\renewcommand{\tocsection}[3]{%
	\indentlabel{\@ifnotempty{#2}{\bfseries\ignorespaces#1 #2.\,\,}}\bfseries#3}
\renewcommand{\tocsubsection}[3]{%
	\indentlabel{\@ifnotempty{#2}{\ignorespaces#1 #2\quad}}#3}
\renewcommand{\tocsubsubsection}[3]{%
	\quad\quad\quad\indentlabel{\@ifnotempty{#2}{\ignorespaces#1 #2\quad}}#3}
\def\bN{\mathbb{N}}
\def\bC{\mathbb{C}}
\def\norm#1{\Vert #1 \Vert}
\def\ip#1{\left \langle #1 \right \rangle}
\def\1{\mathbf{1}}
\def\Mn{{\mathbb{M}_{N}}}
\DeclareMathOperator{\tr}{tr}
\DeclareMathOperator{\Tr}{Tr}
\DeclareMathOperator{\free}{free}
\DeclareMathOperator{\Bool}{Bool}
\DeclareMathOperator{\mono}{mono}
\DeclareMathOperator{\ten}{ten}
\DeclareMathOperator{\id}{id}
\newtheorem{theorem}{Theorem}[section]
\newtheorem{proposition}[theorem]{Proposition}
\newtheorem*{proposition*}{Proposition}
\newtheorem*{theorem*}{Theorem}
\newtheorem{lemma}[theorem]{Lemma}
\newtheorem{fact}[theorem]{Fact}
\theoremstyle{remark}
\newtheorem{remark}[theorem]{Remark}
\newtheorem{example}[theorem]{Example}
\theoremstyle{definition}
\newtheorem{definition}[theorem]{Definition}
\newtheorem{observation}[theorem]{Observation}
\numberwithin{equation}{section}
\author{Nicolas Gilliers}
\address{\parbox{\linewidth}{CNRS, MAP5 \\ Universit{\'e} Paris Cit{\'e} \\ 45 rue des Saints-P{\`e}res, F-75006, Paris, France}}
\email{nicolas.gilliers@u-paris.fr}
\author{David Jekel}
\address{\parbox{\linewidth}{Department of Mathematical Sciences \\ University of Copenhagen \\ Universitetsparken 5, 2100 K{\o}benhavn {\O}, Denmark}}
\email{daj@math.ku.dk}
\title[Bigraph independence]{Bigraph independence: \\ a mixture of the five natural independences}
\newcommand{\pcg}{{\mathcal{P}(c,\mathcal{G})}}
\begin{document}

\begin{abstract}
	We introduce a notion of non-commutative joint independence for \emph{multiple} algebras in a non-commutative probability space. The \emph{pairwise} relationships between these algebras are encoded by a graph with two edge sets—a combinatorial structure we call a \emph{bigraph}— and naturally encompass the five fundamental types of independence: tensor, free, (anti)monotone, and Boolean. It subsumes the BMT independence of Arizmendi--Mendoza--Vazquez-Becerra (when all pairwise relationships are Boolean, (anti)monotone, or tensor) and the $\epsilon$ or $\Lambda$-independence of M{\l}otkowski (when the pairwise relationships are tensor and free). We present explicit combinatorial moment formulas, a Hilbert space construction, and natural \emph{associativity} relations within this setting. Furthermore, we demonstrate that bigraph independence emerges in the asymptotic behavior of tensor product random matrix models with respect to a vector state, encompassing the Charlesworth--Collins model for $\varepsilon$-independence as a special case and offering a random matrix perspective on BMT independence.
\end{abstract}

\maketitle
\tableofcontents

\section{Introduction}

\subsection{Motivation}

Free probability originated largely from the work of Voiculescu in the 80’s and has since grown into a surprisingly effective framework for the study of high-dimensional phenomena in random matrix theory, among many other achievements of Voiculescu's theory. While freeness, as a non-commutative analogue of classical independence, was the main focus of Voiculescu’s work, it was soon realised that the notion of non-commutative independence, formalised in the early 2000s, contains more than freeness and classical stochastic independence. In particular, Speicher \cite{speicher1997}, Schürmann, Ben Ghorbal \cite{ghorbal2002non} and Muraki \cite{muraki2003five} independently classified the five ``natural’’ notions of non-commutative independence (that arise from an associative \emph{binary product} on non-commutative probability spaces):  tensor (or \emph{classical}), \emph{free}, \emph{Boolean}, \emph{monotone}, and \emph{anti-monotone} independence.  The parallels between these five independences go strikingly far: the joint moments for several independent random variables can be described using the combinatorial machinery of cumulants, expressing joint moments as a sum of cumulants \cite{nica2006lectures,SpeicherWoroudi1997BooleanConvolution,arizmendi2015relations, lehner2004cumulants} indexed by partitions (for tensor independence, all partitions are used; for free, non-crossing partitions; for Boolean, interval partitions; and for monotone, labelled partitions satisfying a certain ordering condition).  Further, there are parallel central limit theorems (and limit theorems for iterated convolution in general) with both additive and multiplicative versions  \cite{nica2006lectures,SpeicherWoroudi1997BooleanConvolution,muraki2000monotonic, tucci2010limits}.

Beyond these five natural independences, numerous constructions have emerged that ``independently'' join non-commutative probability spaces, broadening the concept beyond the historical categorical formulation.  Generally, by a non-commutative independence relation, we mean a rule prescribing how to compute \emph{joint moments} of elements from several subalgebras $A_1$, \dots, $A_N$ of a non-commutative probability space $(A,\varphi)$, in terms of the individual moments $\varphi|_{A_j}$. For the five natural independences, this rule for any number $N$ of algebras follows from the prescription at $N=2$. However, this need not always be the case—and in the present work, it is not.

Our focus is on \emph{mixed} non-commutative independences, where the \emph{pairwise relationship} between $A_i$ and $A_j$ can be any one of Muraki’s five types. Thus, different pairs of algebras may exhibit distinct independences: for instance, $A_1$ and $A_2$ might be freely independent, $A_2$ and $A_3$ tensor independent, and $A_1$ and $A_3$ Boolean independent. Importantly, these pairwise relations alone do not determine the joint moments of $A_1$, $A_2$, and $A_3$ (see Remark \ref{remark: BMF mixture}; a mixed independence relation must explicitly prescribe these moments.

There are already several examples of such mixed independences in the literature, appearing in various contexts including graph products of algebras, random matrix models, and operator algebras.  In particular, we mention the following:
\begin{itemize}
	\item A mixture of tensor and free independence is studied in \cite{mlotkowski2004} ($\Lambda$ or $\varepsilon$ independence).  This arises from graph products of von Neumann algebras.
	\item A mixture of free, Boolean, and monotone (anti-monotone) independence is studied in \cite{JekelLiu2020,jekel2024general}.  This generalises the mixtures of monotone and Boolean independence in \cite{wysoczanski2010bm} and the mixtures of free and Boolean independence in \cite{kula2013example}.
	\item A mixture of Boolean, monotone (anti-monotone), and tensor products is given in \cite{arizmendi2025bmt}.
\end{itemize}

Thus, two distinct models yield the pairwise relations of Boolean, monotone, and anti-monotone independence as described in \cite{arizmendi2025bmt} and \cite{jekel2024general}. Beyond these, multiple approaches exist for combining different notions of independence in frameworks involving two or more states, such as the \emph{c-free independence} of Bo$\dot{\rm z}$ejko, Leinert and Speicher \cite{bozejko1996convolution}. In this work, however, we restrict our attention to non-commutative probability spaces with a single state. We also mention traffic independence, traffic spaces and the recently introduced notion of freeness for tensors \cite{bonnin2024freeness}, which extend the original categorical framework to the category of operadic algebras over a certain operad of graphs. We emphasise that our work is only concerned with plain standard algebras, although associativity for our mixed independence will naturally be formulated by using the language of operads in \S \ref{sec:operad associativity}.

It is important to note that, in all previously cited works, the mixed independences studied involve only \emph{some} of the five natural types. To date, no mixed independence relation has accommodated all of Muraki’s five. In this paper, we define for the first time a mixed independence that permits \emph{all five pairwise relations}—tensor, free, Boolean, monotone, and anti-monotone independence—to coexist with a \emph{single state}. We term this new notion \emph{bigraph independence}, as it is characterised by a pair of graphs on the same vertex set encoding the pairwise relations.

Our mixed independence generalises both the tensor-free graph independence of \cite{mlotkowski2004} and the BMT independence of \cite{arizmendi2025bmt}, and thus differs from \cite{JekelLiu2020,jekel2024general} for free-Boolean-(anti-)monotone mixtures in general. Our bigraph independence arises naturally in random matrix models on a tensor product over several sites (or ``strings'') as in \cite{MorLau2019,charlesworth2021matrix}, incorporating rank $1$ projections in some tensorands to mimic the tensor product model of \cite{arizmendi2025bmt}. We also present a Hilbert space model realizing our mixed independence (see \ref{sec:hilbertspacemodel}, thereby enabling the construction of independent copies of arbitrary non-commutative probability spaces.

\subsection{Bigraph independence}

We assume familiarity with the basic theory of $\mathrm{C}^*$-algebras.  By a \emph{$\mathrm{C}^*$-probability space} we mean a pair $(A,\varphi)$ where $A$ is a $\mathrm{C}^*$-algebra and $\varphi: A \to \mathbb{C}$ is a state.  We will begin by introducing the combinatorial data and stating the formula for joint moments in our model of mixed independence.

\begin{definition} \label{def: bigraph}
	A \emph{bigraph} is a triple $\mathcal{G} = (\mathcal{V},\mathcal{E}_1,\mathcal{E}_2)$ where
	\begin{itemize}
		\item $\mathcal{V}$ is a (finite or countable) set of vertices.
		\item $\mathcal{E}_1, \mathcal{E}_2 \subseteq \mathcal{V} \times \mathcal{V}$ are edge sets of two types.
		\item $\mathcal{E}_1$ is reflexive: $\Delta := \{(v,v): v \in \mathcal{V}\} \subseteq \mathcal{E}_1$.
		\item $\mathcal{E}_2$ is irreflexive: $\mathcal{E}_2 \cap \Delta = \varnothing$.
		\item $\mathcal{E}_2$ is symmetric: $(v,w) \in \mathcal{E}_2 \iff (w,v) \in \mathcal{E}_2$.
	\end{itemize}
	We also write $\overline{\mathcal{E}}_1 := \{(v,w) \in \mathcal{V}^2 : (w,v) \in \mathcal{E}_1\}$.
\end{definition}
We note that $(\mathcal{V},\mathcal{E}_1)$ is a directed graph with reflexive loops, while $(\mathcal{V},\mathcal{E}_2)$ is an undirected, loopless graph. In Proposition \ref{prop: pairwise independence}, we show how Boolean, monotone, free, and tensor pairwise relations are encoded by these two edge sets.
Let $(A,\varphi)$ be a non-commutative probability space, and let $(A_v)_{v \in \mathcal{V}}$ be a family of subalgebras of $A$.
We aim to express $\varphi(a_1\cdots a_k)$ for $a_j \in A_{c(j)}$ given a labeling $c:[k]\to\mathcal{V}$.
As usual, joint moment formulas are indexed by set partitions of $[k]$. A partition $\pi$ is a family of disjoint nonempty blocks whose union is $[k]$; the set of all partitions is denoted by $\mathcal{P}(k)$. For $i,j\in[k]$, we write $i\sim_\pi j$ if $i$ and $j$ lie in the same block of $\pi$.

\begin{definition} \label{def: compatible partitions}
	Given a bigraph $\mathcal{G}$, $k\in\mathbb{N}$, and a labeling $c:[k]\to\mathcal{V}$, define $\mathcal{P}(c,\mathcal{G})\subseteq \mathcal{P}(k)$ to be the set of partitions $\pi$ satisfying:
	\begin{enumerate}
		\item 
		      If $i\sim_\pi j$, then $c(i)=c(j)$.
		\item 
		      If $i_1<j<i_2$ and $i_1\sim_\pi i_2$, then $(c(i_1),c(j))\in\mathcal{E}_1$.
		\item 
		      If $i_1<j_1<i_2<j_2$ and $i_1\sim_\pi i_2$ and $j_1\sim_\pi j_2$ and $i_1 \not \sim_\pi j_1$, then $(c(i_2),c(j_1))\in\mathcal{E}_2$.
	\end{enumerate}
\end{definition}
\begin{figure}[htb]
\centering
\includegraphics[scale=0.8]{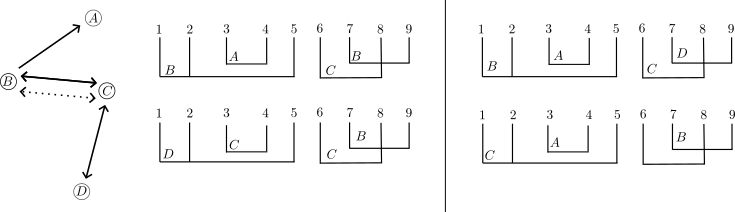}
\caption{\label{fig:exemplepartition} On the left, example of a bigraph $\mathcal{G}$ : edges of type $1$ are drawn as solid black edges and edges of type $2$ are drawn as dashed edges. On the right : partitions in the leftmost column belong to $\mathcal{P}(c,\mathcal{G})$, those in the rightmost column do \emph{not} belong to $\mathcal{P}(c,\mathcal{G})$.}
\end{figure}
\begin{remark}
	The set $\mathcal{P}(c,\mathcal{G})$ depends only on the induced sub-bigraph on ${\rm Im}(c)$; thus if $\mathcal{G}_{|c}$ denotes the restriction of $\mathcal{G}$ to vertices appearing in $c$, then $\mathcal{P}(c,\mathcal{G})=\mathcal{P}(c,\mathcal{G}_{|c})$.
\end{remark}
Let $k\geq 1$ and $\pi \in \mathcal{P}(k)$ a partition, it will be convenient to work
sometimes with colorings $c : [k] \rightarrow \mathcal{V}$ and sometimes with colorings defined on
the blocks of $\pi$, that is, functions $\pi \rightarrow \mathcal{V}$.
The space $\mathcal{V}^{\pi}$ can be canonically identified with the subspace of $\mathcal{V}^k$ consisting of functions which are constant on each block. We use this identification throughout the rest of the paper.

\begin{definition}[Bigraph independence] \label{def: bigraph independence}
	Let $\mathcal{G}=(\mathcal{V},\mathcal{E}_1,\mathcal{E}_2)$ be a bigraph. In a non-commutative probability space $(A,\varphi)$, a family of subalgebras $(A_v)_{v\in\mathcal{V}}$ is $\mathcal{G}$-independent if for every $k\in\mathbb{N}$, labeling $c:[k]\to\mathcal{V}$, and elements $a_j\in A_{c(j)}$,
	\begin{equation}\label{eqn:formula moments}
		\varphi(a_1\cdots a_k)=\sum_{\pi\in\mathcal{P}(c,\mathcal{G})} K^{\free}_{\pi}(a_1,\ldots,a_k),
	\end{equation}
	where $K^{\free}_{\pi}$ denotes the \emph{partitioned free cumulant} (see \cite{nica2006lectures}).
\end{definition}

This definition is not vacuous: in Section \ref{sec:hilbertspacemodel} we construct $\mathcal{G}$-independent copies of arbitrary $\mathrm{C}^*$-probability spaces.
Moreover, the pairwise relations of Boolean, (anti)monotone, free, and tensor independence are exactly encoded by the sets $\mathcal{E}_1$ and $\mathcal{E}_2$ as follows.

\begin{proposition} \label{prop: pairwise independence}
	Consider the situation of Definition \ref{def: bigraph independence}. Let $v,w\in\mathcal{V}$ with $v\neq w$. Then $A_v$ and $A_w$ are:
	\begin{enumerate}
		\item Boolean independent if $(v,w)\in \mathcal{E}_{\Bool} := \mathcal{V}\times\mathcal{V}\setminus (\mathcal{E}_1 \cup \overline{\mathcal{E}}_1)$;
		\item monotone independent if $(v,w)\in \mathcal{E}_{\mono} := \mathcal{E}_1 \setminus \overline{\mathcal{E}}_1$;
		\item anti-monotone independent if $(v,w)\in \overline{\mathcal{E}}_{\mono} = \overline{\mathcal{E}}_1 \setminus \mathcal{E}_1$;
		\item freely independent if $(v,w)\in \mathcal{E}_{\free} := (\mathcal{E}_1 \cap \overline{\mathcal{E}}_1)\setminus \mathcal{E}_2$;
		\item tensor independent if $(v,w)\in \mathcal{E}_{\ten} := \mathcal{E}_1 \cap \overline{\mathcal{E}}_1 \cap \mathcal{E}_2$;
		\item if $\mathcal{E}_1 = \mathcal{V}\times\mathcal{V}$, then $(A_v)_{v\in\mathcal{V}}$ are graph-independent ($\varepsilon$-independent) in the sense of \cite{mlotkowski2004} with respect to $(\mathcal{V},\mathcal{E}_2)$;
		\item if $\mathcal{E}_2 \supseteq (\mathcal{E}_1 \cap \overline{\mathcal{E}}_1)\setminus \Delta$, then $(A_v)_{v\in\mathcal{V}}$ are BMT-independent in the sense of \cite{arizmendi2025bmt} with respect to the digraph $(\mathcal{V},\mathcal{E}_1\setminus\Delta)$.
	\end{enumerate}
\end{proposition}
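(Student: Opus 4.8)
For parts (1)--(5) the plan is to reduce to the case of two vertices and then read off $\mathcal{P}(c,\mathcal{G})$ explicitly. Fix $v\neq w$ and consider words $a_1\cdots a_k$ with $a_j\in A_{c(j)}$ and $c:[k]\to\{v,w\}$. By the Remark after Definition~\ref{def: compatible partitions} we have $\mathcal{P}(c,\mathcal{G})=\mathcal{P}(c,\mathcal{G}_{|\{v,w\}})$, so only the induced sub-bigraph on $\{v,w\}$ matters; since $\mathcal{E}_1\supseteq\Delta$, such a sub-bigraph is determined by which of $(v,w),(w,v)$ lie in $\mathcal{E}_1$, together with, in the bidirectional case, whether $(v,w)\in\mathcal{E}_2$, giving exactly the five classes $\mathcal{E}_{\Bool},\mathcal{E}_{\mono},\overline{\mathcal{E}}_{\mono},\mathcal{E}_{\free},\mathcal{E}_{\ten}$. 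Two local consequences of Definition~\ref{def: compatible partitions} drive the analysis: since $\mathcal{E}_2$ is irreflexive, condition~(3) fails on every crossing whose two crossing blocks share a colour, so absent a type-$2$ edge between $v$ and $w$ no crossing is allowed at all; and condition~(2) says that a block of colour $u$ may be nested over a colour-$u'$ element precisely when $(u,u')\in\mathcal{E}_1$.

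Carrying this out: in the free class ($\mathcal{E}_2$ empty on $\{v,w\}$, both off-diagonal directions in $\mathcal{E}_1$) one gets $\mathcal{P}(c,\mathcal{G})=\{\pi\in NC(k):\pi\text{ monochromatic}\}$, so \eqref{eqn:formula moments} is equivalent (by Möbius inversion in $NC(k)$) to the vanishing of all mixed free cumulants of $A_v,A_w$, i.e.\ to freeness. In the tensor class ($(v,w)\in\mathcal{E}_2$ as well) one gets $\mathcal{P}(c,\mathcal{G})=\{\pi\text{ monochromatic and non-crossing within each colour}\}$; such a $\pi$ is a pair of non-crossing partitions of the two colour classes, so multiplicativity of free cumulants factors the right side of \eqref{eqn:formula moments} into $\varphi(\prod_{c(j)=v}a_j)\,\varphi(\prod_{c(j)=w}a_j)$, the tensor moment formula. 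In the Boolean class (no off-diagonal direction in $\mathcal{E}_1$) the two local consequences force every block to be contained in a maximal monochromatic run of $c$ and non-crossing there, so \eqref{eqn:formula moments} factors as $\prod_{\text{runs }r}\varphi(\prod_{j\in r}a_j)$, the Boolean moment formula. In the monotone class (say $(v,w)\in\mathcal{E}_1$, $(w,v)\notin\mathcal{E}_1$), condition~(2) allows $v$-blocks nested over $w$-elements but forbids the reverse; a short check shows the constraints decouple: a $w$-block has monochromatic span, hence lies strictly within one maximal $w$-run between two consecutive $v$-positions and cannot interleave with any $v$-block, so $\mathcal{P}(c,\mathcal{G})$ consists of the partitions $\sigma\sqcup\tau$ with $\sigma$ an arbitrary non-crossing partition of the $v$-positions and $\tau$ an arbitrary non-crossing partition of each maximal $w$-run. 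Thus \eqref{eqn:formula moments} factors as $\varphi(\prod_{c(j)=v}a_j)\prod_{w\text{-runs }r}\varphi(\prod_{j\in r}a_j)$, which is Muraki's monotone moment formula with $A_v$ the inner factor (iterating $\varphi(xbx')=\varphi(xx')\varphi(b)$ and its boundary versions); the anti-monotone class follows by exchanging $v\leftrightarrow w$.

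For part (6), when $\mathcal{E}_1=\mathcal{V}\times\mathcal{V}$ condition~(2) becomes vacuous, so $\mathcal{P}(c,\mathcal{G})$ is the set of monochromatic partitions all of whose crossings use colours joined by an edge of $\mathcal{E}_2$; this is precisely the index set in M{\l}otkowski's combinatorial moment formula for graph ($\varepsilon$-)independence with respect to $(\mathcal{V},\mathcal{E}_2)$, so the two notions agree. For part (7), the hypothesis $\mathcal{E}_2\supseteq(\mathcal{E}_1\cap\overline{\mathcal{E}}_1)\setminus\Delta$ means that every pair of vertices bidirectionally joined by $\mathcal{E}_1$ is a type-$2$ edge; since condition~(2) applied to each of the two blocks of a crossing forces their colours to be bidirectionally $\mathcal{E}_1$-related, a crossing is admissible iff its blocks carry distinct colours that are tensor neighbours, which is exactly the crossing rule of the BMT moment formula of \cite{arizmendi2025bmt} for the digraph $(\mathcal{V},\mathcal{E}_1\setminus\Delta)$, while condition~(2) reproduces its nesting rule and condition~(1) its monochromaticity. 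Hence $\mathcal{P}(c,\mathcal{G})$ coincides with the BMT index set and \eqref{eqn:formula moments} is the BMT moment formula.

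The steps I expect to require the most care are the identification of $\mathcal{P}(c,\mathcal{G})$ in the monotone class --- showing the non-crossing and nesting constraints genuinely decouple, which is what makes the free-cumulant sum collapse to the monotone product --- and, for parts~(6) and~(7), matching $\mathcal{P}(c,\mathcal{G})$ with the combinatorial descriptions in \cite{mlotkowski2004} and \cite{arizmendi2025bmt}, including reconciling their orientation conventions (which vertex plays the inner role) with Definition~\ref{def: bigraph}; part~(7) in particular cannot be reduced to two vertices, so one must argue directly with the global combinatorial formula there. The free, Boolean and tensor cases, by contrast, are routine once $\mathcal{P}(c,\mathcal{G})$ is described, since the free-cumulant sum is then manifestly multiplicative.
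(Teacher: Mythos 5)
Your proposal is correct in substance but takes a genuinely different route for parts (1)--(5). The paper first establishes (6) and (7) as standalone results (Propositions \ref{prop: epsilon independence} and \ref{prop: BMT independence}) and then deduces (1)--(5) by restricting to the induced two-vertex sub-bigraph (Lemma \ref{lem: restriction to subgraph}) and invoking those results together with \cite[Proposition 3.12]{arizmendi2025bmt}, which already identifies two-vertex BMT independence with the Boolean, monotone, and tensor cases, and with \cite{mlotkowski2004,SpWy2016} for the free case. You instead compute $\mathcal{P}(c,\mathcal{G})$ explicitly for each of the five two-vertex sub-bigraphs and verify the five moment formulas directly; your identifications of the partition sets (all monochromatic non-crossing partitions in the free case, colorwise non-crossing in the tensor case, runwise non-crossing in the Boolean case, and the decoupled description of $v$-blocks and $w$-runs in the monotone case) are accurate, and the factorizations of the free-cumulant sum you describe do go through. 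What your route buys is self-containedness; what it costs is redoing by hand the two-vertex identifications that the paper outsources to the cited references, including the orientation bookkeeping you rightly flag. For (6) your argument coincides with the paper's, which matches $\mathcal{P}(c,\mathcal{G})$ with the index set of \cite[Theorem 5.2]{SpWy2016}. For (7) your identification of the admissible crossings and nestings is correct, but note that BMT independence as used here is \emph{defined} by the product formula \eqref{eq: BMT definition} rather than by a cumulant sum over a partition set; the paper therefore needs an additional step --- the lemma showing that $\mathcal{P}(c,\mathcal{G})$ consists of the $\pi \leq \ker_G(c)$ whose restriction to each block of $\ker_G(c)$ is non-crossing, the resulting bijection \eqref{eq: first bijection}, and the free moment--cumulant formula applied blockwise --- to collapse $\sum_{\pi} K_\pi^{\free}$ into $\prod_{B} \varphi(\vec{\prod}_{j \in B} a_j)$. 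Your sketch asserts the match with ``the BMT moment formula'' without carrying out this factorization; it is routine, but it is a genuine step that should be written out.
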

For proofs, see Section \ref{sec:relationsotherindependences}.

\begin{remark} \label{remark: BMF mixture}
	In the case where $\mathcal{E}_2 = \varnothing$, the pairwise relations between the algebras will be free, (anti)monotone, or Boolean independence.  Thus it is naturally to compare these cases of bigraph independence with the mixtures coming from digraph independence in \cite{jekel2024general}. We can already see from \cite[Remark 3.24]{jekel2024general} that these independences provide different joint moments when the pairwise relations are only (anti)monotone and Boolean, since bigraph independence agrees with BMT independence \cite{arizmendi2025bmt} in this case.  In the case of a general free, (anti)monotone, and Boolean mixture, both bigraph independence and digraph independence are special cases of the tree independence of \cite{JekelLiu2020}; can therefore evaluate the joint moments of both in terms of the moment formula from that paper, and show that they disagree.
\end{remark}

\begin{remark}
	\label{rk:wlog}
	The reader may notice that the edges in $\mathcal{E}_2 \setminus (\mathcal{E}_1 \cap \overline{\mathcal{E}}_1)$ do not play any role in the definitions.  Indeed, in Definition \ref{def: compatible partitions} (3), if $i_1 < j_1 < i_2 < j_2$ and $i_1 \sim_\pi i_2$ and $j_1 \sim_\pi j_2$, we have by (2) that $(c(i_1),c(j_1)) \in \mathcal{E}_1$ and also $(c(j_1),c(i_2)) \in \mathcal{E}_1$, and therefore $(c(i_1),c(j_1))$ is automatically in $ \mathcal{E}_2 \cap \mathcal{E}_1 \cap \overline{\mathcal{E}}_1$.  Moreover, in Proposition \ref{prop: pairwise independence}, if $(v,w) \not \in \mathcal{E}_1 \cap \overline{\mathcal{E}}_1$, the pairwise relation of $A_v$ and $A_w$ is unaffected by whether $(v,w) \in \mathcal{E}_2$.  Hence, in general, \emph{we can assume without loss of generality} that $\mathcal{E}_2 \subseteq \mathcal{E}_1 \cap \overline{\mathcal{E}}_1$ by discarding the other elements from $\mathcal{E}_2$, and the definition of $\mathcal{G}$-independence will be unaffected. However, allowing more flexibility in the choice of $\mathcal{E}_2$ is convenient for stating our random matrix construction in Theorem \ref{thm: matrix models main}.
\end{remark}

\begin{remark}
	Any pairwise combination of Boolean, (anti)monotone, free, and tensor independence can be realized by some bigraph, which can additionally be taken with $\mathcal{E}_2 \subseteq \mathcal{E}_1 \cap \overline{\mathcal{E}}_1$.  Indeed, suppose we are given a vertex set $\mathcal{V}$ and sets
	\[
		\mathcal{E}_{\Bool}, \mathcal{E}_{\mono}, \mathcal{E}_{\free}, \mathcal{E}_{\ten} \subseteq \mathcal{V} \times \mathcal{V}
	\]
	satisfying
	\begin{itemize}
		\item $\mathcal{V} \times \mathcal{V} = \mathcal{E}_{\Bool} \sqcup \mathcal{E}_{\mono} \sqcup \overline{\mathcal{E}}_{\mono} \sqcup \mathcal{E}_{\free} \sqcup \mathcal{E}_{\ten}$,
		\item $\overline{\mathcal{E}}_{\Bool} = \mathcal{E}_{\Bool}$ and $\overline{\mathcal{E}}_{\free} = \mathcal{E}_{\free}$.
		\item $\Delta \subseteq \mathcal{E}_{\free}$.
	\end{itemize}
	We can then set
	\[
		\mathcal{E}_1 = \mathcal{E}_{\mono} \cup \mathcal{E}_{\free} \cup \mathcal{E}_{\ten}, \qquad \mathcal{E}_2 = \mathcal{E}_{\ten},
	\]
	and this choice will satisfy
	\[
		\mathcal{E}_{\Bool} = (\mathcal{V} \times \mathcal{V}) \setminus (\mathcal{E}_1 \cup \overline{\mathcal{E}}_1), \quad
		\mathcal{E}_{\mono} = \mathcal{E}_1 \setminus \overline{\mathcal{E}}_1,
		\quad
		\mathcal{E}_{\free} = \mathcal{E}_1 \cap \overline{\mathcal{E}}_1 \setminus \mathcal{E}_2,
		\quad
		\mathcal{E}_{\ten} = \mathcal{E}_1 \cap \overline{\mathcal{E}}_1 \cap \mathcal{E}_2.
	\]
\end{remark}

In \ref{sec:hilbertspacemodel} we provide a Hilbert space model for bigraph independence (Theorem \ref{thm: Hilbert realization}) that allows us to realize $\mathcal{G}$-independent copies of any $\mathrm{C}^*$-probability spaces.

\begin{proposition} \label{prop: independent C star copies}
	Let $\mathcal{G}$ be a bigraph, and let $(A_v,\varphi_v)$ be $\mathrm{C}^*$-probability spaces.  Then there exists a $\mathrm{C}^*$-probability space $(A,\varphi)$ and state-preserving $*$-homomorphisms $\lambda_v: A_v \to A$ such that $(\lambda_v(A_v))_{v \in \mathcal{V}}$ are $\mathcal{G}$-independent.
\end{proposition}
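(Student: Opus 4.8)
The plan is to prove this concretely, by realizing all the $\lambda_v(A_v)$ inside a single Hilbert space, so that $\varphi$ becomes visibly a vector state and the only real work is checking the moment formula \eqref{eqn:formula moments}; this is precisely the content of the Hilbert space model of Section~\ref{sec:hilbertspacemodel} (Theorem~\ref{thm: Hilbert realization}). First, after unitizing the $A_v$ if necessary (extending each $\varphi_v$ to a state), one applies the GNS construction to obtain a Hilbert space $\mathcal{H}_v$, a cyclic unit vector $\xi_v$, and a unital $*$-representation $\pi_v \colon A_v \to B(\mathcal{H}_v)$ with $\varphi_v = \langle \pi_v(\cdot)\xi_v,\xi_v\rangle$. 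Split $\mathcal{H}_v = \mathbb{C}\xi_v \oplus \mathcal{H}_v^{\circ}$ and decompose $\pi_v(a)$, for $a \in A_v$, into its vacuum scalar $\varphi_v(a)$, a creation part $\xi_v \mapsto P_{\mathcal{H}_v^{\circ}}\pi_v(a)\xi_v$, an annihilation part, and a ``gauge'' part acting on $\mathcal{H}_v^{\circ}$.

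Next, one assembles the ambient space $\mathcal{H}$ as an orthogonal direct sum, over \emph{$\mathcal{G}$-admissible words} $((v_1,w_1),\dots,(v_n,w_n))$ with $w_i \in \mathcal{H}_{v_i}^{\circ}$, of the corresponding tensor products $\mathbb{C}w_1 \otimes \cdots \otimes \mathbb{C}w_n$, where the vertex sequence $v_1 \cdots v_n$ is ``reduced'' in the sense dictated by the bigraph: no two consecutive letters coincide, two equal letters separated only by $\mathcal{E}_2$-neighbours (hence commuting) are likewise forbidden, and every letter is required to be $\mathcal{E}_1$-related to each letter sitting ``above'' it, so that the monotone/Boolean hierarchy is respected. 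This space specialises to the $\varepsilon$-independent Fock space of \cite{mlotkowski2004} when $\mathcal{E}_1 = \mathcal{V}\times\mathcal{V}$, to the usual free Fock space when moreover $\mathcal{E}_2 = \varnothing$, and to the monotone-type Fock spaces underlying \cite{arizmendi2025bmt}. Let $\Omega$ be the empty word. One then defines $\lambda_v(a)$, for $a \in A_v$, on an admissible word by prepending a new $v$-letter carrying the creation part of $\pi_v(a)$, contracting against the leading $v$-letter when it is present and ``visible'', applying the gauge part internally, and multiplying by $\varphi_v(a)$ when $v$ sees the vacuum through the leading block — the $\mathcal{E}_1$/$\mathcal{E}_2$-conditions of admissibility determining which case occurs. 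As in the free and $\varepsilon$-independent constructions, each $\lambda_v$ is a unital $*$-homomorphism, bounded by $\|\pi_v(a)\|$ since it is assembled from pieces each dominated by it. Setting $A := \mathrm{C}^*\bigl(\bigcup_{v\in\mathcal{V}} \lambda_v(A_v)\bigr) \subseteq B(\mathcal{H})$ and $\varphi := \langle \cdot\,\Omega,\Omega\rangle|_A$, then $\varphi$ is a state (a unit vector state on a $\mathrm{C}^*$-algebra containing $1$), and $\varphi \circ \lambda_v = \varphi_v$ because a single $v$-letter applied to $\Omega$ reproduces the GNS formula.

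It remains — and this is the main obstacle — to verify the moment formula \eqref{eqn:formula moments}. Expanding $\langle \lambda_{c(1)}(a_1)\cdots \lambda_{c(k)}(a_k)\Omega,\ \Omega\rangle$ by distributing each operator into creation/annihilation/gauge/vacuum-scalar parts yields a sum over ``configurations'' on the positions $[k]$, and the key point is that the surviving (nonzero) configurations are naturally indexed by exactly the partitions $\pi \in \mathcal{P}(c,\mathcal{G})$: condition (1) of Definition~\ref{def: compatible partitions} reflects that only letters from the same $A_v$ can be contracted; condition (2) is forced because any position nested inside a matched pair of $v$-letters must, by admissibility of the intermediate words, be $\mathcal{E}_1$-related to $v$; and condition (3) holds because two interleaved matched pairs survive the expansion only if the relevant letters commute past each other, i.e.\ are $\mathcal{E}_2$-adjacent. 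Finally, summing the weights of all configurations refining a fixed $\pi$ telescopes, via the standard recursion linking a free-type Fock computation to free cumulants, to the partitioned free cumulant $K^{\free}_{\pi}(a_1,\dots,a_k)$. Granting this combinatorial identification, $(\lambda_v(A_v))_{v\in\mathcal{V}}$ is $\mathcal{G}$-independent by Definition~\ref{def: bigraph independence}, proving the proposition. The delicate part, which is exactly what Theorem~\ref{thm: Hilbert realization} must supply, is choosing the admissibility condition on words so that it reproduces $\mathcal{P}(c,\mathcal{G})$ on the nose and tracking the weights so that they collapse to free cumulants with no spurious cancellations.
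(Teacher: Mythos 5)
Your proposal follows essentially the same route as the paper: the paper deduces this proposition in one paragraph from the GNS construction combined with the product Hilbert space model of Theorem \ref{thm: Hilbert realization}, whose proof (in \S \ref{subsec: computation of vectors}) carries out exactly the creation/annihilation/gauge expansion over admissible words that you sketch. Two small corrections to your outline: the inclusions $\lambda_v$ are \emph{not} unital in general (the paper remarks on this explicitly; $\lambda_v(1)$ is a proper projection whenever prepending $v$ to some word is impermissible), and the direct Fock-space computation actually yields Boolean cumulants indexed by the subclass $\mathcal{P}(c,\mathcal{G})^0$ (Lemma \ref{lem: unfinished moments}), which must then be converted to the free-cumulant formula over $\mathcal{P}(c,\mathcal{G})$ via Proposition \ref{prop: formula via Boolean cumulants}, rather than producing free cumulants over $\mathcal{P}(c,\mathcal{G})$ on the nose as you assert.
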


We remark that, as in the monotone and Boolean cases for instance, the maps $\lambda_v$ need not be unital.  Our Hilbert space model contains as special cases the free--tensor graph product Hilbert space of \cite{mlotkowski2004}, and injects into the BMT Hilbert space model from \cite{arizmendi2025bmt}.  The Hilbert space is a direct sum of subspaces indexed by words on the alphabet $\mathcal{V}$; we \emph{simultaneously} place some restrictions on which words may occur, as in \cite{JekelLiu2020,jekel2024general} and also consider words up to equivalence via swapping consecutive letters which are connected by an edge in $\mathcal{E}_{\ten}$ as in \cite{mlotkowski2004}.
The proof of Theorem \ref{thm: Hilbert realization} that the Hilbert space construction produces the moment relation of $\mathcal{G}$-independence from Definition \ref{def: bigraph independence} proceeds along similar lines to many previous works, such as \cite[Theorem 3.13]{jekel2024general}.

\subsection{Matrix models}

We next describe how our bigraph independence arises naturally from random matrix models in a tensor-product setting, which generalize the construction of \cite{charlesworth2021matrix}.

\begin{theorem}~ \label{thm: matrix models main}
	\begin{itemize}
		\item Consider two sets $\mathcal{S}$ and $\mathcal{V}$.
		\item For each $v \in \mathcal{V}$, let $S_v^{(1)}$, $S_v^{(2)}$, $S_v^{(3)}$ be a partition of $\mathcal{S}$ such that $S_v^{(1)}$ is non-empty.
		\item Let $\mathcal{G} = (\mathcal{V},\mathcal{E}_1,\mathcal{E}_2)$, where
		      \[
			      \mathcal{E}_1 = \{(v,w): S_v^{(1)} \cap S_w^{(2)} = \varnothing \}, \qquad \mathcal{E}_2 = \{(v,w): S_v^{(1)} \cap S_w^{(1)} = \varnothing \}.
		      \]
		\item Let $\Mn$ denote the set of $N \times N$ complex matrices; fix a unit vector $\xi \in \bC^N$; and consider $\Mn^{\otimes \mathcal{S}}$ as a non-commutative probability space with the state given by the vector $\xi^{\otimes \mathcal{S}}$.
		\item Define the (not necessarily unital) $*$-homomorphism
		      \[
			      \lambda_v^{(N)}: \Mn^{\otimes S_v^{(1)}} \to \Mn^{\otimes \mathcal{S}}, \qquad \lambda_v^{(N)}(A) = A \otimes (\xi \xi^*)^{\otimes S_v^{(2)}} \otimes I_N^{\otimes S_v^{(3)}},
		      \]
		      where $\xi \xi^*$ denotes the rank-one projection onto the span of $\xi$.
		\item For each $v \in V$, let $I_v$ be a finite index set, and let $\mathbf{A}_i^{(N)} = (A_{v,i}^{(N)})_{i \in I_v}$ be random matrices satisfying, almost surely,
		      \[
			      \sup_{N \in \bN} \norm{A_{v,i}^{(N)}} < \infty.
		      \]
		\item For each $v \in \mathcal{V}$, let $U_v^{(N)}$ be a Haar random unitary matrix in $\Mn^{\otimes S_v^{(1)}}$, such that the $U_v^{(N)}$'s for $v \in \mathcal{V}$ and $N \in \bN$ are jointly independent of the $(A_{v,i}^{(N)})$'s for $v \in \mathcal{V}$, $i \in I_v$, and $N \in \bN$, and let
		      \[
			      \mathbf{B}_v^{(N)} = (\lambda_v^{(N)}(U_v^{(N)} A_{v,i}^{(N)} (U_v^{(N)})^*))_{i \in I_v}.
		      \]
	\end{itemize}
	Then almost surely the families $\mathbf{B}_v^{(N)}$ for $v \in \mathcal{V}$ are asymptotically $\mathcal{G}$-independent in the following sense:  For every $k \in \bN$ and $c: [k] \to \mathcal{V}$ and non-commutative polynomials $p_j$ in variables indexed by $I_{c(j)}$ with no constant term, we have
	\begin{equation} \label{eq: matrix convergence statement main}
		\lim_{N \to \infty} \biggl| \ip{\xi^{\otimes \mathcal{S}}\,,\, p_1(\mathbf{B}_{c(1)}^{(N)}) \dots p_k(\mathbf{B}_{c(k)}) \xi^{\otimes \mathcal{S}}} - \sum_{\pi \in \mathcal{P}(c,\mathcal{G})} K_{\free,\pi}(p_1(\mathbf{B}_{c(1)}^{(N)}),\dots, p_k(\mathbf{B}_{c(k)})) \biggr| = 0
	\end{equation}
	almost surely, where the free cumulants are computed with respect to the state given by $\xi^{\otimes \mathcal{S}}$.  We also remark that for each $v$ and polynomial $p$ with no constant term,
	\begin{equation} \label{eq: matrix trace versus vector main}
		\lim_{N \to \infty} \Bigl|\ip{\xi^{\otimes \mathcal{S}}\,,\, p(\mathbf{B}_v^{(N)}) \xi^{\otimes \mathcal{S}}} - \tr_{N^{S_v^{(1)}}}[p(\mathbf{A}_v)] \Bigr| = 0 \text{ almost surely.}
	\end{equation}
\end{theorem}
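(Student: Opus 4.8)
The plan is to reduce the claim to an expectation computation via the Weingarten calculus for the Haar unitaries $U_v^{(N)}$, identify the leading-order terms with the partitions in $\mathcal{P}(c,\mathcal{G})$, and then pass to the almost-sure statement by concentration of measure. By multilinearity of both sides of \eqref{eq: matrix convergence statement main} in $(p_1,\dots,p_k)$, it suffices to treat the case in which each $p_j$ is a monomial of degree $\geq 1$; in particular there are now only countably many cases, so it is enough to prove the almost-sure statement for a fixed one. Since $\lambda_v^{(N)}$ is a (not necessarily unital) $*$-homomorphism, $p_j(\mathbf{B}_{c(j)}^{(N)}) = \lambda_{c(j)}^{(N)}\bigl(U_{c(j)}^{(N)}\, Y_j^{(N)}\,(U_{c(j)}^{(N)})^*\bigr)$ with $Y_j^{(N)} := p_j(\mathbf{A}_{c(j)}^{(N)}) \in \Mn^{\otimes S_{c(j)}^{(1)}}$; on the almost-sure event on which $\sup_N\norm{A_{v,i}^{(N)}}<\infty$ there is $R$ with $\sup_N\norm{Y_j^{(N)}}\le R$, and we work on this event, conditioning on the matrices $\mathbf{A}^{(N)}$ (so the $Y_j^{(N)}$ become deterministic) and averaging over the $U_v^{(N)}$, all estimates being uniform over $\norm{Y_j^{(N)}}\le R$. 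The estimate \eqref{eq: matrix trace versus vector main}, and more generally any moment of a word in a \emph{single} family $\mathbf{B}_v^{(N)}$, is immediate: because $U_v U_v^* = I$ such a word collapses to $\lambda_v^{(N)}(U_v Z U_v^*)$ for the corresponding product $Z$ of the $Y$'s, and since the $S_v^{(2)}$- and $S_v^{(3)}$-strings contribute factors $\norm{\xi}^2 = 1$ one gets $\langle\xi^{\otimes\mathcal{S}}, \lambda_v^{(N)}(U_v Z U_v^*)\,\xi^{\otimes\mathcal{S}}\rangle = \langle\xi^{\otimes S_v^{(1)}}, U_v Z U_v^*\,\xi^{\otimes S_v^{(1)}}\rangle$, whose $U_v$-average is $\tr_{N^{|S_v^{(1)}|}}(Z)$; that this quantity equals $\tr_{N^{|S_v^{(1)}|}}(Z)+o(1)$ almost surely follows from concentration of measure on $U(N^{|S_v^{(1)}|})$ — here $|S_v^{(1)}|\ge 1$, hence $N^{|S_v^{(1)}|}\to\infty$, is used — together with Borel--Cantelli.

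For a mixed moment set $b_j := \lambda_{c(j)}^{(N)}(U_{c(j)} Y_j U_{c(j)}^*)$ and expand $\langle\xi^{\otimes\mathcal{S}}, b_1\cdots b_k\,\xi^{\otimes\mathcal{S}}\rangle$ in the standard tensor basis of $(\bC^N)^{\otimes\mathcal{S}}$. One obtains a sum over index tuples $\vec{a}_0,\dots,\vec{a}_k\in[N]^{\mathcal{S}}$ in which, on a fixed string $s$, the position $j$ contributes a Kronecker delta $\delta_{(\vec{a}_{j-1})_s,(\vec{a}_j)_s}$ when $s\in S_{c(j)}^{(3)}$, a \emph{break} $\xi_{(\vec{a}_{j-1})_s}\,\overline{\xi_{(\vec{a}_j)_s}}$ when $s\in S_{c(j)}^{(2)}$, and — jointly over the strings in $S_{c(j)}^{(1)}$ — a matrix entry of $U_{c(j)} Y_j U_{c(j)}^*$ when $s\in S_{c(j)}^{(1)}$, while the end vectors contribute $\overline{\xi_{(\vec{a}_0)_s}}$ and $\xi_{(\vec{a}_k)_s}$; only the finitely many strings appearing in some $S_v^{(1)}\cup S_v^{(2)}$ with $v$ in the image of $c$ are relevant. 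Integrating out the $U_v$ (independent over $v$) with the Weingarten formula introduces, for each $v$, a pair of permutations $(\sigma_v,\tau_v)$ of $c^{-1}(v)$ weighted by $\mathrm{Wg}\bigl(N^{|S_v^{(1)}|},\,\sigma_v\tau_v^{-1}\bigr)$: the $\tau_v$ turns the $Y_j$, $j\in c^{-1}(v)$, into traces of products along its cycles, and the $\sigma_v$ forces the identifications $(\vec{a}_{j-1})|_{S_v^{(1)}} = (\vec{a}_{\sigma_v(j)})|_{S_v^{(1)}}$ for $j\in c^{-1}(v)$.

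The heart of the proof is the $N$-power count for the resulting sum. One shows that every term is $O(1)$; that the $O(1)$ terms are precisely those in which the $(\sigma_v,\tau_v)$ satisfy the relevant planarity (geodesic) conditions \emph{and} the partition $\pi$ of $[k]$ whose blocks are the cycles of the $\sigma_v$ is compatible with $\mathcal{G}$ in the sense of Definition \ref{def: compatible partitions}; and that the contribution of such a $\pi$ equals $\prod_{V\in\pi}\kappa_V + O(N^{-1})$, where $\kappa_V$ is the free cumulant of $(Y_j)_{j\in V}$, taken in increasing order of $j$, with respect to the normalized trace on $\Mn^{\otimes S_v^{(1)}}$ ($v$ being the common colour of the block $V$) — this last point following from the standard asymptotics of the Weingarten function. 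The combinatorial matching is as follows: condition (1) holds because each $\sigma_v$ permutes only within $c^{-1}(v)$; condition (2) appears because the matrix strings $S_v^{(1)}$ of a block $V$ of colour $v$ must survive, un-\emph{broken}, past every position $j$ lying strictly between two elements of $V$, i.e.\ $S_v^{(1)}\cap S_{c(j)}^{(2)}=\varnothing$, equivalently $(c(i_1),c(j))\in\mathcal{E}_1$; and condition (3) appears because two crossing blocks of colours $v,w$ can both contribute at order $O(1)$ only when their matrix parts commute, i.e.\ $S_v^{(1)}\cap S_w^{(1)}=\varnothing$, equivalently $(c(i_2),c(j_1))\in\mathcal{E}_2$. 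The directional asymmetry of conditions (2)--(3) comes from the asymmetry between the two ends of a break $\xi_{(\vec{a}_{j-1})_s}\,\overline{\xi_{(\vec{a}_j)_s}}$. Summing, $\mathbb{E}\langle\xi^{\otimes\mathcal{S}}, b_1\cdots b_k\,\xi^{\otimes\mathcal{S}}\rangle = \sum_{\pi\in\mathcal{P}(c,\mathcal{G})}\prod_{V\in\pi}\kappa_V + O(N^{-1})$.

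Finally, concentration of measure on the product of unitary groups $\prod_v U(N^{|S_v^{(1)}|})$, applied to the polynomial $U\mapsto\langle\xi^{\otimes\mathcal{S}}, b_1\cdots b_k\,\xi^{\otimes\mathcal{S}}\rangle$ (of degree at most $2k$, Lipschitz with a constant depending only on $k$ and $R$), together with Borel--Cantelli, removes the expectation: almost surely $\langle\xi^{\otimes\mathcal{S}}, b_1\cdots b_k\,\xi^{\otimes\mathcal{S}}\rangle = \sum_{\pi\in\mathcal{P}(c,\mathcal{G})}\prod_{V\in\pi}\kappa_V + o(1)$. Moreover, for a block $V=\{j_1<\dots<j_l\}$ of a compatible $\pi$ — which is monochromatic by condition (1) — the sub-word $b_{j_1}\cdots b_{j_l}$ collapses to $\lambda_v^{(N)}(U_v Y_{j_1}\cdots Y_{j_l} U_v^*)$, so by the one-vertex estimate of the first step the moments with respect to $\psi := \langle\xi^{\otimes\mathcal{S}},\cdot\,\xi^{\otimes\mathcal{S}}\rangle$ of the family $(b_j)_{j\in V}$ match the corresponding normalized-trace moments of $(Y_j)_{j\in V}$ up to $o(1)$ almost surely; since free cumulants are Lipschitz polynomials of the moments on norm-bounded sets, this gives $\kappa_V = \kappa_V^{\psi} + o(1)$ almost surely, where $\kappa_V^{\psi}$ is the free cumulant of $(b_j)_{j\in V}$ with respect to $\psi$. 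Hence $\sum_{\pi\in\mathcal{P}(c,\mathcal{G})}\prod_{V\in\pi}\kappa_V = \sum_{\pi\in\mathcal{P}(c,\mathcal{G})}K_{\free,\pi}(b_1,\dots,b_k)+o(1)$ almost surely, which is \eqref{eq: matrix convergence statement main}. The principal obstacle is the $N$-power count isolating the leading Weingarten configurations: it extends the planarity analysis of \cite{charlesworth2021matrix} — in which all pairwise relations are free or tensor, so only conditions (1) and (3) arise — by carefully tracking the rank-one breaks on the $S_v^{(2)}$-strings, which are responsible for condition (2) and for the monotone and anti-monotone directionality.
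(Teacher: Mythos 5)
Your proposal is correct and follows essentially the same route as the paper: reduce by multilinearity and conditioning to deterministic norm-bounded inputs, compute the expectation via Weingarten calculus with a power count that identifies the surviving configurations with $\mathcal{P}(c,\mathcal{G})$ (condition (2) from the rank-one breaks on $S^{(2)}$-strings, condition (3) from crossings requiring disjoint $S^{(1)}$-strings), upgrade to almost-sure convergence by concentration on the product of unitary groups plus Borel--Cantelli, and convert trace-cumulants to vector-state cumulants via the single-colour estimate and M{\"o}bius inversion. The only caveat is that the central power count is asserted rather than carried out, and the leading-order terms are indexed by \emph{pairs} $(\sigma,\tau)$ with $\sigma$ colourwise non-crossing and $\sigma\le\tau\in\mathcal{P}(c,\mathcal{G})$ (the cumulants arise from summing over $\sigma$ for fixed $\tau$), rather than by the cycle partition of $\sigma$ alone as your sketch suggests.
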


This theorem generalizes \cite{charlesworth2021matrix}.  Indeed, the random matrix models from \cite{charlesworth2021matrix} occur when $S_v^{(2)} = \varnothing$ for all $v$ (and in this case using a fixed vector state as in our theorem gives asymptotically the same result as using the trace as in \cite{charlesworth2021matrix}). When $S_v^{(2)} = \varnothing$, then $\mathcal{E}_1 = \mathcal{V} \times \mathcal{V}$.  Thus, in Proposition \ref{prop: pairwise independence}, we would obtain $\mathcal{E}_{\ten} = \mathcal{E}_2$ and $\mathcal{E}_{\free} = (\mathcal{V} \times \mathcal{V}) \setminus \mathcal{E}_2$, so we recover the setting of $\varepsilon$-independence from \cite{mlotkowski2004}.

Meanwhile, the matrix model also includes the BMT product construction of \cite{arizmendi2025bmt} as a special case when $\mathcal{S} = \mathcal{V}$ and $S_v^{(1)} = \{v\}$ for each $v$.  Then our tensor product becomes simply $\Mn^{\otimes \mathcal{V}}$ and $\lambda_v$ is simply the inclusion of the $v$th tensorand with copies of $\xi \xi^*$ on the vertices indexed by $S_v^{(2)}$.  Of course, in \cite{arizmendi2025bmt}, there would not be any conjugation by random unitaries, and the unitary conjugation effectively changes what the state would be on a polynomial in each $\mathbf{A}_v$, but it is needed in our more general setting to obtain the asymptotic free independence.  We also point out that our matrix model produces non-obvious models for BMT independence for graphs where $\mathcal{E}_{\free} = \varnothing$.  Indeed, the sets $S_v^{(1)}$ and $S_w^{(1)}$ are allowed to overlap in our model when $(v,w) \not \in \mathcal{E}_{\ten}$, even though the deterministic construction of BMT independence would require them to be disjoint.  Our model still asymptotically produces BMT independence through the interactions of the unitary conjugation and the rank-one projections.

The proof of Theorem \ref{thm: matrix models main} in \S \ref{sec: random matrix model} also proceeds along similar lines to that of \cite{charlesworth2021matrix} using Weingarten calculus, with additional combinatorial arguments that obtain $\mathcal{G}$-independence from the behavior of the rank-one projections.

\begin{remark}
	Similarly to \cite{charlesworth2021matrix}, every  bigraph $\mathcal{G}$ can be realized using sets $\mathcal{S}$ and $S_v^{(j)}$ for $v \in \mathcal{V}$ and $j \in \{1,2,3\}$ as in Theorems \ref{thm: matrix models main}.  Indeed, fix a bigraph $\mathcal{G} = (\mathcal{V},\mathcal{E}_1,\mathcal{E}_2)$.  Let
	\[
		\mathcal{S} = \mathcal{V} \times \mathcal{V}.
	\]
	For each $v \in \mathcal{V}$, let
	\[
		S_v^{(1)} = \{(v,w), (w,v): w \in \mathcal{V} \text{ such that } (v,w) \not \in \mathcal{E}_2 \},
	\]
	and let
	\[
		S_v^{(2)} = \{(w,w): w \in \mathcal{V} \text{ such that } (w,v) \not \in \mathcal{E}_1 \}
	\]
	and finally let
	\[
		S_v^{(3)} = \mathcal{S} \setminus (S_v^{(1)} \cup S_v^{(2)}).
	\]
	We claim that the bigraph induces by the collection $S_v^{(j)}$ is the original bigraph $\mathcal{G}$.  Indeed, first note that since $\mathcal{E}_2$ is irreflexive, $S_v^{(1)}$ contains $(v,v)$; in particular $S_v^{(1)}$ is nonempty.  In fact, $S_v^{(1)} \cap \Delta = \{(v,v)\}$.  We also constructed $S_w^{(2)}$ so that $S_w^{(2)} \subseteq \Delta$.  Hence, $S_v^{(1)} \cap S_w^{(2)} \neq \varnothing$ if and only if $(v,v) \in S_w^{(2)}$ if and only if $(v,w) \not \in \mathcal{E}_1$.  Hence,
	\[
		\{(v,w): S_v^{(1)} \cap S_w^{(2)} = \varnothing \} = \mathcal{E}_1.
	\]
	Meanwhile, note that for $v \neq w$, we have
	\[
		S_v^{(1)} \cap S_w^{(1)} = \begin{cases}
			\{(v,w), (w,v)\} & \text{if } (v,w) \not \in \mathcal{E}_2 \\
			\varnothing      & \text{if } (v,w) \in \mathcal{E}_2,
		\end{cases}
	\]
	and hence
	\[
		\mathcal{E}_2 = \{(v,w): S_v^{(1)} \cap S_w^{(1)} = \varnothing \}.
	\]
\end{remark}

\subsection{Further questions}

We propose the following questions as a natural continuation of our work:
\begin{enumerate}
	\item Prove central limit theorem, Poisson limit theorem, and more general limit theorems for sums of independent random variables with respect to a sequence of bigraphs $\mathcal{G}_n$ such that
	      \[
		      \lim_{n \to \infty} \frac{|\operatorname{Hom}(\mathcal{G}',\mathcal{G}_n)|}{|\mathcal{V}_n|^{|\mathcal{V}'|}}
	      \]
	      exists for every bigraph $\mathcal{V}'$.  Note that in many situations the limit is described by the homomorphisms into a \emph{bigraphon}, providing the analogue of graphon convergence.  We expect that limit theorems analogous to \cite[Theorem 1.1]{jekel2024general} and \cite[Theorem 1.1]{COSY2024} will hold.  The main challenge will be the generalization to measures with unbounded support since we have not yet developed the appropriate analytic theory of convolution.
	\item Prove analogous theorems for products of independent variables.  We remark that a challenge in this case is that the independent subalgebras $A_v$ may not be \emph{unitally} included into $A$, but a unitary in $A_v$ can be extended to a unitary in $A$ by adding $1_A - 1_{A_v}$ as in \cite[\S 10.3]{JekelLiu2020}.
	\item Given a bigraphon, construct a Fock space and appropriate operators to model the central limit distribution, and more generally, the analogues infinitely divisible distributions.  Such a construction should generalize \cite[\S 2]{COSY2024} and proceed analogously to \cite[\S 6]{jekel2024general}
	\item The matrix models in Theorem \ref{thm: matrix models main} could be generalized in various ways.  For instance, it would be natural to consider a Haar random unitary chosen from the subgroup that fixes $\bC \xi^{\otimes A} \otimes (\bC^n)^{\otimes B}$ for some disjoint subsets $A, B \subseteq \mathcal{S}$.  For instance, we could force the unitary acting $(\bC^n)^{\otimes S_j^{(1)}}$ to fix a subspace of this form.  In principle, the same techniques with Weingarten calculus could be used to analyze this model as well, leading potentially to new moment conditions.
	\item In Theorem \ref{thm: matrix models main}, one could also consider uniformly random permutation matrices in place of Haar random unitaries, and attempt to analyze these models using similar techniques as \cite{ACDGM2021} and \cite{CdSHJKEN2025}.
\end{enumerate}

\subsection{Organization}

The remainder of the paper is divided into three sections.

In Section \ref{sec: bigraph independence}, we prove a formula for the joint moments of $\mathcal{G}$-independent variables in terms of Boolean cumulants. We use this formula to compare with other existing independences. We also prove at the end of this section that our $\mathcal{G}$-independence is associative, in a certain sense.

In Section \ref{sec:hilbertspacemodel}, we construct a Hilbert space and inclusions allowing the construction of $\mathcal{G}$-independence copies of variables.

Finally, in Section \ref{sec: random matrix model}, we present our random matrix model, which exhibits asymptotic $\mathcal{G}$-independence.

\subsection{Acknowledgements}

NG acknowledges the financial support of NYU Abu Dhabi. This work began during his postdoctoral position funded by NYU Abu Dhabi, under the supervision of Marwa Banna. DJ was partially supported by the Danish Independent Research Fund, grant 1026-00371B, and the Horizon Europe Marie Sk{\l}odowska Curie Action FREEINFOGEOM, project 101209517.

We thank Patrick Oliveira Santos, Marwa Banna, and Pierre Youssef for discussions at NYU Abu Dhabi in December 2024 where this work begin, especially Patrick Oliveira Santos for noting the parallel between the moment formulas for digraph independence and for $\varepsilon$-independence that suggests the possibility of combination.  This occurred during DJ visit to speak in the Abu Dhabi Stochastic Days, for which travel funding was provided by NYU Abu Dhabi and Sorbonne Univesrity Abu Dhabi.  Travel funding for NG's visit to Copenhagen in November 2025 was provided by the Marie Curie grant and Universit{\'e} Paris Cit{\'e}.

\section{Bigraph independence and relations to other independences} \label{sec: bigraph independence}

\subsection{Combinatorial setup}

We begin with some combinatorial definitions and lemmas that will be useful in the sequel.
\begin{definition}[Partitions]
	\label{def:partitions}
	We use the notation $[k] = \{1,\dots,k\}$.  A \emph{partition} of $[k]$ is a collection $\pi$ of non-empty subsets of $[k]$, called \emph{blocks}, such that $[k] = \bigsqcup_{B \in \pi} B$.  The set of partitions of $[k]$ is denoted $\mathcal{P}(k)$.
\end{definition}

\begin{definition}[Crossings, non-crossing partitions]
	\label{def:nccrossings}
	For a partition $\pi$ of $[k]$, a \emph{crossing} is a sequence of indices $i < i' < j < j'$ such that $i$ and $j$ are in some block $B$ and $i'$ and $j'$ are in some block $B' \neq B$.  In this case, we also say that $B$ and $B'$ are \emph{crossing}.

	We say that a partition $\pi$ is \emph{non-crossing} if it has no crossings.  We denote by $\mathcal{NC}(k)$ the set of non-crossing partitions of $[k]$.
\end{definition}

\begin{definition}[Nested blocks]
	\label{def:nested}
	If $\pi$ is a partition of $[k]$ and $B', B \in \pi$, we say that $B'$ is \emph{nested inside} $B$ if there exist $i, j \in B$ such that $i < j$ and $B' \subseteq \{i+1,\dots,j-1\} \subseteq [k] \setminus B$; in other words, there are no intervening indices of $B$ between $i$ and $j$, and $B'$ lies entirely between $i$ and $j$.  In this case, we write $B \prec B'$.
\end{definition}

\begin{definition}[Separated blocks]
	\label{def:sepblocks}
	If $\pi$ is a partition of $[k]$ and $B, B' \in \pi$, we say that $B$ and $B'$ are \emph{separated} if there exists $j \in [k]$ such that either $B \subseteq \{1,\dots,j\}$ and $B' \subseteq \{j+1,\dots,k\}$ or $B' \subseteq \{1,\dots,j\}$ and $B \subseteq \{j+1,\dots,k\}$.
\end{definition}

\begin{lemma} \label{lem: tetrachotomy}
	Let $\pi \in \mathcal{P}(k)$ and let $B$ and $B'$ be two distinct blocks in $\pi$.  Then exactly one of the following occurs:
	\begin{itemize}
		\item $B$ and $B'$ are crossing.
		\item $B$ and $B'$ are separated.
		\item $B$ is nested inside $B'$.
		\item $B'$ is nested inside $B$.
	\end{itemize}
\end{lemma}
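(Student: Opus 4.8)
The plan is to reduce the statement to an elementary case analysis on how the index sets of $B$ and $B'$ interleave on the line $[k]$. First I would fix the two distinct blocks $B$ and $B'$ and consider whether there exists a pair $i<j$ in $B$ with some element of $B'$ strictly between them (and symmetrically with the roles of $B$ and $B'$ swapped). This naturally splits into cases: (a) neither block has an element of the other ``between'' a pair of its own elements, which I expect to force separation; (b) exactly one of the two blocks contains such an interleaved element, which I expect to force nesting (of the inner block inside the outer); and (c) both blocks contain such an interleaved element, which should be exactly the crossing case.

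The key technical step is to make ``$B'$ lies between two consecutive elements of $B$'' precise. Given $i < j$ in $B$ and some $\ell \in B'$ with $i < \ell < j$, I would shrink the interval: let $i' = \max\{b \in B : b < \ell\}$ and $j' = \min\{b \in B : b > \ell\}$, so that $i' < \ell < j'$, $i',j' \in B$, and no element of $B$ lies strictly between $i'$ and $j'$. Then I need to show that the \emph{entire} block $B'$ sits inside $(i', j')$: if some $\ell' \in B'$ satisfied $\ell' < i'$ or $\ell' > j'$, then together with $\ell$ and the pair $i', j'$ we would produce four indices witnessing a crossing of $B$ and $B'$. So either $B'$ is nested inside $B$ (Definition \ref{def:nested}), or $B$ and $B'$ are crossing. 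The symmetric argument handles when $B$ has an element interleaved inside $B'$.

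To finish, I would combine the two symmetric analyses. If $B$ and $B'$ are not crossing, then from case (c) above, not both blocks can have an interleaved element of the other — and in fact by the previous paragraph, ``$B$ has an element of $B'$ interleaved'' already yields ``$B'$ nested inside $B$ or crossing'', so in the non-crossing world ``$B$ has an interleaved element of $B'$'' is equivalent to ``$B'$ is nested inside $B$''. If neither block has an interleaved element of the other, I would show separation directly: take $j = \max B$; I claim every element of $B'$ is either all $\le \max B$ and in fact comparable in the right way — more carefully, let $m = \min(B \cup B')$, say without loss $m \in B$, and let $j^* = \max\{b \in B : b < \ell \text{ for all } \ell \in B' \text{ with } \ell > m\}$; a cleaner route is to observe that since no element of $B'$ lies strictly between two elements of $B$, the set $B'$ cannot ``straddle'' $B$, so $B' \subseteq \{1,\dots,\min B - 1\} \cup \{\max B+1, \dots, k\}$, and combined with the symmetric statement for $B$ inside $B'$ this pins $B$ and $B'$ to opposite sides of a single cut point, giving Definition \ref{def:sepblocks}. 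Finally I would check mutual exclusivity: crossing is incompatible with the other three by definition; nesting in one direction forbids nesting in the other (the inner block is strictly contained in an interval disjoint from the outer block's relevant pair) and forbids separation (a nested block shares no cut point line with its host in the required way — concretely, if $B' \subseteq \{i+1,\dots,j-1\}$ with $i,j \in B$, then any cut $\le i$ puts part of $B$ below and part above, contradiction, and similarly for cuts $\ge j$).

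The main obstacle I anticipate is purely bookkeeping: carefully extracting the minimal enclosing interval $(i',j')$ and verifying that no stray element of $B'$ escapes it without producing a crossing, and then handling the ``neither interleaved'' case cleanly enough to land exactly in the separated configuration rather than something weaker. None of this is deep, but the quantifier juggling around ``consecutive elements of $B$'' and the four-index crossing witnesses needs to be done with some care to avoid off-by-one errors and to make the exclusivity claims airtight.
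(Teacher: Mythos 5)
Your proof is correct. The paper itself does not prove this lemma; it only cites \cite[Lemma 2.6]{jekel2024general} and remarks that the argument there, though stated for non-crossing partitions, localizes to a pair of non-crossing blocks. Your self-contained interleaving analysis is a valid replacement and is essentially the standard argument: the reduction to a minimal gap $(i',j')$ of $B$ containing some $\ell\in B'$, followed by the observation that any $\ell'\in B'$ escaping that gap produces a four-index crossing witness, correctly yields ``interleaved $\Rightarrow$ nested or crossing.'' The only rough spot is the ``neither interleaved $\Rightarrow$ separated'' step, where you start down one route and abandon it; the clean way to finish is exactly the one you gesture at: non-interleaving of $B'$ in $B$ gives $B'\cap[\min B,\max B]=\varnothing$, and if $B'$ then had elements on \emph{both} sides of $B$, the point $\min B$ would lie strictly between two elements of $B'$, violating the symmetric hypothesis; hence $B'$ sits entirely to one side of $B$ and the cut point $j=\min B-1$ or $j=\max B$ witnesses separation. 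Your exclusivity checks (comparing minima to rule out mutual nesting, and using the gap condition $\{i+1,\dots,j-1\}\subseteq[k]\setminus B$ to rule out nesting plus crossing or nesting plus separation) are all sound, so nothing essential is missing.
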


This fact is well-known.  For proof, see \cite[Lemma 2.6]{jekel2024general}; note that there the entire partition $\pi$ is assumed to be non-crossing, but the proof shows that if the two blocks $B$ and $B'$ are non-crossing, then $B$ and $B'$ are either separated or one is nested inside the other.

We next describe the natural choice of morphisms for bigraphs, which will then allow us to reformulate Definition \ref{def: compatible partitions} in terms of morphisms of bigraphs.

\begin{definition}[Bigraph morphisms]
	Let $\mathcal{G}_1 = (\mathcal{V}^{(1)},\mathcal{E}_1^{(1)},\mathcal{E}_2^{(1)})$ and $\mathcal{G}_2 = (\mathcal{V}^{(2)},\mathcal{E}_1^{(2)},\mathcal{E}_2^{(2)})$ be two bigraphs (see Definition \ref{def: bigraph}). A \emph{bigraph morphism} $\mathcal{G}_1\to\mathcal{G}_2$ is a set-map $f \colon \mathcal{V}^{(1)}\to\mathcal{V}^{(2)}$ sending an edge of type $1$ (resp. of type $2$) to an edge of type $1$ (resp. of type 2), i.e.,
	$$
		(v,w) \in \mathcal{E}_i^{(1)} \implies (f(v),f(w)) \in \mathcal{E}_{i}^{(2)}
	$$
	for $i = 1,2$ and $v,w \in \mathcal{V}$.
\end{definition}

Each partition of $[k]$ has a naturally associated bigraph defined as follows.

\begin{definition}[Bigraph associated to a partition]
	Let $\pi$ be a partition of $[k]$. We define a bigraph $\mathcal{G}(\pi) = (\pi,\mathcal{E}_1(\pi),\mathcal{E}_2(\pi))$ where the vertices are the blocks of $\pi$ and the two edge sets are given by
	$$
		\mathcal{E}^{\pi}_1 = \{ (B,B') \in \pi\times\pi \colon \exists i_1,i_2 \in B, j \in B', i_1 < j < i_2 \} \cup \Delta,
	$$
	where $\Delta = \{(B,B): B \in \pi\}$, and
	$$
		\mathcal{E}^{\pi}_2 = \mathcal{E}^{\pi}_1 \cap \overline{\mathcal{E}_1^{\pi}} \backslash \Delta,
	$$
\end{definition}

\begin{remark}
	In light of Lemma \ref{lem: tetrachotomy}, for a partition $\pi$ and $B, B' \in \pi$:
	\begin{itemize}
		\item $(B,B') \in \mathcal{E}_1(\pi)$ if and only if $B'$ and $B$ are crossing or $B'$ is nested inside $B$.
		\item $(B,B') \in \mathcal{E}_2(\pi)$ if and only if $B$ and $B'$ are crossing.
	\end{itemize}
\end{remark}

\begin{proposition}
	Let $\pi \in \mathcal{P}(k)$, such that $c$ is constant on the blocks of $\pi$ then $\pi \in \pcg$ if and only the labelling $c$ defines a bigraph morphism $\mathcal{G}(\pi) \to \mathcal{G}$.
\end{proposition}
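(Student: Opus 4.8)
The plan is to unwind both sides of the claimed equivalence into their defining conditions and match them up one at a time, using the characterization of $\mathcal{E}_1(\pi)$ and $\mathcal{E}_2(\pi)$ via crossings/nesting from the Remark following the definition of $\mathcal{G}(\pi)$, together with Lemma \ref{lem: tetrachotomy}. Throughout we assume $c$ is constant on the blocks of $\pi$, so that condition (1) in Definition \ref{def: compatible partitions} is automatic and $c$ can be regarded as a map $\pi \to \mathcal{V}$; the remaining content is to show that conditions (2) and (3) of Definition \ref{def: compatible partitions} hold if and only if $c$ sends type-$i$ edges of $\mathcal{G}(\pi)$ to type-$i$ edges of $\mathcal{G}$ for $i = 1, 2$.

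First I would treat the type-$1$ edges. By the Remark, $(B, B') \in \mathcal{E}_1(\pi)$ with $B \neq B'$ exactly when $B'$ is nested inside $B$ or $B$ and $B'$ cross; in either case, by definition of nesting (resp. of crossing) there exist $i_1, i_2 \in B$ and $j \in B'$ with $i_1 < j < i_2$. Conversely, such a configuration gives $(B,B') \in \mathcal{E}_1^\pi$ directly. So $c$ respecting type-$1$ edges says precisely: whenever there exist $i_1 < j < i_2$ with $i_1, i_2$ in one block and $j$ in another, we have $(c(i_1), c(j)) \in \mathcal{E}_1$ — and the diagonal case $B = B'$ is covered since $\mathcal{E}_1$ is reflexive. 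But since $c$ is constant on blocks, $c(i_1) = c(i_2)$, and this is exactly condition (2) of Definition \ref{def: compatible partitions} (when $j$ lies in the same block as $i_1, i_2$, condition (2) asks $(c(i_1), c(j)) = (c(i_1), c(i_1)) \in \mathcal{E}_1$, again automatic by reflexivity). Hence: condition (2) holds $\iff$ $c$ respects $\mathcal{E}_1$.

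Next I would treat the type-$2$ edges. By the Remark, $(B, B') \in \mathcal{E}_2(\pi)$ iff $B$ and $B'$ cross, i.e. there are indices $i_1 < j_1 < i_2 < j_2$ with $i_1, i_2 \in B$ and $j_1, j_2 \in B'$ (and $B \neq B'$). So $c$ respecting type-$2$ edges says: for any such crossing configuration, $(c(i_2), c(j_1)) \in \mathcal{E}_2$ — which, reading $i_1 \not\sim_\pi j_1$ from $B \neq B'$, is verbatim condition (3) of Definition \ref{def: compatible partitions}. The one subtlety to spell out is the direction of the edge and which pair of indices is named: condition (3) names the pair $(c(i_2), c(j_1))$, and a crossing of $B$ with $B'$ always exhibits indices in the pattern $i_1 < j_1 < i_2 < j_2$ with the middle two coming from different blocks; since $\mathcal{E}_2$ is symmetric, the choice of ordered pair is immaterial, and since $c$ is constant on blocks any witnessing quadruple gives the same pair of colors $(c(B), c(B'))$. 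Thus condition (3) holds $\iff$ $c$ respects $\mathcal{E}_2$, and combining the two equivalences finishes the proof.

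The only mild obstacle is bookkeeping: making sure the reflexive/diagonal cases and the symmetry of $\mathcal{E}_2$ are handled so that the quantifier ``there exist witnessing indices'' in the definition of $\mathcal{G}(\pi)$ lines up exactly with the ``for all such indices'' phrasing of Definition \ref{def: compatible partitions}. This is resolved by noting that $\mathcal{E}_1^\pi$ and $\mathcal{E}_2^\pi$ are defined by the \emph{existence} of one witnessing configuration, whereas conditions (2)–(3) quantify over \emph{all} configurations; but since $c$ is constant on blocks, every witnessing configuration between a fixed pair of blocks produces the same pair of colors, so ``some configuration forces $(c(B),c(B')) \in \mathcal{E}_i$'' and ``every configuration forces it'' coincide. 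No deeper input beyond Lemma \ref{lem: tetrachotomy} and the Remark is needed.
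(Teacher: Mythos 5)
Your proof is correct and follows essentially the same route as the paper's: both arguments reduce the claim to matching the edge-preservation conditions for $\mathcal{E}_1^\pi$ and $\mathcal{E}_2^\pi$ against conditions (2) and (3) of Definition \ref{def: compatible partitions}, with condition (1) handled by the hypothesis that $c$ is constant on blocks. The paper states this equivalence in one line, whereas you carefully spell out the existential-versus-universal quantifier bookkeeping and the reflexive and symmetric edge cases; this is the same argument, just made explicit.
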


\begin{proof}
	Let $\pi \in \mathcal{P}(k)$ such that $c$ is constant on the blocks of $\pi$. The labelling $c$ defines a bigraph morphism $\mathcal{G}(\pi) \to \mathcal{G}$ if and only if for any two blocks $B,B' \in \pi$,
	$$
		(B,B') \in \mathcal{E}^{\pi}_1 \Rightarrow (c(B),c(B')) \in \mathcal{E}_1,
	$$
	$$
		(B,B') \in \mathcal{E}^{\pi}_2 \Rightarrow (c(B),c(B')) \in \mathcal{E}_2.
	$$
	By definition of $\mathcal{E}^{\pi}_1$ and $\mathcal{E}^{\pi}_2$, this is equivalent to conditions (2) and (3) of Definition \ref{def: compatible partitions}. Condition (1) is satisfied by hypothesis.
\end{proof}

\subsection{The five natural independences, $\varepsilon$-independence, and BMT independence}
\label{sec:relationsotherindependences}

In this subsection, we prove Proposition \ref{prop: pairwise independence} on the pairwise relationships between the algebras in bigraph independence, and its relationship with $\varepsilon$-independence \cite{mlotkowski2004} and BMT independence \cite{arizmendi2025bmt}.  We will first prove (6) and (7) of Proposition \ref{prop: pairwise independence} and then deduce the pairwise relationships from those.

First, we handle the relationship with $\varepsilon$-independence or graph independence from \cite{mlotkowski2004} (M{\l}otkowski's original paper used $\Lambda$ in place of $\varepsilon$, but we use $\varepsilon$ following Speicher and Wysocza{\'n}ski \cite{SpWy2016}).
In the terminology of \cite{SpWy2016}, $\varepsilon$ is a $\{0,1\}$-valued matrix indexed by $\mathcal{V} \times \mathcal{V}$ (i.e. an adjacency matrix for a graph).
We can equivalently view $\varepsilon$ as a subset of $\mathcal{V} \times \mathcal{V}$.
We will show that if $\mathcal{E}_1 = \mathcal{V} \times \mathcal{V}$, then bigraph independence reduces to $\varepsilon$-independence with $\varepsilon = \mathcal{E}_2$, which will establish Proposition \ref{prop: pairwise independence} (6).

\begin{proposition} \label{prop: epsilon independence}
	Let $\mathcal{G}$ be a bigraph with $\mathcal{V} \times \mathcal{V}=\mathcal{E}_1 = \bar{\mathcal{E}}_1$.  Then bigraph independence with respect to $\mathcal{G}$ is equivalent to $\varepsilon$-independence with $\varepsilon = \mathcal{E}_2$ \cite{mlotkowski2004,SpWy2016}.
\end{proposition}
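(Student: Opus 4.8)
The plan is to identify $\mathcal{G}$-independence (when $\mathcal{E}_1 = \mathcal{V}\times\mathcal{V}$) with M{\l}otkowski's two defining axioms of $\varepsilon$-independence for $\varepsilon = \mathcal{E}_2$, and then conclude using the existence of $\mathcal{G}$-independent copies together with the uniqueness of $\varepsilon$-independent moments. Recall from \cite{mlotkowski2004,SpWy2016} that $(A_v)_{v\in\mathcal{V}}$ is $\varepsilon$-independent exactly when (i) $A_v$ and $A_w$ commute whenever $(v,w)\in\mathcal{E}_2$, and (ii) $\varphi(a_1\cdots a_n)=0$ whenever $a_j\in A_{c(j)}$, $\varphi(a_j)=0$ for all $j$, and the word $c\colon[n]\to\mathcal{V}$ is \emph{$\mathcal{E}_2$-reduced}, i.e.\ no $j<\ell$ satisfies $c(j)=c(\ell)$ together with $(c(j),c(m))\in\mathcal{E}_2$ for all $j<m<\ell$; and that (i)--(ii), together with the marginals $\varphi|_{A_v}$, determine every mixed moment, by centering each factor and inducting on word length (a letter that can be commuted onto an equal neighbour is absorbed, and otherwise the word is $\mathcal{E}_2$-reduced).

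Because $\mathcal{E}_1=\mathcal{V}\times\mathcal{V}$, condition (2) of Definition \ref{def: compatible partitions} is vacuous, so --- using that $\mathcal{E}_2$ is symmetric --- $\mathcal{P}(c,\mathcal{G})$ is precisely the set of partitions with monochromatic blocks every pair of crossing blocks of which carries an $\mathcal{E}_2$-edge; since $\mathcal{E}_2$ is irreflexive, for constant $c$ this forces $\mathcal{P}(c,\mathcal{G})=\mathcal{NC}(n)$, so that $\sum_{\pi\in\mathcal{P}(c,\mathcal{G})}K^{\free}_\pi(a_1,\dots,a_n)=\varphi(a_1\cdots a_n)$ when all $a_j\in A_v$, since the free cumulants are defined by $\varphi(a_1\cdots a_n)=\sum_{\pi\in\mathcal{NC}(n)}K^{\free}_\pi$. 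I would then prove two combinatorial facts. \emph{First}: if $c$ is $\mathcal{E}_2$-reduced then every $\pi\in\mathcal{P}(c,\mathcal{G})$ has a singleton block. Arguing the contrapositive, take $\pi\in\mathcal{P}(c,\mathcal{G})$ with all blocks of size $\ge 2$, pick a block $B$ and $i<j$ consecutive in $B$ with $j-i$ minimal; if $j=i+1$ then $c(i)=c(i+1)$, and if $j>i+1$ then for each $m\in(i,j)$ the block $B'\ni m$ has size $\ge 2$, so by Lemma \ref{lem: tetrachotomy} it crosses $B$ (it cannot lie inside $(i,j)$ --- that would make the gap smaller; it cannot be separated from $B$; and it cannot contain $B$ nested inside it), whence $(c(i),c(m))=(c(B),c(B'))\in\mathcal{E}_2$; so $c(i)=c(j)$ with every intervening letter $\mathcal{E}_2$-adjacent to $c(i)$, contradicting that $c$ is $\mathcal{E}_2$-reduced. \emph{Second}: if $(c(k),c(k+1))\in\mathcal{E}_2$, then the transposition $\tau$ of $k$ and $k+1$ restricts to a bijection $\mathcal{P}(c,\mathcal{G})\to\mathcal{P}(c\circ\tau,\mathcal{G})$, under which $K^{\free}_{\tau\pi}$ evaluated on the swapped tuple equals $K^{\free}_\pi$ --- indeed $k,k+1$ lie in distinct blocks (distinct colours), applying $\tau$ changes the crossing status of no pair of blocks except possibly that of the two blocks through $k$ and $k+1$ (a third block contains neither index, so the order of its elements relative to those two blocks is unaffected), and if that pair becomes crossing its colours are $\{c(k),c(k+1)\}\in\mathcal{E}_2$; the cumulant identity holds because $\tau$ preserves the internal order of every block.

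Now assemble both directions. A $\mathcal{G}$-independent family satisfies (ii): with all $a_j$ centered, any $\pi$ with a singleton contributes a factor $\varphi(a_j)=0$ to \eqref{eqn:formula moments}, and for $\mathcal{E}_2$-reduced $c$ the first fact leaves no other terms. It satisfies (i): by the second fact, \eqref{eqn:formula moments} is invariant under interchanging two adjacent factors from $\mathcal{E}_2$-adjacent algebras inside any moment, which --- by the standard argument, applied in the GNS representation --- gives $[A_v,A_w]=0$. Hence $\mathcal{G}$-independence implies $\varepsilon$-independence. Conversely, the right-hand side of \eqref{eqn:formula moments} depends only on the marginals $\varphi|_{A_v}$, since each $K^{\free}_\pi$ is a product of free cumulants over monochromatic blocks. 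Given an arbitrary $\varepsilon$-independent family with marginals $\varphi_v$, a $\mathcal{G}$-independent family with the same marginals exists by Proposition \ref{prop: independent C star copies}; it is $\varepsilon$-independent by the above, so by uniqueness of $\varepsilon$-independent moments it has the same mixed moments as the original family, which therefore satisfies \eqref{eqn:formula moments} and is $\mathcal{G}$-independent.

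The only step with genuine content is the first combinatorial fact --- the nonexistence of a singleton-free, crossing-admissible partition over an $\mathcal{E}_2$-reduced word --- the rest being bookkeeping or standard. In particular, if one instead takes as given the moment--cumulant description of $\varepsilon$-independence from \cite{mlotkowski2004,SpWy2016} (or the Boolean-cumulant moment formula for $\mathcal{G}$-independence), the proposition follows at once from the identification of $\mathcal{P}(c,\mathcal{G})$ above. Finally, $\varepsilon$-independence is stated for unital subalgebras, so the $A_v$ are taken to contain $1_A$ here; this is compatible with \eqref{eqn:formula moments} because deleting a singleton block $\{j\}$ carrying $a_j=1_A$ is a bijection $\mathcal{P}(c,\mathcal{G})\to\mathcal{P}(c|_{[n]\setminus\{j\}},\mathcal{G})$.
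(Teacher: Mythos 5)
Your proof is correct, but it takes a genuinely different route from the paper's. The paper's proof of Proposition \ref{prop: epsilon independence} is essentially a one-step identification: it invokes the moment formula for $\varepsilon$-independence from \cite[Theorem 5.2]{SpWy2016} (a sum of partitioned free cumulants over the set $\mathcal{NC}^{\varepsilon}[c]$) and observes that $\mathcal{NC}^{\varepsilon}[c]=\mathcal{P}(c,\mathcal{G})$ once condition (2) of Definition \ref{def: compatible partitions} becomes vacuous, so the two moment formulas coincide verbatim and both implications follow. You instead work from M{\l}otkowski's original axioms (elementwise commutation along $\mathcal{E}_2$-edges, plus vanishing of centered moments over $\mathcal{E}_2$-reduced words) and derive them directly from \eqref{eqn:formula moments} via two combinatorial lemmas --- every $\pi\in\mathcal{P}(c,\mathcal{G})$ over an $\mathcal{E}_2$-reduced word has a singleton block, and adjacent transpositions along $\mathcal{E}_2$-edges act bijectively on $\mathcal{P}(c,\mathcal{G})$ while preserving the partitioned cumulants --- closing the converse with Proposition \ref{prop: independent C star copies} and uniqueness of $\varepsilon$-independent moments. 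In effect you reprove the special case of the cited theorem that the paper uses as a black box; this makes the argument self-contained at the cost of length, and your singleton lemma (whose tetrachotomy argument I checked and which is sound) is indeed the only step with real content. Both proofs share the same mild imprecision about operator-level commutation: bigraph independence is a condition on moments only, so it can yield commutation of the GNS images or $\varepsilon$-independence in a distributional sense, which you flag explicitly and the paper glosses by passing to ``$\varepsilon$-independent copies.'' Your closing remark on unitality of the inclusions is likewise a point the paper leaves implicit.
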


\begin{proof}
	The claim will follow from comparing our moment formula with the one for $\varepsilon$-independence in \cite[Theorem 5.2]{SpWy2016}.  Fix a bigraph $\mathcal{G}$ with $\mathcal{E}_1 = \mathcal{V} \times \mathcal{V}$, and let $\varepsilon = \mathcal{E}_2$.  Let $(A,\varphi)$ be a $\mathrm{C}^*$-probability space and let $(A_v)_{v \in \mathcal{V}}$ be $*$-subalgebras.  Fix $c: [k] \to \mathcal{V}$, and let $a_j \in A_{c(j)}$ for $j = 1, \dots, k$.  If $(A_v)_{v \in \mathcal{V}}$ are $\mathcal{E}_2$-independent in the sense of \cite{mlotkowski2004}, then by \cite[Theorem 5.2]{SpWy2016}, we have
	\begin{equation} \label{eq: epsilon independence moment formula}
		\varphi(a_1 \dots a_k) = \sum_{\pi \in \mathcal{NC}^{\varepsilon}[c]} K_{\pi}^{\free}[a_1,\dots,a_k],
	\end{equation}
	where $\mathcal{NC}^\varepsilon[c]$ is the set of partitions described as follows:  $\pi \in \mathcal{NC}^\varepsilon[c]$ if $\pi \leq \ker(c)$ and whenever two blocks $B$ and $B'$ in $\pi$ are crossing, then $(c(B),c(B')) \in \varepsilon = \mathcal{E}_2$.  Note that this is equivalent to conditions (1) and (3) in Definition \ref{def: compatible partitions} for $\mathcal{P}(c,\mathcal{G})$.  Meanwhile, Definition \ref{def: compatible partitions} (2) is vacuously true when $\mathcal{E}_1 = \mathcal{V} \times \mathcal{V}$.  Thus, $\mathcal{NC}^{\varepsilon}[c] = \mathcal{P}(c,\mathcal{G})$, and so \eqref{eq: epsilon independence moment formula} reduces to \eqref{eqn:formula moments}, which means that $(A_v)_{v \in \mathcal{V}}$ are bigraph independent with respect to $\mathcal{G}$.  Conversely, if $(A_v)_{v \in \mathcal{V}}$ are di-graph independent with respect to $\mathcal{G}$, then they satisfy \eqref{eq: epsilon independence moment formula}; this means that the joint moments agree with those of $\varepsilon$-independent copies of $A_1$, \dots, $A_k$, and hence they are $\varepsilon$-independent.
\end{proof}

Next, we prove Proposition \ref{prop: pairwise independence} (7) concerning BMT independence.  First, we recall the definition of BMT independence from \cite{arizmendi2025bmt}.  Let $G = (V,E)$ be a digraph with $E$ irreflexive.  First, from \cite[Definition 2.8]{arizmendi2025bmt}, for $c: [k] \to V$ and for $i_1, i_2 \in [k]$, we say that $i_1 \sim_{G,c} i_2$  if
\begin{itemize}
	\item $c(i_1) = c(i_2)$;
	\item whenever $j$ is between $i_1$ and $i_2$ (that is, $i_1 < j < i_2$ or $i_2 < j < i_1$), then $c(j) = c(i_1)$ or $(c(j),c(i_1)) \in E$.
\end{itemize}
It is straighforward to check that $\sim_{G,c}$ is an equivalence relation.  Let $\ker_G(c)$ be the partition of $[k]$ whose blocks are the equivalence classes.  Now by \cite[Definition 3.4]{arizmendi2025bmt}, given a digraph $G$, a $\mathrm{C}^*$-probability space $(A,\varphi)$ and $*$-subalgebras $(A_v)_{v \in V}$, we say that $(A_v)_{v \in V}$ are BMT-independent with respect to $G$ if for every $c: [k] \to V$ and $a_j \in A_{c(j)}$, we have
\begin{equation} \label{eq: BMT definition}
	\varphi(a_1 \dots a_k) = \prod_{B \in \ker_G(c)} \varphi( \vec{\prod_{j \in B}} a_j),
\end{equation}
where $\vec{\prod}$ denotes the product of the terms in increasing order from left to right.

\begin{proposition} \label{prop: BMT independence}
	Let $\mathcal{G}$ be a bigraph with $\mathcal{E}_2 \supseteq \mathcal{E}_1 \cap \overline{\mathcal{E}}_1 \setminus \Delta$.  Then bigraph independence with respect to $\mathcal{G}$ is equivalent to BMT independence with respect to $G = (\mathcal{V},\overline{\mathcal{E}}_1 \setminus \Delta)$ in the sense of \cite{arizmendi2025bmt}.
\end{proposition}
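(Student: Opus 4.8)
The plan is to compare the bigraph moment formula \eqref{eqn:formula moments} with the BMT moment formula \eqref{eq: BMT definition}. Since $\mathcal{E}_2 \supseteq \mathcal{E}_1 \cap \overline{\mathcal{E}}_1 \setminus \Delta$, condition (3) of Definition \ref{def: compatible partitions} becomes automatically satisfied whenever conditions (1) and (2) hold: indeed, in the setup of (3), conditions (1) and (2) already force $(c(i_1),c(j_1)), (c(j_1),c(i_2)) \in \mathcal{E}_1$, and by the symmetric argument applied to the block of $j_1$ one also gets $(c(j_1),c(i_1)) \in \mathcal{E}_1$, hence $(c(i_2),c(j_1)) = (c(i_1),c(j_1)) \in \mathcal{E}_1 \cap \overline{\mathcal{E}}_1 \setminus \Delta \subseteq \mathcal{E}_2$. (This is essentially the observation in Remark \ref{rk:wlog}.) So $\mathcal{P}(c,\mathcal{G})$ is cut out by conditions (1) and (2) alone. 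The first step is therefore to identify the set of partitions satisfying (1) and (2) with a distinguished single partition and to show that this partition is exactly $\ker_G(c)$ for $G = (\mathcal{V}, \overline{\mathcal{E}}_1 \setminus \Delta)$.

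The key combinatorial claim is: a partition $\pi$ satisfies (1) and (2) of Definition \ref{def: compatible partitions} for $\mathcal{G}$ if and only if $\pi$ is \emph{non-crossing}, $\pi \leq \ker(c)$, and $\pi \leq \ker_G(c)$ (equivalently, $\pi$ refines $\ker_G(c)$); and among all such $\pi$, the formula collapses to a single term, namely $\pi = \ker_G(c)$ itself. First I would check that any $\pi$ satisfying (1)–(2) is automatically non-crossing: if $i_1 < j_1 < i_2 < j_2$ with $i_1 \sim_\pi i_2$, $j_1 \sim_\pi j_2$, then (2) applied to the block of $i$ gives $(c(i_1),c(j_1)) \in \mathcal{E}_1$ and applied to the block of $j$ gives $(c(j_1),c(i_2)) \in \mathcal{E}_1$; but then one must rule out a genuine crossing — here one uses that $K^{\free}_\pi$ vanishes... no: rather, the correct statement is that $\mathcal{P}(c,\mathcal{G}) \subseteq \mathcal{NC}(k)$ fails in general, but the \emph{summation} telescopes. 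So instead I would argue directly at the level of moments. By the moment-cumulant formula, $\sum_{\pi \in \mathcal{P}(c,\mathcal{G})} K^{\free}_\pi(a_1,\dots,a_k)$ should be shown equal to $\prod_{B \in \ker_G(c)} \varphi(\vec{\prod}_{j \in B} a_j)$. Decompose: condition (2) says precisely that every block $B$ of $\pi$, together with all the letters strictly between its extremes, has labels $w$ with $(c(B), w) \in \mathcal{E}_1$; combined with condition (1), this is exactly the condition defining the blocks of $\ker_G(c)$ once we also note $\overline{\mathcal{E}}_1$ appears because BMT's condition involves $(c(j),c(i_1)) \in E$ with $E = \overline{\mathcal{E}}_1 \setminus \Delta$. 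So $\pi \in \mathcal{P}(c,\mathcal{G})$ iff $\pi$ refines $\ker_G(c)$ and each block of $\pi$ lies within a single block of $\ker_G(c)$ in a way compatible with the non-crossing structure inside that block.

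The cleanest execution: for $\pi$ refining $\ker_G(c)$, the partition $\pi$ is in $\mathcal{P}(c,\mathcal{G})$ iff it does not connect across distinct blocks of $\ker_G(c)$, so $\sum_{\pi \in \mathcal{P}(c,\mathcal{G})} K^{\free}_\pi$ factors as a product over the blocks $B$ of $\ker_G(c)$ of $\sum_{\sigma \in \mathcal{NC}(B)} K^{\free}_\sigma(a_j : j \in B)$ — here the crucial point is that \emph{within} a single block $B$ of $\ker_G(c)$, every label of a letter strictly between two elements of $B$ is either equal to $c(B)$ (impossible to have a different block in between without violating $\ker_G$) or connected to $c(B)$ by $\mathcal{E}_1$, so conditions (2) and (3) impose no further restriction and all non-crossing partitions of $B$ are allowed; and then $\sum_{\sigma \in \mathcal{NC}(B)} K^{\free}_\sigma = \varphi(\vec{\prod}_{j\in B} a_j)$ by the standard free moment-cumulant relation. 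Conversely, a $\pi$ not refining $\ker_G(c)$ must either violate (1) or have a block straddling a letter not connected by $\mathcal{E}_1$, violating (2). Assembling these gives \eqref{eq: BMT definition}, hence bigraph independence $\Rightarrow$ BMT independence; the converse direction then follows as in the proof of Proposition \ref{prop: epsilon independence}, since matching all joint moments with BMT-independent copies forces bigraph independence. \textbf{The main obstacle} I anticipate is the bookkeeping in showing that $\mathcal{P}(c,\mathcal{G})$ is precisely the set of non-crossing partitions refining $\ker_G(c)$ and not straddling its blocks — in particular verifying that no partition in $\mathcal{P}(c,\mathcal{G})$ can have a crossing when $\mathcal{E}_2$ is large (a crossing is allowed by (3), but a crossing between two blocks sitting inside different $\ker_G$-blocks is killed by (1)–(2), while a crossing inside one $\ker_G$-block is harmless because $K^{\free}$ over crossing partitions... actually $K^{\free}_\pi$ for crossing $\pi$ is generally nonzero, so I must be careful to use that within a block of $\ker_G(c)$ the allowed $\pi$'s are \emph{all} of $\mathcal{P}(\text{block})$, not just $\mathcal{NC}$, and then invoke that $\sum_{\pi \in \mathcal{P}(B)} K^{\free}_\pi$ — no, only non-crossing $\pi$ contribute to $K^{\free}$ being defined; $K^{\free}_\pi$ is only defined for $\pi \in \mathcal{NC}$). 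This forces the argument that $\mathcal{P}(c,\mathcal{G}) \cap$ (partitions with a crossing) consists only of partitions already excluded, i.e. $\mathcal{P}(c,\mathcal{G}) \subseteq \mathcal{NC}(k)$ here because any crossing $i_1 < j_1 < i_2 < j_2$ would, via (2), put $c(i_1)$ and $c(j_1)$ in the same $\ker_G(c)$-block yet the blocks of $i$ and $j$ in $\pi$ are distinct and both refine that single $\ker_G$-block — which is fine and non-crossing-violating, so one genuinely must allow crossings and instead show the total sum still factors correctly. The resolution is that $K^{\free}_\pi$ is defined for $\pi \in \mathcal{NC}$ only, so $\mathcal{P}(c,\mathcal{G})$ in \eqref{eqn:formula moments} is implicitly intersected with $\mathcal{NC}(k)$; granting that, the factorization over $\ker_G(c)$-blocks goes through cleanly.
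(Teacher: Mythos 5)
There is a genuine gap, and it sits exactly where your proposal hesitates. Your opening claim that condition (3) of Definition \ref{def: compatible partitions} becomes automatic from (1)--(2) is false in the one case that matters: when the two crossing blocks have the \emph{same} color. In that case $(c(i_1),c(j_1)) \in \Delta$, and since $\mathcal{E}_2$ is irreflexive this pair is \emph{not} in $\mathcal{E}_2$, so condition (3) genuinely forbids crossings between distinct blocks of the same color. (Remark \ref{rk:wlog} only says that when (3) \emph{does} hold the relevant edge lies in $\mathcal{E}_2 \cap \mathcal{E}_1 \cap \overline{\mathcal{E}}_1$; it does not say (3) is vacuous.) This surviving content of (3) is precisely what the argument needs: the paper's proof goes through a lemma showing that $\pi \in \mathcal{P}(c,\mathcal{G})$ if and only if $\pi \leq \ker_G(c)$ \emph{and the restriction of $\pi$ to each block of $\ker_G(c)$ is non-crossing}. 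Since the blocks of $\ker_G(c)$ are monochromatic, the non-crossing condition on each block comes exactly from the same-color case of (3). If, as you claim, $\mathcal{P}(c,\mathcal{G})$ were cut out by (1)--(2) alone, the per-block sum would run over \emph{all} partitions of $B$ rather than $\mathcal{NC}(B)$, and $\sum_{\sigma} K^{\free}_\sigma[a_j : j \in B]$ would no longer equal $\varphi(\vec{\prod}_{j \in B} a_j)$ by the free moment--cumulant formula.

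Your attempted ``resolution'' at the end --- that $K^{\free}_\pi$ is only defined for non-crossing $\pi$, so the sum in \eqref{eqn:formula moments} is implicitly intersected with $\mathcal{NC}(k)$ --- is also wrong and would break the formula. The partitioned free cumulant $K^{\free}_\pi = \prod_{B' \in \pi} K^{\free}_{|B'|}[a_j : j \in B']$ is defined for arbitrary partitions, and $\mathcal{P}(c,\mathcal{G})$ genuinely contains crossing partitions whenever two differently colored blocks cross along an $\mathcal{E}_2$-edge (this is how tensor and $\varepsilon$-independence arise; e.g.\ $\pi = \{\{1,3\},\{2,4\}\}$ with $c = vwvw$ and $(v,w) \in \mathcal{E}_{\ten}$ contributes a nonzero term). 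Discarding these would destroy Proposition \ref{prop: epsilon independence} and the tensor case of Proposition \ref{prop: pairwise independence}. The correct mechanism is not to exclude cross-block crossings but to observe that $K^{\free}_\pi$ depends only on the blocks of $\pi$ and not on how blocks from different $\ker_G(c)$-classes interleave, so the sum over $\mathcal{P}(c,\mathcal{G})$ factors through the bijection $\mathcal{P}(c,\mathcal{G}) \to \prod_{B \in \ker_G(c)} \mathcal{NC}(B)$ given by restriction. With that bijection in hand (which requires the same-color non-crossing property you dropped), the factorization and the block-wise moment--cumulant formula give \eqref{eq: BMT definition}, and the converse follows by reversing the manipulations. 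Your overall strategy --- factor over $\ker_G(c)$ and apply the free moment--cumulant formula per block --- is the right one and is the paper's, but the two combinatorial claims you use to set it up must be corrected as above.
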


To this end, we use the following lemma:

\begin{lemma}
	Fix a bigraph $\mathcal{G}$ and $c: [k] \to \mathcal{V}$.  Assume that $\mathcal{E}_2 \subseteq \mathcal{E}_1 \cap \overline{\mathcal{E}}_1 \setminus \Delta$.  Then $\pi \in \mathcal{P}(c,\mathcal{G})$ if and only if $\pi \leq \ker_{\mathcal{G}}(c)$ and the restriction of $\pi$ to each block of $\ker_G(c)$ is non-crossing.
\end{lemma}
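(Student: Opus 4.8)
The plan is to unwind Definition~\ref{def: compatible partitions} and match its three clauses against the two assertions on the right-hand side. Throughout I write $G = (\mathcal{V},\overline{\mathcal{E}}_1 \setminus \Delta)$ and abbreviate $\kappa := \ker_G(c)$; recall that $\kappa \leq \ker(c)$, so $c$ is constant on each block of $\kappa$. The first step is a reformulation of the relation $\sim_{G,c}$: since $\mathcal{E}_1$ is reflexive, and since $(c(j),c(i_1)) \in \overline{\mathcal{E}}_1 \setminus \Delta$ is equivalent to ``$(c(i_1),c(j)) \in \mathcal{E}_1$ and $c(j) \neq c(i_1)$'', the clause ``$c(j) = c(i_1)$ or $(c(j),c(i_1)) \in \overline{\mathcal{E}}_1 \setminus \Delta$'' in the definition of $\sim_{G,c}$ collapses to simply $(c(i_1),c(j)) \in \mathcal{E}_1$. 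Hence $i_1 \sim_{G,c} i_2$ if and only if $c(i_1) = c(i_2)$ and $(c(i_1),c(j)) \in \mathcal{E}_1$ for every $j$ strictly between $i_1$ and $i_2$.

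Next I would prove that conditions (1) and (2) of Definition~\ref{def: compatible partitions}, taken together, are equivalent to $\pi \leq \kappa$. For the forward direction, if $i < i'$ lie in a common block of $\pi$ then (1) gives $c(i) = c(i')$ and (2) gives $(c(i),c(j)) \in \mathcal{E}_1$ for every intermediate $j$, so $i \sim_{G,c} i'$ by the reformulation; since $\sim_{G,c}$ is an equivalence relation, the entire block lies in one block of $\kappa$. The converse is the same computation read backwards. Given this, it remains to show that, \emph{assuming $\pi \leq \kappa$}, condition (3) is equivalent to the statement that $\pi|_D$ is non-crossing for every block $D$ of $\kappa$.

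For this last equivalence, the ``only if'' direction is short: a crossing of $\pi$ within a single block $D$ of $\kappa$ yields indices $i_1 < j_1 < i_2 < j_2$ lying in two distinct $\pi$-blocks, with $i_2, j_1 \in D$; condition (3) would then force $(c(i_2),c(j_1)) \in \mathcal{E}_2$, but $c$ is constant on $D$, so $(c(i_2),c(j_1)) \in \Delta$, contradicting $\mathcal{E}_2 \cap \Delta = \varnothing$ (this uses only irreflexivity of $\mathcal{E}_2$). For the ``if'' direction, take $i_1 < j_1 < i_2 < j_2$ with $i_1 \sim_\pi i_2$, $j_1 \sim_\pi j_2$ and $i_1 \not\sim_\pi j_1$; the two blocks $[i_1]_\pi$ and $[j_1]_\pi$ cross, so the non-crossing hypothesis forces them into \emph{distinct} blocks of $\kappa$. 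Applying (2) to $i_1 \sim_\pi i_2$ with the intermediate index $j_1$, and to $j_1 \sim_\pi j_2$ with the intermediate index $i_2$, and using $c(i_1) = c(i_2)$ from (1), one obtains $(c(i_1),c(j_1)) \in \mathcal{E}_1 \cap \overline{\mathcal{E}}_1$; moreover $c(i_1) \neq c(j_1)$, because otherwise applying (2) to all $j$ with $i_1 < j < j_1$ (legitimate since $j_1 < i_2$) would give, via the reformulation, $i_1 \sim_{G,c} j_1$, collapsing the two $\kappa$-blocks. Therefore $(c(i_2),c(j_1)) = (c(i_1),c(j_1)) \in \mathcal{E}_1 \cap \overline{\mathcal{E}}_1 \setminus \Delta \subseteq \mathcal{E}_2$, so (3) holds. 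Combining this with the previous step proves the lemma.

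I expect the ``if'' direction, and in particular the verification that $c(i_1) \neq c(j_1)$, to be the main obstacle: it is the one place where clause (2) is invoked a third time and where the fine structure of $\kappa$ enters essentially. I would also flag that this direction uses the inclusion $\mathcal{E}_1 \cap \overline{\mathcal{E}}_1 \setminus \Delta \subseteq \mathcal{E}_2$; under the normalization $\mathcal{E}_2 \subseteq \mathcal{E}_1 \cap \overline{\mathcal{E}}_1$ from Remark~\ref{rk:wlog} this is exactly the hypothesis $\mathcal{E}_2 \supseteq \mathcal{E}_1 \cap \overline{\mathcal{E}}_1 \setminus \Delta$ of Proposition~\ref{prop: BMT independence}, so the lemma should be read with that containment present (the displayed ``$\subseteq$'' alone does not suffice --- e.g.\ for two freely independent algebras $\mathcal{E}_2 = \varnothing$, yet $\pi = \{\{1,3\},\{2,4\}\}$ refines $\ker(c)$ with non-crossing restrictions but violates (3)).
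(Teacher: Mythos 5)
Your proof is correct and follows essentially the same route as the paper's: reduce conditions (1)+(2) to $\pi \le \ker_G(c)$ via the reformulation of $\sim_{G,c}$, then match condition (3) against non-crossingness (the paper routes this last step through the color classes $c^{-1}(v)$ before passing to the blocks of $\ker_G(c)$, while you argue directly on the blocks of $\ker_G(c)$, but the content is the same). Your flag about the hypothesis is also well taken: the paper's own proof invokes the equality $\mathcal{E}_1\cap\overline{\mathcal{E}}_1\setminus\Delta=\mathcal{E}_2$, so the stated containment ``$\subseteq$'' should be read as the equality that holds under the normalization of Remark \ref{rk:wlog} combined with the hypothesis of Proposition \ref{prop: BMT independence}, and your free-independence counterexample correctly shows that ``$\subseteq$'' alone would not suffice.
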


\begin{proof}
	Assuming that $\mathcal{E}_2 \subseteq \mathcal{E}_1 \cap \overline{\mathcal{E}}_1 \setminus \Delta$, we claim that Definition \ref{def: compatible partitions} (3) can be replaced by
	\begin{itemize}
		\item[(3')] For each $v \in V$, the restriction of $\pi$ to $c^{-1}(v)$ is non-crossing.
	\end{itemize}
	Indeed, (3) implies that crossings between two blocks of the same color are impossible because $\mathcal{E}_2$ is irreflexive, so (3') holds.  On the other hand, suppose (3') holds and that $i_1 < j_1 < i_2 < j_2$ and $i_1 \sim_\pi i_2$ and $j_1 \sim_\pi j_2$ and $i_1 \not \sim_\pi j_1$.  Then $c(i_1) \neq c(j_1)$ by (3').  Meanwhile, two applications of Definition \ref{def: compatible partitions} (2) imply that $(c(i_1),c(j_1)) \in \mathcal{E}_1$ and $(c(j_1),c(i_1)) \in \mathcal{E}_1$.  Hence, $(c(i_1),c(j_1)) \in \mathcal{E}_1 \cap \overline{\mathcal{E}}_1 \setminus \Delta = \mathcal{E}_2$, so (3) holds.

	Now we prove the main claim that $\pi \in \mathcal{P}(c,\mathcal{G})$ if and only if $\pi \leq \ker_G(\pi)$ and the restriction of $\pi$ to each block of $\ker_G(c)$ is non-crossing.  Indeed, conditions (1) and (2) of Definition \ref{def: compatible partitions} are equivalent to the condition that if $i_1 \sim_\pi i_2$, then $i_1 \sim_{G,c} i_2$, which is equivalent to $\pi \leq \ker_G(c)$.  Moreover, assuming that (1) and (2) hold, (3') implies that the restriction of $\pi$ to each block of $\ker_G(c)$ is non-crossing, since each block of $\ker_G(\pi)$ is monochromatic.  Conversely, assume that $\pi \leq \ker_G(c)$ and the restriction of $\pi$ to each block of $\ker_G(c)$ is non-crossing.  Suppose that $v \in V$ and $i_1 < j_1 < i_2 < j_2$ with $i_1 \sim_\pi i_2$ and $j_1 \sim_\pi j_2$ and $c(i_1) = c(j_1) = c(i_2) = c(j_2) = v$.  Since $\pi \leq \ker_G(c)$, all the indices $j'$ between between $i_1$ and $i_2$ must satisfy $c(j') = v$ or $(v,c(j')) \in \mathcal{E}_1 \setminus \Delta$.  In particular, this holds for all indices between $i_1$ and $j_1$, and so $i_1$ and $j_1$ are in the same block of $\ker_\pi(G)$.  Since $\pi$ restricted to this block is non-crossing, $i_1$ and $j_1$ must be in the same block of $\pi$.  Hence, $\pi$ restricted to $c^{-1}(v)$ is non-crossing, so (3') holds.
\end{proof}

\begin{proof}[Proof of Proposition \ref{prop: BMT independence}]
	Let $(A,\varphi)$ be a $\mathrm{C}^*$-probability space and let $(A_v)_{v \in \mathcal{V}}$ be $*$-subalgebras that are bigraph independent with respect to $\mathcal{G}$.  Let $k \in \bN$, let $c: [k] \to V$, and let $a_j \in A_{c(j)}$ for $j \in [k]$.  Since $\pi \in \mathcal{P}(c,\mathcal{G})$ if and only if $\pi \leq \ker_G(c)$ and $\pi$ is non-crossing on each block of $\ker_G(c)$, there is a bijection
	\begin{equation} \label{eq: first bijection}
		\mathcal{P}(c,\mathcal{G}) \to \prod_{B \in \ker_G(c)} \mathcal{NC}(B)
	\end{equation}
	given by sending $\pi$ to $(\pi|_B)_{B \in \ker_G(c)}$.  Thus, \eqref{eqn:formula moments} leads to
	\begin{align*}
		\varphi(a_1 \dots a_k) & = \sum_{\pi \in \mathcal{P}(c,\mathcal{G})} K_\pi^{\free}[a_1,\dots,a_k]                     \\
		                       & = \prod_{B \in \ker_G(\pi)} \sum_{\pi_B \in \mathcal{NC}(B)} K_{\pi_B}^{\free}[a_j: j \in B] \\
		                       & = \prod_{B \in \ker_G(\pi)} \varphi(\vec{\prod}_{j \in B} a_j),
	\end{align*}
	where the arguments for $K_{\pi_B}^{\free}[a_j: j \in B]$ are ordered from left to right, and where the last equality follows from the free moment-cumulant formula applied to $\varphi(\vec{\prod}_{j \in B} a_j)$.  This shows that $(A_v)_{v \in \mathcal{V}}$ satisfy \eqref{eq: BMT definition}, that is, they are BMT independent with respect to $G$.  The reserve manipulations show that conversely if $(A_v)_{v \in V}$ are BMT independent with respect to $G$, then they are bigraph independent with respect to $\mathcal{G}$.
\end{proof}

To conclude the proof of Proposition \ref{prop: pairwise independence}, we also record the following easy observation.

\begin{lemma} \label{lem: restriction to subgraph}
	Let $\mathcal{G}$ be a bigraph.  Let $\mathcal{V}' \subseteq \mathcal{V}$ and let $\mathcal{E}_j' = \mathcal{E}_j \cap (\mathcal{V}' \times \mathcal{V}')$ for $j = 1$, $2$.  Then $\mathcal{G}' = (\mathcal{V}',\mathcal{E}_1',\mathcal{E}_2')$ is a bigraph.  If $*$-subalgebras $(A_v)_{v \in \mathcal{V}}$ in a $\mathrm{C}^*$-probability space $(A,\varphi)$ are $\mathcal{G}$-independent, then $(A_v)_{v \in \mathcal{V}'}$ are $\mathcal{G}'$-independent.
\end{lemma}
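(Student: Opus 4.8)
The plan is to reduce everything to the observation recorded in the remark following Definition \ref{def: compatible partitions}: the set $\mathcal{P}(c,\mathcal{G})$ of compatible partitions depends only on the induced sub-bigraph on $\mathrm{Im}(c)$. First I would verify that $\mathcal{G}' = (\mathcal{V}',\mathcal{E}_1',\mathcal{E}_2')$ really is a bigraph, which is immediate from the definitions: writing $\Delta' = \{(v,v): v\in\mathcal{V}'\}$, we have $\Delta' \subseteq \Delta \cap (\mathcal{V}'\times\mathcal{V}') \subseteq \mathcal{E}_1 \cap (\mathcal{V}'\times\mathcal{V}') = \mathcal{E}_1'$, so $\mathcal{E}_1'$ is reflexive; $\mathcal{E}_2' \cap \Delta' \subseteq \mathcal{E}_2 \cap \Delta = \varnothing$, so $\mathcal{E}_2'$ is irreflexive; and $\mathcal{E}_2'$ is the intersection of the two symmetric sets $\mathcal{E}_2$ and $\mathcal{V}'\times\mathcal{V}'$, hence symmetric.

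Next I would fix $k\in\mathbb{N}$ and a labeling $c:[k]\to\mathcal{V}'$ and argue that $\mathcal{P}(c,\mathcal{G}') = \mathcal{P}(c,\mathcal{G})$. The point is that each of conditions (1)--(3) of Definition \ref{def: compatible partitions} only tests whether certain pairs $(c(i),c(j))$ — all of which have both coordinates in $\mathrm{Im}(c) \subseteq \mathcal{V}'$ — lie in $\mathcal{E}_1$ (resp.\ $\mathcal{E}_2$). For such a pair, $(c(i),c(j)) \in \mathcal{E}_1 \iff (c(i),c(j)) \in \mathcal{E}_1 \cap (\mathcal{V}'\times\mathcal{V}') = \mathcal{E}_1'$, and likewise with $\mathcal{E}_2$ replaced by $\mathcal{E}_2'$. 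Thus the three defining conditions for $\mathcal{P}(c,\mathcal{G})$ and for $\mathcal{P}(c,\mathcal{G}')$ are literally the same, and the two partition sets coincide.

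Finally, assume $(A_v)_{v\in\mathcal{V}}$ are $\mathcal{G}$-independent. Given $k$, a labeling $c:[k]\to\mathcal{V}'$, and elements $a_j\in A_{c(j)}$, the moment formula \eqref{eqn:formula moments} applied to the full family (a labeling into $\mathcal{V}'$ being in particular a labeling into $\mathcal{V}$) gives $\varphi(a_1\cdots a_k) = \sum_{\pi\in\mathcal{P}(c,\mathcal{G})} K^{\free}_\pi(a_1,\dots,a_k)$, and by the previous paragraph the index set is $\mathcal{P}(c,\mathcal{G}')$; this is exactly the right-hand side of \eqref{eqn:formula moments} for $(A_v)_{v\in\mathcal{V}'}$ with respect to $\mathcal{G}'$. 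Hence $(A_v)_{v\in\mathcal{V}'}$ are $\mathcal{G}'$-independent. I do not anticipate any real obstacle: the lemma is a bookkeeping statement, and the only point requiring a moment's care is to confirm that Definition \ref{def: compatible partitions} never references edges of $\mathcal{G}$ outside $\mathrm{Im}(c)\times\mathrm{Im}(c)$, which an inspection of its three clauses makes manifest.
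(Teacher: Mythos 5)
Your proof is correct and follows exactly the paper's own route: the paper likewise observes that $\mathcal{P}(c,\mathcal{G}) = \mathcal{P}(c,\mathcal{G}')$ for any labeling $c:[k]\to\mathcal{V}'$ (by inspection of Definition \ref{def: compatible partitions}) and then concludes immediately from the moment formula \eqref{eqn:formula moments}. You have merely written out the inspection and the bigraph-axiom check that the paper leaves implicit.
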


\begin{proof}
	First, note that if $c: [k] \to \mathcal{V}'$, then $\mathcal{P}(c,\mathcal{G}) = \mathcal{P}(c,\mathcal{G}')$, which follows by inspection from Definition \ref{def: compatible partitions}.  From this, the conclusion is immediate from Definition \ref{def: bigraph independence}.
\end{proof}

\begin{proof}[Proof of Proposition \ref{prop: pairwise independence}]
	We have already shown claims (6) and (7) in Proposition \ref{prop: epsilon independence} and \ref{prop: BMT independence} respectively.  For the rest of the proof, assume without loss of generality that $\mathcal{E}_2 \subseteq \mathcal{E}_1 \cap \overline{\mathcal{E}}_1$ (see Remark).

	For claim (1), suppose that $(v,w) \in \mathcal{E}_{\Bool}$.  By Lemma \ref{lem: restriction to subgraph}, $A_v$ and $A_w$ are bigraph independent with respect to $(\{v,w\}, \varnothing,\varnothing)$.  By Proposition \ref{prop: BMT independence}, they are BMT independent with respect to $(\{v,w\},\varnothing)$, which means that they are Boolean independent by \cite[Proposition 3.12(ii)]{arizmendi2025bmt}.

	Claim (2) follows in the same way.  If $(v,w) \in \mathcal{E}_{\mono}$, then $A_v$ and $A_w$ are bigraph independent with respect to $(\{v,w\}, \{(v,w)\}, \varnothing)$, hence BMT-independent with respect to $(\{v,w\}, \{(w,v)\})$, hence monotone independent by \cite[Proposition 3.12(iii)]{arizmendi2025bmt}.

	Claim (3) on the anti-monotone case follows by symmetry.

	Claim (5) for the tensor case follows in the same way using BMT independence over the graph $(\{v,w\},\{(v,w),(w,v)\})$ and \cite[Proposition 3.12(i)]{arizmendi2025bmt}.

	For claim (4) on the free case, we similarly use Proposition \ref{prop: epsilon independence} to show that $A_v$ and $A_w$ are $\varepsilon$-independent with respect to $(\{v,w\},\varnothing)$, which means that they are freely independent by the results of \cite{mlotkowski2004,SpWy2016}.
\end{proof}

\begin{remark} \label{rem: restrictions of bigraph independence}
	The tensor case in claim (5) can alternatively be proved using $\varepsilon$-independence.  Of course, the reasoning of our proof also applies more generally:  One can use Lemma \ref{lem: restriction to subgraph} in conjunction with Propositions \ref{prop: epsilon independence} and \ref{prop: BMT independence} to describe the behavior of $(A_v)_{v \in \mathcal{V}'}$ for any subset of the vertices where the induced sub-bigraph $\mathcal{G}'$ has one of the special forms in Propositions \ref{prop: epsilon independence} and \ref{prop: BMT independence}.
\end{remark}

\begin{figure}
	\renewcommand{\arraystretch}{1.15}
	$$
		\begin{tabular}{|l|c|}  \hline
			$\mathcal{E}_1 = \emptyset, $                                                & boolean independence       \\
			\hline
			$\mathcal{E}_1$ total order                                                  & monotone independence      \\
			\hline
			$\mathcal{V}\times\mathcal{V}= {\mathcal{E}}_1$, $\mathcal{E}_2 = \emptyset$ & free independence          \\
			\hline
			$\mathcal{V}\times\mathcal{V}= {\mathcal{E}}_1=\mathcal{E}_2$                & tensor independence        \\
			\hline

			$\mathcal{V}\times\mathcal{V}= {\mathcal{E}}_1$                              & $\varepsilon$-independence \\
			\hline
			$\mathcal{E}_2 = \bar{\mathcal{E}}_1 \cap \mathcal{E}_1$                     & BMT independence           \\
			\hline
			$\mathcal{E}_1$ partial order                                                & BM independence            \\
			\hline
		\end{tabular}
	$$
	\caption{\label{tab:relations} Relation of the $\mathcal{G}$-independences to other independences. We have assumed $\mathcal{E}_2 \subset \mathcal{E}_1\cap\bar{\mathcal{E}}_1$.}
\end{figure}

\subsection{Bigraph independence via Boolean and classical cumulants} \label{subsec: boolean and classical cumulants}

In this section, we give a formula for joint moments of $\mathcal{G}$-independent variables in terms of Boolean cumulants rather than free cumulants.  This formula will be needed in \S \ref{subsec: tree independence}, \S \ref{sec:operad associativity}, and \S \ref{sec:hilbertspacemodel}.  To this end, we need to introduce a certain subclass $\mathcal{P}(c,\mathcal{G})^0$ of $\mathcal{P}(c,\mathcal{G})$.

Recall the definition of $\prec$ from Definition \ref{def:nested}. In the sequel, we use $\leq$ to designate the containment order on partitions: $\sigma \leq \pi$, $\sigma,\pi$ partitions of $[k]$, if $\pi$ is obtained from $\sigma$ by merging some blocks of $\sigma$.

\begin{definition} \label{def: unobstructed nesting}
	Fix $\mathcal{G}$ and $c: [k] \to \mathcal{V}$.  Let $\pi \in \mathcal{P}(c,\mathcal{G})$.  We define the \emph{unobstructed nesting} relation $\prec_{un}$ on $\pi$ as follows:  let $B\neq B'$, $B \prec_{un} B'$ if and only if
	\begin{itemize}
		\item $c(B) = c(B')$;
		\item $B \prec B'$ (equivalently, $B'$ is nested inside of $B$);
		\item If $B''$ satisfies $B \prec B'' \prec B'$, then $(c(B),c(B'')) \in \mathcal{E}_2 \cup \Delta$.
	\end{itemize}
\end{definition}

\begin{definition}
	We denote by $\mathcal{P}(c,\mathcal{G})^0$ the set of $\pi \in \mathcal{P}(c,\mathcal{G})$ such that $\prec_{un}$ is the empty relation.  Equivalently, $\pi \in \mathcal{P}(c,\mathcal{G})^0$ whenever $\pi \in \mathcal{P}(c,\mathcal{G})$ and for every $B$ and $B' \in \pi$, if $c(B) = c(B')$ and $B \prec B'$, there exists some $B''$ such that $B \prec B'' \prec B''$ and $(c(B),c(B'')) \not \in \mathcal{E}_2 \cup \Delta$.
\end{definition}

\begin{remark}
	In the Definition, \ref{def: unobstructed nesting}, last condition, it is also necessary that $(c(B),c(B'')) \in \mathcal{E}_1 \cap \overline{\mathcal{E}}_1$ because $\pi \in \mathcal{P}(c,\mathcal{G})$ and $B \prec B'' \prec B'$.  Hence, we see that $(c(B),c(B'')) \in \mathcal{E}_{\free} \setminus \Delta$.
\end{remark}

The first main goal of this section is to prove the following statement.

\begin{proposition} \label{prop: formula via Boolean cumulants}
	Fix a bigraph $\mathcal{G}$, a $\mathrm{C}^*$-probability space $(A,\varphi)$, and $*$-subalgebras $(A_v)_{v \in \mathcal{V}}$.  Then $(A_v)_{v \in \mathcal{V}}$ are bigraph independent with respect to $\mathcal{G}$ if and only if the following holds:  For $k \in \bN$ and $c: [k] \to \mathcal{V}$ and $a_j \in A_{c(j)}$ for $j \in [k]$, we have
	\begin{equation} \label{eq: joint moments via Boolean cumulants}
		\varphi(a_1 \dots a_k) = \sum_{\pi \in \mathcal{P}(c,\mathcal{G})^0} K_\pi^{\Bool}[a_1,\dots,a_k].
	\end{equation}
\end{proposition}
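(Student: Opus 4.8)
The plan is to pass between free and Boolean cumulants using the standard relation that expresses each $K^{\free}_\pi$ as a sum of $K^{\Bool}_\sigma$ over $\sigma \leq \pi$ (or more precisely, the relation among the multiplicative functions on the lattice of non-crossing partitions, as in \cite{arizmendi2015relations}), and then reorganize the resulting double sum. Concretely, starting from \eqref{eqn:formula moments}, I would write
\[
\varphi(a_1 \cdots a_k) = \sum_{\pi \in \mathcal{P}(c,\mathcal{G})} K^{\free}_\pi(a_1,\dots,a_k) = \sum_{\pi \in \mathcal{P}(c,\mathcal{G})} \sum_{\sigma \leq \pi,\ \sigma \in \mathcal{NC}(k)} (\text{coefficient}) \cdot K^{\Bool}_\sigma(a_1,\dots,a_k),
\]
where the coefficient records the Möbius-type weight relating $\zeta$- and $\beta$-functions on non-crossing partitions. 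Interchanging the order of summation, the claim \eqref{eq: joint moments via Boolean cumulants} will follow once I show that for each fixed $\sigma \in \mathcal{NC}(k)$ the inner sum of coefficients over all $\pi \in \mathcal{P}(c,\mathcal{G})$ with $\pi \geq \sigma$ equals $1$ if $\sigma \in \mathcal{P}(c,\mathcal{G})^0$ and $0$ otherwise. The "$1$ if $\sigma \in \mathcal{P}(c,\mathcal{G})^0$'' part should come out of a telescoping/inclusion–exclusion identity, and the role of $\mathcal{P}(c,\mathcal{G})^0$ is exactly to record which $\sigma$ survive: a nesting $B \prec B'$ with $c(B) = c(B')$ and all intermediate blocks connected to $c(B)$ by $\mathcal{E}_2$-edges is precisely the configuration in which $\sigma$ can be merged "upward'' within $\mathcal{P}(c,\mathcal{G})$ (by the characterization in the Remark following Definition \ref{def: unobstructed nesting}, such intermediate blocks are $\mathcal{E}_{\free}$-colored, so merging $B$ and $B'$ keeps us inside $\mathcal{P}(c,\mathcal{G})$), and it is exactly these merges that cause cancellation.

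The key structural step is to understand the interval $[\sigma, \hat{1}] \cap \mathcal{P}(c,\mathcal{G})$ inside the partition lattice: I would argue that if $\sigma \in \mathcal{P}(c,\mathcal{G})$, then the set of $\pi \in \mathcal{P}(c,\mathcal{G})$ with $\pi \geq \sigma$ is itself an interval (or at least a nice sublattice) determined by which pairs of blocks of $\sigma$ are "allowed'' to be merged — and that the allowed merges are generated by merging a block $B$ with a block $B'$ nested unobstructedly inside it in the sense of $\prec_{un}$. Once this combinatorial picture is in place, the cancellation is the same Möbius cancellation that underlies the free-to-Boolean cumulant conversion (the sum of the relevant coefficients over a nontrivial Boolean interval of the non-crossing partition lattice vanishes), localized to each "unobstructed nesting component'' of $\sigma$. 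The statement $\pi \in \mathcal{P}(c,\mathcal{G})^0$ says exactly that no such nontrivial merge is available, so no cancellation occurs and the coefficient is $1$.

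The main obstacle I anticipate is bookkeeping the combinatorics of $\mathcal{P}(c,\mathcal{G})$ under merging: I need that merging two $\prec_{un}$-related blocks keeps $\pi$ inside $\mathcal{P}(c,\mathcal{G})$ (this uses conditions (2) and (3) of Definition \ref{def: compatible partitions} together with the fact that intermediate blocks are $\mathcal{E}_2 \cup \Delta$-colored), that conversely any merge taking one element of $\mathcal{P}(c,\mathcal{G})$ to another decomposes into such elementary $\prec_{un}$-merges, and that the resulting family of "allowed merges'' has the lattice structure needed to run the Möbius cancellation cleanly. A clean way to organize this is to induct on $k$, or to reduce to the already-established special cases: when $\mathcal{E}_2 = \mathcal{E}_1 \cap \overline{\mathcal{E}}_1 \setminus \Delta$ one is in the BMT case where $\mathcal{P}(c,\mathcal{G})$ consists of non-crossing refinements of $\ker_G(c)$ and \eqref{eq: joint moments via Boolean cumulants} should reduce to the Boolean-cumulant expansion of \eqref{eq: BMT definition}; and when $\mathcal{E}_1 = \mathcal{V} \times \mathcal{V}$ one is in the $\varepsilon$-independence case. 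Checking that the general formula specializes correctly in those cases both guides the proof and provides a sanity check. I would also verify the "if'' direction (that \eqref{eq: joint moments via Boolean cumulants} implies bigraph independence) simply by reversing the summation manipulation, since the coefficient matrix relating the two cumulant families over non-crossing partitions is triangular and invertible.
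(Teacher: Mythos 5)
Your overall strategy runs in the opposite direction from the paper's, and the direction you chose is the harder one. The paper starts from the right-hand side of \eqref{eq: joint moments via Boolean cumulants} and substitutes the \emph{sign-free} identity $K_k^{\Bool}=\sum_{\rho\in\mathcal{NC}_{\operatorname{irr}}(k)}K_\rho^{\free}$ into each block; the whole proof then reduces to a clean bijection (no cancellation at all) between $\bigsqcup_{\sigma\in\pcg^0}\prod_{B\in\sigma}\mathcal{NC}_{\operatorname{irr}}(B)$ and $\pcg$, established via the existence and uniqueness of a ``$\pcg^0$-envelope'' of any $\pi\in\pcg$. Your route instead expands each $K^{\free}_\pi$ in Boolean cumulants, which necessarily introduces signs ($K_n^{\free}=\sum_{\rho\in\mathcal{NC}_{\operatorname{irr}}(n)}(-1)^{|\rho|-1}K_\rho^{\Bool}$), and then everything hinges on a signed cancellation: for fixed $\sigma$, the sum of $(-1)^{|\sigma|-|\pi|}$ over the admissible coarsenings $\pi\in\pcg$ of $\sigma$ must equal $\mathbf{1}[\sigma\in\pcg^0]$. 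That identity is the entire content of the proposition, and your proposal asserts it (``telescoping/inclusion--exclusion'', ``nice sublattice'') without establishing the poset structure that would make the Möbius cancellation run. Note also that the index set of the inner sum is not all of $\{\sigma\le\pi\}\cap\mathcal{NC}(k)$: it is the set of $\sigma$ restricting to an \emph{irreducible} non-crossing partition on each block of $\pi$, and such $\sigma$ need not be globally non-crossing when blocks of $\pi$ cross.

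More seriously, the one concrete structural claim you offer in support of the cancellation is false: merging two $\prec_{un}$-related blocks does \emph{not} in general keep you inside $\pcg$. If $B\prec B''\prec B'$ with $c(B)=c(B'')=c(B')$ (which is compatible with $B\prec_{un}B'$, since same-color intermediate blocks satisfy $(c(B),c(B''))\in\Delta\subseteq\mathcal{E}_2\cup\Delta$), then merging $B$ with $B'$ alone creates a crossing between $B\cup B'$ and $B''$ of the same color, which violates Definition \ref{def: compatible partitions}(3) because $\mathcal{E}_2$ is irreflexive. (Concretely: one color, $\mathcal{E}_2=\varnothing$, $\sigma=\{\{1,6\},\{2,5\},\{3,4\}\}$; merging $\{1,6\}$ with $\{3,4\}$ leaves $\pcg$.) The correct elementary moves merge entire $\sim_{un}$-equivalence classes (blocks sharing the same $\prec_{un}$-minimal element), which is exactly what the paper's envelope construction in Lemma \ref{lem: P zero envelope} does. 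Your parenthetical also misreads the Remark after Definition \ref{def: unobstructed nesting}: the $\mathcal{E}_{\free}$-colored blocks described there are the \emph{obstructions} witnessing $\sigma\in\pcg^0$, not the intermediate blocks of an unobstructed nesting. Finally, reducing to the BMT and $\varepsilon$ special cases is only a sanity check, as you say, and does not cover the general bigraph. The converse direction by invertibility of the cumulant transform is fine.
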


The main idea of the proof is that we can substitute the expression for Boolean cumulants in terms of free cumulants into \eqref{eq: joint moments via Boolean cumulants} in order to obtain \eqref{eqn:formula moments}.  To achieve this, we need to show that a partition in $\mathcal{P}(c,\mathcal{G})$ can be uniquely expressed by taking a partition in $\mathcal{P}(c,\mathcal{G})^0$ and subdividing each of its blocks according to an irreducible non-crossing partition (see Lemma \ref{lem: P zero decomposition}).  We begin with some technical lemmas on $\prec_{un}$ and $\mathcal{P}(c,\mathcal{G})$.

\begin{lemma} \label{lem: unobstructed nesting}
	Fix $c$ and $\mathcal{G}$ as above and let $\pi \in \mathcal{P}(c,\mathcal{G})$.
	\begin{enumerate}
		\item $\prec_{un}$ is a strict partial order on $\pi$.
		\item For every $B \in \pi$, the set $\{B' \in \pi: B' \prec_{un} B\}$ is totally ordered with respect to $\prec_{un}$.
		\item For every $B \in \pi$, there exists a unique $B' \prec_{un} B$ which is minimal with respect to $\prec$.
	\end{enumerate}
\end{lemma}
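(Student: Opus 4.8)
The plan is to establish the three claims in order, exploiting the fact (recorded in the remark after Definition \ref{def: unobstructed nesting}) that whenever $B \prec_{un} B'$ the intervening blocks $B''$ with $B \prec B'' \prec B'$ all have $c(B'') $ connected to $c(B)$ by a $\mathcal{E}_{\free}$-edge, and that $c(B) = c(B')$. For part (1), I would first note irreflexivity is built into the definition ($B \neq B'$). For transitivity, suppose $B \prec_{un} B'$ and $B' \prec_{un} B''$. Then $c(B) = c(B') = c(B'')$, and since $\prec$ is the nesting relation, $B \prec B'$ and $B' \prec B''$ give $B \prec B''$ (nesting is transitive: $B''$ lies in a gap of $B'$, which lies in a gap of $B$, hence $B''$ lies in a gap of $B$ because $\pi \in \mathcal{P}(c,\mathcal{G})$ forbids any block of $B$ from intruding — here I would invoke Definition \ref{def: compatible partitions} (2) to rule out an index of $B$ sitting between two indices of $B'$). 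Finally, for the third bullet: any $B^\dagger$ with $B \prec B^\dagger \prec B''$ either satisfies $B \prec B^\dagger \prec B'$, or $B' \preceq B^\dagger \prec B''$ (or $B^\dagger = B'$); in the first case the condition for $B \prec_{un} B'$ applies, in the second the condition for $B' \prec_{un} B''$ applies together with $c(B) = c(B')$, and the case $B^\dagger = B'$ is covered since $c(B') = c(B)$ gives $(c(B),c(B')) \in \Delta$. One subtlety to check carefully is that the three-way split of the ``gap'' structure genuinely exhausts all intervening blocks; this is a routine consequence of Lemma \ref{lem: tetrachotomy} applied to pairs among $B, B', B'', B^\dagger$, since all crossings are excluded once we know these blocks are mutually nested.

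For part (2), I would show that the set $\{B' \in \pi : B' \prec_{un} B\}$ is totally ordered under $\prec_{un}$. Take $B_1, B_2 \prec_{un} B$. Then $c(B_1) = c(B_2) = c(B)$, and both $B_1 \prec B$ and $B_2 \prec B$, so both $B_1$ and $B_2$ are nested inside $B$, hence (by Lemma \ref{lem: tetrachotomy}, since they cannot cross — a crossing between same-colored blocks is impossible by condition (3) of Definition \ref{def: compatible partitions} together with irreflexivity of $\mathcal{E}_2$) either $B_1$ and $B_2$ are separated, or one is nested in the other. If they are separated but both nested in $B$, I claim this is impossible given both are $\prec_{un}$-related to $B$: actually separated blocks nested in the same gap of $B$ can occur, so I need to rule this out using the full strength of $\prec_{un}$. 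The resolution: $B_1 \prec_{un} B$ forces $B_1$ to lie in a \emph{single} gap of $B$ with no block of $B$ between the endpoints, and similarly for $B_2$; if $B_1$ and $B_2$ lie in the same gap of $B$ and are separated from each other, then I would argue via the nesting structure and colors that one obstructs the other's $\prec_{un}$ relation to $B$ — more carefully, I suspect the correct statement is that they must be comparable, which follows once one observes that $\prec_{un}$-nesting inside $B$ behaves like a \emph{chain} because each step demands the monochromatic condition $c(B_i) = c(B)$ and the intervening blocks to be $\mathcal{E}_{\free}$-colored relative to $c(B)$. If $B_1$ is nested inside $B_2$ (WLOG), I then need $B_1 \prec_{un} B_2$: the intervening blocks between $B_1$ and $B_2$ are a subset of those between $B_1$ and $B$, so the $\mathcal{E}_2 \cup \Delta$ condition is inherited, and the colors match.

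For part (3), once (2) gives that $\{B' : B' \prec_{un} B\}$ is totally ordered by $\prec_{un}$, and since $\prec_{un}$ refines $\prec$ (if $B' \prec_{un} B$ then $B' \prec B$, i.e. $B$ is nested inside $B'$, so $B'$ is ``larger'' in the $\prec$ sense), the totally ordered set has a $\prec$-minimal element precisely when it is nonempty and the order is a genuine chain; I would take the $\prec_{un}$-minimum of this chain (which exists because the chain is finite, $\pi$ being finite) and argue it is $\prec$-minimal among $\{B' : B' \prec_{un} B\}$, with uniqueness following from total ordering. I should also address the edge case where $\{B' : B' \prec_{un} B\}$ is empty — in that case the statement of (3) is vacuous or should be read as ``if nonempty,'' which I would clarify in the writeup. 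The main obstacle I anticipate is part (2): showing that two blocks, both $\prec_{un}$-nested inside $B$ but lying in the same gap, cannot be merely separated — I expect this requires a careful argument that the ``unobstructed'' condition propagates, possibly by contradiction, identifying an intervening block that breaks the $\mathcal{E}_2 \cup \Delta$ requirement for one of them. The color and crossing bookkeeping via Lemma \ref{lem: tetrachotomy} and Definition \ref{def: compatible partitions} (2)--(3) is the engine throughout, but assembling it into a clean chain argument for (2) is where the real work lies.
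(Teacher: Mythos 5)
Your overall strategy (tetrachotomy plus the compatibility conditions of Definition \ref{def: compatible partitions}) is the same as the paper's, but there are two genuine gaps. The first is in part (2), and it stems from reversing the direction of $\prec$: by Definition \ref{def:nested}, $B_1 \prec B$ means $B$ is nested inside $B_1$, so the set $\{B' : B' \prec_{un} B\}$ consists of blocks that \emph{contain} $B$ nested inside them, not blocks nested inside $B$. With the correct reading, two such blocks $B_1, B_2$ both have $B$ nested inside them and therefore cannot be separated --- this is the one-line observation the paper uses --- and since they share a color they cannot cross (Definition \ref{def: compatible partitions}(3) plus irreflexivity of $\mathcal{E}_2$), so one is nested inside the other and the unobstructedness condition is inherited exactly as you describe at the end. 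Because of the reversal you instead analyze two blocks sitting inside $B$, manufacture a spurious difficulty about separated blocks in the same gap, and explicitly leave it unresolved (``I suspect the correct statement is that they must be comparable\dots''). As written, part (2) of your argument does not go through; the fix is simply to orient $\prec$ correctly, after which the separated case vanishes.

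The second gap is in the transitivity argument of part (1). Given $B \prec_{un} B' \prec_{un} B''$ and an intervening block $B^\dagger$ with $B \prec B^\dagger \prec B''$, your three-way split ($B \prec B^\dagger \prec B'$, or $B' \prec B^\dagger \prec B''$, or $B^\dagger = B'$) is not exhaustive: $B^\dagger$ and $B'$ both contain $B''$, so they are not separated, but they may \emph{cross} --- your claim that ``all crossings are excluded once we know these blocks are mutually nested'' is false, since $B^\dagger$ is an arbitrary block of a possibly different color and is not a priori nested with $B'$ (e.g.\ $B = \{1,20\}$, $B' = \{2,9\}$, $B^\dagger = \{5,15\}$, $B'' = \{6,7\}$). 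The paper handles this fourth case directly: if $B^\dagger$ crosses $B'$, then Definition \ref{def: compatible partitions}(3) forces $(c(B^\dagger), c(B')) \in \mathcal{E}_2$, and since $c(B') = c(B)$ this gives the required edge. So the conclusion survives, but the case must be included. Your part (3) is essentially fine once (2) is repaired, and your remark about the empty-set edge case is a reasonable point of care.
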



\begin{proof}
	(1) First, note that since $\prec_{un} \subseteq \prec$ and $\prec$ is a strict partial order, we have that $\prec_{un}$ is antisymmetric and irreflexive.  So it remains to show that $\prec_{un}$ is transitive.  Suppose that $B_0 \prec_{un} B_1 \prec_{un} B_2$.  Then $c(B_0) = c(B_1) = c(B_2)$ and $B_0 \prec B_1 \prec B_2$, which verifies the first two conditions of Definition \ref{def: unobstructed nesting}.  For the third condition, suppose that $B_0 \prec B' \prec B_2$.  We apply Lemma \ref{lem: tetrachotomy} to $B_1$ and $B'$, and note that $B'$ and $B_1$ cannot be separated since $B' \prec B_2$ and $B_1 \prec B_2$.
	\begin{itemize}
		\item If $B'$ crosses $B_1$, then since $\pi \in \mathcal{P}(c,\mathcal{G})$, we must have $(c(B'),c(B_1)) \in \mathcal{E}_2 \subseteq \mathcal{E}_2 \cup \Delta$ as desired.
		\item If $B_1 \prec B'$, then $B_1 \prec B' \prec B_2$ and hence $(c(B'),c(B_1)) \in \mathcal{E}_2 \cup \Delta$ by definition of $B_1 \prec_{un} B_2$.
		\item If $B' \prec B_1$, then $B_0 \prec B' \prec B_2$, and hence $(c(B_0),c(B')) \in \mathcal{E}_2 \cup \Delta$ by definition of $B_0 \prec_{un} B_1$.
	\end{itemize}
	Hence, $B_0 \prec_{un} B_2$.

	(2) Suppose that $B_1 \prec_{un} B$ and $B_2 \prec_{un} B$.  Since $c(B_1) = c(B_2)$ and $\pi \in \mathcal{P}(c,\mathcal{G})$, the blocks $B_1$ and $B_2$ cannot cross.  They also cannot be separated since $B_1 \prec B$ and $B_2 \prec B$.  Hence, by Lemma \ref{lem: tetrachotomy}, we have $B_1 \prec B_2$ or $B_2 \prec B_1$.  If $B_1 \prec B_2$, then we must also have $B_1 \prec_{un} B_2$.  Indeed, it is clear that $c(B_1) = c(B_2)$ and $B_1 \prec B_2$; moreover, if $B_1 \prec B' \prec B_2$, then $B_1 \prec B' \prec B$, and hence $(c(B_1),c(B')) \in \mathcal{E}_2 \cup \Delta$ by definition of $B_1 \prec_{un} B$.

	(3) This is immediate from (2).
\end{proof}

\begin{lemma} \label{lem: P zero envelope}
	Fix $c$ and $\mathcal{G}$ as above.  Then for each $\pi \in \mathcal{P}(c,\mathcal{G})$, there exists a unique $\sigma \in \mathcal{P}(c,\mathcal{G})^0$ such that $\sigma \geq \pi$ and for every $B \in \sigma$, $\min B \sim_\pi \max B$.
\end{lemma}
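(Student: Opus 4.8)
The plan is to produce $\sigma$ explicitly from the unobstructed‑nesting order $\prec_{un}$ on $\pi$ and then verify the required properties with the tetrachotomy (Lemma~\ref{lem: tetrachotomy}) and the structure from Lemma~\ref{lem: unobstructed nesting}. By Lemma~\ref{lem: unobstructed nesting}, for each block $B\in\pi$ the set $\{B'\in\pi:B'\prec_{un}B\}$ is a $\prec_{un}$‑chain; let $r(B)$ denote its $\prec$‑least element when this set is non‑empty, and $r(B):=B$ otherwise. One checks that $r$ is idempotent: if $r(B')=B$ then $B$ has no $\prec_{un}$‑predecessor, so $r(B)=B$. Hence ``$r(B)=r(\widetilde B)$'' is an equivalence relation on the blocks of $\pi$, and we let $\sigma$ be the partition of $[k]$ obtained by merging the blocks in each class. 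The classes are indexed by the $\prec_{un}$‑minimal blocks (the ``roots''): the class of a root $B_0$ is $\widehat{B_0}:=B_0\cup\bigcup_{B_0\prec_{un}B}B$. Because $B_0\prec_{un}B$ forces $\min B_0<\min B\le\max B<\max B_0$, the minimum and maximum of $\widehat{B_0}$ lie in $B_0$; so $\sigma\ge\pi$, each $\sigma$‑block $C$ satisfies $\min C\sim_\pi\max C$, and $c$ is constant on $\widehat{B_0}$ with value $c(B_0)$.

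Next one checks $\sigma\in\mathcal{P}(c,\mathcal{G})$. Condition (2) of Definition~\ref{def: compatible partitions} is straightforward: if $i_1<j<i_2$ with $i_1\sim_\sigma i_2$ of common root $B_0$, then $i_1,i_2\in[\min B_0,\max B_0]$, so either $j\in B_0$ (and $(c(i_1),c(j))$ is on the diagonal) or we apply condition (2) of $\pi$ to the consecutive elements of $B_0$ straddling $j$. Condition (3) is the technical heart: given $i_1<j_1<i_2<j_2$ with $i_1\sim_\sigma i_2$ of root $B_0$, $j_1\sim_\sigma j_2$ of root $B_0'\ne B_0$, one first normalizes — if $i_1\notin B_0$ then $i_1$ lies in a block $B_0\prec_{un}B_1$ inside a gap $(a,b)$ of $B_0$, and replacing $i_1$ by $a\in B_0$ preserves all hypotheses; similarly for $i_2,j_1,j_2$. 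One is then reduced to tracking, via the tetrachotomy, how two elements of $B_0$ and two elements of $B_0'$ interleave; the various cases all produce a genuine crossing of a $\pi$‑block of colour $c(B_0)$ with a $\pi$‑block of colour $c(B_0')$, so condition (3) of $\pi$ together with the symmetry of $\mathcal{E}_2$ gives $(c(i_2),c(j_1))=(c(B_0),c(B_0'))\in\mathcal{E}_2$. To see $\sigma\in\mathcal{P}(c,\mathcal{G})^0$: if $\sigma$ had a nontrivial unobstructed nesting of $\widehat{B_0'}$ inside $\widehat{B_0}$, the tetrachotomy rules out that $B_0,B_0'$ cross (a crossing would lift to a crossing of $\widehat{B_0},\widehat{B_0'}$) or are separated, and $B_0'\prec B_0$ is impossible since $\min\widehat{B_0}=\min B_0$, so $B_0\prec B_0'$ in $\pi$; any $\pi$‑block $B^*$ strictly nested between $B_0$ and $B_0'$ is either absorbed into $\widehat{B_0}$ or $\widehat{B_0'}$, or is $\prec_{un}$‑nested between the two roots, or produces a crossing, and in each case $(c(B_0),c(B^*))\in\mathcal{E}_2\cup\Delta$; this yields $B_0\prec_{un}B_0'$ in $\pi$, contradicting minimality of $B_0'$.

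For uniqueness, suppose $\tau\in\mathcal{P}(c,\mathcal{G})^0$ with $\tau\ge\pi$ and $\min B\sim_\pi\max B$ for every $B\in\tau$. For $\tau\le\sigma$: a $\tau$‑block $C$ contains the $\pi$‑block $D$ of $\min C=\max C$, and every other $\pi$‑block $E\subseteq C$ lies in the span of $D$ and has $c(E)=c(D)$ (as $c$ is constant on $C$). Since $\mathcal{E}_2$ is irreflexive, two same‑coloured blocks cannot cross, so $D\prec E$; checking that no intermediate block obstructs — an obstructing block would lie in a $\tau$‑block crossing $C$, hence by condition (3) for $\tau$ be related to $C$ by $\mathcal{E}_2$, which is not an obstruction — one gets $D\prec_{un}E$, whence $r(E)=r(D)$, so $C$ lies in a single $\sigma$‑block. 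For $\tau\ge\sigma$: arguing by well‑founded induction along $\prec_{un}$, fix $B_0\prec_{un}B$ with $B$ $\prec$‑minimal among blocks with $B_0\prec_{un}B$ and $B_0\not\sim_\tau B$; the $\tau$‑blocks $C_0\ni B_0$ and $C\ni B$ are distinct, $C_0\prec C$ again by irreflexivity of $\mathcal{E}_2$, the induction hypothesis clears intermediate $\tau$‑blocks of colour $c(B_0)$, and condition (3) for $\tau$ handles those of a different colour — so $C_0\prec_{un}C$ in $\tau$, contradicting $\tau\in\mathcal{P}(c,\mathcal{G})^0$; hence no such $B$ exists and $\tau\ge\sigma$. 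Therefore $\tau=\sigma$. I expect the main obstacle to be condition (3) for $\sigma$: normalizing the four witness points into the root blocks and then exhausting the tetrachotomy cases is the only place where genuine care is needed, the remaining verifications being short applications of Lemma~\ref{lem: unobstructed nesting} and of the irreflexivity of $\mathcal{E}_2$.
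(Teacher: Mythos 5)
Your construction of $\sigma$ is exactly the paper's (merge each $\prec_{un}$-chain into the class of its $\prec$-least element), and your uniqueness argument follows the same two-step strategy. However, there is a genuine gap at the point you yourself identify as the technical heart: the verification of Definition \ref{def: compatible partitions}(3) for $\sigma$. Your claim that, after normalizing the four witness points into the root blocks, every crossing of two $\sigma$-blocks ``produces a genuine crossing of a $\pi$-block of colour $c(B_0)$ with a $\pi$-block of colour $c(B_0')$'' is false, and the normalization itself does not preserve the interleaving. Concretely, take $k=6$, $\pi=\{B_0,B_0',B''\}$ with $B_0=\{1,6\}$, $B_0'=\{2,5\}$, $B''=\{3,4\}$, $c(B_0)=c(B'')=v$, $c(B_0')=w\neq v$, and $(v,w)\in\mathcal{E}_2$ together with the $\mathcal{E}_1$-edges needed for $\pi\in\mathcal{P}(c,\mathcal{G})$. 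Then $B_0\prec B_0'\prec B''$ and $B_0\prec_{un}B''$, so $\widehat{B_0}=\{1,3,4,6\}$ and $\widehat{B_0'}=\{2,5\}$ cross in $\sigma$ (witnessed by $1<2<3<5$), yet $\pi$ is non-crossing: there is no crossing at the level of $\pi$ to appeal to. Your normalization also breaks down here: the witness $i_2=3\in B''$ lies in the gap $(1,6)$ of $B_0$, and replacing it by either endpoint of that gap destroys $i_1<j_1<i_2<j_2$.

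In this situation the membership $(c(B_0),c(B_0'))\in\mathcal{E}_2\cup\Delta$ must be extracted from the third bullet of Definition \ref{def: unobstructed nesting} (the unobstructedness of the nesting $B_0\prec B_0'\prec B''$ that caused $B''$ to be absorbed into $\widehat{B_0}$), not from condition (3) applied to $\pi$; one must then separately rule out $c(B_0)=c(B_0')$ by observing that equal colours would force $B_0\prec_{un}B_0'$ and hence $\widehat{B_0}=\widehat{B_0'}$. This is precisely how the paper argues: it splits into the case where some $\pi$-blocks inside the two $\sigma$-blocks genuinely cross (where your argument works) and the case where none do, and in the latter case it locates a block $B_1''$ with $B_1'\prec B_2'\prec B_1''$ and $B_1'\prec_{un}B_1''$ and reads off $\mathcal{E}_2\cup\Delta$ from the definition of $\prec_{un}$. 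Your proof needs this second case added; the remaining steps (the $\mathcal{P}(c,\mathcal{G})^0$ property and uniqueness) are sketchy but follow the paper's lines and appear repairable.
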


\begin{proof}
	For each block $B \in \pi$, let $m(B)$ be the minimal block such that $m(B) \prec_{un} B$ using Lemma \ref{lem: unobstructed nesting}.  Define $B \sim_{un} B'$ if $m(B) = m(B')$.  Let $\sigma$ be the partition whose blocks are the unions of the equivalence classes of $\sim_{un}$.  This means in particular that if $B \prec_{un} B'$ in $\pi$, then $B$ and $B'$ are in the same block of $\sigma$.

	We first show that for $B \in \sigma$, we have $\min B \sim_\sigma \max B$.  Each $B \in \sigma$ is the union of some equivalence class of $\sim_{un}$.  By construction of $\sim_{un}$, the equivalence class has a unique minimal element $B'$.  Since $B' \prec B''$ for all the other $B''$ in the equivalence class, we have that $\min B = \min B'$ and $\max B = \max B'$.  Hence, $\min B$ and $\max B$ are in the same block $B'$ of $\pi$.

	Next, we show that $\sigma \in \mathcal{P}(c,\mathcal{G})$, checking each condition of Definition \ref{def: compatible partitions}:
	\begin{enumerate}
		\item Clearly, the blocks of $\sigma$ are monochromatic since each equivalence class of $\sim_{un}$ is monochromatic.
		\item Suppose that $i_1 < j < i_2$ and $i_1$ and $i_2$ are in the same block $B$ of $\sigma$.  By making $i_1$ smaller and $i_2$ larger if necessary, we can assume without loss of generality that $i_1 = \min B$ and $i_2 = \max B$.  Hence, by the preceding paragraph, $i_1 \sim_\pi i_2$.  Since $\pi \in \mathcal{P}(c,\mathcal{G})$, we have $(c(i_1),c(j)) \in \mathcal{E}_2$.
		\item Suppose that two distinct blocks $B_1$ and $B_2$ of $\sigma$ cross.  If there exist blocks $B_1'$ and $B_2'$ of $\pi$ with $B_1' \subseteq B_1$ and $B_2' \subseteq B_2$ such that $B_1'$ and $B_2'$ cross, then we must have $(c(B_1'),c(B_2')) \in \mathcal{E}_2$ because $\pi \in \mathcal{P}(c,\mathcal{G})$, so in this case we are done.

		      Hence, suppose that no blocks of $\pi$ within $B_1$ and $B_2$ cross each other.  Recall that $\min B_1$ and $\max B_1$ are in the same block $B_1'$ of $\pi$, and $\min B_2$ and $\max B_2$ are in the same block $B_2'$ of $\pi$.  Note that $B_1'$ and $B_2'$ cannot be separated since then $B_1$ and $B_2$ could not cross.  We assumed that $B_1'$ and $B_2'$ do not cross, and therefore $B_1' \prec B_2'$ or $B_2 \prec B_1'$.  By symmetry, assume without loss of generality that $B_1' \prec B_2'$.  Now because $B_1$ crosses $B_2$, there must be some element $j$ of $B_1$ between $\min B_2'$ and $\max B_2'$.  The block $B_1''$ of $\pi$ that contains $j$ must be nested inside in $B_2'$ (since by assumption it cannot cross it).  Therefore, $B_1' \prec B_2' \prec B_1''$, and $B_1' \prec_{un} B_1''$ since $m(B_1'') = B_1'$.  By definition of $\prec_{un}$, we have $(c(B_1'),c(B_2')) \in \mathcal{E}_2 \cup \Delta$.

		      Finally, note that $c(B_1)$ cannot equal $c(B_2)$; indeed, if this happened, then $B_1' \prec B_2' \prec B_1''$ would imply that $B_1' \prec_{un} B_2'$ by the same reasoning as in Lemma \ref{lem: unobstructed nesting} (2), which would force $B_1$ to equal $B_2$.  Hence, $(c(B_1),c(B_2)) \in \mathcal{E}_2$ as desired.
	\end{enumerate}

	Next, we show that $\sigma \in \mathcal{P}(c,\mathcal{G})^0$.  Suppose for contradiction that $B_1 \prec_{un} B_2$ in $\sigma$.  For $j = 1, 2$, let $B_j'$ be the block of $\pi$ that contains $\min B_j$ and $\max B_j$.  We claim that $B_1' \prec_{un} B_2'$.  It is immediate that $c(B_1') = c(B_2')$ and $B_1' \prec B_2'$.  Suppose that $B'$ is a block of $\pi$ with $B_1' \prec B' \prec B_2'$.  Let $B$ be the block of $\sigma$ that contains $B'$, and we will show that $(c(B_1),c(B)) \in \mathcal{E}_2 \cup \Delta$.  If $B$ crosses $B_1$ or $B_2$, then this holds because $\sigma \in \mathcal{P}(c,\mathcal{G})$, and we are done.  Suppose that $B$ does not cross $B_1$ or $B_2$.  Then, because $B$ contains $B' \prec B_2'$, we must have that $B \prec B_2'$ and hence $B \prec B_2$.  Similarly, since $B_1$ contains $B_1' \prec B'$, we have $B_1' \prec B$, and hence also $B_1 \prec B$ since they do not cross.  Therefore, $B_1' \prec_{un} B_2'$.  It follows that $B_1'$ and $B_2'$ must be in the same block of $\sigma$, which contradicts that $B_1 \prec B_2$.

	Finally, we show that $\sigma$ is unique.  Let $\sigma'$ be another partition satisfying the conditions in the lemma.  First, we show that $\sigma \leq \sigma'$, or equivalently, if two blocks $B_1'$ and $B_2'$ of $\pi$ are in the same equivalence class, they must be in the same block of $\sigma'$.  It suffices to show that $B_1'$ must be in the same block as $m(B_1')$ in $\sigma'$.  Hence, it suffices to show that if $B_1' \prec_{un} B_2'$ in $\pi$, then $B_1'$ and $B_2'$ are in the same block of $\sigma'$.

	Suppose for contradiction that $B_1'$ is in the block $B_1$ and $B_2'$ is in the block $B_2$ in $\sigma$ with $B_1 \neq B_2$.  Then we claim that $B_1 \prec_{un} B_2$ in $\sigma'$.  Clearly, $B_1$ and $B_2$ are the same color.  Since $B_1$ and $B_2$ are the same color (hence cannot cross) and $B_1' \prec B_2'$, we must have $B_1 \prec B_2$.  Lastly, suppose that $B_1 \prec B \prec B_2$ for some block $B$ in $\sigma'$.  Let $B'$ be the block of $\pi$ containing $\max B$ and $\min B$, and let $B_1''$ be the block of $\pi$ containing $\max B_1$ and $\min B_1$.  Then $B_1'' \prec B' \prec B_2'$.  Then the desired conclusion that $(c(B_1''),c(B')) \in \mathcal{E}_2 \cup \Delta$ will follow if we can show that $B_1'' \prec_{un} B_2'$.  This is immediate if $B_1'' = B_1'$.  If $B_1'' \neq B_1'$, then we show that $B_1'' \prec_{un} B_1'$.  Clearly, they are the same color and $B_1'' \prec B_1'$.  Moreover, if $B_1'' \prec C \prec B_1'$ in $\pi$, then $C$ is either contained in $B_1$ or it crosses $B_1$ and hence $(c(B_1),c(C)) \in \mathcal{E}_2 \cup \Delta$.  So $B_1'' \prec_{un} B_1'' \prec_{un} B_2'$ as claimed.

	This completes the proof that $\sigma' \geq \sigma$.  To conclude that $\sigma' = \sigma$, it remains to show that if $B_1$ and $B_2$ are different blocks in $\sigma$, then they cannot be contained in the same block of $\sigma'$.  Of course, this is immediate if $B_1$ and $B_2$ are different colors.  Suppose $B_1$ and $B_2$ are the same color, and hence do not cross.
	\begin{itemize}
		\item Suppose that $B_1 \prec B_2$.  Since $\sigma \in \mathcal{P}(c,\mathcal{G})^0$, we cannot have $B_1 \prec_{un} B_2$, and so there exists another block $B$ of $\sigma$ with $B_1 \prec B \prec B_2$ and $(c(B_1),c(B)) \not \in \mathcal{E}_2 \cup \Delta$.  If $B_1$ and $B_2$ were in the same block of $\sigma'$, then this block would cross the block containing $B$, which contradicts $\sigma' \in \mathcal{P}(c,\mathcal{G})$.
		\item Of course, the same applies when $B_2 \prec B_1$.
		\item Suppose that $B_1$ and $B_2$ are separated, and they are contained in the same block $A$ of $\sigma'$.  Since nested blocks of $\sigma$ cannot be in the same block of $\sigma'$, the block $A$ must the union of separated blocks $A_1$, \dots, $A_m$ of $\sigma$ (listed in order from left to right).  By our hypotheses on $\sigma'$, $\min A$ and $\max A$ are in the same block of $\pi$.  But $\min A \in A_1$ and $\max A \in A_m$ are in different blocks of $\sigma$, hence different blocks of $\pi$, which is a contradiction.
	\end{itemize}
\end{proof}

\begin{definition}
	Given $\pi \in \mathcal{P}(c,\mathcal{G})$, the partition $\pi'$ in the previous lemma will be denoted $\operatorname{Env}^0(\pi)$ and called the $\mathcal{P}(c,\mathcal{G})^0$-\emph{envelope} of $\pi$.
\end{definition}

We also recall the following more standard definition:
\begin{definition}
	The \emph{interval envelope} $\operatorname{Env}^I(\pi)$ of a partition $\pi$ is the meet of all interval partitions $\sigma \geq \pi$.

	A partition of $[k]$ is \emph{irreducible} if $\operatorname{Env}^I(\pi) = \{[k]\}$.

	We denote by $\mathcal{P}_{\operatorname{irr}}(k)$ the \emph{irreducible partitions} and by $\mathcal{NC}_{\operatorname{irr}}(k)$ the irreducible non-crossing partitions.
\end{definition}

\begin{lemma} \label{lem: P zero decomposition}
	Fix $\mathcal{G}$ and $c: [k] \to \mathcal{V}$.  Then there is a bijection
	\[
		\Phi: \bigsqcup_{\sigma \in \mathcal{P}(c,\mathcal{G})^0} \prod_{B \in \sigma} \mathcal{NC}_{\operatorname{irr}}(B) \to \mathcal{P}(c,\mathcal{G}),
		\quad \Phi((\pi_B)_{B \in \sigma}) = \bigcup_{B \in \sigma} \pi_B.
	\]
	The inverse $\Phi^{-1}$ is obtained as follows:  Given $\pi \in \mathcal{P}(c,\mathcal{G})$, let $\sigma = \operatorname{Env}^0(\pi)$, and for $B \in \sigma$, let $\pi_B$ be the restriction of $\sigma$ to $B$, and then $\Phi^{-1}(\pi) = (\pi_B)_{B \in \sigma}$.
\end{lemma}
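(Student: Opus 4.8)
The plan is to check separately that $\Phi$ is well defined (lands in $\mathcal{P}(c,\mathcal{G})$), that the displayed formula for $\Phi^{-1}$ is well defined (lands in the domain $\bigsqcup_{\sigma}\prod_{B\in\sigma}\mathcal{NC}_{\operatorname{irr}}(B)$), and that $\Phi$ and $\Phi^{-1}$ are mutually inverse. Nearly all of the real content is already packaged in Lemma~\ref{lem: P zero envelope}; the only genuinely new ingredient I would isolate up front is the elementary fact that a non-crossing partition $\rho$ of a finite totally ordered set $S$ is irreducible if and only if $\min S$ and $\max S$ lie in the same block of $\rho$. The ``if'' direction holds for an arbitrary partition, since the block containing both endpoints must lie inside a single block of any interval coarsening, forcing that coarsening to be $\{S\}$. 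The ``only if'' direction is where non-crossingness is used: the outer blocks of $\rho$ are pairwise separated by Lemma~\ref{lem: tetrachotomy}, their convex hulls form an interval partition coarsening $\rho$, and irreducibility forces a single outer block, which must then contain both $\min S$ and $\max S$.

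For well-definedness of $\Phi$: write $\sigma\in\mathcal{P}(c,\mathcal{G})^0$ and $\pi=\bigcup_{B\in\sigma}\pi_B$ with $\pi_B\in\mathcal{NC}_{\operatorname{irr}}(B)$. Since each $\pi_B$ refines $\{B\}$ we have $\pi\le\sigma$, so condition~(1) of Definition~\ref{def: compatible partitions} holds and condition~(2) is inherited from $\sigma$ (if $i_1\sim_\pi i_2$ then $i_1\sim_\sigma i_2$, so $(c(i_1),c(j))\in\mathcal{E}_1$ for any $i_1<j<i_2$). For condition~(3), suppose $i_1<j_1<i_2<j_2$ with $i_1\sim_\pi i_2$, $j_1\sim_\pi j_2$, $i_1\not\sim_\pi j_1$; then $\{i_1,i_2\}$ and $\{j_1,j_2\}$ each lie in a single block of $\sigma$, and these two blocks cannot coincide since $\pi_B$ is non-crossing for every $B$. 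Hence they are distinct blocks of $\sigma$, which therefore cross, and condition~(3) for $\sigma$ gives $(c(i_2),c(j_1))\in\mathcal{E}_2$.

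For well-definedness of $\Phi^{-1}$: let $\pi\in\mathcal{P}(c,\mathcal{G})$ and $\sigma=\operatorname{Env}^0(\pi)$, and fix $B\in\sigma$. If two blocks of $\pi$ inside $B$ crossed, condition~(3) for $\pi$ would produce an edge of $\mathcal{E}_2$ of the form $(c(B),c(B))$, impossible since $\mathcal{E}_2$ is irreflexive; hence $\pi|_B$ is non-crossing. By the defining property of $\operatorname{Env}^0(\pi)$ in Lemma~\ref{lem: P zero envelope}, $\min B\sim_\pi\max B$, so $\pi|_B$ is irreducible by the ``if'' half of the isolated fact; thus $\pi|_B\in\mathcal{NC}_{\operatorname{irr}}(B)$.

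Finally, mutual inverseness. That $\Phi\circ\Phi^{-1}=\operatorname{id}$ is immediate: since $\operatorname{Env}^0(\pi)\ge\pi$, reassembling the restrictions $\pi|_B$ over $B\in\operatorname{Env}^0(\pi)$ returns $\pi$. For $\Phi^{-1}\circ\Phi=\operatorname{id}$, fix $\sigma\in\mathcal{P}(c,\mathcal{G})^0$ and $(\pi_B)_{B\in\sigma}$, and set $\pi=\bigcup_{B\in\sigma}\pi_B\in\mathcal{P}(c,\mathcal{G})$. I would show $\operatorname{Env}^0(\pi)=\sigma$ by invoking the uniqueness clause of Lemma~\ref{lem: P zero envelope}: $\sigma\ge\pi$ and $\sigma\in\mathcal{P}(c,\mathcal{G})^0$ by hypothesis, and for each $B\in\sigma$ the partition $\pi_B$ is non-crossing and irreducible, so $\min B\sim_{\pi_B}\max B$ by the ``only if'' half of the isolated fact, whence $\min B\sim_\pi\max B$. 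Once $\operatorname{Env}^0(\pi)=\sigma$, the restriction of $\pi$ to $B$ is exactly $\pi_B$, so $\Phi^{-1}(\Phi((\pi_B)_B))=(\pi_B)_B$. The only slightly delicate points are the ``only if'' direction of the isolated fact, which genuinely needs non-crossingness (the crossing partition $\{\{1,3\},\{2,4\}\}$ is irreducible but has $1\not\sim 4$), and the precise application of the uniqueness in Lemma~\ref{lem: P zero envelope}; the heavy lifting, namely the existence and uniqueness of the $\mathcal{P}(c,\mathcal{G})^0$-envelope, is done there and is taken as given here.
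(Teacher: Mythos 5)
Your proposal is correct and follows essentially the same route as the paper's proof: verify that $\Phi$ lands in $\mathcal{P}(c,\mathcal{G})$ by checking the three conditions of Definition \ref{def: compatible partitions}, verify that the proposed inverse lands in the stated domain using monochromaticity and irreflexivity of $\mathcal{E}_2$ for non-crossingness, and deduce mutual inverseness from the uniqueness clause of Lemma \ref{lem: P zero envelope}. The only difference is cosmetic: you isolate and prove the fact that a non-crossing partition is irreducible if and only if its minimum and maximum lie in the same block, which the paper uses implicitly.
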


\begin{proof}
	Let $\Psi$ be the map that sends $\pi \in \mathcal{P}(c,\mathcal{G})$ to $(\pi|_B)_{B \in \sigma}$ where $\sigma = \operatorname{Env}^0(\pi)$.  To prove the lemma, we proceed by verifying that $\Phi$ and $\Psi$ map into the codomains that we have asserted, and that they are inverses of each other.

	First, we show that $\Phi$ maps into $\mathcal{P}(c,\mathcal{G})$.  Fix $\sigma \in \mathcal{P}(c,\mathcal{G})^0$ and $\pi_B \in \mathcal{NC}_{\operatorname{irr}}(B)$ for $B \in \sigma$.  Let $\pi = \Phi((\pi_B)_{B \in \sigma})$, and we will show that $\pi \in \mathcal{P}(c,\mathcal{G})$.  We verify the conditions of Definition \ref{def: compatible partitions} for $\pi$ in turn:
	\begin{enumerate}[(1)]
		\item The blocks of $\pi$ are monochromatic since $\pi \leq \sigma$.
		\item Suppose that $i_1 < j < j_2$ with $i_1 \sim_\pi i_2$.  Then $i_1 \sim_\sigma i_2$ since $\pi \leq \sigma$.  Hence, because $\sigma \in \mathcal{P}(c,\mathcal{G})$, we have $(c(i_1),c(j)) \in \mathcal{E}_1$.
		\item Suppose that two blocks $B_1'$ and $B_2'$ in $\pi$ cross.  Since each $\pi_B$ is non-crossing, $B_1'$ and $B_2'$ must be in different blocks of $\sigma$.  Call these blocks $B_1$ and $B_2$.  Then $B_1$ and $B_2$ are crossing.  Hence, $(c(B_1),c(B_2)) \in \mathcal{E}_2$ as desired.
	\end{enumerate}

	Next, we show that $\Psi$ maps into $\bigsqcup_{\sigma \in \mathcal{P}(c,\mathcal{G})^0} \prod_{B \in \sigma} \mathcal{NC}_{\operatorname{irr}}(B)$.  Fix $\pi \in \mathcal{P}(c,\mathcal{G})$, and let $\sigma = \operatorname{Env}^0(\pi)$.  Let $\pi_B = \pi|_B$ for $B \in \sigma$.  By construction and Lemma \ref{lem: P zero envelope}, $\sigma \in \mathcal{P}(c,\mathcal{G})^0$.   Since $B$ is monochromatic and since $\pi \in \mathcal{P}(c,\mathcal{G})$, each $\pi_B$ is non-crossing.  Moreover, by construction or by Lemma \ref{lem: P zero envelope}, for each $B \in \sigma$, we have $\min B \sim_\pi \max B$, which means that the minimal and maximal elements of $\pi_B$ are in the same block of $\pi_B$, which means that $\pi_B$ is irreducible.

	Next, we show that $\Psi \circ \Phi = \id$.  Fix $\sigma \in \mathcal{P}(c,\mathcal{G})^0$ and $\pi_B \in \mathcal{NC}_{\operatorname{irr}}(B)$ for $B \in \sigma$.  Let $\pi = \Phi((\pi_B)_{B \in \sigma})$, and we will show that $\Psi(\pi) = (\pi_B)_{B \in \sigma}$.  Since the $\pi_B$'s are uniquely determined from $\sigma$, it suffices to show that $\operatorname{Env}^0(\pi) = \sigma$.  By the preceding paragraph $\sigma \in \mathcal{P}(c,\mathcal{G})^0$ and by construction $\sigma \geq \pi$.  Moreover, for each $B \in \sigma$, since $\pi_B$ is irreducible and non-crossing, $\min B$ and $\max B$ are in the same block of $\pi$.  Therefore, by the uniqueness claim in Lemma \ref{lem: P zero envelope}, $\sigma$ must equal $\operatorname{Env}^0(\pi)$.

	Finally, we show that $\Phi \circ \Psi = \id$.  Fix $\pi \in \mathcal{P}(c,\mathcal{G})$ and let $(\pi_B)_{B \in \sigma} = \Psi(\pi)$.  Since $\pi_B$ are groupings of the blocks of $\pi$, we have $\pi = \bigcup_{B \in \sigma} \pi = \Phi((\pi_B)_{\sigma \in B})$.
\end{proof}

\begin{proof}[Proof of Proposition \ref{prop: formula via Boolean cumulants}]
	Suppose that $(A_v)_{v \in \mathcal{V}}$ are bigraph independent with respect to $\mathcal{G}$.  Consider the expression
	\begin{equation} \label{eq: start of Boolean proof}
		\sum_{\sigma \in \mathcal{P}(c,\mathcal{G})^0} K_\sigma^{\Bool}[a_1,\dots,a_k].
	\end{equation}
	Recall the formula for expressing Boolean cumulants in terms of free cumulants (see \cite{Lehner2002cumulants}):
	\begin{equation} \label{eq: free to Boolean cumulants}
		K_k^{\Bool}[a_1,\dots,a_k] = \sum_{\pi \in \mathcal{NC}_{\operatorname{irr}}(k)} K_\pi^{\free}[a_1,\dots,a_k].
	\end{equation}
	On each block $B$ of $\sigma$ in \eqref{eq: start of Boolean proof} substitute \eqref{eq: free to Boolean cumulants} to obtain
	\[
		\sum_{\sigma \in \mathcal{P}(c,\mathcal{G})^0} \prod_{B \in \sigma} \sum_{\pi_B \in \mathcal{NC}_{\operatorname{irr}}(B)} K_{\pi_B}^{\free}[a_j: j \in B].
	\]
	By Lemma \ref{lem: P zero decomposition}, this can be equivalently expressed as
	\[
		\sum_{\pi \in \mathcal{P}(c,\mathcal{G})} K_\pi^{\free}[a_1,\dots,a_k],
	\]
	which is $\varphi(a_1 \dots a_k)$ by \eqref{eqn:formula moments}.  The converse implication that \eqref{eq: joint moments via Boolean cumulants} implies \eqref{eqn:formula moments} proceeds in a similar way.
\end{proof}

We next give a formula for joint moments of $\mathcal{G}$-independent variables in terms of classical cumulants.  For this, we introduce a third set of partitions besides $\mathcal{P}(c,\mathcal{G})$ and $\mathcal{P}(c,\mathcal{G})$.  The set $\tilde{\mathcal{P}}(c,\mathcal{G})$ is obtained from $\mathcal{P}(c,\mathcal{G})$ by relaxing the condition that blocks with same color can not have a crossing; in other words, we replace $\mathcal{E}_2$ with $\mathcal{E}_2 \cup \Delta$ in Definition \ref{def: compatible partitions} (3).

\begin{definition}
	Fix a bigraph $\mathcal{G}$ and $k \in \bN$ and $c: [k] \to \mathcal{V}$ be a string of colors $(c_i \in \mathcal{I})$. The sub-set $\tilde{\mathcal{P}}(c,\mathcal{G})$ comprises all partitions of $[k]$ satisfying the following conditions:
	\begin{enumerate}
		\item If $i$ and $j$ are in the same block $B \in \pi$, then $c(i) = c(j)$.
		\item \label{def: compatible partitions item:two}If $i_1 < j < i_2$ and $i_1 \sim_\pi i_2$, then $(c(i_1), c(j)) \in \mathcal{E}_1$.
		\item \label{def: compatible partitions item:three}If $i_1 < j_1 < i_2 < j_2$ and $i_1 \sim_\pi i_2$ and $j_1 \sim_\pi j_2$, then $(c(i_2),c(j_1)) \in \mathcal{E}_2 \cup \Delta$.
	\end{enumerate}
\end{definition}

We then have the following expression of joint moments.

\begin{proposition} \label{prop: formula via classical cumulants}
	Fix a bigraph $\mathcal{G}$, a $\mathrm{C}^*$-probability space $(A,\varphi)$, and $*$-subalgebras $(A_v)_{v \in \mathcal{V}}$.  Then $(A_v)_{v \in \mathcal{V}}$ are bigraph independent with respect to $\mathcal{G}$ if and only if the following holds:  For $k \in \bN$ and $c: [k] \to \mathcal{V}$ and $a_j \in A_{c(j)}$ for $j \in [k]$, we have
	\begin{equation} \label{eq: joint moments via classical cumulants}
		\varphi(a_1 \dots a_k) = \sum_{\pi \in \tilde{\mathcal{P}}(c,\mathcal{G})} K_\pi^{\operatorname{class}}[a_1,\dots,a_k].
	\end{equation}
\end{proposition}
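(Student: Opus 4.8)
The plan is to reduce to Proposition~\ref{prop: formula via Boolean cumulants} and run the argument of its proof with the free-cumulant expansion of Boolean cumulants replaced by a \emph{classical}-cumulant expansion. The first ingredient is the identity
\begin{equation} \label{eq: boolean via classical sketch}
	K^{\Bool}_n[b_1,\dots,b_n] = \sum_{\sigma \in \mathcal{P}_{\operatorname{irr}}(n)} K^{\operatorname{class}}_\sigma[b_1,\dots,b_n],
\end{equation}
valid in any non-commutative probability space; this is the exact analogue of $K^{\Bool}_n = \sum_{\pi \in \mathcal{NC}_{\operatorname{irr}}(n)} K^{\free}_\pi$ (used in the proof of Proposition~\ref{prop: formula via Boolean cumulants}), with the irreducible non-crossing partitions replaced by \emph{all} interval-irreducible partitions. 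I would prove \eqref{eq: boolean via classical sketch} by Möbius inversion on the lattice of interval partitions: write $K^{\Bool}_n = \sum_{\rho \text{ interval}} \mu(\rho,\{[n]\})\, \varphi_\rho$ with $\varphi_\rho = \prod_{C \in \rho}\varphi(\vec{\prod}_{j \in C}b_j)$, substitute the classical moment-cumulant formula $\varphi(\vec{\prod}_{j \in C}b_j) = \sum_{\sigma_C \in \mathcal{P}(C)} K^{\operatorname{class}}_{\sigma_C}$ blockwise, reorganize by the refining partition $\sigma$, and collapse the inner sum using $\{\rho \text{ interval}: \rho \ge \sigma\} = \{\rho \text{ interval} : \rho \ge \operatorname{Env}^I(\sigma)\}$ and the defining recursion of the Möbius function, leaving exactly the terms with $\operatorname{Env}^I(\sigma) = \{[n]\}$.

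The second and main ingredient is the classical analogue of Lemma~\ref{lem: P zero decomposition}: a bijection
\[
	\bigsqcup_{\sigma \in \mathcal{P}(c,\mathcal{G})^0} \prod_{B \in \sigma} \mathcal{P}_{\operatorname{irr}}(B) \;\xrightarrow{\ \sim\ }\; \tilde{\mathcal{P}}(c,\mathcal{G}), \qquad (\pi_B)_{B \in \sigma} \longmapsto \bigcup_{B \in \sigma} \pi_B,
\]
whose inverse sends $\pi \in \tilde{\mathcal{P}}(c,\mathcal{G})$ to $(\sigma, (\pi|_B)_{B\in\sigma})$ with $\sigma$ constructed in two stages: first merge, transitively, the blocks of $\pi$ that are monochromatic \emph{and crossing}, obtaining $\pi^\star$; then set $\sigma = \operatorname{Env}^0(\pi^\star)$. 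The verification splits as follows. One checks that $\pi^\star \in \mathcal{P}(c,\mathcal{G})$ — conditions (1) and (2) of Definition~\ref{def: compatible partitions} by walking a monochromatic crossing chain from the block of $i_1$ to the block of $i_2$ to locate a $\pi$-block straddling the relevant index, and condition (3) because distinct crossing blocks of $\pi^\star$ have different colors and any crossing among them descends (via the elementary fact that a union of two crossing blocks can cross a third block only if one of the two already does) to a crossing in $\pi$, hence lies in $\mathcal{E}_2$ — so that $\operatorname{Env}^0$ applies and $\sigma \in \mathcal{P}(c,\mathcal{G})^0$. One then checks that each $\pi|_B$ is interval-irreducible for $B \in \sigma$: a separating cut of $\pi|_B$ would also be one of $\pi^\star|_B$ (a monochromatic crossing chain cannot straddle such a cut, since blocks on opposite sides of it are separated), contradicting that $\pi^\star|_B$ is non-crossing irreducible by Lemma~\ref{lem: P zero envelope}. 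Conversely, for $\sigma \in \mathcal{P}(c,\mathcal{G})^0$ and $\pi_B \in \mathcal{P}_{\operatorname{irr}}(B)$ one checks $\bigcup_B \pi_B \in \tilde{\mathcal{P}}(c,\mathcal{G})$: conditions (1) and (2) follow from $\bigcup_B\pi_B \le \sigma \in \mathcal{P}(c,\mathcal{G})$, and for condition (3) a crossing between $\pi$-blocks inside a single $\sigma$-block is harmless (both inherit its color, so the pair lies in $\Delta$), while a crossing between $\pi$-blocks in distinct $\sigma$-blocks $B_1 \ne B_2$ forces $B_1$ and $B_2$ to cross (they cannot be nested, as a $\sigma$-block nested inside another has no elements in the relevant gap), whence $(c(B_1),c(B_2)) \in \mathcal{E}_2$. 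Uniqueness of $\sigma$ in the inverse map follows by the same bookkeeping as in the uniqueness part of Lemma~\ref{lem: P zero envelope}.

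With both ingredients in hand the conclusion is routine: substituting \eqref{eq: boolean via classical sketch} blockwise into \eqref{eq: joint moments via Boolean cumulants} and reindexing the resulting double sum by the bijection above turns $\sum_{\sigma \in \mathcal{P}(c,\mathcal{G})^0} K^{\Bool}_\sigma[a_1,\dots,a_k]$ into $\sum_{\pi \in \tilde{\mathcal{P}}(c,\mathcal{G})} K^{\operatorname{class}}_\pi[a_1,\dots,a_k]$, giving \eqref{eq: joint moments via classical cumulants}; the reverse manipulation gives the converse, and Proposition~\ref{prop: formula via Boolean cumulants} then yields the equivalence with $\mathcal{G}$-independence. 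The main obstacle I anticipate is the bijection of the second ingredient — specifically, identifying the correct envelope $\sigma$ (one must merge only the monochromatic \emph{crossings} of $\pi$, not the obstructed monochromatic nestings, before applying $\operatorname{Env}^0$, as the latter would in general push $\sigma$ outside $\mathcal{P}(c,\mathcal{G})$) and carrying out its uniqueness — together with keeping the ordering conventions for the partitioned classical cumulants consistent across the substitution.
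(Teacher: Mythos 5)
Your proposal is correct, but it factors the computation differently from the paper. The paper does not pass through Proposition~\ref{prop: formula via Boolean cumulants} at all: it starts directly from the defining free-cumulant formula \eqref{eqn:formula moments}, expands each free cumulant via $K_n^{\free} = \sum_{\pi \in \mathcal{P}_{\operatorname{conn}}(n)} K_\pi^{\operatorname{class}}$ (connected partitions, cited from \cite{arizmendi2015relations}), and reassembles using Lemma~\ref{lem: colorwise NC decomposition}, whose inverse is the single-step colorwise non-crossing envelope. You instead start from the Boolean formula \eqref{eq: joint moments via Boolean cumulants}, expand via $K_n^{\Bool} = \sum_{\pi \in \mathcal{P}_{\operatorname{irr}}(n)} K_\pi^{\operatorname{class}}$ (which is a correct identity, also derivable from \cite{arizmendi2015relations} or by the M{\"o}bius argument you sketch), and therefore need the bijection $\bigsqcup_{\sigma \in \mathcal{P}(c,\mathcal{G})^0} \prod_{B \in \sigma} \mathcal{P}_{\operatorname{irr}}(B) \cong \tilde{\mathcal{P}}(c,\mathcal{G})$, whose inverse is the two-step envelope (colorwise non-crossing envelope, then $\operatorname{Env}^0$). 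Your identification of $\pi^\star$ with the colorwise non-crossing envelope and your verifications of the forward map and of interval-irreducibility of the restrictions are sound. The one economy you are missing is that this bijection need not be proved from scratch: it is exactly the composite of Lemma~\ref{lem: colorwise NC decomposition} with Lemma~\ref{lem: P zero decomposition}, using the elementary single-color fact that $\mathcal{P}_{\operatorname{irr}}(B) \cong \bigsqcup_{\rho \in \mathcal{NC}_{\operatorname{irr}}(B)} \prod_{C \in \rho} \mathcal{P}_{\operatorname{conn}}(C)$ (interval envelope of a partition equals that of its non-crossing envelope), so the delicate uniqueness bookkeeping you anticipate is already done in Lemma~\ref{lem: P zero envelope}. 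The paper's route is shorter because its key bijection is simpler; yours has the merit of reusing the already-established Boolean formula and displaying the uniform pattern free/connected, Boolean/irreducible, classical/all partitions.
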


\begin{definition}
	The \emph{non-crossing envelope} $\operatorname{Env}^{\operatorname{nc}}(\pi)$ of a partition $\pi$ is the meet of all non-crossing partitions $\sigma \geq \pi$.

	A partition of $[k]$ is said to be \emph{connected} if $\operatorname{Env}^{\operatorname{nc}}(\pi) = \{[k]\}$.

	We denote the set of \emph{connected partitions} of $[k]$ by $\mathcal{P}_{\operatorname{conn}}(k)$.  Similarly, if $B \subseteq [n]$, we denote by $\mathcal{P}_{\operatorname{conn}}(B)$ the corresponding set of partitions obtained by relabeling the elements of $B$ by $1$, \dots, $k$ in order.
\end{definition}

\begin{lemma} \label{lem: colorwise non-crossing envelope}
	Fix a bigraph $\mathcal{G}$, $k \in \bN$, and $c: [k] \to \mathcal{V}$.  Let $\pi \in \mathcal{P}(k)$ be a partition such that each block is monochromatic with respect to $c$.  Let $\sigma$ be the partition obtained by, for each $v \in \mathcal{V}$, replacing $\pi|_{c^{-1}(v)}$ with its non-crossing envelope.  Then $\pi \in \tilde{\mathcal{P}}(c,\mathcal{G})$ if and only if $\sigma \in \mathcal{P}(c,\mathcal{G})$.
\end{lemma}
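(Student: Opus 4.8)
\emph{Plan.} I would check the three defining conditions of $\tilde{\mathcal{P}}(c,\mathcal{G})$ for $\pi$ against those of $\mathcal{P}(c,\mathcal{G})$ for $\sigma$, using throughout that $\sigma$ is monochromatic (it merges only blocks of a common color) and, crucially, non-crossing within each color class. Condition (1) then holds for both. Since distinct crossing blocks of $\sigma$ automatically have distinct colors, condition (3) of Definition \ref{def: compatible partitions} for $\sigma$ says precisely that any two crossing $\sigma$-blocks have colors in $\mathcal{E}_2$, whereas (the equal-block and equal-color cases landing in $\Delta$) condition (3) for $\tilde{\mathcal{P}}(c,\mathcal{G})$ says that any two crossing $\pi$-blocks of distinct colors have colors in $\mathcal{E}_2$. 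The implication ``$\sigma\in\mathcal{P}(c,\mathcal{G})\Rightarrow\pi\in\tilde{\mathcal{P}}(c,\mathcal{G})$'' is then immediate: $\pi\le\sigma$ transfers condition (2) verbatim, and if two distinct $\pi$-blocks of distinct colors cross, the two $\sigma$-blocks containing them are distinct and cross, so condition (3) for $\sigma$ supplies the needed $\mathcal{E}_2$-relation.

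For the converse, I would handle condition (2) for $\sigma$ via the elementary fact that if a partition $\rho$ of a finite ordered set is \emph{separated} at a point $j$ (no block meets both $\{{<}j\}$ and $\{{>}j\}$), then so is $\operatorname{Env}^{\operatorname{nc}}(\rho)$ --- indeed $\operatorname{Env}^{\operatorname{nc}}(\rho)$ is refined by the non-crossing partition obtained by concatenating the non-crossing envelopes of the two halves. Applying this with $\rho=\pi|_{c^{-1}(v)}$ (so $\operatorname{Env}^{\operatorname{nc}}(\rho)=\sigma|_{c^{-1}(v)}$): if $i_1\sim_\sigma i_2$, $i_1<j<i_2$, and $c(j)\ne v:=c(i_1)$, then $\rho$ cannot be separated at $j$, so some color-$v$ block of $\pi$ straddles $j$, and condition (2) for $\pi$ yields $(v,c(j))\in\mathcal{E}_1$ (the case $c(j)=v$ being trivial since $\mathcal{E}_1$ is reflexive).

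The heart of the argument is condition (3) for $\sigma$. Suppose $\sigma$-blocks $\hat B,\hat B'$ of colors $u\ne w$ cross, yet $(u,w)\notin\mathcal{E}_2$; then, by condition (3) for $\pi$ and irreflexivity of $\mathcal{E}_2$, no color-$u$ $\pi$-block crosses a color-$w$ $\pi$-block. I would deduce a contradiction from the following lemma: \emph{if $\alpha,\beta$ are partitions of disjoint ordered sets $L,R$ such that no block of $\alpha$ crosses a block of $\beta$ in $L\cup R$, then no block of $\operatorname{Env}^{\operatorname{nc}}(\alpha)$ crosses a block of $\operatorname{Env}^{\operatorname{nc}}(\beta)$} --- applied to $L=c^{-1}(u)$, $R=c^{-1}(w)$, $\alpha=\pi|_L$, $\beta=\pi|_R$, it says $\hat B$ and $\hat B'$ do not cross, the desired contradiction.

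To prove this lemma I would induct on the total number of blocks: if $\alpha\sqcup\beta$ is non-crossing we are done; otherwise, say $\alpha$ contains two crossing blocks $D_1,D_2$; merging them leaves $\operatorname{Env}^{\operatorname{nc}}(\alpha)$ unchanged and lowers the block count, so by induction it suffices to show $D_1\cup D_2$ still crosses no block of $\beta$. This reduces to the combinatorial fact: if $D_1$ crosses $D_2$ and neither crosses a set $E$ disjoint from them, then $D_1\cup D_2$ does not cross $E$. I would prove it from the ``gap'' form of Lemma \ref{lem: tetrachotomy} --- two disjoint sets fail to cross exactly when one lies strictly between two consecutive elements of the other --- by combining the two options for $(D_1,E)$ and for $(D_2,E)$: each combination either places $D_1\cup D_2$ inside a single gap of $E$, or places $E$ inside a single gap of $D_1\cup D_2$, or is incompatible with $D_1$ crossing $D_2$. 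I expect this last case analysis (and the bookkeeping with one-sided, i.e.\ ``$\pm\infty$'', gaps) to be the only genuinely delicate point; everything else is short once the reformulations above are in place.
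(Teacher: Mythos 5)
Your proof is correct, and the easy direction ($\sigma \in \mathcal{P}(c,\mathcal{G}) \Rightarrow \pi \in \tilde{\mathcal{P}}(c,\mathcal{G})$) together with the treatment of condition (2) via the separation-at-$j$ observation match the paper's argument in substance. Where you genuinely diverge is the hard part of condition (3). The paper argues ``from above,'' using minimality of the colorwise non-crossing envelope twice: given crossing $\sigma$-blocks $B_1, B_2$, if no $\pi$-block inside $B_1$ crossed $B_2$ one could subdivide $B_1$ along the gaps of $B_2$ and contradict minimality, and then a second application of the same subdivision argument produces a $\pi$-block inside $B_2$ crossing the one just found; this yields a crossing pair of $\pi$-blocks to which condition (3) for $\pi$ applies. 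You instead argue ``from below,'' isolating the standalone lemma that if no block of $\alpha$ crosses a block of $\beta$ (partitions of disjoint color classes) then the same holds for their non-crossing envelopes, proved by iteratively merging crossing blocks within one partition --- which leaves the envelope unchanged --- and checking via the gap form of Lemma \ref{lem: tetrachotomy} that the merged block $D_1 \cup D_2$ still crosses no block of $\beta$. I checked the case analysis you defer (splitting on whether $E$ sits in a gap of $D_i$ or $D_i$ sits in a gap of $E$, with one-sided gaps): in each configuration either $E$ lands in a single gap of $D_1 \cup D_2$, or $D_1$ and $D_2$ would be forced to be separated or nested, contradicting that they cross; so the fact holds and the induction closes. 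Your route is slightly longer but buys a cleaner, reusable statement about envelopes of partitions supported on disjoint index sets, whereas the paper's subdivision argument is more direct and stays entirely inside the minimality characterization of $\operatorname{Env}^{\operatorname{nc}}$.
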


\begin{definition}
	In the situation of the previous lemma, we call $\sigma$ the \emph{colorwise non-crossing envelope} of $\pi$.
\end{definition}

\begin{proof}[Proof of Lemma \ref{lem: colorwise non-crossing envelope}]
	Suppose that $\pi \in \tilde{\mathcal{P}}(c,\mathcal{G})$.  We check each of the necessary properties:
	\begin{enumerate}[(1)]
		\item Each block of $\sigma$ is monochromatic since it is obtained by joining together blocks of $\pi$ of the same color.
		\item Suppose that $i_1 < j < i_2$ and $i_1 \sim_\sigma i_2$.  Let $B$ be the block of $\sigma$ containing $i_1$ and $i_2$.  We claim that there is at least one block of $\pi|_B$ that intersects both $\{1,\dots,j-1\}$ and $\{j+1,\dots,k\}$.  If this were not the case, then $\pi|_B$ could be partitioned into the set $\pi_1$ of blocks contained in $\{1,\dots,j-1\}$ and the set $\pi_2$ blocks contained in $\{j+1,\dots,k\}$.  Then we could modify $\sigma$ by subdividing the block $B$ into the union $B_1$ of the blocks in $\pi_1$ and the union $B_2$ of the blocks in $\pi_2$.  This would contradict the fact that $\sigma$ is the least partition $\geq \pi$ that is non-crossing in each color.  Therefore, there is some block $B$ of $\pi$ and $i_1'$, $i_2' \in B'$ such that $i_1' < j < i_2'$.  Hence, $\pi \in \mathcal{P}(c,\mathcal{G})$ implies that $(c(i_1),c(j)) = (c(i_1'),c(j)) \in \mathcal{E}_1$.
		\item Suppose that two blocks $B_1$ and $B_2$ in $\sigma$ cross.  Since $\sigma$ is defined as the colorwise non-crossing envelope of $\pi$, it is impossible for $B_1$ and $B_2$ to cross when $c(B_1) = c(B_2)$.  So $c(B_1) \neq c(B_2)$.

		      We claim that there is some block $B_1'$ of $\pi$ contained in $B_1$ that crosses $B_2$.  Here note that the blocks of $\pi|_{B_1}$ are disjoint from $B_2$, and so $\pi|_{B_1} \cup \{B_2\}$ is a partition of some subset of $[k]$ to which we can apply Lemma \ref{lem: tetrachotomy}.  Write $B_2 = \{i_1 < \dots < i_m\}$.  If no blocks of $\pi|_{B_1}$ crossed $B_2$, then the blocks of $\pi|_{B_1}$ could be partitioned based on whether they fall into $\{i_1+1,\dots,i_2-1\}$, \dots, $\{i_{m-1}+1,\dots,i_m-1\}$ or $[k] \setminus \{i_1,i_1+1,\dots,i_m\}$; call the resulting subpartitions $\tau_1$, \dots, $\tau_m$.  Since $B_1$ crosses $B_2$, at least two of the $\tau_j$'s are non-empty.  Since the $\tau_j$'s cannot cross each other, this would contradict the fact that the colorwise non-crossing envelope of $\pi$ is $\sigma$.

		      Hence, let $B_1' \in \pi|_{B_1}$ cross $B_2$.  By repeating this argument, we see that since $B_2$ crosses $B_1'$, there must be some $B_2' \in \pi|_{B_2}$ that crosses $B_1'$, or otherwise we could contradict $\sigma$ being the colorwise non-crossing envelope of $\pi$ (as before by subdividing the block $B_1$).

		      Therefore, we have blocks $B_1' \in \pi|_{B_1}$ and $B_2' \in \pi|_{B_2}$ that cross each other.  Hence, since $\pi \in \tilde{\mathcal{P}}(c,\mathcal{G})$, we have $(c(B_1),c(B_2)) = (c(B_1'),c(B_2')) \in \mathcal{E}_2$.
	\end{enumerate}
	Therefore, $\sigma \in \mathcal{P}(c,\mathcal{G})$.

	Conversely, suppose that $\sigma \in \mathcal{P}(c,\mathcal{G})$, and we will show that $\pi \in \tilde{\mathcal{P}}(c,\mathcal{G})$.
	\begin{enumerate}[(1)]
		\item Since $\pi \leq \sigma$, each block of $\pi$ is monochromatic.
		\item Suppose that $i_1 < j < i_2$ and $i_1 \sim_\pi i_2$.  Then $i_1 \sim_\sigma i_2$, and hence $(c(i_1),c(j)) \in \mathcal{E}_1$.
		\item Suppose that two blocks $B_1'$ and $B_2'$ of $\pi$ cross.  If $B_1'$ and $B_2'$ have the same color, then $(c(B_1'),c(B_2')) \in \Delta$.  On the other hand, if $B_1'$ and $B_2'$ have different colors, then the blocks $B_1$ and $B_2$ of $\sigma$ that contain $B_1'$ and $B_2'$ will cross each other, and hence $(c(B_1'),c(B_2')) = (c(B_1),c(B_2)) \in \mathcal{E}_2$.  Either way, $(c(B_1'),c(B_2')) \in \mathcal{E}_2 \cup \Delta$.
	\end{enumerate}
	Therefore, $\pi \in \tilde{\mathcal{P}}(c,\mathcal{G})$.
\end{proof}

\begin{lemma} \label{lem: colorwise NC decomposition}
	There is a bijection
	\[
		\Phi: \bigsqcup_{\sigma \in \mathcal{P}(c,\mathcal{G})} \prod_{B \in \sigma} {\mathcal{P}}_{\operatorname{conn}}(B) \to \tilde{\mathcal{P}}(c,\mathcal{G}), \quad
	 (\pi_B)_{B \in \sigma} \mapsto \bigcup_{B \in \sigma} \pi_B.
	\]  For $\pi \in \tilde{\mathcal{P}}(c,\mathcal{G})$, the inverse $\Phi^{-1}(\pi)$ is given by taking $\sigma$ to be the colorwise non-crossing envelope of $\pi$ and $\pi_B = \pi|_B$ for $B \in \sigma$.
\end{lemma}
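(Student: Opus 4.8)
The plan is to mimic the proof of Lemma~\ref{lem: P zero decomposition}, with $\operatorname{Env}^0$ replaced by the colorwise non-crossing envelope and $\mathcal{NC}_{\operatorname{irr}}$ replaced by $\mathcal{P}_{\operatorname{conn}}$. Write $\Psi$ for the candidate inverse: it sends $\pi \in \tilde{\mathcal{P}}(c,\mathcal{G})$ to $(\pi|_B)_{B \in \sigma}$, where $\sigma$ is the colorwise non-crossing envelope of $\pi$. The lemma then reduces to four checks: (i) $\Phi$ maps into $\tilde{\mathcal{P}}(c,\mathcal{G})$; (ii) $\Psi$ maps into $\bigsqcup_{\sigma \in \mathcal{P}(c,\mathcal{G})} \prod_{B \in \sigma} \mathcal{P}_{\operatorname{conn}}(B)$; (iii) $\Psi \circ \Phi = \id$; and (iv) $\Phi \circ \Psi = \id$. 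The last of these is immediate: if $\sigma$ is the colorwise non-crossing envelope of $\pi$ then $\sigma \ge \pi$, so $\pi = \bigcup_{B \in \sigma} \pi|_B = \Phi(\Psi(\pi))$.

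The argument rests on two facts about non-crossing envelopes. \emph{Fact 1:} if $B_1,\dots,B_r$ are pairwise non-crossing subsets of a linearly ordered set and each $\pi_{B_i}$ is a connected partition of $B_i$, then $\operatorname{Env}^{\operatorname{nc}}(\bigcup_i \pi_{B_i}) = \{B_1,\dots,B_r\}$; indeed, any non-crossing $\tau \ge \bigcup_i \pi_{B_i}$ restricts on each $B_i$ to a non-crossing partition $\ge \pi_{B_i}$, hence equals $\{B_i\}$ by connectedness, so $\tau \ge \{B_1,\dots,B_r\}$, and the latter is itself non-crossing. \emph{Fact 2:} for any partition $\tau$ and any block $B$ of $\operatorname{Env}^{\operatorname{nc}}(\tau)$, the restriction $\tau|_B$ is connected. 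I would prove Fact 2 by contradiction: if $\operatorname{Env}^{\operatorname{nc}}(\tau|_B)$ were nontrivial, replacing the block $B$ inside $\operatorname{Env}^{\operatorname{nc}}(\tau)$ by the blocks of $\operatorname{Env}^{\operatorname{nc}}(\tau|_B)$ would yield a partition still $\ge \tau$; it is still non-crossing because any block $D \ne B$ of $\operatorname{Env}^{\operatorname{nc}}(\tau)$ is (by Lemma~\ref{lem: tetrachotomy}) either separated from $B$ or nested relative to $B$, and in each of those configurations a sub-block $C \subseteq B$ cannot cross $D$ (a short position check). This contradicts minimality of $\operatorname{Env}^{\operatorname{nc}}(\tau)$.

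For (i): given $\sigma \in \mathcal{P}(c,\mathcal{G})$ and $\pi_B \in \mathcal{P}_{\operatorname{conn}}(B)$ for $B \in \sigma$, set $\pi = \bigcup_B \pi_B$. Its blocks are monochromatic since $\pi \le \sigma$. Fix a color $v$. The blocks of $\sigma$ of color $v$ are pairwise non-crossing: a crossing between two such blocks $B,B'$ would, by Definition~\ref{def: compatible partitions}(3), force $(c(B),c(B')) \in \mathcal{E}_2$, contradicting irreflexivity of $\mathcal{E}_2$. Hence Fact 1 gives $\operatorname{Env}^{\operatorname{nc}}(\pi|_{c^{-1}(v)}) = \{B \in \sigma : c(B) = v\}$, so the colorwise non-crossing envelope of $\pi$ is exactly $\sigma$, which lies in $\mathcal{P}(c,\mathcal{G})$ by hypothesis; by Lemma~\ref{lem: colorwise non-crossing envelope}, $\pi \in \tilde{\mathcal{P}}(c,\mathcal{G})$. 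The same identification of the envelope yields (iii): $\Psi(\Phi((\pi_B)_B)) = (\pi|_B)_{B \in \sigma} = (\pi_B)_{B \in \sigma}$, using that the blocks $B$ are disjoint.

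For (ii): given $\pi \in \tilde{\mathcal{P}}(c,\mathcal{G})$ with colorwise non-crossing envelope $\sigma$, Lemma~\ref{lem: colorwise non-crossing envelope} gives $\sigma \in \mathcal{P}(c,\mathcal{G})$; and for $B \in \sigma$ of color $v$, $B$ is a block of $\operatorname{Env}^{\operatorname{nc}}(\pi|_{c^{-1}(v)})$, so Fact 2 shows $\pi|_B = (\pi|_{c^{-1}(v)})|_B$ is connected, i.e. $\pi|_B \in \mathcal{P}_{\operatorname{conn}}(B)$. Thus $\Psi$ is well-defined, and together with (iii) and (iv) the lemma follows. The main obstacle is Fact 2 (connectedness of the restriction to an envelope block) together with the case check that makes the refinement non-crossing; everything else is bookkeeping parallel to Lemma~\ref{lem: P zero decomposition}.
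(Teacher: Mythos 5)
Your proof is correct and follows essentially the same route as the paper's: define the candidate inverse $\Psi$ via the colorwise non-crossing envelope, check that $\Phi$ and $\Psi$ land in the asserted codomains using Lemma \ref{lem: colorwise non-crossing envelope}, and verify they are mutually inverse. The only difference is cosmetic: you isolate as explicit ``Facts'' the two properties of non-crossing envelopes (that a union of connected pieces over pairwise non-crossing blocks has those blocks as its envelope, and that the restriction of a partition to an envelope block is connected) which the paper invokes implicitly in a sentence each.
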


\begin{proof}
	Let $\Psi$ be the map that sends $\pi \in \mathcal{P}(c,\mathcal{G})$ to $(\pi|_B)_{B \in \sigma}$ where $\sigma$ is the colorwise non-crossing envelope of $\pi$.  To prove the lemma, we proceed by verifying that $\Phi$ and $\Psi$ map into the codomains that we have asserted, and that they are inverses of each other.

	First, suppose that $\sigma \in \mathcal{P}(c,\mathcal{G})$ and $\pi_B \in \mathcal{P}_{\operatorname{conn}}(B)$ for $B \in \sigma$.  We will show that $\pi = \Phi((\pi_B)_{B \in \sigma}) = \bigcup_{B \in \sigma} \pi_B$ is in $\tilde{\mathcal{P}}(c,\mathcal{G})$.  Note that the colorwise non-crossing envelope of $\pi$ is $\sigma$.  Indeed, let $\pi'$ denote the colorwise non-crossing envelope.  Then $\sigma$ is colorwise non-crossing and $\sigma \geq \pi$, so $\sigma \geq \pi'$.  Moreover, since each $\pi_B$ is connected, the blocks of $\pi_B$ must be one block in the colorwise non-crossing envelope, and so $\sigma \leq \pi'$.  Then by Lemma \ref{lem: colorwise non-crossing envelope}, we have $\pi = \Phi((\pi_B)_{B \in \sigma}) \in \tilde{\mathcal{P}}(c,\mathcal{G})$.

	The above argument also shows that $\Psi(\pi) = \sigma$ since $\sigma$ is the colorwise non-crossing envelope of $\pi$.  Thus, $\Psi \circ \Phi = \id$.

	Next, suppose that $\pi \in \tilde{\mathcal{P}}(c,\mathcal{G})$.  We will show that $\Psi(\pi) \in \bigsqcup_{\sigma \in \mathcal{P}(c,\mathcal{G})} \prod_{B \in \sigma} \mathcal{P}_{\operatorname{conn}}(B) \to \mathcal{P}(c,\mathcal{G})$.  Let $\sigma$ be the colorwise non-crossing envelope.  By Lemma \ref{lem: colorwise non-crossing envelope}, we have $\sigma \in \mathcal{P}(c,\mathcal{G})$.  For $B \in \sigma$, let $\pi_B = \pi|_B$.  Since $\sigma$ is the colorwise non-crossing envelope of $\pi$, we must have that the non-crossing envelope of $\pi_B$ is $B$, meaning that $\pi_B$ is irreducible.  Thus, $\Psi$ maps into the asserted codomain.

	We also clearly have that $\Phi((\pi_B)_{B \in \sigma}) = \bigcup_{B\in \sigma} \pi_B = \pi$, which means that $\Phi \circ \Psi = \id$.
\end{proof}

\begin{proof}[Proof of Proposition \ref{prop: formula via classical cumulants}]
	Suppose that $(A_v)_{v \in \mathcal{V}}$ are bigraph independent with respect to $\mathcal{G}$.  Let $k \in \bN$ and $c: [k] \to \mathcal{V}$ and $a_j \in A_{c(j)}$ for $j = 1$, \dots, $k$.  By \eqref{eqn:formula moments},
	\[
		\varphi(a_1 \dots a_k) = \sum_{\sigma \in\mathcal{P}(c,\mathcal{G})} \prod_{B\in \sigma} K_{|B|}^{\free}(a_j: j \in B).
	\]
	Recall the expression for free cumulants in terms of classical cumulants from \cite{arizmendi2015relations}:
	\[
		K_k^{\free}(a_1,\dots,a_k) = \sum_{\pi \in \mathcal{P}_{\operatorname{conn}}(k)} K_\pi^{\operatorname{class}}[a_1,\dots,a_k].
	\]
	Applying this formula on each block $B$ of $\sigma$ yields
	\[
		\varphi(a_1 \dots a_k) = \sum_{\sigma \in\mathcal{P}(c,\mathcal{G})} \prod_{B \in \sigma} \sum_{\pi_B \in \mathcal{P}_{\operatorname{conn}}(B)} K_{\pi_B}^{\operatorname{class}}(a_j: j \in B).
	\]
	Then applying Lemma \ref{lem: colorwise NC decomposition}, we obtain the desired relation \eqref{eq: joint moments via classical cumulants}.  The proof of the converse statement proceeds similarly.
\end{proof}

We conclude by giving a formula that expresses the Boolean cumulants, rather than the moments, of elements from $\mathcal{G}$-independent algebras.

\begin{proposition} \label{prop: boolean cumulants}
	Let
	$(A_v)_{v \in \mathcal{V}}$ be $\mathcal{G}$-independent. Let $k \in \bN$, $c: [k] \to \mathcal{V}$, and let $a_1 \in A_{c(1)}$, \dots, $a_k \in A_{c(k)}$.  Then
	\begin{align}
		K^{\rm Bool}(a_1,\ldots,a_k) & = \sum_{\pi \in \mathcal{P}_{\operatorname{irr}}(c,\mathcal{G})} K^{\rm free}_{\pi}(a_1,\ldots,a_k) \label{eq: joint boolean cumulants via free cumulants} \\
		                             & = \sum_{\pi \in \mathcal{P}_{\operatorname{irr}}(c,\mathcal{G})^0} K^{\rm Bool}_{\pi}(a_1,\ldots,a_k)                                                      \\
		                             & = \sum_{\pi \in \tilde{\mathcal{P}}_{\operatorname{irr}}(c,\mathcal{G})} K^{\rm class}_{\pi}(a_1,\ldots,a_k),
	\end{align}
	where the subscript $\operatorname{irr}$ indicates the intersection of each class of partitions with the class of irreducible partitions of $[k]$.  Conversely, each of these formulas for the Boolean cumulants implies $\mathcal{G}$-independence.
\end{proposition}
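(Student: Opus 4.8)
The plan is to deduce all three identities, together with their converses, from a single combinatorial fact: each of the partition families $\mathcal{P}(c,\mathcal{G})$, $\mathcal{P}(c,\mathcal{G})^0$ and $\tilde{\mathcal{P}}(c,\mathcal{G})$ \emph{factorizes over interval envelopes}. Writing $\mathcal{S}(c)$ for any one of these families and $\mathcal{S}_{\operatorname{irr}}(c) = \mathcal{S}(c) \cap \mathcal{P}_{\operatorname{irr}}(k)$, the assertion is that $\pi \mapsto \bigl(I, (\pi|_{I_j})_{j}\bigr)$, where $I = \{I_1 < \dots < I_m\} = \operatorname{Env}^I(\pi)$ and $c|_{I_j}$ is the coloring of $I_j$ obtained by relabeling, defines a bijection
\[
\mathcal{S}(c) \;\longrightarrow\; \bigsqcup_{\substack{I = \{I_1 < \dots < I_m\} \\ \text{interval partition of } [k]}} \ \prod_{j=1}^{m} \mathcal{S}_{\operatorname{irr}}\bigl(c|_{I_j}\bigr), \qquad \text{with inverse} \quad \bigl(I, (\pi_j)_j\bigr) \mapsto \bigcup_j \pi_j.
\]
This is the interval-envelope analogue of Lemma \ref{lem: P zero decomposition} (for the $\mathcal{P}(c,\mathcal{G})^0$-envelope) and Lemma \ref{lem: colorwise NC decomposition} (for the colorwise non-crossing envelope), and it is somewhat simpler than either.

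Granting the factorization, the rest is formal. For the first identity, apply the moment formula \eqref{eqn:formula moments} to every \emph{consecutive} sub-tuple $(a_i)_{i \in B}$; since $K^{\free}_\pi = \prod_{B' \in \pi} K^{\free}(a_i : i \in B')$, the factorization for the color string $c|_B$ turns \eqref{eqn:formula moments} into
\[
\varphi\Bigl( \prod_{i \in B} a_i \Bigr) = \sum_{\{B_1 < \dots < B_m\}} \ \prod_{j=1}^{m} \gamma_{B_j}, \qquad \gamma_{B'} := \sum_{\pi \in \mathcal{P}_{\operatorname{irr}}(c|_{B'}, \mathcal{G})} K^{\free}_\pi(a_i : i \in B'),
\]
the outer sum over interval partitions $\{B_1 < \dots < B_m\}$ of $B$. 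This is exactly the shape of the moment--Boolean-cumulant relation $\varphi(\prod_{i \in B} a_i) = \sum_{\{B_1 < \dots < B_m\}} \prod_j K^{\Bool}(a_i : i \in B_j)$ (see \cite{SpeicherWoroudi1997BooleanConvolution}), which determines the functionals $K^{\Bool}$ uniquely: inducting on $|B|$, the one-block term isolates $K^{\Bool}(a_i : i \in B)$ while every other term involves only strictly shorter intervals. Hence $\gamma_B = K^{\Bool}(a_i : i \in B)$ for every interval $B$, and $B = [k]$ is the first identity. The second and third identities are obtained the same way, starting from Proposition \ref{prop: formula via Boolean cumulants} (with $\mathcal{P}(c,\mathcal{G})^0$ and $K^{\Bool}_\pi = \prod_{B' \in \pi} K^{\Bool}(a_i : i \in B')$) and Proposition \ref{prop: formula via classical cumulants} (with $\tilde{\mathcal{P}}(c,\mathcal{G})$ and $K^{\operatorname{class}}_\pi = \prod_{B' \in \pi} K^{\operatorname{class}}(a_i : i \in B')$). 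For the converses: if one of the three displayed formulas holds for \emph{all} colored tuples, then expanding the generic moment--Boolean-cumulant relation by that formula and re-collapsing the interval sum via the factorization recovers \eqref{eqn:formula moments}, respectively the formula of Proposition \ref{prop: formula via Boolean cumulants}, respectively that of Proposition \ref{prop: formula via classical cumulants}, so $\mathcal{G}$-independence follows --- directly from Definition \ref{def: bigraph independence} in the first case, and from Proposition \ref{prop: formula via Boolean cumulants}, resp. Proposition \ref{prop: formula via classical cumulants}, in the other two.

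It remains to establish the factorization lemma, which is the technical core. The points to verify are: (i) $\mathcal{S}(c)$ is closed under restriction to a block $I_j$ of $\operatorname{Env}^I(\pi)$ --- conditions (1)--(3) of Definition \ref{def: compatible partitions} (and the $\mathcal{E}_2 \cup \Delta$-variants defining $\tilde{\mathcal{P}}(c,\mathcal{G})$) pass to any subset, and the relation $\prec_{un}$ is inherited verbatim on $I_j$ because a nesting chain $B \prec B'' \prec B'$ with $B, B' \subseteq I_j$ has $B'' \subseteq I_j$ as well ($I_j$ is an interval containing $\min B$ and $\max B$); (ii) $\pi|_{I_j}$ is irreducible because $I_j$ is, by construction, a block of the finest interval partition above $\pi$; (iii) conversely $\bigcup_j \pi_j \in \mathcal{S}(c)$ whenever each $\pi_j \in \mathcal{S}_{\operatorname{irr}}(c|_{I_j})$, since any witness to a failure of (1)--(3), or to $B \prec_{un} B'$ with $B \neq B'$, forces the indices involved into a common block of a single $\pi_j$ (hence into one interval $I_j$), so the alleged failure already occurs inside $\pi_j$; and (iv) $\operatorname{Env}^I(\bigcup_j \pi_j) = I$ exactly because every $\pi_j$ is irreducible. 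Together (i)--(iv) give the bijection.

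The step I expect to be most delicate is (iii) for the family $\mathcal{P}(c,\mathcal{G})^0$: one must check that gluing $\mathcal{P}(c,\mathcal{G})^0$-irreducible pieces along an interval partition neither manufactures an obstructed nesting nor destroys the requirement that $\prec_{un}$ be empty, which amounts to tracking how unobstructed nesting interacts with the boundaries between the intervals $I_j$. Everything downstream --- the three moment--cumulant rewritings and the three converses --- is then purely formal, relying only on \eqref{eqn:formula moments}, Propositions \ref{prop: formula via Boolean cumulants} and \ref{prop: formula via classical cumulants}, the standard product formulas for partitioned cumulants, and the uniqueness of Boolean cumulants.
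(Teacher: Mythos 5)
Your proposal is correct and follows essentially the same route as the paper: the paper likewise rests everything on the bijection $\mathcal{S}(c) \to \bigsqcup_{\sigma \in \mathcal{I}(k)} \prod_{B \in \sigma} \mathcal{S}_{\operatorname{irr}}(c|_B)$ over interval envelopes (which it asserts as ``easy to see,'' where you spell out the verification), and then identifies the irreducible sum with $K^{\Bool}_k$ via the defining moment--Boolean-cumulant recursion. The only cosmetic difference is that the paper phrases this last identification as an explicit induction on $k$, isolating the $\sigma = \{[k]\}$ term, whereas you invoke uniqueness of the solution to the interval recursion --- the same argument in different clothing.
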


\begin{proof}
	Let $\mathcal{I}(k)$ denote the set of interval partitions of $[k]$.  Recall that there is a bijection
	\[
		\mathcal{P}(k) \to \bigsqcup_{\sigma \in \mathcal{I}(k)} \prod_{B\in \sigma} \mathcal{P}_{\operatorname{irr}}(B)
	\]
	defined as follows:  Given $\pi \in \mathcal{P}(k)$, let $\sigma$ be its interval envelope and let $\pi_B = \pi|_B$ for $B \in \sigma$.  It is also easy to see that $\pi \in \mathcal{P}(c,\mathcal{G})$ if and only if $\pi_B \in \mathcal{P}(c|_B,\mathcal{G})$ for each $B \in \sigma$, since each nesting or crossing in $\pi$ must happen inside one of the irreducible partitions $\pi_B$.  The analogous statements also hold for $\mathcal{P}(c,\mathcal{G})^0$ and $\tilde{\mathcal{P}}(c,\mathcal{G})$.

	Assume that $(A_v)_{v \in \mathcal{V}}$ are $\mathcal{G}$-independent, and we will prove the first equality by induction on $k$. The base case $k = 1$ is immediate since both sides evaluate to $\varphi(a_1)$.  Suppose $k > 1$.  Starting from \eqref{eqn:formula moments}, we apply the bijection $\mathcal{P}(k) \to \bigsqcup_{\sigma \in \mathcal{I}(k)} \prod_{B\in \sigma} \mathcal{P}_{\operatorname{irr}}(B)$ to obtain
	\[
		\varphi(a_1 \dots a_k) = \sum_{\sigma \in \mathcal{I}(k)} \prod_{B \in \sigma} \sum_{\pi_B \in \mathcal{P}_{\operatorname{irr}}(B)} K_\pi^{\free}[a_j: j \in B].
	\]
	Let $\mathbbm{1}_k$ denote the partition $\{[k]\}$.  If $\sigma \neq \mathbbm{1}_k$, then each block $B$ has size strictly less than $k$, so we can apply the induction hypothesis on $B$.  Thus,
	\begin{align*}
		\varphi(a_1 \dots a_k) & = \sum_{\pi_0 \in \mathcal{P}_{\operatorname{irr}}(k)} K_{\pi_0}^{\free}[a_1,\dots,a_k] + \sum_{\sigma \in \mathcal{I}(k) \setminus \{\mathbbm{1}_k\}} \prod_{B \in \sigma} \sum_{\pi_B \in \mathcal{P}_{\operatorname{irr}}(B)} K_\pi^{\free}[a_j: j \in B] \\
		                       & = \sum_{\pi_0 \in \mathcal{P}_{\operatorname{irr}}(k)} + \sum_{\sigma \in \mathcal{I}(k) \setminus \{\mathbbm{1}_k\}} K_{|B|}^{\Bool}[a_j: j \in B].
	\end{align*}
	Applying the definition of Boolean cumulants,
	\[
		\varphi(a_1 \dots a_k) = \sum_{\pi_0 \in \mathcal{P}_{\operatorname{irr}}(k)} K_{\pi_0}^{\free}[a_1,\dots,a_k] + (\varphi(a_1 \dots a_k) - K_k^{\Bool}[a_1,\dots,a_k]).
	\]
	This rearranges to yield $K_k^{\Bool}[a_1,\dots,a_k] = \sum_{\pi_0 \in \mathcal{P}_{\operatorname{irr}}(k)} K_{\pi_0}^{\free}[a_1,\dots,a_k]$, which is the desired formula.  The other formulas, and the converses, follow by similar reasonings.
\end{proof}

\begin{remark} Computing the free or classical cumulants of $\mathcal{G}$ independent variables is rather difficult (let alone the monotone cumulants). For instance, one \emph{cannot} use the same argument to show that $K_k^{\free}[a_1,\dots,a_k]$ is the sum over connected partitions in $\mathcal{P}(c,\mathcal{G})$.  Indeed, in the preceding proof, we used the fact that a partition is in $\mathcal{P}(c,\mathcal{G})$ if and only if each of its irreducible components is in $\mathcal{P}(c,\mathcal{G})$.  The same reasoning could \emph{not} be applied for the connected components of $\pi$.  This is because $\mathcal{P}(c,\mathcal{G})$ has requirements about which blocks can be nested inside each other, which cannot be tested on each of the connected components in isolation.  More precisely, if $\pi \in \mathcal{P}(c,\mathcal{G})$ and if $B_1 \prec B_2$ are blocks in its non-crossing envelope, then $B_1$ will contain some block $B_1'$ of $\pi$ with $B_1' \prec B_2$, and then all the blocks of $\pi$ in $B_2$ are nested inside $B_1'$ and so Definition \ref{def: compatible partitions} (2) applies to them.  Thus, not all partitions in $\mathcal{P}(c|_{B_2},\mathcal{G})$ are allowable.
\end{remark}

\subsection{Free--(anti)monotone--Boolean mixtures in the tree framework} \label{subsec: tree independence}

As mentioned in Remark \ref{remark: BMF mixture}, there are now two different constructions of a mixture of Boolean, monotone, and free independence.  The first is given by digraph independence from \cite{JekelLiu2020,jekel2024general}, while the second is the one in this paper, since in a graph with $\mathcal{E}_2 = \varnothing$ there will be no tensor edges.  The difference between these two constructions was noted in \cite[Remark 3.24]{jekel2024general} in the case of Boolean--monotone mixtures; this remark showed that for a given set of pairwise Boolean and (anti)monotone relationships, the mixture in the digraph framework does not agree with the mixture in the BMT framework of \cite{arizmendi2025bmt} in general (though they agree when the edge set is a partial order), and as shown above our bigraph framework extends the BMT framework.

We will now consider Boolean-(anti)monotone-free pairwise relations in general and comment on the relationship between the digraph framework of \cite{jekel2024general} and our bigraph framework.  Consider a bigraph $\mathcal{G}$ with $\mathcal{E}_2 = \varnothing$.  Let $G$ be the digraph $(\mathcal{V},\mathcal{E}_1 \setminus \Delta)$.  Consider algebras $A_v$ for $v \in \mathcal{V}$ in a non-commutative probability space $(A,\varphi)$.  Let $c: [k] \to \mathcal{V}$ and $a_i \in A_{c(i)}$.
\begin{itemize}
	\item If the algebras $A_v$ are digraph independent with respect to $G$, then by \cite[Definition 3.18]{jekel2024general},
	      \[
		      \varphi(a_1 \dots a_k) = \sum_{\pi \in \mathcal{NC}(c,G)} K^{\Bool}_\pi(a_1,\ldots,a_k),
	      \]
	      where $\mathcal{NC}(c,G)$ is given by \cite[Definition 3.12]{jekel2024general} (explained further below).
	\item If the algebras $A_v$ are bigraph independent with respect to $\mathcal{G}$, then
	      \[
		      \varphi(a_1 \dots a_k) = \sum_{\pi \in \pcg^{0}} K^{\Bool}_\pi(a_1,\ldots,a_k).
	      \]
\end{itemize}
The difference between these two formulas lies completely in the two sets of partitions.  In the both cases, the partitions are non-crossing; indeed, in the bigraph setting crossings are excluded when $\mathcal{E}_2 = \varnothing$.  In both cases, the partitions are required to be consistently colored by $c$.  However, the two frameworks have different requirements about what happens when one block is nested inside another:
\begin{itemize}
	\item For $\pi$ to be in $\mathcal{NC}(c,G)$ means that whenever block $V_2$ is \emph{immediately} nested inside block $V_1$, then $(c(V_1),c(V_2)) \in \mathcal{E}_1 \setminus \Delta$.
	\item For $\pi$ to be in $\pcg^{0}$ means that whenever block $V_2$ is nested inside block $V_1$, then $(c(V_1),c(V_2)) \in \mathcal{E}_1$.  Moreover, if $V_2$ is immediately nested inside block $V_1$, then $c(V_1) \neq c(V_2)$.
\end{itemize}
It is thus clear that $\pcg^{0} \subseteq \mathcal{NC}(c,G)$, but the inclusion is strict in general since $\mathcal{NC}(c,G)$ only considers immediate children in the nesting forest of $\pi$, while $\pcg^{0}$ consider all descendants.

Both the digraph and bigraph independences (when $\mathcal{E}_2 = \varnothing$) fit into the tree independence framework of \cite{JekelLiu2020}.  Let $\mathcal{T}_{\mathcal{V},\free}$ be the rooted tree whose the vertices are the alternating words on the alphabet $\mathcal{V}$, with $w$ being adjacent to $jw$ whenever $jw$ is an alternating word, and with the root being the empty word.  For every rooted subtree $\mathcal{T}$ of $\mathcal{T}_{\mathcal{V},\free}$ one can define $\mathcal{T}$-independence which satisfies the formula \cite[Theorem 4.21]{JekelLiu2020}
\[
	\varphi(a_1 \dots a_k) = \sum_{\pi \in \mathcal{NC}(c,\mathcal{T})} K^{\Bool}_\pi(a_1,\ldots,a_k),
\]
where $\mathcal{NC}(c,\mathcal{T})$ is the set of non-crossing partitions that are consistently colored by $c$, satisfying the following condition:  Given a chain $V_1$, $V_2$, \dots, $V_k$ where $V_1$ is an outer block of $\pi$ and $V_{j+1}$ is immediately nested inside $V_j$, then $c(V_k) \dots c(V_1) \in \mathcal{T}$ \cite[Definition 4.18]{JekelLiu2020}.  Digraph independence corresponds to the case where $\mathcal{T} = \operatorname{Walk}(G)$ is the set of walks in the digraph $G$, written from right to left; see \cite[Definition 3.18]{JekelLiu2020} and \cite{jekel2024general}.  Meanwhile, bigraph independence (when $\mathcal{E}_2 = \varnothing$) corresponds to the tree
\begin{equation} \label{eq: tree for bigraph ind}
	\mathcal{T} = \{v_1 \dots v_k \in \mathcal{T}_{\mathcal{V},\free}: i < j \implies (v_j,v_i) \in \mathcal{E}_1\}.
\end{equation}

While the tree $\mathcal{T}$ in \eqref{eq: tree for bigraph ind} is different from $\operatorname{Walk}(G)$ in general, they will agree when $\mathcal{E}_1$ is transitive.  Indeed, if $v_k \dots v_1$ is in $\operatorname{Walk}(G)$, then $(v_1,v_2) \in \mathcal{E}_1$, \dots, $(v_{k-1},v_k) \in \mathcal{E}_1$, and hence by transitivity, $(v_i,v_j) \in \mathcal{E}_1$ for $i < j$.  Therefore, we have the following observation.

\begin{proposition}
	Let $\mathcal{G}$ be a bigraph with $\mathcal{E}_2 = \varnothing$ and $\mathcal{E}_1$ transitive.  Then bigraph independence with respect to $\mathcal{G}$ is equivalent to digraph independence with respect to $G = (\mathcal{V},\mathcal{E}_1)$.
\end{proposition}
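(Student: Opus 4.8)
The plan is to read the result off from the tree-independence picture developed in this subsection. Recall that when $\mathcal{E}_2 = \varnothing$, a family $(A_v)_{v\in\mathcal{V}}$ is bigraph independent with respect to $\mathcal{G}$ precisely when it is $\mathcal{T}$-independent in the sense of \cite{JekelLiu2020} for the rooted subtree $\mathcal{T}\subseteq\mathcal{T}_{\mathcal{V},\free}$ of \eqref{eq: tree for bigraph ind} (this combines Proposition \ref{prop: formula via Boolean cumulants} with \cite[Theorem 4.21]{JekelLiu2020}), while it is digraph independent with respect to $G = (\mathcal{V},\mathcal{E}_1)$ precisely when it is $\operatorname{Walk}(G)$-independent. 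Since the moment formula $\varphi(a_1\dots a_k) = \sum_{\pi\in\mathcal{NC}(c,\mathcal{T})}K^{\Bool}_\pi(a_1,\dots,a_k)$ of \cite[Theorem 4.21]{JekelLiu2020} depends on the tree only through the sets $\mathcal{NC}(c,\mathcal{T})$, it suffices to prove the purely combinatorial identity $\mathcal{T} = \operatorname{Walk}(G)$: this forces $\mathcal{NC}(c,\mathcal{T}) = \mathcal{NC}(c,\operatorname{Walk}(G))$ for every $c$, hence equality of all joint moments, hence equivalence of the two independences. It is harmless that the proposition writes $G = (\mathcal{V},\mathcal{E}_1)$ rather than $(\mathcal{V},\mathcal{E}_1\setminus\Delta)$ as in the surrounding text, since $\mathcal{T}_{\mathcal{V},\free}$ consists of alternating words and a walk along such a word never traverses a loop.

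For the identity $\mathcal{T} = \operatorname{Walk}(G)$ I would argue both inclusions directly. The inclusion $\operatorname{Walk}(G)\subseteq\mathcal{T}$ is exactly the computation already carried out just before the statement: a word $v_k\dots v_1\in\operatorname{Walk}(G)$ has $(v_i,v_{i+1})\in\mathcal{E}_1$ for every consecutive pair, and transitivity of $\mathcal{E}_1$ promotes this to $(v_i,v_j)\in\mathcal{E}_1$ for all $i<j$, so the word lies in $\mathcal{T}$. Conversely, a word $v_1\dots v_k\in\mathcal{T}$ is by definition alternating, so its consecutive letters are distinct, and the defining condition of $\mathcal{T}$ specialized to consecutive indices gives $(v_{j+1},v_j)\in\mathcal{E}_1\setminus\Delta$, i.e.\ an edge of $G$; hence the word is a walk in $G$. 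This yields $\mathcal{T} = \operatorname{Walk}(G)$ and completes the proof.

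There is no real obstacle here: everything rests on the two tree-independence identifications, which were established or cited above, after which the argument collapses to the elementary observation just made, whose sole content is the transitivity hypothesis on $\mathcal{E}_1$. If one preferred a proof that bypasses the tree formalism, the alternative would be to compare directly the Boolean-cumulant formulas $\sum_{\pi\in\mathcal{NC}(c,G)}K^{\Bool}_\pi$ and $\sum_{\pi\in\pcg^{0}}K^{\Bool}_\pi$ and check that transitivity forces $\mathcal{NC}(c,G) = \pcg^{0}$: the inclusion $\pcg^{0}\subseteq\mathcal{NC}(c,G)$ always holds, and for the reverse one uses that if a block $V_2$ is immediately nested in a block $V_1$ then $(c(V_1),c(V_2))\in\mathcal{E}_1\setminus\Delta$, so along any descending chain of blocks transitivity yields $(c(V_1),c(V'))\in\mathcal{E}_1$ for every descendant $V'$ of $V_1$, while the extra constraint $c(V_1)\neq c(V_2)$ in the definition of $\pcg^{0}$ for immediate children is automatic from irreflexivity of $\mathcal{E}_1\setminus\Delta$. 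This more hands-on route reaches the same conclusion.
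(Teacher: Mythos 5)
Your proof is correct and follows essentially the same route as the paper: the paper's argument is precisely the paragraph preceding the proposition, which identifies both independences with tree independence in the sense of \cite{JekelLiu2020} and observes that transitivity of $\mathcal{E}_1$ forces $\mathcal{T} = \operatorname{Walk}(G)$, exactly as you argue (your remark that loops are irrelevant because the words are alternating, and your sketched alternative comparing $\mathcal{NC}(c,G)$ with $\mathcal{P}(c,\mathcal{G})^0$ directly, are both sound but not needed beyond what the paper already does).
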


We note that in the special case where $\mathcal{E}_2 = \varnothing$ and $\mathcal{E}_1$ is transitive and symmetric, one obtains a mixture of Boolean and free independence, which by the above proposition agrees with the BF independence of Kula and Wysoczanski \cite{kula2013example}.  In general, however, the mixtures of Boolean and free independence from the bigraph framework and the digraph framework are distinct.

For general Boolean, (anti)monotone, free mixtures, there are several other ways to form a mixture which interpolate between the bigraph and digraph versions.  For instance, for $m \in \bN$, let
\[
	\mathcal{T}^{(m)} = \{v_1 \dots v_k \in \mathcal{T}_{\mathcal{V},\free}: i < j \leq i + m \implies (v_j,v_i) \in \mathcal{E}_1\}.
\]
Then $\mathcal{T}^{(1)} = \operatorname{Walk}(G)$ and $\bigcap_{m \in \bN} \mathcal{T}^{(m)} = \mathcal{T}$.  Another way to interpolate is to set
\[
	\mathcal{S}^{(m)} = \{v_1 \dots v_k \in \mathcal{T}_{\mathcal{V},\free}: i < j \leq \max(i+1,k-m) \implies (v_j,v_i) \in \mathcal{E}_1\}.
\]
Thus, $\mathcal{S}^{(m)}$ checks the adjacency condition $(v_j,v_i) \in \mathcal{E}_1$ when either $j = i+1$ or when $j \leq k-m$.  Then $\mathcal{S}^{(0)} = \operatorname{Walk}(G)$ and $\bigcup_{m \in \bN} \mathcal{S}^{(m)} = \mathcal{T}$.

\subsection{Operad associativity} \label{sec:operad associativity}

In this section, we present a natural associativity property under ``composing'' independences with respect to different bigraphs.  Similar to the setting of tree independence and digraph independence studied in \cite{JekelLiu2020}, we formulate a symmetric operad of bigraphs.  We refer to \cite{Leinster2004} for general background on operads, and we recall a self-contained definition here.

\begin{definition}
	A \emph{(plain) operad} consists of a sequence $(P(n))_{n \in \bN}$ of sets, a distinguished element $\id \in P(1)$, and composition maps
	\[
		\circ_{m,n_1,\dots,n_m}: P(m) \times P(n_1) \times \dots \times P(n_m) \to P(n_1 + \dots + n_m)
	\]
	denoted
	\[
		(f, f_1,\dots, f_m) \mapsto f(f_1,\dots,f_m),
	\]
	such that the following axioms hold:
	\begin{itemize}
		\item \emph{Identity:} For $f \in P(m)$, we have $f(\id,\dots,\id) = f$ and $\id(f) = f$.
		\item \emph{Associativity:} Given $f \in P(m)$ and $f_j \in P(n_j)$ for $j = 1$, \dots, $m$ and $f_{j,i} \in P(m_{j,i})$ for $i = 1$, \dots, $I_j$ and $j = 1$, \dots, $n$, we have
		      \begin{multline*}
			      f(f_1(f_{1,1},\dots,f_{1,I_1}), \dots, f_m(f_{m,1}, \dots, f_{m,I_m})) \\
			      = [f(f_1,\dots,f_m)](f_{1,1},\dots, f_{1,I_1}, \dots \dots , f_{1,m}, \dots, f_{m,I_m}).
		      \end{multline*}
	\end{itemize}
	The elements of $P(m)$ are said to have \emph{arity $m$}.
\end{definition}

\begin{definition}
	A \emph{symmetric operad} consists of an operad $(P(n))$ together with a right action of the symmetric group $S_m$ on $P(m)$, denoted $(f,\sigma) \mapsto f_\sigma$, satisfying the following axioms:
	\begin{itemize}
		\item Let $f \in P(m)$ and $f_j \in P(n_j)$ for $j = 1,\dots, m$.  Let $\sigma \in S_m$, and let $\tilde{\sigma} \in S_{n_1+\dots+n_m}$ denote the element that rearranges the order of the blocks $\{1,\dots,n_1\}$, $\{n_1+1,\dots,n_1 + n_2\}$, $\{n_1+n_2+1,\dots,n_1 + n_2 + n_3\}$ according to $\sigma$.  Then
		      \[
			      f_\sigma(f_{\sigma(1)},\dots,f_{\sigma(m)}) = [f(f_1,\dots,f_m)]_{\tilde{\sigma}}.
		      \]
		\item Let $f$ and $f_j$ be as above.  Let $\sigma_j \in S_{n_j}$, and let $\sigma \in S_{n_1+\dots+n_m}$ be the element which permute the elements within each block $\{n_1+\dots+n_{j-1}+1,\dots, n_1 + \dots + n_j\}$ by the permutation $s_j$, without changing the order of the blocks.  Then
		      \[
			      f((f_1)_{\sigma_1},\dots,(f_m)_{\sigma_m}) = f(f_1,\dots,f_m)_\sigma.
		      \]
	\end{itemize}
\end{definition}

\begin{definition}[Operad of bigraphs] \label{def: bigraph operad}
	We define the operad $\operatorname{BG}$ as follows.  For each $m \in \bN$, let $\operatorname{BG}(m)$ be the set of bigraphs on vertex set $[m]$.

	Let $\id$ be the bigraph $(\{1\},\{(1,1)\},\varnothing)$,

	Define the composition operation as follows.  Let $\tilde{\mathcal{G}} = ([m],\tilde{\mathcal{E}}_1,\tilde{\mathcal{E}}_2) \in \operatorname{BG}(m)$ and for $j = 1$, \dots, $m$ let $\mathcal{G}^j = ([n_j],\mathcal{E}_1^j,\mathcal{E}_2^j) \in \operatorname{BG}(n_j)$.  Let $N_j = n_1 + \dots + n_j$ and $N = N_m$, and define $\iota_j: [n_j] \to [N]$ by $\iota_j(i) = N_{j-1} + i$, so that $[N] = \bigsqcup_{j=1}^m \iota_j([n_j])$.

	Let $\tilde{\mathcal{G}}(\mathcal{G}^1,\dots,\mathcal{G}^m)$ be the bigraph $\mathcal{G} = ([N],\mathcal{E}_1,\mathcal{E}_2)$ where $\mathcal{E}_1$ and $\mathcal{E}_2$ are defined as follows:
	\begin{itemize}
		\item If $v, w \in [n_j]$ and $i \in \{1,2\}$, then $(\iota_j(v),\iota_j(w)) \in \mathcal{E}_i$ if and only if $(v,w) \in \mathcal{E}_i^j$.
		\item If $v \in [n_j]$ and $w \in [n_{j'}]$ with $j \neq j'$, then $(\iota_j(v),\iota_{j'}(w)) \in \mathcal{E}_i$ if and only if $(j,j') \in \tilde{\mathcal{E}}_i$.
	\end{itemize}

	The permutation action is defined as follows:  Given $\mathcal{G} = ([m],\mathcal{E}_1,\mathcal{E}_2) \in \operatorname{BG}(m)$, define $\mathcal{G}_\sigma = ([m],(\sigma^{-1} \times \sigma^{-1})(\mathcal{E}_1),(\sigma^{-1} \times \sigma^{-1})(\mathcal{E}_2))$.
\end{definition}

\begin{observation}
	$\operatorname{BG}$ with the composition and permutations operations described above is a symmetric operad.
\end{observation}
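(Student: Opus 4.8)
The plan is to verify each axiom of a (plain) operad and of a symmetric operad directly from Definition~\ref{def: bigraph operad}; every verification reduces to unwinding the index maps $\iota_j$ and running a short case analysis according to which sub-bigraph a given pair of vertices lies in. First I would check that the composition $\tilde{\mathcal{G}}(\mathcal{G}^1,\dots,\mathcal{G}^m)$ is again a bigraph in the sense of Definition~\ref{def: bigraph}: reflexivity of $\mathcal{E}_1$ holds because every diagonal pair $(\iota_j(v),\iota_j(v))$ lies in the ``same-block'' case and $\mathcal{E}_1^j$ is reflexive; irreflexivity of $\mathcal{E}_2$ holds because a diagonal pair is always in the same-block case and each $\mathcal{E}_2^j$ is irreflexive; symmetry of $\mathcal{E}_2$ holds from symmetry of each $\mathcal{E}_2^j$ in the same-block case and symmetry of $\tilde{\mathcal{E}}_2$ in the cross-block case. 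Likewise $\mathcal{G}_\sigma$ is a bigraph since relabelling by a bijection preserves reflexivity, irreflexivity and symmetry, and one has $(\mathcal{G}_\sigma)_\tau = \mathcal{G}_{\sigma\tau}$ because $(a,b) \in \mathcal{E}_s((\mathcal{G}_\sigma)_\tau)$ iff $(\sigma\tau(a),\sigma\tau(b)) \in \mathcal{E}_s$, so the $S_m$-action is a right action.

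For the identity axiom, taking $\tilde{\mathcal{G}} = \mathcal{G}$ and each $\mathcal{G}^j = \id$ forces $n_j = 1$ and each $\iota_j$ to be the inclusion $[1] \hookrightarrow \{j\}$; the same-block case then only produces diagonal pairs (contributing $(j,j)$ to $\mathcal{E}_1$ and nothing to $\mathcal{E}_2$, consistently with $\mathcal{G}$ being a bigraph), while the cross-block case recovers exactly $\mathcal{E}_s(\mathcal{G})$, so $\mathcal{G}(\id,\dots,\id) = \mathcal{G}$. For $\id(\mathcal{G})$ one has $m = 1$ and $\iota_1 = \mathrm{id}$, so every pair of vertices is in the same-block case and the edges are read directly off $\mathcal{G}$; hence $\id(\mathcal{G}) = \mathcal{G}$.

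The associativity axiom is the main point. Given $\tilde{\mathcal{G}}$ on $[m]$, bigraphs $\mathcal{G}^j$ on $[n_j]$, and bigraphs $\mathcal{G}^{j,i}$ on $[m_{j,i}]$, both sides of the associativity identity are bigraphs on the vertex set obtained by iterated disjoint union, which I would canonically identify with the set of triples $(j,i,\ell)$. I claim that for vertices $v=(j,i,\ell)$ and $w=(j',i',\ell')$ and each $s \in \{1,2\}$, membership $(v,w) \in \mathcal{E}_s$ holds in \emph{both} bigraphs under the same condition: governed by $\mathcal{E}^{j,i}_s$ if $(j,i)=(j',i')$; by $\mathcal{E}^{j}_s$ at $(i,i')$ if $j=j'$ but $i \neq i'$; and by $\tilde{\mathcal{E}}_s$ at $(j,j')$ if $j \neq j'$. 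For the left-hand side this follows by applying the composition rule first for $\tilde{\mathcal{G}}$ (which isolates $j\neq j'$) and then, inside the $j$-th slot, for $\mathcal{G}^j$ (which separates $i \neq i'$ from $i=i'$). For the right-hand side one first computes $\tilde{\mathcal{G}}(\mathcal{G}^1,\dots,\mathcal{G}^m)$, a bigraph on $[N]$ in which within-block-$j$ pairs carry $\mathcal{E}^j_s$ and cross-block pairs carry $\tilde{\mathcal{E}}_s$, and then composes with the $\mathcal{G}^{j,i}$'s; a pair not lying inside a single innermost block corresponds to the $i$-th vertex of block $j$ and the $i'$-th vertex of block $j'$ in $[N]$, and reading the edge there produces exactly the same three cases. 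The one thing to check carefully is that the iterated index maps $\iota$ on the two sides agree, which holds because concatenation of finite index sets is associative; I expect this bookkeeping to be the main, albeit routine, obstacle.

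Finally, the two symmetric-operad axioms are handled in the same style. For the first, permuting the input bigraphs by $\sigma$ and simultaneously relabelling by $\tilde{\sigma}$ produces the same relabelling of $[N]$ on both sides, once one checks that $\iota_{\sigma(j)}$ composed with the block-reordering agrees with $\iota_j$ followed by $\tilde{\sigma}$. For the second, a within-block permutation $\sigma_j$ of $\mathcal{G}^j$ affects only the same-block-$j$ edges---precisely those permuted by the corresponding block of $\sigma$ on the right---while leaving all cross-block edges untouched, since those depend only on which block each vertex occupies. In both cases the content is purely the bookkeeping of the index maps, and no further structure is needed.
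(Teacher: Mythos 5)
Your verification is correct and is exactly the routine check the paper has in mind: the paper itself states ``We leave the routine verification of this fact as an exercise,'' so there is no written proof to diverge from, and your case analysis (same innermost block / same outer block / different outer blocks) together with the index bookkeeping for the maps $\iota_j$ is the intended argument. All the individual checks you outline (well-definedness of the composite bigraph, the two identity axioms, associativity via the three-case description of edges on the iterated disjoint union, and the two equivariance axioms) are sound.
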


We leave the routine verification of this fact as an exercise.  We now turn to the statement and proof of associativity for bigraph independence.

\begin{theorem} \label{thm: operad associativity}
	Let $\tilde{\mathcal{G}} \in \operatorname{BG}(m)$ and let $\mathcal{G}^j \in \operatorname{BG}(n_j)$ for $j = 1$, \dots, $m$.  Let $\mathcal{G}$ be the composition $\tilde{\mathcal{G}}(\mathcal{G}^1,\dots,\mathcal{G}^m)$ given by Definition \ref{def: bigraph operad}.  Let $N = n_1 + \dots + n_m$ and let $\iota_j: [n_j] \to [N]$ be the $j$th inclusion as in Definition \ref{def: bigraph operad}.

	Let $(A,\varphi)$ be a $\mathrm{C}^*$-probability space and let $(A_i)_{i \in [N]}$ be $*$-subalgebras.  For $j = 1$, \dots, $m$, let $B_j$ be the $*$-subalgebra generated by $A_{\iota_j(i)}$ for $i = 1$, \dots, $n_j$.  The following are equivalent:
	\begin{enumerate}
		\item $(A_i)_{i \in [N]}$ are $\mathcal{G}$-independent.
		\item $(B_j)_{j \in [m]}$ are $\tilde{\mathcal{G}}$-independent and for every $j \in [m]$, the subalgebras $(A_{\iota_j(i)})_{i \in [n_j]}$ are $\mathcal{G}^j$-independent.
	\end{enumerate}
\end{theorem}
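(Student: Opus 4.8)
The plan is to characterise both conditions through the Boolean‑cumulant moment formula of Proposition~\ref{prop: formula via Boolean cumulants} and to reduce the equivalence to one combinatorial bijection. Given $c\colon[k]\to[N]$, write $\bar c\colon[k]\to[m]$ for the induced \emph{coarse} colouring, so $c(\ell)=\iota_{\bar c(\ell)}(i_\ell)$ for a unique $i_\ell$; for $V\subseteq[k]$ on which $\bar c$ is constant $=j$, let $c_V\colon V\to[n_j]$ send $\ell\mapsto i_\ell$. Definition~\ref{def: bigraph operad} shows that the induced sub‑bigraph of $\mathcal{G}$ on $\iota_j([n_j])$ equals $\mathcal{G}^j$ and that, for $j\neq j'$, whether $(\iota_j(v),\iota_{j'}(w))\in\mathcal{E}_i$ depends only on whether $(j,j')\in\tilde{\mathcal{E}}_i$. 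The combinatorial core is that the map
\[
\Phi\colon\ \bigsqcup_{\rho\in\mathcal{P}(\bar c,\tilde{\mathcal{G}})^0}\ \prod_{V\in\rho}\mathcal{P}_{\operatorname{irr}}(c_V,\mathcal{G}^{\bar c(V)})^0\ \longrightarrow\ \mathcal{P}(c,\mathcal{G})^0,\qquad (\rho,(\pi_V)_{V})\mapsto\bigcup_{V\in\rho}\pi_V,
\]
is a bijection, where $\mathcal{P}_{\operatorname{irr}}$ denotes the irreducible members of the indicated class (for $m=1$ this reduces to the decomposition of $\mathcal{P}(c,\mathcal{G})^0$ along its interval envelope).

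Granting this, $(2)\Rightarrow(1)$ goes as follows: for $c$ and $a_\ell\in A_{c(\ell)}$, viewing $a_1\cdots a_k$ as a length‑$k$ word in the $B_j$'s with colouring $\bar c$ and applying the Boolean form of $\tilde{\mathcal{G}}$‑independence gives $\varphi(a_1\cdots a_k)=\sum_{\rho\in\mathcal{P}(\bar c,\tilde{\mathcal{G}})^0}\prod_{V\in\rho}K^{\Bool}_{|V|}(a_\ell:\ell\in V)$; each family $(A_{\iota_{\bar c(V)}(i)})_i$ being $\mathcal{G}^{\bar c(V)}$‑independent, Proposition~\ref{prop: boolean cumulants} rewrites the inner cumulant as $\sum_{\pi_V\in\mathcal{P}_{\operatorname{irr}}(c_V,\mathcal{G}^{\bar c(V)})^0}K^{\Bool}_{\pi_V}(a_\ell:\ell\in V)$, and re‑summing through $\Phi$ yields $\sum_{\pi\in\mathcal{P}(c,\mathcal{G})^0}K^{\Bool}_\pi(a_1,\dots,a_k)$, i.e.\ $\mathcal{G}$‑independence. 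For $(1)\Rightarrow(2)$, the $\mathcal{G}^j$‑independence of $(A_{\iota_j(i)})_i$ is immediate from Lemma~\ref{lem: restriction to subgraph} (the restriction of $\mathcal{G}$ to $\iota_j([n_j])$ being $\mathcal{G}^j$); running the above identities backwards shows that $\varphi(a_1\cdots a_k)=\sum_{\rho\in\mathcal{P}(\bar c,\tilde{\mathcal{G}})^0}K^{\Bool}_\rho(a_1,\dots,a_k)$ for every word $a_1\cdots a_k$ in the generators of the $B_j$'s, and by multilinearity together with the standard principle that such a cumulant‑type moment formula may be verified on generating sets (equivalently, via the expansion of Boolean cumulants of products), this is precisely the $\tilde{\mathcal{G}}$‑independence of $(B_j)_j$.

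The main obstacle is the bijection $\Phi$. That $\Phi$ lands in $\mathcal{P}(c,\mathcal{G})^0$ is the easier half: by Lemma~\ref{lem: tetrachotomy}, each clause of Definition~\ref{def: compatible partitions} for $\pi:=\bigcup_V\pi_V$ splits into the case of two blocks inside one $V$ (settled by $\pi_V\in\mathcal{P}(c_V,\mathcal{G}^{\bar c(V)})$) and the case of blocks in distinct $V,V'$ (settled by $\rho\in\mathcal{P}(\bar c,\tilde{\mathcal{G}})$ and the cross‑colour composition rule); and an unobstructed nesting $B\prec_{un}B'$ in $\pi$ would force $B,B'$ into a common $V$ — because $\rho$ restricted to each coarse colour is, as one checks, an interval partition of that fibre — contradicting $\pi_V\in(\cdot)^0$. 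The delicate half is the inverse, i.e.\ recovering $(\rho,(\pi_V))$ from $\pi\in\mathcal{P}(c,\mathcal{G})^0$. Here $\rho$ is \emph{not} the coarsewise interval envelope of $\pi$: if an inner block of coarse colour $j$ sits nested beneath an outer block of the same colour with a block of colour $j'$ sandwiched between and $(j,j')\notin\tilde{\mathcal{E}}_2$, the two $j$‑blocks must stay in different blocks of $\rho$. Instead $\rho$ is produced by an $\operatorname{Env}^0$‑type construction (cf.\ Lemma~\ref{lem: P zero envelope}) adapted to the two levels — roughly, first collapsing same‑coarse‑colour crossings and then merging two $\pi$‑blocks of a common coarse colour only when every block nested strictly between them forms a $\tilde{\mathcal{E}}_2$‑edge with their colour, the exact analogue of the unobstructed‑nesting condition at the coarse level. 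One then checks, much as in Lemma~\ref{lem: P zero envelope}, that this $\rho$ lies in $\mathcal{P}(\bar c,\tilde{\mathcal{G}})^0$, that each $\pi|_V$ is irreducible and lies in $\mathcal{P}(c_V,\mathcal{G}^{\bar c(V)})^0$, and that $\rho$ is the unique coarsening of $\pi$ with these properties, which simultaneously gives surjectivity and injectivity of $\Phi$. The bookkeeping for crossings (clause~(3) of Definition~\ref{def: compatible partitions}) is the most technical point and relies repeatedly on Lemma~\ref{lem: tetrachotomy} and on the symmetry and irreflexivity of $\mathcal{E}_2$ and $\tilde{\mathcal{E}}_2$.
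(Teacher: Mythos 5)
Your overall architecture parallels the paper's---reduce both conditions to cumulant moment formulas and push them through a two-level decomposition of the partition classes---but your key bijection is a variant of the one the paper proves: the paper's Lemma \ref{lem: decomposition for associativity} decomposes the \emph{full} class $\mathcal{P}(c,\mathcal{G})$ as $\bigsqcup_{\sigma \in \mathcal{P}(\tilde{c},\tilde{\mathcal{G}})^0} \prod_{B \in \sigma} \mathcal{P}_{\operatorname{irr}}(c_B,\mathcal{G}^{\tilde{c}(B)})$ (Boolean cumulants at the coarse level, free cumulants at the fine level), whereas you work entirely with the $(\cdot)^0$ classes and Boolean cumulants throughout. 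Your version is plausibly equivalent (it should follow by composing the paper's lemma with Lemma \ref{lem: P zero decomposition} applied to each fine factor), but two steps in your sketch do not hold up as written.

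First, in your ``easier half'' you argue that an unobstructed nesting $B \prec_{un} B'$ in $\pi$ forces $B$ and $B'$ into a common block of $\rho$ because ``$\rho$ restricted to each coarse colour is an interval partition of that fibre.'' That claim is false: $\mathcal{P}(\bar{c},\tilde{\mathcal{G}})^0$ forbids only \emph{unobstructed} nestings, so two blocks of $\rho$ of the same coarse colour may be nested provided an obstructing block of a non-$\tilde{\mathcal{E}}_2$-adjacent colour lies between them. For example, with coarse colouring $1\,2\,1\,1\,2\,1$ and blocks $\{1,6\},\{2,5\},\{3,4\}$, if $(1,2)\in\tilde{\mathcal{E}}_1\cap\overline{\tilde{\mathcal{E}}}_1\setminus\tilde{\mathcal{E}}_2$ then this partition lies in $\mathcal{P}(\bar{c},\tilde{\mathcal{G}})^0$ and its restriction to the colour-$1$ fibre is properly nested. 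The conclusion you want survives, but the correct mechanism is different: the \emph{irreducibility} of the fine partition $\pi_{V''}$ attached to the obstructing coarse block $V''$ forces its outer block to straddle everything nested inside $V''$, and that outer block supplies the fine-level obstruction between $B$ and $B'$.

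Second, and more seriously, in $(1)\Rightarrow(2)$ you pass from the identity $\varphi(a_1\cdots a_k)=\sum_{\rho\in\mathcal{P}(\bar{c},\tilde{\mathcal{G}})^0}K^{\Bool}_\rho(a_1,\dots,a_k)$, valid for words in the \emph{generators} of the $B_j$'s, to $\tilde{\mathcal{G}}$-independence of the generated algebras by invoking a ``standard principle'' that such formulas can be verified on generating sets. For a mixed independence defined by a moment formula over a bespoke class of partitions, no such principle is available off the shelf: one must prove that the partition class is compatible with grouping consecutive indices into intervals, i.e.\ a product formula of the form
\[
\sum_{\pi \in \mathcal{P}(\overline{c},\mathcal{G})} K_\pi^{\free}[a_{1,1},\dots,a_{k,m_k}] \;=\; \sum_{\sigma \in \mathcal{P}(c,\mathcal{G})} K_\sigma^{\free}[b_1,\dots,b_k], \qquad b_i = a_{i,1}\cdots a_{i,m_i}.
\]
This is precisely the paper's Lemma \ref{lem: product formula}, whose proof is a genuine combinatorial argument (matching the colourwise non-crossing join against the interval partition and then invoking the product formula for free cumulants). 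Without this lemma, or a Boolean-cumulant analogue of it, your $(1)\Rightarrow(2)$ direction is incomplete.
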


To prove this, we need another technical lemma similar to those in \S\ref{subsec: boolean and classical cumulants}.

\begin{lemma} \label{lem: decomposition for associativity}
	Let $\tilde{\mathcal{G}} \in \operatorname{BG}(m)$ and $\mathcal{G}^i \in \operatorname{BG}(n_i)$ for $i = 1$, \dots, $m$.  Let $\mathcal{G} = \tilde{\mathcal{G}}(\mathcal{G}_1,\dots,\mathcal{G}_m)$.  Let $k \in \bN$ and $c: [k] \to \mathcal{V}$.  Let $\tilde{c}: [k] \to [m]$ be map such that $c(j)$ is in the image of $\iota_{\tilde{c}(j)}$.  For $\sigma \in \mathcal{P}(\tilde{c},\tilde{\mathcal{G}})$ and $B \in \sigma$, let $\tilde{c}(B) \in [m]$ be the common color of the elements in $B$ and let $c_B: B \to [n_{\tilde{c}(B)}]$ be the corresponding map such that $c|_B = \iota_{\tilde{c}(B)} \circ c_B$.  There is a bijection
	\[
		\Phi: \bigsqcup_{\sigma \in \mathcal{P}(\tilde{c},\tilde{\mathcal{G}})^0} \prod_{B \in \sigma} \mathcal{P}_{\operatorname{irr}}(c_B,\mathcal{G}^{\tilde{c}(B)})
		\to \mathcal{P}(c,\mathcal{G})
	\]
	given by $\Phi((\pi_B)_{B \in \sigma}) = \bigcup_{B \in \sigma} \pi_B$.  For $\pi \in \mathcal{P}(c,\mathcal{G})$, the inverse $\Phi^{-1}(\pi)$ is given by taking $\sigma$ to be the $\mathcal{P}(\tilde{c},\mathcal{G})^0$-envelope of the colorwise non-crossing envelope of $\pi$ with respect to $\tilde{c}$, and then setting $\pi_B = \pi|_B$ for $B \in \sigma$.
\end{lemma}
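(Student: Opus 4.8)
The plan is to follow the same pattern as the proofs of Lemmas \ref{lem: P zero decomposition} and \ref{lem: colorwise NC decomposition}: write down the candidate inverse $\Psi$, check that $\Phi$ and $\Psi$ each land in the asserted codomain, and then verify $\Phi\circ\Psi=\id$ and $\Psi\circ\Phi=\id$. Here $\Psi$ is the map from the statement: given $\pi\in\mathcal{P}(c,\mathcal{G})$, let $\rho$ be the colorwise non-crossing envelope of $\pi$ with respect to $\tilde{c}$ (for each $l\in[m]$ replace $\pi|_{\tilde{c}^{-1}(l)}$ by its non-crossing envelope), set $\sigma=\operatorname{Env}^0(\rho)$ computed in the bigraph $\tilde{\mathcal{G}}$ with coloring $\tilde{c}$, and put $\Psi(\pi)=(\pi|_B)_{B\in\sigma}$. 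It is worth keeping in mind that this statement is a common refinement of two facts already proved: the case $n_1=\dots=n_m=1$ is Lemma \ref{lem: P zero decomposition}, and the case $m=1$ is the interval-envelope decomposition used in the proof of Proposition \ref{prop: boolean cumulants}, and the argument should reduce to each of these in the obvious way.

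The structural fact that drives everything is that a partition in $\mathcal{P}(c,\mathcal{G})$, read through the coarse coloring, lands in $\tilde{\mathcal{P}}(\tilde{c},\tilde{\mathcal{G}})$ (the weaker variant that tolerates monochromatic crossings): if $\pi\in\mathcal{P}(c,\mathcal{G})$ then $\pi\in\tilde{\mathcal{P}}(\tilde{c},\tilde{\mathcal{G}})$. This is immediate from Definition \ref{def: bigraph operad}, since an edge of $\mathcal{G}$ between vertices of distinct groups $l\neq l'$ lies in $\mathcal{E}_i$ exactly when $(l,l')\in\tilde{\mathcal{E}}_i$, whereas edges inside a single group only affect the reflexive/diagonal part of conditions (2)--(3). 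Given this, Lemma \ref{lem: colorwise non-crossing envelope} yields $\rho\in\mathcal{P}(\tilde{c},\tilde{\mathcal{G}})$ and Lemma \ref{lem: P zero envelope} yields $\sigma=\operatorname{Env}^0(\rho)\in\mathcal{P}(\tilde{c},\tilde{\mathcal{G}})^0$; one then checks that each $\pi|_B$ is irreducible and lies in $\mathcal{P}(c_B,\mathcal{G}^{\tilde{c}(B)})$, the latter by transporting conditions (1)--(3) for $\pi$ through the inclusion $\iota_{\tilde{c}(B)}$. Conversely, for $\Phi$ to be well defined one takes $\sigma\in\mathcal{P}(\tilde{c},\tilde{\mathcal{G}})^0$ and $\pi_B\in\mathcal{P}_{\operatorname{irr}}(c_B,\mathcal{G}^{\tilde{c}(B)})$, forms $\pi=\bigcup_B\pi_B$, and verifies the three conditions of Definition \ref{def: compatible partitions}: monochromaticity is clear from $\pi\leq\sigma$; condition (3) holds because a crossing of two blocks of $\pi$ forces them into distinct (hence crossing) blocks of $\sigma$, whence the required $\mathcal{E}_2$-membership follows from $\sigma\in\mathcal{P}(\tilde{c},\tilde{\mathcal{G}})$ and Definition \ref{def: bigraph operad}; and condition (2), for $i_1<j<i_2$ with $i_1\sim_\pi i_2$ in a $\sigma$-block $B$ of group $l$, splits according to whether $j$ lies in a different group (handled by condition (2) for $\sigma$), in $B$ itself (handled by $\pi_B\in\mathcal{P}(c_B,\mathcal{G}^l)$), or in a different $\sigma$-block $B'$ of the same group $l$.

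This last alternative is the crux, and I expect it to be the main obstacle. When $j\in B'$ with $\tilde{c}(B')=l=\tilde{c}(B)$ and $B'\neq B$, irreflexivity of $\tilde{\mathcal{E}}_2$ (via condition (3) for $\sigma$) rules out a crossing between $B$ and $B'$, and $i_1<j<i_2$ rules out their being separated, so one of $B,B'$ is nested inside the other; the point is then to use that $\sigma\in\mathcal{P}(\tilde{c},\tilde{\mathcal{G}})^0$, i.e. that the unobstructed-nesting relation of Lemma \ref{lem: unobstructed nesting} is empty, to pin down the blocks of $\sigma$ lying between $B$ and $B'$ and extract from them the membership $(c(i_1),c(j))\in\mathcal{E}_1$. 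This is exactly the place where the two layers of the operad composition genuinely interact, and it will require a delicate nesting analysis of the kind carried out in Lemmas \ref{lem: unobstructed nesting}, \ref{lem: P zero envelope}, and \ref{lem: colorwise non-crossing envelope}; the other cases are bookkeeping of the same flavour as \S\ref{subsec: boolean and classical cumulants}.

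Finally, $\Phi\circ\Psi=\id$ is immediate since $\pi=\bigcup_{B\in\sigma}\pi|_B$, and $\Psi\circ\Phi=\id$ reduces, as in the proofs of Lemmas \ref{lem: P zero decomposition} and \ref{lem: colorwise NC decomposition}, to showing that for $\pi=\Phi((\pi_B)_{B\in\sigma})$ the colorwise non-crossing envelope of $\pi$ with respect to $\tilde{c}$ recovers the partition $\rho$ underlying $\sigma$, and that applying $\operatorname{Env}^0$ then recovers $\sigma$; this follows from irreducibility of the pieces $\pi_B$ together with the uniqueness statements packaged into Lemmas \ref{lem: colorwise non-crossing envelope} and \ref{lem: P zero envelope}.
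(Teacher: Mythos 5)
Your overall architecture matches the paper's proof (define $\Psi$, check that $\Phi$ and $\Psi$ land in the asserted codomains, verify the two compositions), and you are right to single out condition (2) of Definition \ref{def: compatible partitions}, in the case where $j$ lies in a $\sigma$-block $B'\neq B$ with $\tilde{c}(B')=\tilde{c}(B)=l$, as the crux. But at exactly that point you give no argument: you assert that emptiness of $\prec_{un}$ for $\sigma$ will let you ``extract'' the membership $(c(i_1),c(j))\in\mathcal{E}_1$, and this is the one step that cannot be deferred, because it is not true. The $\mathcal{P}(\tilde{c},\tilde{\mathcal{G}})^0$ condition only produces an obstructing block $B''$ between $B$ and $B'$ whose coarse colour differs from $\tilde{c}(B)$ and is not $\tilde{\mathcal{E}}_2$-adjacent to it; it carries no information about the fine colours $c_B(i_1),c_{B'}(j)\in[n_l]$, while by Definition \ref{def: bigraph operad} the membership $(c(i_1),c(j))\in\mathcal{E}_1$ is decided entirely by $\mathcal{E}_1^l$ on those fine colours, about which neither $\sigma$ nor the independently chosen $\pi_B,\pi_{B'}$ say anything.

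Concretely: let $m=2$ with $\tilde{\mathcal{G}}$ the free bigraph on $\{1,2\}$ ($\tilde{\mathcal{E}}_1$ complete, $\tilde{\mathcal{E}}_2=\varnothing$), $n_1=2$ with $\mathcal{G}^1$ Boolean ($\mathcal{E}_1^1=\Delta$, $\mathcal{E}_2^1=\varnothing$), and $n_2=1$. In $\mathcal{G}=\tilde{\mathcal{G}}(\mathcal{G}^1,\mathcal{G}^2)$ one has $(\iota_1(1),\iota_1(2))\notin\mathcal{E}_1$. Take $k=6$ and $c=(\iota_1(1),\iota_2(1),\iota_1(2),\iota_1(2),\iota_2(1),\iota_1(1))$, so $\tilde{c}=(1,2,1,1,2,1)$. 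Then $\sigma=\{\{1,6\},\{2,5\},\{3,4\}\}$ lies in $\mathcal{P}(\tilde{c},\tilde{\mathcal{G}})^0$: it is non-crossing and monochromatic, and the nesting of $\{3,4\}$ inside $\{1,6\}$ is obstructed by $\{2,5\}$ since $(1,2)\notin\tilde{\mathcal{E}}_2\cup\Delta$. Each $\pi_B=\{B\}$ is an irreducible element of $\mathcal{P}(c_B,\mathcal{G}^{\tilde{c}(B)})$ (the relevant conditions are vacuous on a two-point set). Yet $\Phi((\pi_B)_B)=\sigma$ violates Definition \ref{def: compatible partitions}(2) at $1<3<6$, because $(c(1),c(3))=(\iota_1(1),\iota_1(2))\notin\mathcal{E}_1$; so $\Phi$ does not map into $\mathcal{P}(c,\mathcal{G})$. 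For what it is worth, the paper's own proof elides the same case: it splits only on $j\sim_\pi i_1$ versus $j\not\sim_\pi i_1$ and in the latter case passes through $(\tilde{c}(i_1),\tilde{c}(j))\in\tilde{\mathcal{E}}_1$, which is vacuous when $\tilde{c}(i_1)=\tilde{c}(j)$. So the obstacle you flagged is real, but the ``delicate nesting analysis'' you promise cannot produce the missing edge; as stated, the step fails.
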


\begin{proof}
	Let $\Psi$ be the map that sends $\pi \in \mathcal{P}(c,\mathcal{G})$ to $(\pi|_B)_{B \in \sigma}$ where $\sigma$ is the $\mathcal{P}(\tilde{c},\tilde{\mathcal{G}})^0$-envelope of the colorwise non-crossing envelope of $\pi$ with respect to $\tilde{c}$.   To prove the lemma, we proceed by verifying that $\Phi$ and $\Psi$ map into the codomains that we have asserted, and that they are inverses of each other.

	First, let us show that $\Phi$ maps into $\mathcal{P}(c,\mathcal{G})$.  Fix $\sigma \in \mathcal{P}(\tilde{c},\tilde{\mathcal{G}})^0$ and for each $B \in \sigma$, let $\pi_B \in \mathcal{P}_{\operatorname{irr}}(c_B,\mathcal{G}^{\tilde{c}(B)})$.  Let $\pi = \bigcup_{B \in \sigma} \pi_B$.  We first verify that $\pi \in \mathcal{P}(c,\mathcal{G})$.
	\begin{enumerate}
		\item It is clear that the blocks of $\pi$ are monochromatic since this holds for each $\pi_B$.
		\item Suppose that $i_1 < j < i_2$ and $i_1 \sim_\pi i_2$.  Consider first the case that $j \sim_\pi i_1$, so that $j$ and $i_1$ and $i_2$ are in the same block $B$ of $\sigma$.  Then since $\pi_B \in \mathcal{P}(c_B, \mathcal{G}^{\tilde{c}(B)})$, we have that $(c_B(i_1),c_B(j)) \in \mathcal{E}_1^{\tilde{c}(B)}$, which implies that $(c(i_1),c(j)) \in \mathcal{E}_1$.  Now suppose that $j \not \sim_\pi i_1$.  Then because $\pi \in \mathcal{P}(\tilde{c},\tilde{\mathcal{G}})$, we have that $(\tilde{c}(i_1),\tilde{c}(j)) \in \tilde{\mathcal{E}}_1$ and hence $(c(i_1),c(j)) \in \mathcal{E}_1$.
		\item Suppose that $i_1 < j_1 < i_2 < j_2$ and $i_1 \sim_\sigma i_2$ and $j_1 \sim_\sigma j_2$ and $i_1 \not \sim_\sigma i_2$.  Again, we split into two cases based on whether $i_1 \sim_\pi j_1$ or not, and the argument proceeds in a similar way as for point (2).
	\end{enumerate}

	Next, we show that $\Psi$ maps into $\bigsqcup_{\sigma \in \mathcal{P}(c,\mathcal{G})^0} \prod_{B \in \sigma} \mathcal{P}_{\operatorname{irr}}(c_B,\mathcal{G}^{\tilde{c}(B)})$.  Fix $\pi \in \mathcal{P}(c,\mathcal{G})$, and let $\pi'$ be the colorwise non-crossing envelope of $\pi$ with respect to $\tilde{c}$.  It is straightforward to check that $\pi' \in \tilde{\mathcal{P}}(\tilde{c},\mathcal{G})$.  Indeed, when we replace $c$ by $\tilde{c}$, some blocks of different colors may become the same color, but nestings and crossings of the same color are always allowed in $\tilde{\mathcal{P}}(\tilde{c},\mathcal{G})$.  For two blocks with $\tilde{c}(B_1) \neq \tilde{c}(B_2)$, we have $(\tilde{c}(B_1),\tilde{c}(B_2)) \in \tilde{\mathcal{E}}_i$ if and only if $(c(B_1),c(B_2)) \in \mathcal{E}_i$, so the conditions for $\mathcal{P}(c,\mathcal{G})$ easily imply those of $\tilde{\mathcal{P}}(\tilde{c},\mathcal{G})$.  Let $\sigma$ be the $\mathcal{P}(\tilde{c},\tilde{\mathcal{G}})^0$-envelope $\pi'$ given by Lemma \ref{lem: P zero envelope}, so by construction $\sigma \in \mathcal{P}(\tilde{c},\tilde{\mathcal{G}})^0$.  Let $\pi_B = \pi|_B$ for $B \in \sigma$.  Now we verify that $\pi_B \in \mathcal{P}_{\operatorname{irr}}(c_B,\mathcal{G}^{\tilde{c}(B)})$ for each $B \in \sigma$:
	\begin{enumerate}
		\item The blocks of $\pi_B$ are monochromatic with respect to $c$, hence with respect to $c_B$.
		\item Suppose that $i_1 < j < i_2$ and $i_1 \sim_{\pi_B} i_2$.  Then of course $i_1 \sim_\pi i_2$, hence $(c(i_1),c(j)) \in \mathcal{E}_1$.  By definition of $\mathcal{G}$, we have $(c_B(i_1),c_B(j)) \in \mathcal{E}_1^{\tilde{c}(B)}$.
		\item Suppose that $i_1 < j_1 < i_2 < j_2$ and $i_1 \sim_{\pi_B} i_2$ and $j_1 \sim_{\pi_B} j_2$ and $i_1 \not \sim_{\pi_B} j_1$.  Since the same relations hold with respect to $\pi$, we have $(c(i_1),c(j_1)) \in \mathcal{E}_2$, and hence $(c_B(i_1),c_B(j_1)) \in \mathcal{E}_2^{\tilde{c}(B)}$.
	\end{enumerate}

	Next, we show that $\Psi \circ \Phi = \id$.  Fix $\sigma \in \mathcal{P}(\tilde{c},\tilde{\mathcal{G}})^0$ and for each $B \in \sigma$, let $\pi_B \in \mathcal{P}_{\operatorname{irr}}(c_B,\mathcal{G}^{\tilde{c}(B)})$.  Let $\pi = \bigcup_{B \in \sigma} \pi_B$, and we will show that $(\pi_B)_{B \in \sigma} = \Psi(\pi)$.  Since $\pi_B$ is simply $\pi|_B$, it suffices to show that $\sigma$ is the $\mathcal{P}(\tilde{c},\mathcal{G})^0$-envelope of the colorwise non-crossing envelope of $\pi$.  Let $\pi'$ denote the colorwise non-crossing envelope of $\pi$.  Since $\sigma$ is colorwise non-crossing and $\sigma \geq \pi$, we have $\sigma \geq \pi'$.  Moreover, for each block $B \in \sigma$, the irreduciblity of $\pi_B$ implies irreducibility of its non-crossing envelope $\operatorname{Env}^{\operatorname{nc}}(\pi_B)$.  Hence, $\min B$ and $\max B$ are in the same block of $\operatorname{Env}^{\operatorname{nc}}(\pi_B)$.  This implies that they are in the same block of the colorwise non-crossing envelope of $\pi$, namely $\pi'$.  Hence, by the uniqueness claim of Lemma \ref{lem: P zero envelope}, $\sigma$ must be the $\mathcal{P}(\tilde{c},\tilde{\mathcal{G}})^0$-envelope of $\pi'$, as desired.

	Finally, $\Phi \circ \Psi = \id$ is immediate by the same reasoning as in Lemma \ref{lem: P zero decomposition}.
\end{proof}

The other ingredient that we need is a product formula derived from the product formula for free cumulants.

\begin{lemma} \label{lem: product formula}
	Fix a bigraph $\mathcal{G}$, $k \in \bN$, and $c: [k] \to \mathcal{V}$.  Let $m_1$, \dots, $m_k \in \bN$.  Let $(A,\varphi)$ be a $\mathrm{C}^*$-probability space.  For $i = 1$, \dots, $k$ and $j = 1$, \dots, $m_i$, let $a_{i,j} \in A$, and let $b_i = a_{i,1} \dots a_{i,m_i}$.  Let $\overline{c}: [m_1 + \dots m_k] \to \mathcal{V}$ be the map that takes value $c(i)$ on $\{m_{i-1}+1, \dots, m_i\}$ (where by convention $m_0 = 0$).  Then
	\begin{equation} \label{eq: product formula}
		\sum_{\pi \in \mathcal{P}(\overline{c},\mathcal{G})} K_\pi^{\free}[a_{1,1},\dots,a_{1,m_1},\dots \dots, a_{k,1},\dots,a_{k,m_k}] = \sum_{\sigma \in \mathcal{P}(c,\mathcal{G})} K_\sigma^{\free}[b_1,\dots,b_k].
	\end{equation}
\end{lemma}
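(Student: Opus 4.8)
The plan is to reduce the identity to the case where the $a_{i,j}$ already live in $\mathcal{G}$-independent algebras, by exploiting the fact that both sides of \eqref{eq: product formula} are \emph{colorwise} quantities.  First I would note that every block $B$ of a partition $\pi\in\mathcal{P}(\overline c,\mathcal{G})$ is monochromatic by Definition \ref{def: compatible partitions}(1), so that each factor $K^{\free}_{|B|}$ of $K_\pi^{\free}[a_{1,1},\dots,a_{k,m_k}]$ involves only letters $a_{i,j}$ of a single color; since a free cumulant of $n$ elements is a universal polynomial in the joint moments of those elements, the left-hand side of \eqref{eq: product formula} is a polynomial in the joint moments of the subfamilies $\{a_{i,j}:c(i)=v\}$, $v\in\mathcal{V}$, \emph{taken separately}.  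The same holds for the right-hand side: every block $V$ of a partition $\sigma\in\mathcal{P}(c,\mathcal{G})$ is monochromatic, so each factor $K^{\free}_{|V|}(b_i:i\in V)$ only sees the $b_i$'s of one color $v$, and these are words in $\{a_{i,j}:c(i)=v\}$.  Thus each side is a function of the separate joint distributions of the families $(\{a_{i,j}:c(i)=v\})_{v\in\mathcal{V}}$ alone.

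Next I would invoke the existence of $\mathcal{G}$-independent copies.  Given an arbitrary $\mathrm{C}^*$-probability space $(A,\varphi)$ and elements $a_{i,j}$, let $A_v\subseteq A$ be the $\mathrm{C}^*$-subalgebra generated by $\{a_{i,j}:c(i)=v\}$ with state $\varphi_v:=\varphi|_{A_v}$, and apply Proposition \ref{prop: independent C star copies} to obtain a $\mathrm{C}^*$-probability space $(\tilde A,\tilde\varphi)$ and state-preserving $*$-homomorphisms $\lambda_v:A_v\to\tilde A$ with $(\lambda_v(A_v))_v$ being $\mathcal{G}$-independent.  Put $\tilde a_{i,j}:=\lambda_{c(i)}(a_{i,j})$ and $\tilde b_i:=\tilde a_{i,1}\cdots\tilde a_{i,m_i}=\lambda_{c(i)}(b_i)\in\lambda_{c(i)}(A_{c(i)})$.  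Because each $\lambda_v$ is a state-preserving homomorphism, the joint moments of $\{\tilde a_{i,j}:c(i)=v\}$ agree with those of $\{a_{i,j}:c(i)=v\}$ for every $v$, so by the previous paragraph neither side of \eqref{eq: product formula} changes under $(a_{i,j})\mapsto(\tilde a_{i,j})$.  It therefore suffices to prove the identity for the $\tilde a_{i,j}$; but applying Definition \ref{def: bigraph independence} once to the word $\tilde a_{1,1},\dots,\tilde a_{k,m_k}$ with coloring $\overline c$ and once to the word $\tilde b_1,\dots,\tilde b_k$ with coloring $c$ yields
\[
\sum_{\pi\in\mathcal{P}(\overline c,\mathcal{G})}K_\pi^{\free}[\tilde a_{1,1},\dots,\tilde a_{k,m_k}]=\tilde\varphi(\tilde a_{1,1}\cdots\tilde a_{k,m_k})=\tilde\varphi(\tilde b_1\cdots\tilde b_k)=\sum_{\sigma\in\mathcal{P}(c,\mathcal{G})}K_\sigma^{\free}[\tilde b_1,\dots,\tilde b_k],
\]
the middle equality holding because $\tilde b_1\cdots\tilde b_k$ and $\tilde a_{1,1}\cdots\tilde a_{k,m_k}$ are literally the same element of $\tilde A$.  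This finishes the proof; in this route the only point requiring (easy) care is the colorwise reduction of the first paragraph.

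There is also a more self-contained combinatorial route, in the style of Lemmas \ref{lem: P zero decomposition}, \ref{lem: colorwise NC decomposition} and \ref{lem: decomposition for associativity}: apply the classical product formula for free cumulants (Nica--Speicher) to each block $V$ of each $\sigma\in\mathcal{P}(c,\mathcal{G})$, writing $K^{\free}_{|V|}(b_i:i\in V)$ as a sum over non-crossing partitions of the corresponding run of letters whose join with the interval partition is full, and then recognizing the resulting partitions of $[m_1+\dots+m_k]$ as exactly $\mathcal{P}(\overline c,\mathcal{G})$.  Concretely one would prove that $\pi\mapsto(\hat\pi,(\pi|_{J_V})_V)$ — where $\hat\pi$ collapses each interval $\{m_{i-1}+1,\dots,m_i\}$ to the point $i$ and records which such intervals are connected through $\pi$ — is a bijection from $\mathcal{P}(\overline c,\mathcal{G})$ onto $\bigsqcup_{\sigma\in\mathcal{P}(c,\mathcal{G})}\prod_{V\in\sigma}\{\rho\in\mathcal{NC}(J_V):\rho\vee\tau_V=\mathbbm{1}\}$.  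The main obstacle here is showing that the collapsed partition $\hat\pi$ again lies in $\mathcal{P}(c,\mathcal{G})$, in particular that Definition \ref{def: compatible partitions}(3) survives the collapse; the argument uses that monochromatic crossings are forbidden (so $\pi$ restricted to each monochromatic connected component of $\pi\vee\tau$ is non-crossing), a chain argument producing a $\pi$-block that straddles the intervals indexed by a potential crossing of $\hat\pi$, and the symmetry of $\mathcal{E}_2$.  Since the first route sidesteps all of this, that is the one I would write up.
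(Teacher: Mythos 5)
Your first route is correct, but it is genuinely different from the paper's proof; your second, sketched route is essentially what the paper actually does. The paper argues directly and combinatorially: it introduces the interval partition $\rho$ with blocks $J_i$, defines a map $\Phi^J$ sending $\pi \in \mathcal{P}(\overline{c},\mathcal{G})$ to the partition $\sigma$ of $[k]$ induced by the colorwise non-crossing join $\pi \vee \rho$, proves that $\pi \in \mathcal{P}(\overline{c},\mathcal{G})$ if and only if $\sigma \in \mathcal{P}(c,\mathcal{G})$ (the two-sided verification of Definition \ref{def: compatible partitions}, including the crossing condition you flag as the main obstacle), and then groups the left-hand side of \eqref{eq: product formula} by the value of $\Phi^J$ and applies the Nica--Speicher product formula \eqref{eq: free cumulant product formula} on each block. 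Your realization argument is sound: both sides are universal polynomials in the colorwise joint moments because every block of every partition involved is monochromatic, and Proposition \ref{prop: independent C star copies} supplies state-preserving embeddings under which the two sides collapse to the same moment $\tilde\varphi(\tilde a_{1,1}\cdots \tilde a_{k,m_k})$. The one point you should make explicit is the absence of circularity: Proposition \ref{prop: independent C star copies} is proved only in Section \ref{sec:hilbertspacemodel} via Theorem \ref{thm: Hilbert realization} and Lemma \ref{lem: unfinished moments}, whose proofs rest on Proposition \ref{prop: formula via Boolean cumulants} and Lemma \ref{lem: P zero decomposition} but not on Lemma \ref{lem: product formula} or on the associativity theorem, so the dependency is legitimate --- but in the paper's current ordering your proof would require a forward reference. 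What each approach buys: yours is shorter and sidesteps the bijection entirely, at the cost of importing the analytic existence result; the paper's keeps Section \ref{sec: bigraph independence} combinatorially self-contained and yields the explicit correspondence between $\mathcal{P}(\overline{c},\mathcal{G})$ and $\bigsqcup_{\sigma}\prod_{B}\{\pi_B \in \mathcal{NC}(\overline{B}) : \pi_B \vee \rho|_{\overline{B}} = \mathbbm{1}_{\overline{B}}\}$, which is in the same spirit as the other decomposition lemmas of that section.
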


\begin{proof}
	We first prove the following claim:  Let $\rho$ be the partition with blocks $J_i = \{m_{i-1}+1,\dots,m_i\}$ for $i = 1$, \dots, $k$.  Let $\pi$ be a partition which is consistently colored by $\overline{c}$ and non-crossing on each color.  Let $\pi'$ be the partition of $[m_1+\dots+m_k]$ obtained by taking the colorwise non-crossing join of $\pi$ and $\rho$.  Since $\pi' \geq \rho$, there is a unique partition $\sigma$ of $[k]$ such that $i \sim_\sigma j$ if and only if $J_i$ and $J_j$ are in the same block of $\pi'$.  We claim that $\pi \in \mathcal{P}(\overline{c},\mathcal{G})$ if and only if $\sigma \in \mathcal{P}(c,\mathcal{G})$.  Suppose that $\pi \in \mathcal{P}(\overline{c},\mathcal{G})$.  We verify the conditions of Definition \ref{def: compatible partitions} for $\sigma$.
	\begin{enumerate}
		\item It is clear that the blocks of $\sigma$ are monochromatic.
		\item Suppose that $i_1 < j < i_2$ and $i_1 \sim_\sigma i_2$.  If $c(j) = c(i_1)$, there is nothing to prove, so assume that $c(j) \neq c(i_1)$.  Let $B$ be the block of $\sigma$ containing $i_1$ and $i_2$, and let $B'$ be the corresponding block of $\pi'$.  Let $j' \in J_j$.  There must exist some $i_1'$ and $i_2'$ in $B'$ with $i_1' < j < i_2'$ since otherwise the blocks of $\pi$ in $B'$ could be partitioned into those to the left of $j'$ and those to the right of $j'$, which would contradict their being in the same block in the join $\pi'$.  Hence, $(c(i_1),c(j)) = (\overline{c}(i_1'),\overline{c}(j)) \in \mathcal{E}_1$ since $\mathcal{P}(\overline{c},\mathcal{G})$.
		\item Suppose two blocks $B_1$ and $B_2$ of $\sigma$ cross.  Then the corresponding blocks $B_1'$ and $B_2'$ of $\pi'$ cross.  Since $\rho$ is an interval partition, the only way that this can happen is if some blocks of $\pi$, contained in $B_1'$ and $B_2'$ respectively, cross each other.  (This follows by similar reasoning as in Lemma \ref{lem: colorwise non-crossing envelope}.)  We conclude that $(c(B_1),c(B_2)) \in \mathcal{E}_2$.
	\end{enumerate}
	Conversely, suppose that $\sigma \in \mathcal{P}(c,\mathcal{G})$.
	\begin{enumerate}
		\item The blocks of $\pi$ are monochromatic by construction.
		\item Suppose $i_1' < j' < i_2'$ in $\pi$.  If $c(i_1) = c(j)$, there is nothing to prove.  Otherwise, suppose $i_1' \in J_{i_1}$ and $j' \in J_j$ and $i_2' \in J_{i_2}$, so that $i_1 < j < i_2$ and $i_1 \sim_\sigma i_2$.  We thus get $(\overline{c}(i_1'),\overline{c}(j')) = (c(i_1),c(j)) \in \mathcal{E}_1$.
		\item Suppose that two blocks $B_1$ and $B_2$ of $\pi$ cross.  We assumed that $\pi$ is non-crossing on each color, so $c(B_1) \neq c(B_2)$ and so $B_1$ and $B_2$ are in different blocks of $\pi'$.  These two blocks must cross, and hence the corresponding blocks in $\sigma$ cross, which implies that $(c(B_1),c(B_2)) \in \mathcal{E}_2$.
	\end{enumerate}
	Let $\Phi^J$ denote the map sending $\pi$ to $\sigma$ as described above.

	We group the terms on the left-hand side of \eqref{eq: product formula} by the value of $\Phi^J(\pi)$, resulting in
	\begin{equation} \label{eq: product rule intermediate}
		\sum_{\sigma \in \mathcal{P}(c,\mathcal{G})} \sum_{\substack{\pi \in \mathcal{P}(\overline{c},\mathcal{G}) \\ \Phi^J(\pi) = \sigma}} K_\pi^{\free}[a_{1,1},\dots,a_{1,m_1},\dots \dots, a_{k,1},\dots,a_{k,m_k}].
	\end{equation}
	Given $\sigma \in \mathcal{P}(c,\mathcal{G})$, let $\overline{\sigma}$ denote the partition of $[m_1 + \dots + m_k]$ obtained by replacing each element $j$ with the interval $J_j$.  For $B \in \sigma$, let $\overline{B}$ be the corresponding block in $\overline{\sigma}$.  Note that $\sigma = \Phi^J(\pi)$ occurs if and only if $\pi$ is obtained from $\overline{\sigma}$ by replacing each block $\overline{B}$ of $\overline{\sigma}$ with a partition $\pi_B \in \mathcal{NC}(\overline{B})$ such that $\pi_B \vee \rho|_{\overline{B}} = \mathbbm{1}_{\overline{B}}$.  The product formula for free cumulants implies that
	\begin{equation} \label{eq: free cumulant product formula}
		\sum_{\substack{\pi_B \in \mathcal{NC}(\overline{B}) \\ \pi_B \vee \rho|_{\overline{B}} = \mathbbm{1}_{\overline{B}}}} K_{\pi_B}^{\free}[a_{i,j}: i \in B, j \in [m_i]] = K_{|B|}^{\free}[b_i: i \in B],
	\end{equation}
	where the terms $a_{i,j}$ on the left-hand side are ordered lexicographically from left to right.  Therefore, \eqref{eq: product rule intermediate} becomes
	\[
		\sum_{\sigma \in \mathcal{P}(c,\mathcal{G})} \prod_{B \in \sigma} \sum_{\substack{\pi_B \in \mathcal{NC}(\overline{B}) \\ \pi_B \vee \rho|_{\overline{B}'} = \mathbbm{1}_{\overline{B}}}} K_{\pi_B}^{\free}[a_{1,1},\dots,a_{1,m_1},\dots \dots, a_{k,1},\dots,a_{k,m_k}].
	\]
	and hence by \eqref{eq: free cumulant product formula}
	\[
		\sum_{\sigma \in \mathcal{P}(c,\mathcal{G})} \prod_{B \in \sigma} K_{B}^{\free}[b_i: i \in B],
	\]
	which is the right-hand side of \eqref{eq: product formula}.
\end{proof}

\begin{proof}[Proof of Theorem \ref{thm: operad associativity}]
	We start with (2) $\implies$ (1) because the argument is simpler.  Assume that the $(B_j)_{j \in [k]}$ are $\tilde{\mathcal{G}}$-independent and for each $j \in [k]$, the subalgebras $(A_{\iota_j(i)})_{i \in [n_j]}$ are $\mathcal{G}^j$-independent.

	Fix $c: [k] \to [N]$, and let $\tilde{c}$ be as in Lemma \ref{lem: decomposition for associativity}.  Let $a_j \in A_{c(j)}$ for $j = 1$, \dots, $k$.  Note in particular that $a_j \in B_{\tilde{c}(j)}$.  Hence, using $\tilde{\mathcal{G}}$-independence of $(B_v)_{v \in [m]}$ and Proposition \ref{prop: formula via Boolean cumulants}
	\[
		\varphi(a_1 \dots a_k) = \sum_{\sigma \in \mathcal{P}(\tilde{c},\mathcal{G})^0} K_\sigma^{\Bool}[a_1,\dots,a_k] = \sum_{\sigma \in \mathcal{P}(\tilde{c},\mathcal{G})^0} \prod_{B \in \sigma} K_{|B|}^{\Bool}[a_j: j \in B].
	\]
	Next, for each block $B$ of $\sigma$, we apply the $\mathcal{G}^{\tilde{c}(B)}$-independence of $(A_{\iota_{\tilde{c}(B)}(v)})_{v \in [n_{\tilde{c}(B)}]}$ and use \eqref{eq: joint boolean cumulants via free cumulants} to express the Boolean cumulants
	\[
		\varphi(a_1 \dots a_k) = \sum_{\sigma \in \mathcal{P}(\tilde{c},\tilde{\mathcal{G}})^0} \prod_{B \in \sigma} \sum_{\pi_B \in \mathcal{P}_{\operatorname{irr}}(c_B,\mathcal{G}^{\tilde{c}(B)})} K_{\pi_B}^{\free}[a_j: j \in B].
	\]
	Using Lemma \ref{lem: decomposition for associativity}, this results in
	\[
		\varphi(a_1 \dots a_k) = \sum_{\pi \in \mathcal{P}(c,\mathcal{G})}  K_{\pi}^{\free}[a_1,\dots,a_k],
	\]
	which shows that $(A_v)_{v \in [N]}$ are $\mathcal{G}$-independent.

	(1) $\implies$ (2).  Assume (1).  The fact that $(A_{\iota_j(v)})_{v \in [n_j]}$ are $\mathcal{G}^j$ independent is straightforward to verify by applying the $\mathcal{G}$-independence condition to a product $a_1 \dots a_k$ where $a_i \in A_{\iota_j(c(i))}$ for some $c: [k] \to [n_j]$.

	We next have to prove that $B_1$, \dots, $B_m$ are $\tilde{\mathcal{G}}$-independent.  We first show a preliminary claim.  Suppose that $c: [k] \to [N]$, and let $\tilde{c}$ be as in Lemma \ref{lem: decomposition for associativity}.  Let $a_j \in A_{c(j)}$ for $j = 1$, \dots, $k$.  By reversing the manipulations in the previous argument, (1) implies that
	\[
		\varphi(a_1 \dots a_k) = \sum_{\sigma \in \mathcal{P}(\tilde{c},\mathcal{\tilde{G}})^0} K_\sigma^{\Bool}[a_1,\dots,a_k].
	\]
	The proof of Proposition \ref{prop: formula via Boolean cumulants} implies that
	\begin{equation} \label{eq: associativity intermediate}
		\varphi(a_1 \dots a_k) = \sum_{\pi \in \mathcal{P}(\tilde{c},\mathcal{\tilde{G}})} K_\pi^{\free}[a_1,\dots,a_k].
	\end{equation}

	Now to prove $\tilde{\mathcal{G}}$-independence of $B_1$, \dots, $B_m$, fix $k \in \bN$ and $c: [k] \to [m]$.  Let $b_i \in B_{c(i)}$ for $i = 1$, \dots, $k$, and we need to show that
	\begin{equation} \label{eq: associatity last goal}
		\varphi(b_1 \dots b_k) = \sum_{\sigma \in \mathcal{P}(c,\tilde{\mathcal{G}})} K_\sigma^{\free}[b_1,\dots,b_k].
	\end{equation}
	Since $B_j$ is generated by $(A_{\iota_j(i)})_{i \in [n_j]}$, we can use multilinearity to reduce to the case where
	\[
		b_i = a_{i,1} \dots a_{i,m_i}
	\]
	for some $m_j \in \bN$ and $a_{i,1}$, \dots, $a_{i,m_i}$ coming from one of the algebras $(A_{\iota_{c(i)}(v)})_{v \in [n_{c(i)}]}$.  Let $\overline{c}: [m_1 + \dots + m_k] \to [m]$ be as in Lemma \ref{lem: product formula}.  Since $a_{1,1} \dots a_{1,m_1} \dots \dots a_{k,1} \dots a_{k,m_k}$ is a product of elements from the individual algebras $(A_i)_{i \in [N]}$, we can apply the preliminary claim \eqref{eq: associativity intermediate}, which with the current objects and notation, translates to,
	\[
		\varphi(a_{1,1} \dots a_{1,m_1} \dots \dots a_{k,1} \dots a_{k,m_k}) = \sum_{\pi \in \mathcal{P}(\overline{c},\mathcal{G})} K_\pi^{\free}[a_{1,1},\dots,a_{1,m_1},\dots \dots, a_{k,1},\dots,a_{k,m_k}].
	\]
	Then using Lemma \ref{lem: product formula}, this becomes \eqref{eq: associatity last goal}.
\end{proof}

\section{Hilbert space model} \label{sec:hilbertspacemodel}

\subsection{Construction of the product Hilbert space}

By a \emph{pointed Hilbert space}, we mean a pair $(H,\xi)$ where $H$ is a Hilbert space and $\xi$ is a unit vector.  Our goal is to describe a product Hilbert space construction that allows us to realize bigraph independence.  As in the case of free-tensor graph independence ($\varepsilon$-independence) \cite{mlotkowski2004}, the Fock-like free product Hilbert space needs to be modified to arrange commutation for the elements that will be tensor independent.  Thus, we will similarly need to several of the same definitions pertaining to words over the vertex set of our graph (such as \emph{reduced} and \emph{equivalent} words with respect to $\mathcal{G}$), as well as the new definition of \emph{permissible} words.
We collect them in the following.
\begin{definition}
	Let $\mathcal{G} = (\mathcal{V},\mathcal{E}_1,\mathcal{E}_2)$ be a bigraph, and let $(H_v,\xi_v)$ be pointed Hilbert spaces.  Let $\mathcal{W}$ be the set of words $v_1 \dots v_k$ on the alphabet $\mathcal{V}$.
	\begin{itemize}
		\item A word $w = v_1 \dots v_k$ is \emph{reduced} if whenever $i_1 < i_2$ and $v_{i_1} = v_{i_2}$, there exists $j$ between $i_1$ and $i_2$ such that $v_j \neq v_{i_1}$ and $(v_{i_1},v_j) \not \in \mathcal{E}_2 \cap \mathcal{E}_1 \cap \overline{\mathcal{E}}_1$ ($v_{i_1}$ and $v_j$ are not in tensor relation).
		\item For $w = v_1 \dots v_k$ and $w' = v_1' \dots v_k'$, we say that $w \sim_0 w'$ if $w'$ is obtained by switching two consecutive letters $v_i$ and $v_{i+1}$ such that $(v_i,v_{i+1}) \in \mathcal{E}_2 \cap \mathcal{E}_1 \cap \overline{\mathcal{E}}_1$.
		\item We say $w$ and $w'$ are \emph{equivalent}, or $w \sim w'$ if there exists a sequence of words $w \sim_0 w_1 \sim_0 \dots \sim_0 w_{k-1} \sim w'$.  In other words, $\sim$ is the transitive closure of $\sim_0$.
		\item A word $v_1 \dots v_k$ is \emph{permissible} if whenever $i_1 < i_2$, we have $(v_{i_2},v_{i_1}) \in \mathcal{E}_1$.
	\end{itemize}
\end{definition}

Note that the notions of reduced words and equivalence are exactly the same as in \cite{mlotkowski2004,CaFi2017} for the graph $(\mathcal{V}, \mathcal{E}_2 \cap \mathcal{E}_1 \cap \overline{\mathcal{E}}_1)$. 

\begin{lemma} \label{lem: word facts}
	Fix a bigraph $\mathcal{G}$, and let $w$ and $w'$ be words.
	\begin{enumerate}
		\item If $w \sim w'$, then $w$ is reduced if and only if $w'$ is reduced.
		\item If $w \sim w'$, then $w$ is permissible if and only if $w'$ is permissible.
		\item Let $v \in \mathcal{G}$.  Then $vw \sim vw'$ if and only if $w \sim w'$.
	\end{enumerate}
\end{lemma}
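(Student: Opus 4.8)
The plan is to reduce all three claims to the effect of a \emph{single} elementary move. Recall that $w \sim_0 w'$ means $w'$ arises from $w$ by exchanging two consecutive letters $v_i, v_{i+1}$ with $(v_i, v_{i+1})$ in the tensor relation $\mathcal{E}_{\ten} = \mathcal{E}_2 \cap \mathcal{E}_1 \cap \overline{\mathcal{E}}_1$, and that this set is symmetric and irreflexive; in particular $\sim_0$ is a symmetric relation and $v_i \neq v_{i+1}$ whenever a pair is exchangeable. Since $\sim$ is the transitive closure of $\sim_0$, claims (1) and (2) follow as soon as one knows that ``reduced'' and ``permissible'' are each preserved by one $\sim_0$-move, the biconditionals being then automatic. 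Claim (2) is quickest: writing $w = v_1 \cdots v_k$ and letting $w'$ be obtained by swapping positions $p$ and $p+1$, the position permutation $\tau = (p\ p{+}1)$ reverses the relative order of exactly one pair of indices, namely $\{p,p+1\}$. Hence for any pair $i_1 < i_2$ in $w'$ other than $\{p,p+1\}$ the letters $w'_{i_1}, w'_{i_2}$ occur in the same order in $w$, so the condition $(w'_{i_2}, w'_{i_1}) \in \mathcal{E}_1$ is merely an instance of permissibility of $w$; and for the pair $\{p,p+1\}$ one needs $(v_p, v_{p+1}) \in \mathcal{E}_1$, which holds because $\mathcal{E}_{\ten} \subseteq \mathcal{E}_1$. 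This proves (2).

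Claim (1) is the same kind of argument but fiddlier. With $w$ reduced and $w'$ obtained by a swap at $p, p+1$, take $i_1 < i_2$ in $w'$ with $w'_{i_1} = w'_{i_2}$; since $v_p \neq v_{p+1}$ we have $\{i_1,i_2\} \neq \{p,p+1\}$, so these indices pull back to $a < b$ in $w$ with $w_a = w_b$. Let $j$ with $a < j < b$ be a reduction witness in $w$, i.e.\ $w_j \neq w_a$ and $(w_a, w_j) \notin \mathcal{E}_{\ten}$. If $j \notin \{p,p+1\}$, the same position witnesses reducedness of $w'$ at $(i_1,i_2)$. If $j \in \{p,p+1\}$, then using symmetry of $\mathcal{E}_{\ten}$ one checks that $j$ itself cannot be the witness in the remaining configurations (it would force $(w_a,w_j)\in\mathcal{E}_{\ten}$), while the other element of $\{p,p+1\}$ then serves as a witness in $w'$. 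This case analysis establishes (1); alternatively it is exactly the standard stability of reduced words with respect to the commutation graph $(\mathcal{V},\mathcal{E}_{\ten})$ already used in \cite{mlotkowski2004,CaFi2017}.

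For (3), the implication $w \sim w' \Rightarrow vw \sim vw'$ is immediate: a chain of $\sim_0$-moves realizing $w \sim w'$ transports verbatim to a chain from $vw$ to $vw'$, acting on positions shifted by one and never touching the leading letter. For the converse, fix a chain $vw = u_0 \sim_0 u_1 \sim_0 \cdots \sim_0 u_n = vw'$, distinguish the occurrence of $v$ at position $1$ of $u_0$, and track it along the chain. At each step the distinguished occurrence either lies outside the swapped pair, in which case deleting it from the word (forming the \emph{residual word} $\operatorname{res}(u_i)$) either does nothing or performs one $\sim_0$-move, or it is swapped with an adjacent letter $x$, which forces $(v,x) \in \mathcal{E}_{\ten}$ and leaves the residual word unchanged. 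Hence $\operatorname{res}(u_0) \sim \operatorname{res}(u_n)$, and $\operatorname{res}(u_0) = w$. Finally, since the relative order of the distinguished occurrence and any other fixed occurrence of a letter can only change through a direct swap of those two letters, every letter lying to the left of the distinguished $v$ in $u_n = vw'$ is tensor-adjacent to $v$; sliding the distinguished occurrence past those letters to the front shows $\operatorname{res}(u_n) \sim w'$, whence $w \sim \operatorname{res}(u_n) \sim w'$.

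I expect the converse half of (3) to be the main obstacle: one must set up the distinguished-occurrence bookkeeping carefully so that the residual word behaves well under every elementary move, and prove the tensor-adjacency claim for precisely those letters that switch sides relative to the tracked occurrence. Abstractly this amounts to left-cancellativity of the trace monoid $\mathcal{V}^{*}/\!\sim$ attached to the commutation graph $(\mathcal{V},\mathcal{E}_{\ten})$, a classical fact, but it is worth spelling out in the present notation since it is used repeatedly in the Hilbert space construction.
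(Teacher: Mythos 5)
Your proposal is correct and follows essentially the same route as the paper: (2) is the identical one-move argument, (1) is the standard stability of reduced words under commutation moves (which the paper simply cites from the $\varepsilon$-independence literature), and the converse of (3) is the paper's deletion argument, since your tracked occurrence of $v$ coincides at every stage with the paper's ``first occurrence of $v$'' (no other occurrence of $v$ can cross it, as that would require $(v,v)\in\mathcal{E}_{\ten}\subseteq\mathcal{E}_2$, contradicting irreflexivity). For the same reason the final ``sliding'' step is unnecessary and, as phrased, borders on circularity: irreflexivity already forces the tracked occurrence to sit at position $1$ of $u_n$, so $\operatorname{res}(u_n)=w'$ on the nose.
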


\begin{proof}
	(1) is already known from \cite{mlotkowski2004,CaFi2017}.

	(2) Suppose that $w'$ is obtained from $w$ by swapping $v_j$ and $v_{j+1}$, where $(v_j,v_{j+1}) \in \mathcal{E}_2 \cap \mathcal{E}_1 \cap \overline{\mathcal{E}}_1$.  Assume $w$ is permissible, so for every $i_1 < i_2$, we have $(v_{i_2},v_{i_1}) \in \mathcal{E}_1$.  In $w'$, all pairs of indices are in the same order as before except for $j$ and $j+1$, so the condition of permissibility is preserved for all the other pairs.  But it is also preserved for the pair $j < j+1$ because both $(v_j,v_{j+1})$ and $(v_{j+1},v_j)$ are in $\mathcal{E}_1$.

	(3) It is clear that if $w \sim w'$, then $vw \sim vw'$.  Conversely, if $vw \sim vw'$, then consider the sequence of transformations $\tilde{w}_1$, \dots, $\tilde{w}_k$ from $vw$ to $vw'$.  Let $w_j$ be given by deleting the first occurrence of $v$ from $\tilde{w}_j$.  Then $w_1$, \dots, $w_k$ witnesses equivalence of $w$ and $w'$.
\end{proof}
Fix $\mathcal{G}$ and let $W_{\mathcal{G}}$ be the set of permissible reduced words (including the empty word).  We also fix a set $W_0$ of representatives for the equivalence classes of permissible words.  (For instance, we could choose the element that is minimal in the lexicographical order from each equivalence class.)
Let $r: W_{\mathcal{G}} \to W_0$ be the function sending each word to its equivalent representative.  Fix pointed Hilbert spaces $(H_v,\xi_v)$ for $v \in \mathcal{V}$, and let $H_v^\circ = \{h \in H_v: \ip{h,\xi_v} = 0\}$.  For $w = v_1 \dots v_k \in W_{\mathcal{G}}$ with $k \geq 1$, let
\[
	H_w^\circ := H_{v_1}^\circ \otimes \dots \otimes H_{v_k}^\circ,
\]
and for the empty word $\emptyset$, set
\[
	H_{\emptyset}^\circ := \bC.
\]
For $w \in W_{\mathcal{G}}$, let
\[
	U_w: H_w^\circ \to H_{r(w)}^\circ
\]
be the canonical isomorphism obtained by permuting the tensorands according to the rearrangement of $w$ into $r(w)$.  (Note that $U_w$ is independent of the sequence of swaps, since it is uniquely determined by sending the $k$th occurrence of each letter $v$ in $w$ to the $k$th occurrence of $v$ in $r(w)$.)

\begin{definition}[Bigraph product Hilbert space] \label{def: G-product Hilbert space}
	\[
		H := \bigoplus_{w \in W_0} H_w^\circ, \qquad \xi := 1 \in \bC = H_{\emptyset}^\circ \subset H.
	\]
	We call $(H,\xi)$ a $\mathcal{G}$-independent product of the pointed spaces $(H_v,\xi_v)_{v \in \mathcal{V}}$. Note that $(H,\xi)$ depends on the choice of representatives $W_0$.
\end{definition}
For each $v \in \mathcal{V}$, define a (not necessarily unital) $*$-homomorphism $\lambda_v: B(H_v) \to B(H)$.
Given $w \in W_{\mathcal{G}}$, write $vw$ for the word obtained by left-appending $v$. Set
\begin{align*}
	W_v^{(1)} & := \{ w \in W_0 : vw \in W_{\mathcal{G}} \}, \quad W_v^{(2)} := \{ r(vw) : w \in W_v^{(1)} \},
\end{align*}
and
\begin{align*}
	W_v^{(3)} & := W_0 \setminus \big(W_v^{(1)} \cup W_v^{(2)}\big).
\end{align*}
By Lemma \ref{lem: word facts} (3), the map $w \mapsto r(vw)$ is a bijection $W_v^{(1)} \to W_v^{(2)}$.

For $w \in W_v^{(1)}$, using $H_v = \bC \xi_v \oplus H_v^\circ$, define
\[
	T_{v,w} :
	H_w^\circ \oplus H_{r(vw)}^\circ
	\xrightarrow{\ \id \oplus U_{vw}\ }
	H_w^\circ \oplus H_{vw}^\circ
	\cong (\bC \otimes H_w^\circ) \oplus (H_v^\circ \otimes H_w^\circ)
	\cong H_v \otimes H_w^\circ,
\]
where the middle identifications are canonical and $1 \in \bC$ maps to $\xi_v$. Set
\[
	\mathcal{H}^{(v)}_- \coloneqq \bigoplus_{w \in W_v^{(1)}} H_w^\circ,\qquad
	\mathcal{H}^{(v)}_+ \coloneqq \bigoplus_{w \in W_v^{(2)}} H_{w}^\circ,\qquad
	\mathcal{H}_0^{(v)} \coloneqq \bigoplus_{w \in W_v^{(3)}} H_{w}^\circ.
\]
Define the isometric inclusion
\[
	S_v := \bigoplus_{w \in W_v^{(1)}} T_{v,w}^* :
	H_v \otimes \mathcal{H}^{(v)}_-
	\longrightarrow
	\mathcal{H}^{(v)}_- \oplus \mathcal{H}^{(v)}_+
	\subseteq H.
\]

\begin{definition}[Bigraph product inclusions] \label{def: G-free product inclusions}
	For $A \in B(H_v)$, define
	\[
		\lambda_v(A) := S_v \bigl(A \otimes \mathrm{id}_{\mathcal{H}^{(v)}_-}\bigr) S_v^* \in B(H).
	\]
	Equivalently, for each $w \in W_v^{(1)}$, $\lambda_v(A)$ acts on
	$H_w^\circ \oplus H_{r(vw)}^\circ$ via
	\[
		\lambda_v(A)\upharpoonright_{H_w^\circ \oplus H_{r(vw)}^\circ}
		= T_{v,w}^* \bigl(A \otimes I_{H_w^\circ}\bigr) T_{v,w},
	\]
	and $\lambda_v(A)$ vanishes on $\mathcal{H}^{(v)}_0$.
\end{definition}

Our main result for this section is that this construction realizes $\mathcal{G}$-independence; we postpone the proof to \S \ref{subsec: computation of vectors}.

\begin{theorem} \label{thm: Hilbert realization}
	Fix a bigraph $\mathcal{G}$ and pointed Hilbert spaces $(H_v,\xi_v)$ for $v \in \mathcal{V}$.
	Let $W_0$ be as in Definition \ref{def: G-product Hilbert space}, and let $(H,\xi)$ be the corresponding $\mathcal{G}$-product space. Then the subalgebras $\lambda_v(B(H_v)) \subset B(H)$ are $\mathcal{G}$-independent with respect to the vector state $\langle \xi \,|\,\cdot\,\xi\rangle$.
\end{theorem}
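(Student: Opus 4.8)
The plan is to verify the Boolean–cumulant characterisation of $\mathcal{G}$-independence from Proposition \ref{prop: formula via Boolean cumulants}. As a warm-up one first checks that each $\lambda_v$ is the (generally non-unital) $*$-homomorphism already recorded in Definition \ref{def: G-free product inclusions}, so that $\lambda_v(B(H_v))$ is a $*$-subalgebra of $B(H)$, and that $\varphi\circ\lambda_v=\varphi_v$: unwinding the definition of $S_v$ one has $S_v^*\xi=\xi_v$ under the identification $H_v\cong H_v\otimes H_\emptyset^\circ$, whence $\langle\xi,\lambda_v(A)\xi\rangle=\langle\xi_v,A\xi_v\rangle$. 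Consequently, for monochromatic blocks the Boolean cumulants taken with respect to $\varphi$ coincide with those taken with respect to the local states $\varphi_v$.

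The core of the argument will be an explicit formula for $\lambda_{c(1)}(a_1)\cdots\lambda_{c(k)}(a_k)\,\xi$, obtained by downward induction on the index of the outermost operator. Reading off Definition \ref{def: G-free product inclusions}, when $\lambda_v(A)$ is applied to a vector in a summand $H_w^\circ$ ($w\in W_0$) it acts by a sum of three elementary moves: a \emph{creation} move, defined exactly when $w\in W_v^{(1)}$ (i.e.\ $vw\in W_{\mathcal{G}}$), which sends $H_w^\circ$ isometrically into $H_{r(vw)}^\circ$ by $\eta\mapsto T_{v,w}^*\big((I-\xi_v\xi_v^*)A\xi_v\otimes\eta\big)$; a \emph{preservation} move, either multiplication by $\langle\xi_v,A\xi_v\rangle$ (when $w\in W_v^{(1)}$) or the action of the compression $(I-\xi_v\xi_v^*)A(I-\xi_v\xi_v^*)$ on the distinguished $v$-tensorand (when $w\in W_v^{(2)}$); and an \emph{annihilation} move (when $w\in W_v^{(2)}$), which deletes that $v$-tensorand against $\langle\xi_v,A\,\cdot\,\rangle$ and returns to $H_{w'}^\circ$ where $w=r(vw')$. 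One then proves by induction that $\lambda_{c(j)}(a_j)\cdots\lambda_{c(k)}(a_k)\xi$ is a sum, indexed by admissible ``partial episode configurations'' on $\{j,\dots,k\}$, of (the product over the already-closed episodes of their local Boolean cumulants $K^{\Bool}$) times (an explicit pure-tensor vector built from the still-open episodes, whose colours, read left to right, form a reduced permissible word). The induction step is a careful but routine bookkeeping: applying $\lambda_{c(j-1)}(a_{j-1})$, the creation move opens a new episode at $j-1$, a preservation move either extends the currently-frontmost open episode of colour $c(j-1)$ or records a singleton episode $\{j-1\}$, and the annihilation move closes the frontmost open episode of colour $c(j-1)$; in each case the new data are again admissible, and all of the word rearrangements forced by $r(\cdot)$ only swap tensorands whose colours are joined by an edge of $\mathcal{E}_{\ten}$.

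Specialising to $j=1$ and taking the inner product with $\xi\in H_\emptyset^\circ$ retains only the configurations in which every episode is closed. Such a configuration determines a partition $\pi$ of $[k]$, its blocks being the episodes; this partition is monochromatic, and the three defining conditions of $\mathcal{P}(c,\mathcal{G})^0$ come out as follows. The permissibility requirement $vw\in W_{\mathcal{G}}$ imposed at every creation move says exactly that whenever an open episode $B'$ strictly encloses a block $B$ one has $(c(B'),c(B))\in\mathcal{E}_1$, which is condition (2) of Definition \ref{def: compatible partitions} (with $i_1=\min B'$, $i_2=\max B'$). Two crossing episodes can coexist on the Fock space only if the word-equivalence $\sim$ allows one colour to be commuted past the other, i.e.\ if their colours are $\mathcal{E}_{\ten}$-related, which together with condition (2) yields condition (3). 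Finally, a creation move at the top of a block $B$ of colour $v$ is \emph{forbidden} whenever the frontmost word letter is already $v$, and one checks that this happens precisely when some same-colour block would be nested immediately inside $B$ with no intervening non-tensor-related open episode — exactly the obstruction ruling out the unobstructed-nesting relation of Definition \ref{def: unobstructed nesting}; hence $\pi\in\mathcal{P}(c,\mathcal{G})^0$, and conversely every element of $\mathcal{P}(c,\mathcal{G})^0$ is realised by a unique admissible configuration. Since a closed episode $B=\{j_1<\dots<j_s\}$ of colour $v$ contributes the scalar $\langle\xi_v,a_{j_1}(I-\xi_v\xi_v^*)a_{j_2}\cdots(I-\xi_v\xi_v^*)a_{j_s}\xi_v\rangle=K^{\Bool}_s(a_{j_1},\dots,a_{j_s})$, we conclude $\langle\xi,\lambda_{c(1)}(a_1)\cdots\lambda_{c(k)}(a_k)\xi\rangle=\sum_{\pi\in\mathcal{P}(c,\mathcal{G})^0}K_\pi^{\Bool}(a_1,\dots,a_k)$, which by Proposition \ref{prop: formula via Boolean cumulants} is precisely the assertion that the $\lambda_v(B(H_v))$ are $\mathcal{G}$-independent.

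The main difficulty lies entirely in the inductive step and the final identification: one must keep precise track of how the representative map $r(\cdot)$ and the rearrangement isomorphisms reshuffle the tensorands as episodes are opened, preserved, crossed and closed, and then verify that the data surviving at $j=1$ is genuinely in bijection with $\mathcal{P}(c,\mathcal{G})^0$ — in particular that the crossing condition (3) and the absence of unobstructed nesting emerge correctly. This is where the argument genuinely departs from the digraph model of \cite[Theorem 3.13]{jekel2024general}; the word combinatorics needed to handle the $\mathcal{E}_{\ten}$-commutations is exactly that of \cite{mlotkowski2004,CaFi2017} applied to the graph $(\mathcal{V},\mathcal{E}_{\ten})$, superimposed on the permissibility restriction as in \cite{JekelLiu2020,jekel2024general}.
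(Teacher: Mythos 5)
Your proposal is correct and follows essentially the same route as the paper's own proof: the paper likewise reduces to the Boolean-cumulant formula of Proposition \ref{prop: formula via Boolean cumulants}, decomposes each $a_j$ into the four compressions by $P_v=\xi_v\xi_v^*$ and $Q_v=1-P_v$ (your creation/preservation/annihilation moves), and proves by induction an explicit formula for $\lambda_{c(k)}(a_k^{(\delta_k,\epsilon_k)})\cdots\lambda_{c(1)}(a_1^{(\delta_1,\epsilon_1)})\xi$ as a product of Boolean cumulants over finished blocks times a pure tensor indexed by the unfinished blocks, whose colors form a reduced permissible word (Lemma \ref{lem: unfinished moments}). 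Your ``partial episode configurations'' are exactly the paper's unfinished partitions in $\mathcal{P}(c,\mathcal{G})_u^0$, and your identification of permissibility with condition (2), tensor-commutation of crossing episodes with condition (3), and the forbidden creation move with the absence of unobstructed nesting matches the paper's analysis.
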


We first remark that Theorem \ref{thm: Hilbert realization} implies Proposition \ref{prop: independent C star copies}.  Indeed, suppose we are given a bigraph $\mathcal{G}$ and $\mathrm{C}^*$-probability spaces $(A_v,\varphi_v)$ for $v \in \mathcal{V}$.  The GNS construction produces a Hilbert space $H_v$, a unit vector $\xi_v$ and a $*$-homomorphism $\pi_v: A_v \to B(H_v)$ such that $\varphi_v(x) = \ip{\xi_v, \pi_v(x) \xi_v}$.  Let $(H,\xi)$ be the product Hilbert space in Theorem \ref{thm: Hilbert realization}.  We then obtain $*$-homomorphisms $\tilde{\lambda}_v = \lambda_v \circ \pi_v: A_v \to B(H)$ such that the images $\tilde{\lambda}_v(A_v)$ are bigraph independent with respect to $\mathcal{G}$ in $B(H)$ with the state given by $\xi$.

\begin{remark}
	The construction in Theorem \ref{thm: Hilbert realization} naturally includes previous Hilbert space models for independence.
	\begin{itemize}
		\item When $\mathcal{E}_1 = \bar{\mathcal{E}}_1$, $\mathcal{E}_1 = \emptyset$, our $\mathcal{G}$-independent product reduces to $\Lambda$ free product of \cite{mlotkowski2004}, section 3.
		\item When $\mathcal{E}_2 = \emptyset$ and $\mathcal{E}_1=\bar{\mathcal{E}_1}=\mathcal{V}\times\mathcal{V}$, our $\mathcal{G}$-independent product reduces to the free product of pointed Hilbert spaces \cite{voiculescu2007multiplication}.
		\item When $\mathcal{E}_1 = \bar{\mathcal{E}}_1 = \mathcal{E}_2 = \mathcal{V}\times\mathcal{V}$, our $\mathcal{G}$-independent product reduces to the tensor product of pointed Hilbert spaces.
		\item When $\mathcal{E}_1 \cap \bar{\mathcal{E}}_1 = \emptyset
		      $ and $\mathcal{E}_1 = \mathcal{V}$, our $\mathcal{G}$-independent product reduces to the monotone product of pointed Hilbert spaces \cite{muraki2000monotonic}.
		\item When $\mathcal{E}_1 = \mathcal{E}_1 = \emptyset$, our $\mathcal{G}$-independent product reduces to the Boolean product of pointed Hilbert spaces \cite{SpeicherWoroudi1997BooleanConvolution}.
	\end{itemize}
\end{remark}

\begin{remark}
	We now relate our construction in the case that $\mathcal{E}_1\cap \bar{\mathcal{E}}_1 \setminus \Delta = \mathcal{E}_2$ with the construction for BMT independence in \cite{arizmendi2025bmt}.  We will see that although the Hilbert spaces need not coincide in general, the Hilbert space from our construction will embed isometrically into the one from \cite{arizmendi2025bmt}.

	To describe this construction, we need a fixed ordering of the vertices of $\mathcal{G}$.  In fact, we will assume without loss of generality that $\mathcal{V} = \{1,\dots,p\}$.  Consider the tensor product Hilbert space
	$$
		\tilde{H} = {H}_1 \otimes \cdots \otimes {H}_p,
	$$
	and injections $\pi_i : B(H_v) \to B(\tilde{H})$ given by
	$$
		\pi_{i}(A) = P_{i,1} \otimes \cdots \otimes P_{i,i-1} \otimes A \otimes P_{i,i+1} \otimes \cdots \otimes P_{i,p}
	$$
	where $P_{i,j} = I_{{H}_j}$ if  $(i,j) \in \mathcal{E}_1$ and $P_{i,j} = P_{\xi_j}$ otherwise.  We note that by expressing $H_v \cong \bC \oplus H_v^\circ$, we can write
	\begin{equation} \label{eq: tilde H expansion}
		\tilde{H} \cong \bigoplus_{k \in \{0,\dots,p\}} \bigoplus_{i_1 < \dots < i_k} H_{i_1}^\circ \otimes \dots \otimes H_{i_k}^\circ.
	\end{equation}
	The terms in the sum can be regarded as indexed by the power-set $\mathcal{P}([p])$.  We can then compare the terms appearing in this direct sum with those that appear in bigraph product Hilbert space $H$ from Theorem \ref{thm: Hilbert realization}.

	To this end, we note several facts about the permissible, reduced words associated to $\mathcal{G}$ in the case that $\mathcal{E}_2 = \mathcal{E}_1 \cap \mathcal{E}_1 \setminus \Delta$.
	\begin{itemize}
		\item \emph{In a permissible reduced word $w = v_1 \dots v_m$, each letter appears at most once.}  Indeed, suppose that there are two occurrences of the same letter, say $v_i = v_j$ with $i < j$.  Assume without loss of generality that $v_k \neq v_i$ for $k \in \{i+1,\dots,j-1\}$.  Let $k \in \{i+1,\dots,j-1\}$.  Since the word is permissible, we have $(v_k,v_i) \in \mathcal{E}_1$ and $(v_i,v_k) = (v_j,v_k) \in \mathcal{E}_1$.  Hence, $(v_i,v_k) \in \mathcal{E}_1 \cap \overline{\mathcal{E}}_1$, and so $(v_i,v_k) \in \mathcal{E}_2$.  Therefore, all the letters between $v_i$ and $v_j$ are adjacent by an edge in $\mathcal{E}_2$, which contradicts $w$ being reduced.
		\item \emph{Two permissible reduced words $w = v_1 \dots v_m$ and $w' = v_1' \dots v_m'$ are equivalent if and only if $\{v_1,\dots,v_m\} = \{v_1',\dots,v_m'\}$.}  The ``only if'' implication is immediate.  For the converse, we proceed by induction on $m$.  Suppose that $\{v_1,\dots,v_m\} = \{v_1',\dots,v_m'\}$.  Then there is a permutation $\sigma$ such that $v_{\sigma(j)}' = v_j$ for all $j$.  If $\sigma(j) < \sigma(1)$, then we have $(v_j,v_1) \in \mathcal{E}_1$ by permissibility of $w$ and at the same time $(v_1,v_j) = (v_{\sigma(1)}',v_{\sigma(j)}') \in \mathcal{E}_1$ by permissibility of $w'$; therefore, $(v_1,v_j) \in \mathcal{E}_2$.  Hence, $v_1' \dots v_{\sigma(1)-1}'$ are $\mathcal{E}_2$-adjacent to $v_{\sigma(1)}' = v_1$, and thus can be swapped with it to produce an equivalent word.  Therefore,
		      \[
			      w'' := v_{\sigma(1)}' v_1' \dots v_{\sigma(1)-1}' v_{\sigma(1)+1}' \dots v_m' \sim w'.
		      \]
		      By induction hypothesis, the words $v_1' \dots v_{\sigma(1)-1}' v_{\sigma(1)+1}' \dots v_m'$ and $v_2 \dots v_m$ are equivalent since they have the same set of letters.  Therefore, $w''$ is equivalent to $w$, and hence $w'$ is equivalent to $w$.
	\end{itemize}
	In light of these two observations, there is an injection from the set of permissible reduced words to the power-set of $\mathcal{V}$ given by mapping $v_1 \dots v_m$ to $\{v_1,\dots,v_m\}$.  The terms in the product Hilbert space $H$ thus match up with a subset of the terms in the expansion \eqref{eq: tilde H expansion} for $\tilde{H}$.  We thus naturally obtain an isometry $U: H \to \tilde{H}$ given by matching the corresponding terms in the direct sum and permuting the tensorands in each term $H_{v_1}^\circ \otimes \cdots \otimes H_{v_m}^\circ$ to match the order they appear in $\tilde{H}$.

    We claim that for $A \in B(H_v)$,
    \[
    U \lambda_i(A) = \pi_i(A) U.
    \]
    It suffices to check the equality on vectors that span the Hilbert space.  For this purpose, consider a word $w = v_1 \dots v_m\in \mathcal{W}$ such that $iw$ is permissible, and we will show equality on vectors in $H_w^\circ \oplus H_{r(iw)}^\circ$.  Let $j_1 \dots j_{\ell} i j_{\ell+1} \dots j_m$ be the increasing rearrangement of $v_1 \dots v_m$.  Consider the commutative diagram
    \[
    \begin{tikzcd}
    H_i \otimes H_w^\circ \arrow{r}{T_{i,w}} \arrow{d}{\id \otimes U} & H_w^\circ \oplus H_{r(iw)}^\circ \arrow{d}{U}\\
    H_i \otimes H_{j_1}^\circ \otimes \dots \otimes H_{j_m}^\circ \arrow{r}{\cong} & H_{j_1}^\circ \otimes \dots \otimes H_{j_{\ell}}^\circ \otimes H_i \otimes H_{j_{\ell+1}}^\circ \otimes \dots \otimes H_{j_m}^\circ \arrow{d}{\subseteq} \\
    & H_1 \otimes \dots \otimes H_p,
    \end{tikzcd}
    \]
    where the bottom horizontal map is the permutation of the tensorands, and the bottom vertical map is inclusion.  The maps in the commutative square are isomorphisms given by distributing tensor products over direct sums and permuting the tensorands in the natural way, so commutativity is clear.  The action of $\lambda_i(A)$ on the top right space $H_w^\circ \oplus H_{r(iw)}^\circ$ corresponds to $A \otimes \id$ on the top left space by Definition \ref{def: G-free product inclusions}.  The action of $\pi_i(A)$ on the $H_1 \otimes \dots \otimes H_p$ is given by $P_{i,1} \otimes \dots \otimes P_{i,i-1} \otimes A \otimes P_{i,i+1} \otimes P_{i,p}$.  Note that since $iw$ is permissible, all the letters $v_j$ satisfies $(v_j,i) \in \mathcal{E}_1$ and so $P_{i,v_j} = 1$.  We thus see that the restriction of $\pi_i(A)$ to the subspace $H_{j_1}^\circ \otimes \dots \otimes H_{j_{\ell}}^\circ \otimes H_i \otimes H_{j_{\ell+1}}^\circ \otimes \dots \otimes H_{j_m}^\circ$ in the bottom right part of the square is simply $\id \otimes \dots \otimes \id \otimes A \otimes \id \otimes \dots \otimes \id$.  Hence, the action of $\pi_i(A)$ translates into $A \otimes \id$ on the space in the bottom left of the square.  Comparing this with $A \otimes \id$ on the top left part of the square, we see that the action of $\lambda_i(A)$ on the top right corresponds to the action of $\pi_i(A)$ on the bottom right, as desired.
    
    Having shown the equality on the span of $H_w^\circ \oplus H_{r(iw)}^\circ$ where $iw$ is permissible, we now consider the orthogonal complement of the Hilbert space, which is the span of $H_w^\circ$ where $w \in \mathcal{W}$ such that $w$ does not equal $r(iw')$ and $iw$ is not permissible.  In this case, the action of $\lambda_i(A)$ on $H_w^\circ$ is zero.  On the other hand, letting $j_1 \dots j_m$ be the increasing rearrangement of $w$, $U$ maps $H_w^\circ$ into $H_{j_1}^\circ \otimes \dots \otimes H_{j_m}^\circ$.  Since $iw$ is not permissible, there exists some $s$ such that $(j_s,i) \not \in \mathcal{E}_1$, and so $P_{i,j_s} = P_{\xi_{j_s}}$ which vanishes on $H_{j_s}^\circ$.  This implies that $\pi_i(A)$ acts by zero on $H_{j_1}^\circ \otimes \dots \otimes H_{j_m}^\circ$.  Hence, $\pi_i(A) U$ and $U \lambda_i(A)$ are both zero on this subspace.

    Thus, although the Hilbert space constructed here is smaller than $H_1 \otimes \dots \otimes H_p$, we have an isometric inclusion $U$ such that $U \lambda_i(A) = \pi_i(A)U$ and $U$ maps the state vector to the state vector.  In particular, the cyclic subspaces generated by the state vector in the two constructions under the action of some family of operators on the $B(H_j)$'s will be isomorphic, and the joint moments of such operators will agree.
\end{remark}

\subsection{Inductive computation of the vectors} \label{subsec: computation of vectors}

In order prove Theorem \ref{thm: Hilbert realization}, we want to show that
\begin{equation} \label{eq: moment formula reverse}
	\ip{\xi, \lambda_{c(k)}(a_k) \dots \lambda_{c(1)}(a_1) \xi} = \sum_{\pi \in \mathcal{P}_k(c,\mathcal{G})^0} \prod_{B \in \pi} K^{\Bool}_{|B|}[a_j: j \in B].
\end{equation}
Here we have written the indices in reverse order for notational convenience in our inductive arguments.  Hence, in the Boolean cumulants the indices $a_j: j \in B$ should be understood to be in \emph{decreasing} order for each block as well.  We will in fact give a combinatorial formula in general for the element
\[
	\lambda_{c(k)}(a_k) \dots \lambda_{c(1)}(a_1) \xi,
\]
since this is more suitable to compute by induction.

We introduce first some more preparatory notation.  Let $P_v \in B(\mathcal{H}_v)$ be the rank-one projection onto $\bC \xi_v$, and let $Q_v = 1 - P_v$.  Let
\begin{align*}
	a_j^{(0,0)} & = P_{c(j)} a_j P_{c(j)},\quad
	a_j^{(0,1)}  = P_{c(j)} a_j Q_{c(j)}         \\
	a_j^{(1,1)} & = Q_{c(j)} a_j Q_{c(j)}, \quad
	a_j^{(1,0)}  = Q_{c(j)} a_j P_{c(j)}.
\end{align*}
Then $a_j = a_j^{(0,0)} + a_j^{(0,1)} + a_j^{(1,0)} + a_j^{(1,1)}$.  We may thus write
\begin{equation} \label{eq: 4 term expansion}
	\ip{\xi, \lambda_{c(k)}(a_k) \dots \lambda_{c(1)}(a_1) \xi} = \sum_{\delta_1,\epsilon_1,\dots,\delta_k,\epsilon_k \in \{0,1\}} \ip{\xi, \lambda_{c(k)}(a_k^{(\delta_k,\epsilon_k)}) \dots \lambda_{c(1)}(a_1^{(\delta_1,\epsilon_1)}) \xi}.
\end{equation}
Our goal is to show that certain of the terms in the sum vanish, while the others correspond to partitions in $\pcg$ and evaluate to the product of Boolean cumulants in the asserted formula.  Note that $a_j^{(\delta,\epsilon)}$ annihilates $\mathcal{H}_{c(j)}^\circ$ when $\epsilon = 0$ and annihilates $\bC \xi_{c(j)}$ when $\epsilon = 1$, and its image is contained in $\bC \xi_{c(j)}$ when $\delta = 0$ and $\mathcal{H}_{c(j)}^\circ$ when $\delta = 1$.

Examining the definition of the maps $\lambda_v$ in Definition \ref{def: G-free product inclusions}, we conclude the following.  Let $w = v_1 \dots v_k$.

\begin{fact} \label{obs: where they map} ~
	\begin{itemize}
		\item $\lambda_{c(j)}(a^{(0,0)})$ maps $H_w^\circ$ into itself if $w \in W_{c(j)}^{(1)}$ and vanishes on $H_w^\circ$ otherwise.
		\item $\lambda_{c(j)}(a^{(1,0)})$ maps $H_w^\circ$ into $H_{r(c(j)w)}^\circ$ if $w \in W_{c(j)}^{(1)}$, and vanishes on $H_w^\circ$ otherwise.
		\item $\lambda_{c(j)}(a^{(0,1)})$ maps $H_w^\circ$ into $H_{w'}^\circ$ if $w = r(c(j)w') \in W_{c(j)}^{(2)}$ (where $w' \in W_{c(j)}^{(1)}$), and vanishes on $H_w^\circ$ otherwise.
		\item $\lambda_{c(j)}(a^{(1,1)})$ maps $H_w^\circ$ into itself if $w \in W_{c(j)}^{(2)}$, and vanishes on $H_w^\circ$ otherwise.
	\end{itemize}
\end{fact}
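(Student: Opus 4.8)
The plan is to read all four assertions off from Definition~\ref{def: G-free product inclusions}, after recording three ingredients (abbreviating $v = c(j)$ and $a = a_j$). First, from the paragraph preceding the statement: $a^{(\delta,\epsilon)}$ annihilates $H_v^\circ$ when $\epsilon = 0$ and annihilates $\bC\xi_v$ when $\epsilon = 1$, while its range lies in $\bC\xi_v$ when $\delta = 0$ and in $H_v^\circ$ when $\delta = 1$. Second, $\lambda_v(A)$ is block diagonal for the orthogonal decomposition $H = \bigl(\bigoplus_{w \in W_v^{(1)}} (H_w^\circ \oplus H_{r(vw)}^\circ)\bigr) \oplus \mathcal{H}^{(v)}_0$: it vanishes on $\mathcal{H}^{(v)}_0$, and on the $w$-block it equals $T_{v,w}^*(A \otimes \id_{H_w^\circ}) T_{v,w}$. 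Third, tracing through the chain of identifications defining $T_{v,w}$ shows that $T_{v,w}$ carries the first summand $H_w^\circ$ isometrically onto $\bC\xi_v \otimes H_w^\circ$ via $h \mapsto \xi_v \otimes h$, and carries the second summand $H_{r(vw)}^\circ$ onto $H_v^\circ \otimes H_w^\circ = H_{vw}^\circ$ via $U_{vw}$; dually, $T_{v,w}^*$ maps $\bC\xi_v \otimes H_w^\circ$ back to $H_w^\circ$ and maps $H_v^\circ \otimes H_w^\circ$ to $H_{r(vw)}^\circ$ via $U_{vw}^*$.

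Granting these, I would first note that $\lambda_v(a^{(\delta,\epsilon)})$ vanishes on every $H_w^\circ$ with $w \in W_v^{(3)}$, since these span $\mathcal{H}^{(v)}_0$; this accounts for part of the ``otherwise'' clause of each bullet. Next consider $w \in W_v^{(1)}$, so $H_w^\circ$ is the first summand of its block. For $h \in H_w^\circ$ one has $\lambda_v(a^{(\delta,\epsilon)}) h = T_{v,w}^*\bigl((a^{(\delta,\epsilon)}\xi_v) \otimes h\bigr)$. When $\epsilon = 1$, $a^{(\delta,1)}\xi_v = 0$, so $\lambda_v(a^{(0,1)})$ and $\lambda_v(a^{(1,1)})$ vanish on $H_w^\circ$, finishing the ``otherwise'' clauses of the third and fourth bullets. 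When $(\delta,\epsilon) = (0,0)$, $a^{(0,0)}\xi_v \in \bC\xi_v$, so $(a^{(0,0)}\xi_v)\otimes h \in \bC\xi_v \otimes H_w^\circ$ and $T_{v,w}^*$ returns it to $H_w^\circ$ — the first bullet. When $(\delta,\epsilon) = (1,0)$, $a^{(1,0)}\xi_v \in H_v^\circ$, so $(a^{(1,0)}\xi_v)\otimes h \in H_{vw}^\circ$ and $T_{v,w}^*$ sends it to $H_{r(vw)}^\circ$ via $U_{vw}^*$ — the second bullet.

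Finally consider $w \in W_v^{(2)}$; write $w = r(vw')$ with $w' \in W_v^{(1)}$ the unique preimage under the bijection $W_v^{(1)} \to W_v^{(2)}$, so $H_w^\circ$ is the second summand of $H_{w'}^\circ \oplus H_{r(vw')}^\circ$. By linearity and continuity it is enough to evaluate on $g \in H_w^\circ$ with $U_{vw'} g = \eta \otimes h$, $\eta \in H_v^\circ$, $h \in H_{w'}^\circ$, so that $\lambda_v(a^{(\delta,\epsilon)}) g = T_{v,w'}^*\bigl((a^{(\delta,\epsilon)}\eta)\otimes h\bigr)$. When $\epsilon = 0$, $a^{(\delta,0)}\eta = 0$, so $\lambda_v(a^{(0,0)})$ and $\lambda_v(a^{(1,0)})$ vanish on $H_w^\circ$, finishing the ``otherwise'' clauses of the first and second bullets. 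When $(\delta,\epsilon) = (0,1)$, $a^{(0,1)}\eta \in \bC\xi_v$, so $(a^{(0,1)}\eta)\otimes h \in \bC\xi_v \otimes H_{w'}^\circ$ and $T_{v,w'}^*$ sends it to $H_{w'}^\circ$ — the third bullet. When $(\delta,\epsilon) = (1,1)$, $a^{(1,1)}\eta \in H_v^\circ$, so $(a^{(1,1)}\eta)\otimes h \in H_{vw'}^\circ$ and $T_{v,w'}^*$ sends it to $H_{r(vw')}^\circ = H_w^\circ$ via $U_{vw'}^*$ — the fourth bullet.

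Along the way one uses that $W_v^{(1)}$, $W_v^{(2)}$, $W_v^{(3)}$ genuinely partition $W_0$; disjointness of $W_v^{(1)}$ and $W_v^{(2)}$ holds because if $w = r(vw') \in W_v^{(2)}$ then $vw \sim vvw'$, which is never reduced, whence $vw \notin W_{\mathcal{G}}$ and $w \notin W_v^{(1)}$. I do not expect any real obstacle here: the argument is a mechanical consequence of Definition~\ref{def: G-free product inclusions} and the decomposition $a = a^{(0,0)} + a^{(0,1)} + a^{(1,0)} + a^{(1,1)}$, the only point requiring attention being to keep straight, in each $(w\text{-type},(\delta,\epsilon))$ combination, which direct summand $H_w^\circ$ occupies and which leg — the $\bC\xi_v$-leg or the $H_v^\circ$-leg — of $H_v \otimes H_{(\cdot)}^\circ$ the computation lands in.
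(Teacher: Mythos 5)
Your proof is correct and is exactly the verification the paper leaves implicit: the paper states this Fact with no proof beyond ``examining the definition of the maps $\lambda_v$,'' and your case-by-case computation through $T_{v,w}$ (tracking which leg of $H_v\otimes H_w^\circ$ each $a^{(\delta,\epsilon)}\xi_v$ or $a^{(\delta,\epsilon)}\eta$ lands in) is precisely that examination, carried out carefully. The extra check that $W_v^{(1)}$ and $W_v^{(2)}$ are disjoint (via non-reducedness of $vvw'$) is a worthwhile detail the paper does not spell out.
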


With this information in mind, we can then consider the effect of applying several operators $\lambda_{c(j)}(a^{(\delta_j,\epsilon_j)})$ consecutively to the state vector $\xi$, and thus determine which direct summand of the Hilbert space $\mathcal{H}$ contains the vector
\[
	\lambda_{c(j)}(a_j^{(\delta_1,\epsilon_1)}) \dots \lambda_{c(1)}(a_1^{(\delta_1,\epsilon_1)}) \xi
\]
for each $j \leq k$.   First, to keep track of the number of tensorands, we introduce \emph{for each vertex} $v \in \mathcal{G}$ a \emph{height function} $h_v$ associated to the sequence of indices $(\delta_j,\epsilon_j)_{1 \leq j \leq k}$ and a coloring $c = c_1\cdots c_k$.  Let, for any $ m \in \llbracket 0,k \rrbracket $:
\[
	h_v(m) = \sum_{j=1}^m \mathbbm{1}_{c(j)=v} (\delta_i - \epsilon_i).
\]
Note that $h_v(0) = 0$, and $h_v(j+1) - h_v(j) \in \{-1,0,1\}$ and if $|h_v(j+1)-h_v(j)|=1$, then $c(j)=v$.
By inductive application of the observations above, one can show that
$$
	\lambda_{c(j)}(a_j) \dots \lambda_{c(1)}(a_1) \xi \in H_{w(j)}^{\circ}
$$
for some word $w(j) \in W_0$ with $h_v(j)$ \emph{occurrences of the letter} $v$ (allowing the possibility that the vector is zero):
$$
	h_v(j) = \{ i \in \llbracket 1,|w(j)| \rrbracket  \colon w_i(j)=v \}.
$$

If $h_v(j)$ is ever $-1$, then the first time that $h_v(j) = -1$, we are applying an `annihilation operator' $\lambda_{c(j)}(a_j^{(0,1)})$ (with $c(j) = v$) to a vector which does not have any tensorands from $H_v^\circ$, which results in :
\[
	\lambda_{c(j)}(a_j^{(\delta_j,\epsilon_j)}) \dots \lambda_{c(1)}(a_1^{(\delta_1,\epsilon_1)}) \xi= 0,~ \textrm{if } h_v(j)=-1.
\]
Hence also, if $h(i) < 0$ for any $i \leq j$, then $\lambda_{c(j)}(a_j^{(\delta_j,\epsilon_j)}) \dots \lambda_{c(1)}(a_1^{(\delta_1,\epsilon_1)}) \xi = 0$.  Furthermore, at the last step, for the inner product to be nonzero, $\lambda_{c(k)}(a_k^{(\delta_k,\epsilon_k)}) \dots \lambda_{c(1)}(a_1^{(\delta_1,\epsilon_1)}) \xi$ must be in $\bC \xi$, and hence
\[
	h_v(k) = 0.
\]

Therefore, in the expansion \ref{eq: 4 term expansion}, only the summands which have nonnegative height functions $h_v$, for each $v \in \mathcal{G}$, with $h_v(k) = 0$ will remain.  Also, whenever $h_v$ hits $0$ at some point $j$ and $h(j+1)=0$ too, one must have $\delta_{j+1} = \varepsilon_{j+1} =0$ for the vector not to be $0$.

We want to express these in terms of non-crossing partitions of the sets $\{c_i = v\}$, for each $v \in \mathcal{G}$ and see these partitions are compatible in the sense that they yield a single partition of $[k]$ in $\mathcal{P}(c,\mathcal{G})^0$.

Thus, we recall the following fact, which is a generalization of the well-known bijection between non-crossing \emph{pair} partitions and Dyck paths.  A similar statement is given in \cite[Lemma 4.24]{JekelLiu2020}.  In our setting, we visualize the indices $1$, \dots, $k$ as running from \emph{right} to \emph{left}.  The idea is that the $(0,0)$ indices correspond to singletons, the $(1,0)$ indices correspond to the right endpoints of blocks, the $(0,1)$ indices correspond to the left endpoints, and the $(1,1)$ indices correspond to middle elements of a block.  However, since in the larger argument we consider building partition inductively by adding on new indices to the left side, then at the intermediate stages we have to allow that some blocks are ``unfinished'' or are missing their left endpoints.

\begin{definition}
	An \emph{unfinished partition} $\pi$ is a partition $\pi$ of $[k]$ together with a labeling of each of block as ``finished'' or ``unfinished.''  Similarly, an \emph{unfinished non-crossing partition} is an unfinished partition such that $\pi$ is non-crossing.
\end{definition}

An unfinished partition $\pi \in [k]$ has a restriction to $[k-1]$, which is also an unfinished partition. Let $\pi$ be an unfinished partition of $[k]$ and let $B$ be the block containing $k$. We define the restriction $\tilde{\pi}$ of $\pi$ to $[k-1]$, by declaring that all blocks of $\pi$ different from $B$ are blocks of $\tilde{\pi}$ and if $B \neq \{k\}$, $B\backslash \{k\}$ is a unfinished block of $\tilde{\pi}$. This construction will be used in the proof of our key result, Lemma \ref{lem: unfinished moments}.

\begin{lemma} \label{lem: partition path bijection}
	There is a bijection between the following sets:
	\begin{enumerate}
		\item The set of unfinished non-crossing partitions of $[k]$.
		\item The set of sequences $(\delta_1,\varepsilon_1)$, \dots, $(\delta_k, \varepsilon_k)$ such that the height function
		      \[
			      h(j) = \sum_{i=1}^j (\delta_i - \epsilon_i),~ j \in \llbracket 0, k \rrbracket ,
		      \]
		      is always nonnegative and $h(j)=h(j+1)=0$ implies $\varepsilon_{j+1} = \delta_{j+1} = 0$
	\end{enumerate}
	The bijection is described as follows:
	\begin{enumerate}
		\item $j$ is a finished singleton block of $\pi$ if and only if $(\delta_j,\varepsilon_j) = (0,0)$.
		\item $j$ is the right endpoint of a block that is not a finished singleton block if and only if $(\delta_j,\varepsilon_j) = (1,0)$.
		\item $j$ is the left endpoint of a finished non-singleton block if and only if $(\delta_j,\varepsilon_j) = (0,1)$.
		\item $j$ is not the right endpoint of a singleton block and not the left endpoints of a finished block if and only if $(\delta_j,\varepsilon_j) = (1,1)$.
	\end{enumerate}
\end{lemma}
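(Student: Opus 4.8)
The plan is to argue by induction on $k$, taking as the basic tool the restriction operation $\pi \mapsto \tilde\pi$ of an unfinished partition of $[k]$ to $[k-1]$ defined just above; on the sequence side this will correspond to deleting the last pair $(\delta_k,\varepsilon_k)$. The base case $k=0$ (or $k=1$) is immediate, both sides being singletons (resp.\ two-element sets). For the inductive step, given an unfinished non-crossing partition $\pi$ of $[k]$, look at the block $B$ containing $k$; since $k=\max B$, exactly one of four mutually exclusive situations holds, and these are declared to match clauses (1)--(4) of the statement: $B=\{k\}$ finished gives $(\delta_k,\varepsilon_k)=(0,0)$; $B=\{k\}$ unfinished gives $(1,0)$; $|B|\ge 2$ with $B$ finished gives $(0,1)$ (here $k$ is the left endpoint of $B$ in the right-to-left picture, hence a ``closing'' step); and $|B|\ge 2$ with $B$ unfinished gives $(1,1)$. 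In every case $\tilde\pi$ is again an unfinished non-crossing partition of $[k-1]$, so by the inductive hypothesis it is matched with a sequence $(\delta_i,\varepsilon_i)_{i<k}$ whose height function is nonnegative and satisfies the flat-step-at-zero constraint.

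Next I would verify that appending $(\delta_k,\varepsilon_k)$ to that sequence keeps it admissible, and conversely that every admissible extension arises this way. Writing $h(k)=h(k-1)+\delta_k-\varepsilon_k$: in the $(0,1)$ and $(1,1)$ cases $B$ is a block of size $\ge 2$ that has already been ``opened'' by the time we have read $1,\dots,k-1$, which forces $h(k-1)\ge 1$, so $h(k)\ge 0$ in all four cases; and the only way to pass from height $0$ to height $0$ is a $(0,0)$ step, because a $(1,0)$ step strictly raises the height while a $(1,1)$ step requires an open block to extend — this is precisely the flat-step-at-zero constraint. For the converse direction one reconstructs $\pi$ from an admissible sequence by scanning $1,2,\dots,k$ while maintaining a stack of currently-open blocks: a $(1,0)$ step pushes a new block, a $(1,1)$ step adjoins the current index to the top block, a $(0,1)$ step adjoins it to the top block and then pops it (marking that block finished), and a $(0,0)$ step creates a finished singleton; the blocks surviving on the stack are declared unfinished. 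Nonnegativity of $h$ makes the pop/extend operations always legal, and the flat-step-at-zero constraint ensures we never try to extend an empty stack.

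It then remains to show these two constructions are mutually inverse, and this is where I expect the genuine work to be. One must check that the stack reconstruction always yields a non-crossing partition and — crucially — that in the $(0,1)$ and $(1,1)$ cases the block being closed or extended is \emph{forced} to be the most recently opened one; equivalently, using Lemma \ref{lem: tetrachotomy}, that the open (in particular the unfinished) blocks of a non-crossing partition are organized last-in-first-out, with no unfinished block nested inside another block. With this nesting/LIFO bookkeeping established, ``removing $k$'' on the partition side becomes the exact inverse of ``deleting the last step'' on the sequence side, and one reads off directly that $\Psi\circ\Phi=\mathrm{id}$ and $\Phi\circ\Psi=\mathrm{id}$, closing the induction. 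This last point is the only delicate step; it is the natural generalization of the classical correspondence between Dyck paths and non-crossing pair partitions and parallels \cite[Lemma 4.24]{JekelLiu2020}, from which the remaining verifications are routine.
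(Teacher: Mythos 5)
Your proposal is correct and follows essentially the same route as the paper: induction on $k$ via the restriction $\pi\mapsto\tilde\pi$, the same four-case matching of $(\delta_k,\varepsilon_k)$ with the role of $k$ in its block, and the same height-function bookkeeping for admissibility of the extended sequence. Your stack-based description of the inverse is just a reformulation of the paper's rule of adjoining $k$ to the unfinished block of $\tilde\pi$ containing the largest element, and you correctly isolate the forced-choice/LIFO verification as the one delicate point --- a point the paper's own proof likewise leaves implicit when it says the converse map is obtained by reversing the construction.
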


\begin{proof}
	Let us proceed by induction on $k$. The case $k=1$ is trivial, if $h(1)=0$, then $(\delta_1,\varepsilon_1)=(0,0)$ and the only unfinished non-crossing partition of $[1]$ is $\{\{1\}\}$ which is a finished singleton block. If $h(1)=1$, then $(\delta_1,\varepsilon_1)=(1,0)$ and the only unfinished non-crossing partition of $[1]$ is $\{\{1\}\}$ which is an unfinished block.
	Suppose the result holds for $k-1$ and let us prove it for $k$. Let $(\delta_1,\varepsilon_1), \ldots, (\delta_k,\varepsilon_k)$ be a sequence such that the height function $h$ is nonnegative and $h(j)=h(j+1)=0$ implies $\varepsilon_{j+1} = \delta_{j+1} = 0$. Let $\tilde{h}$ be the restriction of $h$ to $\llbracket 0, k-1 \rrbracket$ and let $\tilde{\pi}$ be the unfinished non-crossing partition of $[k-1]$ associated to $(\delta_1,\varepsilon_1), \ldots, (\delta_{k-1},\varepsilon_{k-1})$ by the induction hypothesis. We define the unfinished non-crossing partition $\pi$ of $[k]$ associated to $(\delta_1,\varepsilon_1), \ldots, (\delta_k,\varepsilon_k)$ as follows:
	\begin{itemize}
		\item If $(\delta_k,\varepsilon_k) = (0,0)$, then $\pi$ is obtained from $\tilde{\pi}$ by adding the finished singleton block $\{k\}$.
		\item If $(\delta_k,\varepsilon_k) = (1,0)$, then $\pi$ is obtained from $\tilde{\pi}$ by adding the unfinished block $\{k\}$.
		\item If $(\delta_k,\varepsilon_k) = (1,1)$, then $h(k-1) > 0$ and $\pi$ is obtained from $\tilde{\pi}$ by adding $k$ to the unfinished block of $\tilde{\pi}$ containing the largest element.
		\item If $(\delta_k,\varepsilon_k) = (0,1)$, then $h(k-1) > 0$ and $\pi$ is obtained from $\tilde{\pi}$ by adding $k$ to the unfinished block of $\tilde{\pi}$ containing the largest integer, making this block finished.
	\end{itemize}
	The converse map is obtained by reversing the above construction. This concludes the proof.
\end{proof}

\begin{example}
	Consider the unfinished non-crossing partition $\pi = \{\{6,5,4\},\{3\},\{2\},\{1\}\}$ of $[6]$, where the block $\{6,5,4\}$ is unfinished.  The associated sequence $(\delta_1,\varepsilon_1), \ldots, (\delta_6,\varepsilon_6)$ is given by
	\[
		(0,0), (0,0), (0,1), (1,1), (1,1), (1,0),
	\]
	since $h(0) = 0$, $h(1) = 0$, $h(2) = 0$, $h(3) = 1$, $h(4) = 2$, $h(5) = 1$, and $h(6) = 0$.
\end{example}
\begin{observation} \label{obs: multicolor partition from path}
	Consider a sequence $(\delta_1,\varepsilon_1)$, \dots, $(\delta_k, \varepsilon_k) \in \{0,1\}^2$ together with a coloring $c: [k] \to \mathcal{V}$.  Let $J_v = c^{-1}(v) = \{j: c(j) = v\}$.  Suppose that $h_v \geq 0$.  By applying Lemma \ref{lem: partition path bijection} to the restriction of $\pi$ to $J_v$, we obtain an unfinished non-crossing partition $\pi_v$ of the index set $J_v$.  We hence obtain an unfinished partition $\pi = \bigcup_{v \in \mathcal{V}} \pi_v$, whose restriction to each $J_v$ is non-crossing.
\end{observation}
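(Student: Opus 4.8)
The content of this observation is a bookkeeping step: it asserts that the color-by-color application of Lemma~\ref{lem: partition path bijection} is legitimate and that the resulting pieces assemble into a single unfinished partition of $[k]$ whose restriction to each color class is non-crossing. The plan is to verify the hypotheses of Lemma~\ref{lem: partition path bijection} for the sequence restricted to each $J_v$, and then to observe that the resulting partitions glue together without conflict because the $J_v$ form a partition of $[k]$.

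First I would fix $v \in \mathcal{V}$ and write $J_v = \{j_1 < \dots < j_r\}$, identified with $[r]$ in increasing order; the restricted datum is the sequence $(\delta_{j_1},\varepsilon_{j_1}),\dots,(\delta_{j_r},\varepsilon_{j_r})$. The key point is that the height function of this restricted sequence takes the value $h_v(j_m)$ at position $m$. This is immediate from the defining formula $h_v(\ell) = \sum_{i \le \ell}\mathbbm{1}_{c(i)=v}(\delta_i-\varepsilon_i)$, since $h_v$ is constant on each interval $[j_m,j_{m+1})$ and only changes at indices colored $v$. Consequently the hypothesis $h_v \ge 0$ is exactly the nonnegativity of the restricted height function. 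For the second hypothesis of Lemma~\ref{lem: partition path bijection}, suppose the restricted height vanishes at two consecutive positions $m$ and $m+1$, so $h_v(j_m) = h_v(j_{m+1}) = 0$; since $h_v$ does not change on $[j_m,j_{m+1})$ we have in particular $h_v(j_{m+1}-1) = h_v(j_{m+1}) = 0$, and the constraint on the surviving terms recorded in the preceding paragraph then forces $(\delta_{j_{m+1}},\varepsilon_{j_{m+1}}) = (0,0)$. Hence Lemma~\ref{lem: partition path bijection} applies to the restricted sequence and produces an unfinished non-crossing partition $\pi_v$ of $J_v$.

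Finally, since $[k] = \bigsqcup_{v \in \mathcal{V}} J_v$, setting $\pi := \bigsqcup_{v} \pi_v$ (each block keeping the finished/unfinished label it carries in the corresponding $\pi_v$) gives a well-defined unfinished partition of $[k]$; every block is monochromatic, being contained in a single $J_v$, and by construction $\pi|_{J_v} = \pi_v$, which is non-crossing. I do not anticipate any real obstacle beyond the two routine verifications above; the only point requiring a moment's care is the identification of the restricted height function with the slice $m \mapsto h_v(j_m)$ of $h_v$, together with the transfer of the consecutive-zeros condition to each color class, both of which follow directly from the definition of $h_v$.
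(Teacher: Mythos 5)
Your proof is correct and follows the same route the paper intends (the paper states this as an Observation without proof): restrict the sequence to each color class $J_v$, identify the restricted height function with the slice $m \mapsto h_v(j_m)$ of $h_v$, apply Lemma \ref{lem: partition path bijection} color by color, and glue the monochromatic pieces into one unfinished partition of $[k]$. Your point that the consecutive-zeros hypothesis of Lemma \ref{lem: partition path bijection} must be imported from the surrounding discussion (it is not literally contained in the stated hypothesis ``$h_v \geq 0$''), and your transfer of that condition to the restricted sequence via the constancy of $h_v$ on $[j_m, j_{m+1})$, are exactly the right verifications and are handled correctly.
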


We now describe the analogue of $\mathcal{P}(c,\mathcal{G})^0$ for unfinished partitions.  The definition is motivated by considering the partition as part of a larger, completed partition; the unfinished blocks will have more elements added to the left, and these future elements must be accounted for in the conditions of Definition \ref{def: compatible partitions}.

If $V$ is an unfinished block, we should slightly modify the definition of its convex hull, ${\rm Conv}(V)$, and set it equal to ${\rm Conv}(\{k\} \cup V)$. This has the effect of adding all points to the left of the block $V$ to the convex hull of all the points in $V$.

Define $\mathcal{E}_1^{\pi_u} = \{ (U,V) : V \cap {\rm Conv}(U) \neq \emptyset \}$ and $\mathcal{E}^{\pi_u}_2 = \mathcal{E}_1 \cap \bar{\mathcal{E}}_1$.
A block $U$ of $\pi_u$ is nested in another block $V$ of $\pi_u$ if $(U,V) \in \mathcal{E}_1^{\pi_u}\backslash\mathcal{E}_2^{\pi_u}$. For example, if $U$ is unfinished and the minimum of $V$ is greater than the maximum of $U$ (again, for the order $k > \cdots > 1$), $V$ is nested in $U$.
An unfinished block is pictured, along with its convex hull, with a segment extending to the left, see Figure \ref{fig:unfinished partition}.

\begin{figure}
	\includegraphics[width=0.4\textwidth]{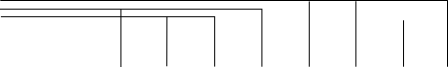}
	\caption{\label{fig:unfinished partition} An unfinished partition. Unfinished blocks are pictured with a segment extending to the left.}
\end{figure}

\begin{definition}
	\label{def:unfinishedpartition}
	Let $\pi$ be an unfinished partition of $[k]$.

	\begin{itemize}
		\item We say that $\pi \in \mathcal{P}(c,\mathcal{G})_u$ if the coloring $c$ defines a morphism from the bigraph $(\pi,\mathcal{E}^{\pi_u}_1,\mathcal{E}_2^{\pi_u})$ to $(\mathcal{G},\mathcal{E}_1,\mathcal{E}_2)$ :
		      \begin{itemize}
			      \item[(2')] If $j > i$ and $i$ is in an unfinished block, then $(c(i),c(j)) \in \mathcal{E}_1$.
			      \item[(3')] If $j_1 > i > j_2$ and $j_1 \sim_\pi j_2$ and $i$ is in an unfinished block, then $(c(j_1),c(i)) \in \mathcal{E}_2$.
		      \end{itemize}

		\item Two blocks $U$ and $V$ of an unfinished partition $\pi_u \in \pcg_u$ are said to be \emph{adjacent} if $U$ is nested in $V$, $c(U) = c(V)$ and for any other block $W$ with $U$ nested in $W$ and $V$ nested in $U$, $(c(U),c(W)) \in \mathcal{E}_2$.

		\item We say that $\pi \in \mathcal{P}(c,\mathcal{G})^0_u$ if $\pi \in \pcg_u$ and $\pi$ has no adjacent blocks.

	\end{itemize}
\end{definition}

We are now ready to prove our lemma, computing the action of the operators $\lambda_c,~c\in\mathcal{G}$ on the vacuum vector $\xi$.  This computation is parallel to the one given in \cite[Lemma 3.17]{jekel2024general} for the setting of digraph independences.

\begin{lemma} \label{lem: unfinished moments}
	Fix a bigraph $\mathcal{G}$, a sequence $(\delta_1,\varepsilon_1),\ldots,(\delta_k,\varepsilon_k)\in\{0,1\}^2$, and a coloring $c:[k]\to\mathcal{V}$, and assume $h_v(m)\ge 0$ for all $v\in\mathcal{V}$ and $m\in\{0,\ldots,k\}$.

	Let $\pi$ be the unfinished partition associated to $(\delta,\varepsilon,c)$ (Observation \ref{obs: multicolor partition from path}). List the unfinished blocks $B_1,\ldots,B_m$ so that $\min B_1<\cdots<\min B_m$ (right-to-left), set $v_j=c(B_j)$, and define $w=v_m\cdots v_1$.

	If $\pi\in\mathcal{P}(c,\mathcal{G})_u^0$, then $w\in W_{\mathcal{G}}$ and
	\begin{multline*}
		\lambda_{c(k)}\!\big(a_k^{(\delta_k,\epsilon_k)}\big)\cdots
		\lambda_{c(1)}\!\big(a_1^{(\delta_1,\epsilon_1)}\big)\,\xi
		=
		U_w\!\left[
			\bigg(\prod_{j\in B_m} Q_{v_m} a_j\bigg)\xi_{v_m}\otimes\cdots\otimes
			\bigg(\prod_{j\in B_1} Q_{v_1} a_j\bigg)\xi_{v_1}
			\right]\! \\
		\times \prod_{\substack{B\in\pi\\ B\ \text{finished}}} K^{\Bool}_{|B|}(a_j:j\in B).
	\end{multline*}

	If $\pi\notin\mathcal{P}(c,\mathcal{G})_u^0$, then
	\[
		\lambda_{c(k)}\!\big(a_k^{(\delta_k,\epsilon_k)}\big)\cdots
		\lambda_{c(1)}\!\big(a_1^{(\delta_1,\epsilon_1)}\big)\,\xi
		= 0.
	\]
\end{lemma}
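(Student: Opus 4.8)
\emph{The plan is to prove both assertions simultaneously by induction on $k$}, peeling off the last-applied operator $\lambda_{c(k)}\!\big(a_k^{(\delta_k,\epsilon_k)}\big)$. Write $\tilde c=c|_{[k-1]}$, let $\tilde\pi$ be the restriction of $\pi$ to $[k-1]$ in the sense of unfinished partitions recalled before the statement, let $\tilde w$ be the word of unfinished blocks of $\tilde\pi$, and set $\tilde\eta=\lambda_{c(k-1)}\!\big(a_{k-1}^{(\delta_{k-1},\epsilon_{k-1})}\big)\cdots\lambda_{c(1)}\!\big(a_1^{(\delta_1,\epsilon_1)}\big)\xi$. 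The height functions of the first $k-1$ steps agree with $h_v$ on $\{0,\dots,k-1\}$, so the inductive hypothesis applies to $\tilde\eta$: it vanishes when $\tilde\pi\notin\mathcal{P}(\tilde c,\mathcal{G})_u^0$, and otherwise equals the asserted product built from the unfinished blocks of $\tilde\pi$. The base case $k=0$ is trivial ($\tilde\eta=\xi$, $w$ empty, $\pi$ empty). Throughout I would assume, via Remark \ref{rk:wlog}, that $\mathcal{E}_2\subseteq\mathcal{E}_1\cap\overline{\mathcal{E}}_1$, so that ``$\mathcal{E}_2$-related'' and ``$(\mathcal{E}_1\cap\overline{\mathcal{E}}_1\cap\mathcal{E}_2)$-related'' coincide.

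For the inductive step \emph{I would distinguish the four cases} $(\delta_k,\epsilon_k)\in\{0,1\}^2$. By Lemma \ref{lem: partition path bijection}, applied inside the color class $c^{-1}(c(k))$ as in Observation \ref{obs: multicolor partition from path}, these describe exactly how $\pi$ arises from $\tilde\pi$: $(0,0)$ adjoins $\{k\}$ as a new finished singleton; $(1,0)$ adjoins $\{k\}$ as a new unfinished block; $(1,1)$ appends $k$ to the innermost unfinished block $B_p$ of color $c(k)$ and keeps it unfinished; $(0,1)$ does the same but marks $B_p\cup\{k\}$ finished. In each case Fact \ref{obs: where they map} tells us whether $\lambda_{c(k)}\!\big(a_k^{(\delta_k,\epsilon_k)}\big)$ annihilates the summand $H^\circ_{r(\tilde w)}$ carrying $\tilde\eta$, or sends it into $H^\circ_{r(c(k)\tilde w)}$ (case $(1,0)$), $H^\circ_{r(\tilde w)}$ (cases $(0,0)$, $(1,1)$), or $H^\circ_{w'}$ with $\tilde w=r(c(k)w')$ (case $(0,1)$), according as $\tilde w$ lies in $W^{(1)}_{c(k)}$, resp. $W^{(2)}_{c(k)}$, or in neither. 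Plugging in the explicit form of $\tilde\eta$ and using how $a_k^{(\delta_k,\epsilon_k)}$ acts on $\xi_{c(k)}$ and on $H_{c(k)}^\circ$, together with the elementary identity $K^{\Bool}_{n}[b_1,\dots,b_n]=\bigl\langle\xi_v,\,b_1 Q_v b_2 Q_v\cdots Q_v b_n\,\xi_v\bigr\rangle$ in the GNS representation of $(A_v,\varphi_v)$ (valid since every block is monochromatic), one reads off that the surviving vector is exactly the claimed tensor product, and that the new factor $K^{\Bool}_{|B|}(a_j:j\in B)$ materializes precisely in case $(0,0)$ (with $B=\{k\}$, contributing $\varphi_{c(k)}(a_k)$) and case $(0,1)$ (with $B=B_p\cup\{k\}$, the block just closed).

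\emph{The heart of the proof is the combinatorial matching}: in each of the four cases,
\[
\pi\in\mathcal{P}(c,\mathcal{G})_u^0 \iff \tilde\pi\in\mathcal{P}(\tilde c,\mathcal{G})_u^0 \ \text{and the relevant word membership }(\tilde w\in W^{(1)}_{c(k)}\text{, resp. }W^{(2)}_{c(k)})\text{ holds,}
\]
which makes the vanishing dichotomy of Fact \ref{obs: where they map} coincide with membership in $\mathcal{P}(c,\mathcal{G})_u^0$; in particular if $\tilde\pi\notin\mathcal{P}(\tilde c,\mathcal{G})_u^0$ then $\tilde\eta=0$ and the left side fails as well. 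To establish the equivalence I would translate the two defining properties of a permissible reduced word against Definition \ref{def:unfinishedpartition}: \emph{permissibility} of $c(k)\tilde w$ is exactly condition (2') for the pairs newly involving $k$ (that is, $(c(i),c(k))\in\mathcal{E}_1$ for $i$ in an unfinished block), while \emph{reducedness} of $c(k)\tilde w$ is the absence of an \emph{adjacency} created by $k$ — the only possible obstruction being an occurrence of the letter $c(k)$ in $\tilde w$ all of whose preceding letters are $(\mathcal{E}_1\cap\overline{\mathcal{E}}_1\cap\mathcal{E}_2)$-related to $c(k)$, which corresponds to $\{k\}$ (resp. $B_p\cup\{k\}$) being adjacent in $\pi$ to the unfinished block of color $c(k)$ it is nested inside. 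The mechanism tying the two sides together is that $k$, being the current maximum, lies inside the modified convex hull of every unfinished block of $\tilde\pi$; hence adjoining $k$ creates new crossings precisely with the more innermost blocks it encloses, and the morphism condition of Definition \ref{def:unfinishedpartition} applied to those new crossings forces exactly the $(\mathcal{E}_1\cap\overline{\mathcal{E}}_1\cap\mathcal{E}_2)$-relatedness that $W^{(1)}_{c(k)}$- resp. $W^{(2)}_{c(k)}$-membership demands. All remaining nesting and crossing relations, and the ``finished'' status of old blocks, are unchanged, so conditions (2'), (3'), monochromaticity, and the adjacency condition persist in both directions apart from the genuinely new pairs.

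\emph{I expect the bookkeeping in this matching step to be the main obstacle} — in particular, in cases $(1,0)$ and $(1,1)$, correctly locating the unfinished block that $k$ attaches to and the unfinished blocks lying ``between'' it and the earlier occurrence of $c(k)$, and reconciling these statements about modified convex hulls with the syntactic definitions of $W^{(1)}_{c(k)}$, $W^{(2)}_{c(k)}$ and the equivalence $\sim$ of words. Once the identification of index sets is in place, the vector computations and the identification of the Boolean-cumulant factors are routine and follow the same pattern as \cite[Lemma 3.17]{jekel2024general}.
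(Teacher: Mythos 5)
Your proposal is correct and follows essentially the same route as the paper's proof: induction on $k$ peeling off the last operator, a four-way case split on $(\delta_k,\epsilon_k)$, Fact \ref{obs: where they map} to locate the image summand, the identity expressing Boolean cumulants as $\langle\xi_v, a\,Q_v a\cdots Q_v a\,\xi_v\rangle$, and the combinatorial matching of permissibility/reducedness of $c(k)\tilde w$ with conditions (2'), (3') and non-adjacency in $\mathcal{P}(c,\mathcal{G})_u^0$. The bookkeeping you flag as the main obstacle is indeed where the paper spends most of its effort, but your outline of how the new crossings created by $k$ force the required $\mathcal{E}_2$-relations matches the paper's argument.
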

\begin{proof}
	Inductive proof on $k$.
	Suppose the result holds for $k-1$ and prove it for $k$. Assume $\pi \in \pcg^0_u$ with $c=c(k) \tilde{c}$. Since the \emph{restriction} $\tilde{\pi}$ of $\pi$ to set $\{k-1,\ldots,1\}$ is in $\mathcal{P}(\tilde{c},\mathcal{G})_u^0$, $\tilde{c} \in W_{\mathcal{G}}$ and from the inductive hypothesis:
	\begin{multline*}
		\lambda_{c(k)}(a_k^{(\delta_k,\epsilon_k)}) \dots \lambda_{c(1)}(a_1^{(\delta_1,\epsilon_1)}) \xi \\
		=\lambda_{c(k)}(a_k^{(\delta_k,\epsilon_k)}) \bigg\{
		U_{\tilde{w}}\bigg[ \bigg( \prod_{j \in \tilde{B}_m} Q_{\tilde{v}_m} a_j\bigg) \xi_{\tilde{v}_m} \otimes \dots \otimes \bigg( \prod_{j \in \tilde{B}_1} Q_{\tilde{v}_1} a_j \bigg) \xi_{\tilde{v}_1}\bigg] \\
		\prod_{B \,\in\, \tilde{\pi} \,\textrm{finished}} K_{\Bool,|B|}(a_j: j \in B)
		\bigg\},
	\end{multline*}
	where $\tilde{B}_1,\ldots, \tilde{B}_m$ are the unfinished block of the restriction $\tilde{\pi}$.

	We have four cases. Set $v=c(k)$.

	$\bullet (\delta_k,\varepsilon_k) = (1,1)$. The height function $h_v$ is positive at $k$, $h_v(k)>0$, indicating that $v$ occurs in $\tilde{w}$.
	Let $B$ be the unfinished block of $\pi$ containing $k$ (it is necessarily unfinished since $(\delta_k,\varepsilon_k) = (1,1))$. Since $\pi$ is colorwise non-crossing, $B$ contains the left-most occurrence of $v$ in $\tilde{c}$.
	$B\backslash \{k\}$ is also an unfinished block of $\tilde{\pi}$. Say
	$
		\tilde{B}_j = B \backslash \{k\}.
	$

	What we have to prove now is that $r(\tilde{w}) = r(v\bar{w})$, which is equivalent to saying that in $\tilde{w}$, the left-most occurrence of $v$ is in tensor relation with every color in $\tilde{w}$ to its left : those are among the $\tilde{v}$, the colors of the unfinished blocks of $\tilde{\pi}$. More precisely, those are the colors $\tilde{v}_{j+1},\ldots,\tilde{v}_m$. By definition, $(B_j,B) \in \mathcal{E}^{\pi}_u$ and since $\pi \in \mathcal{P}(c,\mathcal{G})_u$, we get that $(\tilde{v_j},v) \in \mathcal{E}_2$.

	$\bullet (\delta_k,\varepsilon_k)=(1,0)$, $k$ is right end point of an unfinished block. By definition, the string of colors of the unfinished blocks is $v \tilde{v}_1\cdots \tilde{v}_m$. We have to prove that $w \in W_{\mathcal{G}}$. By the induction hypothesis, $\tilde{w} \in W_{\mathcal{G}}$, it remains to prove:
	\begin{itemize}
		\item  if $v=w_j \in \tilde{w}$, there exists $i:k<i<j$ and $(v_i,v)\not\in \mathcal{E}_2$.
		\item $(v,w_\ell) \in \mathcal{E}_1$
	\end{itemize}
	The second item follows from the fact that $\pi \in \mathcal{P}(c,\mathcal{G})_u$, more precisely $(1')$ of Definition \ref{def:unfinishedpartition}. Let $v=w_j \in \tilde{w}$, since $k$ is the right-end of a block of $\pi$, there exists a second, unfinished block $\tilde{B}_\ell$, among the $\tilde{B}$'s with color $v$. From the definition of $\mathcal{P}(c,\mathcal{G})_u^0$, there exists a third block $\tilde{B}_{\ell}$ nested in between $B$ and $\tilde{B}_{\ell}$ whose color is not in $\mathcal{E}_2$. Since the color of this last block is between $v$ and $\tilde{v}_{\ell}$, $w$ is reduced with $w=v\tilde{w}$. By definition,
	\begin{multline*}
		\lambda_{c(k)}(a_k^{(\delta_k,\epsilon_k)}) \bigg\{
		U_{\tilde{w}}\bigg[ \bigotimes_{1\leq \ell \leq m} \bigg( \prod_{j \in \tilde{B}_m} Q_{\tilde{v}_m} a_j\bigg) \xi_{\tilde{v}_m} \bigg]
		\prod_{B \in \pi ~\text{finished}} K_{\Bool,|B|}(a_j: j \in B).
		\bigg\} \\
		=
		U_{{w}}\bigg[ Q_{v}a_k\xi_v\bigotimes_{1\leq \ell \leq m} \bigg( \prod_{j \in \tilde{B}_m} Q_{\tilde{v}_m} a_j\bigg) \xi_{\tilde{v}_m} \bigg]
		\prod_{B \in \pi ~\text{finished}} K_{\Bool,|B|}(a_j: j \in B).
	\end{multline*}

	$\bullet (\delta_k,\varepsilon_k)=(1,0)$, in that case $k$ is the left end point of a block of $\pi$. Hence, the unfinished blocks of $\pi$ are among the $\tilde{B}$'s, the unfinished blocks of the restriction $\tilde{\pi}$. From the induction hypothesis, $\tilde{w} \in \mathcal{W}$. The word $w$ is obtain from the word $\tilde{w}$ by deleting one color $\tilde{v}_j=v$, the color of the block finished at $k$, which we assume to be $\tilde{B}_j$. Each of the block $\tilde{B}_m,\ldots,\tilde{B}_{j+1}$ has a crossing with $B$, since $\pi \in \mathcal{P}(c,\mathcal{G})_u$, $(v,\tilde{v}_m),\ldots, (v,\tilde{v}_{j+1})\in\mathcal{E}_2$. Hence, $w$ remains reduced. The property of being admissible still holds after deleting letters of an admissible word and thus $w$ is admissible. We saw that all of the colors $\tilde{v}_{j+1}, \ldots, \tilde{v}_{m}$ are in tensor relation with $v$, hence $\tilde{w} = r(vw)$. From the definition of the action of $\lambda_{c(k)}(a_k^{(\delta_k,\varepsilon_k)})$ :
	\begin{multline*}\lambda_{c(k)}(a_k^{(\delta_k,\epsilon_k)}) \bigg\{
		U_{\tilde{w}}\bigg[ \bigotimes_{1\leq \ell \leq m} \bigg( \prod_{j \in \tilde{B}_m} Q_{\tilde{v}_m} a_j\bigg) \xi_{\tilde{v}_m} \bigg]
		\prod_{B \in \pi ~\text{finished}} K_{\Bool,|B|}(a_j: j \in B).
		\bigg\} \\
		=
		U_{\tilde{w}\backslash v_j}\bigg[\bigotimes_{\ell\neq j} \bigg( \prod_{j \in \tilde{B}_m} Q_{\tilde{v}_m} a_j\bigg) \xi_{\tilde{v}_m} \bigg]
		\langle \xi_v \,,\,a_{k}\prod_{\ell\in \tilde{B}_j} Q_{{v}} a_\ell \xi_{{v}} \rangle \prod_{B \in \pi ~\text{finished}} K_{\Bool,|B|}(a_j: j \in B)
	\end{multline*}
	Since \cite{JekelLiu2020}
	$$
		\langle \xi_v \,,\,a_{k}\prod_{\ell\in \tilde{B}_j} Q_{{v}} a_\ell \xi_{{v}} \rangle = K^{\Bool}(a_{k}, a_\ell : \ell \in B_j) )
	$$
	the formula follows.

	$\bullet (\delta_k,\varepsilon_k)=(0,0)$. In that case, $\{k\}$ is a finished singleton. In that case, $\tilde{w} = \tilde{w}$. Thus, from the induction hypothesis, we have $w \in W_{\mathcal{G}}$. Let us check now that $v\tilde{w} \in W_{\mathcal{G}}$. Since $\pi \in \mathcal{P}(c,\mathcal{G})$, from point $(2')$ in Definition \ref{def:unfinishedpartition}, $(\tilde{v}_1,v),\ldots,(\tilde{v}_m,v) \in \mathcal{E}_1$. Moreover, of $v_j=v$ for some $j$, from the definition of the set $\mathcal{P}(c,\mathcal{G})^0_u$, we a third block $B_{\ell}$ nested between $\{k\}$ and $B_j$, $\ell < j$, with its colors $\tilde{v}_\ell$ in not in tensor relation with $v$. Since $\tilde{v}_{\ell}$ is in between $v$ and $\tilde{v}_j$ in $v\tilde{w}$, we get that $v\tilde{w}$ is reduced. From the definition of the action of $\lambda_{c(k)}(a_k^{\delta_k,\varepsilon_k})$, we obtain
	\begin{multline*}
		\lambda_{c(k)}(a_k^{(\delta_k,\epsilon_k)}) \bigg\{
		U_{\tilde{w}}\bigg[ \bigotimes_{1\leq \ell \leq m} \bigg( \prod_{j \in \tilde{B}_m} Q_{\tilde{v}_m} a_j\bigg) \xi_{\tilde{v}_m} \bigg]
		\prod_{B \in \pi ~\text{finished}} K_{\Bool,|B|}(a_j: j \in B).
		\bigg\} \\
		=\langle\xi_v\,,\, a_k \xi_v\rangle \bigg\{
		U_{\tilde{w}}\bigg[ \bigotimes_{1\leq \ell \leq m} \bigg( \prod_{j \in \tilde{B}_m} Q_{\tilde{v}_m} a_j\bigg) \xi_{\tilde{v}_m} \bigg]
		\prod_{B \in \pi ~\text{finished}} K_{\Bool,|B|}(a_j: j \in B).
		\bigg\}
	\end{multline*}
	The formula follows from the equality $ \langle\xi_v\,,\, a_k \xi_v\rangle = K^{\rm Bool}_1(a_k)$.

	The proof of the first assertion of the Theorem is complete, let us prove now the second assertion. If $\pi \not\in \mathcal{P}(c,\mathcal{G})$, there exists an integer $\ell \leq k$ such that the restriction of $\pi$ to $\{1,\ldots,\ell\}$ is in $\mathcal{P}(c,\mathcal{G})$ but the restriction to $\{1,\ldots,\ell+1\}$ is not. Hence, without loss of generality, suppose the restriction of $\pi$ to $\{1,\ldots,k-1\}$ is in $\mathcal{P}(c,\mathcal{G})$. We apply the first assertion of the theorem to the restriction $\tilde{\pi}$ and we are left to proving that the action of $\lambda_{c(k)}(a_k)^{(\delta_k,\varepsilon_k)}$ is equal to zero. Again, four cases occur.

	$ \bullet (\delta_k,\varepsilon_k)= (1,0),(0,0)$. In these cases, $k$ is the right end point of a singleton block, un-finished or finished. If $\pi \in \mathcal{P}(c,\mathcal{G})$, this means either $(\tilde{v}_j,v) \not\in\mathcal{E}_1$ for some $j \leq m$ (say $j=1$ for concreteness) or $\{k\}$ is adjacent to an unfinished block of $\tilde{\pi}$ with same color $v$. In the former case, the word $v\tilde{w}$ does not belong to $W^{(1)}_{\mathcal{G}}$, hence $\lambda_{v}(a_k^{(\delta_k,\varepsilon_k)})$ sends every vector in $H_{r(\tilde{w})}$ to zero, from its definition. In the latter case, the word $v\tilde{w}$ is not reduced and thus does not belong to $\mathcal{W}^{(1)}_\mathcal{G}$.

	$ \bullet (\delta_k,\varepsilon_k)= (0,1),(1,1)$ In these cases, $k$ is the left end-point of a block of $\pi$ that were an unfinished block of $\pi$, or $k$ is added to an unfinished block of $\tilde{\pi}$. In that case $\tilde{\pi} \in \mathcal{P}(c,\mathcal{G})$ means there is a crossing between blocks that should not have been, $(v,\tilde{v}_\ell) \not\in \mathcal{E}_2$ while $\tilde{v}_j=v$ for some $\ell \leq j \leq m$, for concreteness, let us suppose $\ell=1$ and $j =2$.
	In this case, the equivalence class of $\tilde{w}$ does not contain a word starting with $v$ : $v$ can not be exchanged with $\tilde{v}_1$. Therefore, $\lambda^{(\delta_k,\varepsilon_k)}(a_k^{(\delta_k,\varepsilon_k)}$ acts as zero on $H_{r(\tilde{w})}$.
\end{proof}

By applying the proposition above to the case where $h_v(k) = 0$ for all $v \in \mathcal{G}$, we obtain Theorem \ref{thm: Hilbert realization} as a consequence of Lemma \ref{lem: unfinished moments}.

\section{Random matrix model} \label{sec: random matrix model}
This section is dedicated to the proof of Theorem \ref{thm: matrix models main}. We shall first reduce Theorem \ref{thm: matrix models main} to Theorem \ref{thm: matrix models reduction} below, and then prove Theorem \ref{thm: matrix models reduction}.

\subsection{A quick reduction}
Let us set
$
	\tr_N^{c} = \tr_N^{\otimes S_{c}^{(1)}}
$
under the notation of Theorem \ref{thm: matrix models main}.
\begin{theorem}~ \label{thm: matrix models reduction}
	\begin{itemize}
		\item Consider two sets $\mathcal{S}$ and $\mathcal{V}$.
		\item For each $v \in \mathcal{V}$, let $S_v^{(1)}$, $S_v^{(2)}$, $S_v^{(3)}$ be a partition of $\mathcal{S}$ such that $S_v^{(1)}$ is non-empty.
		\item Let $\mathcal{G} = (\mathcal{V},\mathcal{E}_1,\mathcal{E}_2)$, where
		      \[
			      \mathcal{E}_1 = \{(v,w): S_v^{(1)} \cap S_w^{(2)} = \varnothing \}, \qquad \mathcal{E}_2 = \{(v,w): S_v^{(1)} \cap S_w^{(1)} = \varnothing \}.
		      \]
		\item Let $\Mn$ denote the set of $N \times N$ complex matrices; fix a unit vector $\xi \in \bC^N$; and consider $\Mn^{\otimes \mathcal{S}}$ as a non-commutative probability space with the state given by the vector $\xi^{\otimes \mathcal{S}}$.
		\item Define the (not necessarily unital) $*$-homomorphism
		      \[
			      \lambda_v^{(N)}: \Mn^{\otimes S_v^{(1)}} \to \Mn^{\otimes \mathcal{S}}, \qquad \lambda_v^{(N)}(A) = A \otimes (\xi \xi^*)^{\otimes S_v^{(2)}} \otimes I_N^{\otimes S_v^{(3)}},
		      \]
		      where $\xi \xi^*$ denotes the rank-one projection onto the span of $\xi$.
		\item For each $v \in \mathcal{V}$, let $U_v^{(N)}$ be a Haar random unitary matrix in $\Mn^{\otimes S_v^{(1)}}$, such that the $U_v^{(N)}$'s are independent of each other.
	\end{itemize}
	Let $k \in \bN$ and $c: [k] \to \mathcal{V}$.  For $j = 1$, \dots, $k$, let $A_j^{(N)} \in \Mn^{\otimes S_{c(j)}^{(1)}}$ such that
	\[
		\sup_N \norm{A_j^{(N)}} < \infty.
	\]
	Then almost surely
	\begin{multline} \label{eq: matrix convergence statement reduced}
		\lim_{N \to \infty} \biggl| \ip{\xi^{\otimes \mathcal{S}}, \lambda_{c(1)}^{(N)}(U_{c(1)}^{(N)} A_1^{(N)} (U_{c(1)}^{(N)})^*) \dots \lambda_{c(k)}^{(N)}(U_{c(k)}^{(N)}A_k^{(N)}(U_{c(k)}^{(N)})^*) \xi^{\otimes \mathcal{S}}} \\ - \sum_{\pi \in \mathcal{P}(c,\mathcal{G})} \prod_{B \in \pi} K_{|B|}^{\free,\, \tr_N^{{c(B)}}}(A_j^{(N)}: j \in B) \biggr| = 0;
	\end{multline}
	here $c(B)$ denotes the common value of $c(j)$ for $j \in B$, and the free cumulants are computed in $(\Mn^{\otimes S_{c(B)}^{(1)}}, \tr_N^{{c(B)}})$.
\end{theorem}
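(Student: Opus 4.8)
The proof follows the Weingarten-calculus strategy of \cite{charlesworth2021matrix}, with extra combinatorial bookkeeping to account for the rank-one projections appearing in the maps $\lambda_v^{(N)}$. Write $W_N$ for the inner product on the left-hand side of \eqref{eq: matrix convergence statement reduced}. The first step is a standard reduction from the almost-sure statement to a statement about $\mathbb{E}[W_N]$. Viewed as a function of the independent Haar unitaries $(U_v^{(N)})_{v\in\mathcal{V}}$, $W_N$ is Lipschitz with a constant bounded uniformly in $N$, since $\sup_N\norm{A_j^{(N)}}<\infty$, the operators $(\xi\xi^*)^{\otimes S_{c(j)}^{(2)}}\otimes I_N^{\otimes S_{c(j)}^{(3)}}$ all have norm $\le 1$, and each $U_v^{(N)}$ appears at most $2|c^{-1}(v)|$ times in the product. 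Concentration of measure on the unitary groups $U(N^{|S_v^{(1)}|})$ --- whose dimensions all tend to $\infty$ because each $S_v^{(1)}$ is non-empty --- then yields $\mathbb{P}(|W_N-\mathbb{E}[W_N]|>\epsilon)\le C_1 e^{-C_2 N\epsilon^2}$, which is summable in $N$; by Borel--Cantelli it suffices to prove
\[
\lim_{N\to\infty}\Bigl|\,\mathbb{E}[W_N] - \sum_{\pi\in\mathcal{P}(c,\mathcal{G})}\prod_{B\in\pi}K_{|B|}^{\free,\,\tr_N^{c(B)}}(A_j^{(N)}:j\in B)\,\Bigr| = 0.
\]

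Next I would expand $\mathbb{E}[W_N]$ in the tensor-product orthonormal basis of $(\bC^N)^{\otimes\mathcal{S}}$. Along each site $s\in\mathcal{S}$ the sequence of indices $i_0^{(s)},\dots,i_k^{(s)}$ is constrained step by step: an identity ($s\in S_{c(j)}^{(3)}$) forces $i_{j-1}^{(s)}=i_j^{(s)}$; a projection ($s\in S_{c(j)}^{(2)}$) caps the left index of that step against $\xi$ and the right index against $\overline{\xi}$, thereby \emph{splitting} the $s$-timeline into segments; and on the sites of $S_{c(j)}^{(1)}$ the factor $X_j:=U_{c(j)}^{(N)}A_j^{(N)}(U_{c(j)}^{(N)})^*$ acts jointly. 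Applying the unitary Weingarten formula independently for each $v\in\mathcal{V}$ to the $U_v^{(N)}$ and $(U_v^{(N)})^*$ factors produces a sum over pairs of permutations $(\alpha_v,\beta_v)\in S_{c^{-1}(v)}^2$, weighted by $\prod_{v\in\mathcal{V}}\operatorname{Wg}_{N^{|S_v^{(1)}|}}(\alpha_v\beta_v^{-1})$; the $\beta_v$'s dictate which of the $A_j^{(N)}$'s are multiplied together inside a common trace, while the $\alpha_v$'s, together with the boundary caps, the identity-deltas, and the segment structure, impose a set of identifications among the outer matrix indices. Contracting everything, each ``free loop'' of outer indices contributes a factor $N$, each loop pinned to $\xi$ through the $S^{(2)}$-caps contributes an $O(1)$ scalar, and each block $B$ (an orbit of the relevant permutation) contributes a factor $\tr_N^{c(B)}$ of a cyclically ordered product of the $A_j^{(N)}$'s with $j\in B$.

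The heart of the argument is the leading-order analysis. Using $\operatorname{Wg}_d(\sigma)=d^{-|c^{-1}(v)|-|\sigma|}(\mu(\sigma)+O(d^{-2}))$, where $|\sigma|$ is the minimal transposition length and $\mu$ the free Möbius function, one computes the net power of $N$ attached to a term indexed by $(\alpha_v,\beta_v)_v$ and shows it is $\le 0$, with equality precisely when two conditions hold. First, each $\beta_v$ must be a non-crossing permutation of $c^{-1}(v)$ with respect to the cyclic order inherited from $[k]$, so that the Weingarten weight is evaluated at the top order; summing over the non-crossing refinements of a given block then turns $\operatorname{Wg}$ into the moment--free-cumulant formula on $(\Mn^{\otimes S_{c(B)}^{(1)}},\tr_N^{c(B)})$ and produces exactly the $K_{|B|}^{\free,\tr_N^{c(B)}}$ appearing on the right-hand side. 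Second, the partition $\pi$ of $[k]$ whose blocks are the cycles of the $\beta_v$ must lie in $\mathcal{P}(c,\mathcal{G})$: condition (2) of Definition \ref{def: compatible partitions} is the statement that a color-$v$ block cannot straddle a color-$w$ step when $S_v^{(1)}\cap S_w^{(2)}\neq\varnothing$ --- otherwise the projection at that step severs the loop carrying the block and costs a factor of $N$, so $(c(i_1),c(j))\notin\mathcal{E}_1$ is excluded --- and condition (3) is the statement that blocks of colors $v,w$ can cross only when $S_v^{(1)}\cap S_w^{(1)}\neq\varnothing$, i.e. $(v,w)\notin\mathcal{E}_2$, since only then do the two operators share a tensor leg along which a crossing can be threaded at no cost. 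The $O(d^{-2})$ Weingarten corrections and all non-maximal configurations contribute $O(N^{-1})$ in total using $\sup_N\norm{A_j^{(N)}}<\infty$, hence vanish in the limit.

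I expect the third paragraph --- the exact power-of-$N$ bookkeeping and the identification of the maximal configurations with $\mathcal{P}(c,\mathcal{G})$ --- to be the main obstacle. The delicate points are (a) organizing the index/loop graph so that the counting is correct when the sets $S_v^{(1)}$ overlap \emph{and} rank-one projections contract indices inside those overlaps, and (b) verifying that the three clauses of Definition \ref{def: compatible partitions} are \emph{exactly} the obstructions to maximality, with no extra surviving configurations and no spurious exclusions. Once this is established, reassembling the surviving terms into $\sum_{\pi\in\mathcal{P}(c,\mathcal{G})}\prod_{B\in\pi}K_{|B|}^{\free,\tr_N^{c(B)}}(A_j^{(N)}:j\in B)$ is routine, and combining this with the reduction at the start of this section and with \eqref{eq: matrix trace versus vector main} recovers Theorem \ref{thm: matrix models main}.
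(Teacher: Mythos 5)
Your overall strategy is the same as the paper's: a concentration-of-measure/Borel--Cantelli reduction to convergence in expectation, followed by a Weingarten expansion whose leading-order terms are matched with $\mathcal{P}(c,\mathcal{G})$ and resummed via the M\"obius function into free cumulants. The first two paragraphs and your treatment of condition (2) of Definition \ref{def: compatible partitions} (the projection severing a loop that straddles it) agree with the paper's argument.

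However, your identification of condition (3) is inverted, and the justification you give shows this is a conceptual error rather than a typo. You write that blocks of colors $v,w$ can cross only when $S_v^{(1)}\cap S_w^{(1)}\neq\varnothing$, i.e.\ $(v,w)\notin\mathcal{E}_2$, ``since only then do the two operators share a tensor leg along which a crossing can be threaded at no cost.'' Definition \ref{def: compatible partitions}(3) requires the opposite: a crossing forces $(c(i_2),c(j_1))\in\mathcal{E}_2$, i.e.\ $S_{c(i_2)}^{(1)}\cap S_{c(j_1)}^{(1)}=\varnothing$. The mechanism is also the opposite of what you describe: when two colors share a site $s\in S_v^{(1)}\cap S_w^{(1)}$, both sets of indices land in the same single-site permutation $\tau_s$ acting on $J_s^{(1)}$, and a crossing between their cycles makes $|\tau_s|+|\tau_s^{-1}\gamma_s|-|\gamma_s|>0$, costing a power of $N$; it is precisely when the supports are \emph{disjoint} that the two families commute and the crossing is invisible to every single-site Weingarten count, hence free. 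If you carried your version through, you would sum over the wrong set of partitions (and, e.g., recover classical rather than free independence for matrices conjugated by independent unitaries on the \emph{same} tensor legs). The fix is local --- swap the roles of ``shared leg'' and ``disjoint legs'' in your leading-order criterion --- but the detailed power-of-$N$ bookkeeping you defer in your last paragraph is exactly where this must be done correctly, so the gap is not cosmetic.
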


\begin{proof}[Proof of Theorem \ref{thm: matrix models main} from Theorem \ref{thm: matrix models reduction}]
	There are three differences in the setup of Theorem \ref{thm: matrix models reduction}:
	\begin{enumerate}
		\item We replace the elements $p(\mathbf{B}_{c(j)}^{(N)}) = \lambda_{c(j)}^{(N)}(p(\mathbf{A}_{c(j)}^{(N)}))$ simply by $\lambda_{c(j)}^{(N)}(A_j^{(N)})$.
		\item We assume the input matrices $A_j^{(N)}$ are deterministic.
		\item We compute cumulants with respect to the trace rather than the vector $\xi^{\otimes \mathcal{S}}$ in \eqref{eq: matrix convergence statement main}, and remove \eqref{eq: matrix trace versus vector main}
	\end{enumerate}
	Let us start by assuming that Theorem \ref{thm: matrix models reduction} holds and perform each modification in succession.  For item (3), note that we can \eqref{eq: matrix convergence statement reduced} to the case where all $c$ is constant, say $c(j) = v$ for all $j$.  This yields that
	\[
		\lim_{N \to \infty}  \left| \ip{\xi^{\otimes \mathcal{S}}, \lambda_v^{(N)}(U_v^{(N)}A_1^{(N)}(U_v^{(N)})^* \dots U_v^{(N)} A_k^{(N)} (U_v^{(N)})^*) \xi^{\otimes \mathcal{S}}} - \sum_{\pi \in \mathcal{NC}_k} K_{\pi}^{\free,\tr_N^{v}}(A_1^{(N)},\dots,A_k^{(N)}) \right| = 0.
	\]
	Indeed, since the color is constant, $\mathcal{P}(c,\mathcal{G})$ reduces to $\mathcal{NC}_k$.  Hence, by the free moment-cumulant formula,
	\begin{equation} \label{eq: matrix vector versus trace reduced 1}
		\lim_{N \to \infty}  \left| \ip{\xi^{\otimes \mathcal{S}}, \lambda_v^{(N)}(U_v^{(N)}A_1^{(N)}(U_v^{(N)})^* \dots U_v^{(N)}A_k^{(N)}(U_v^{(N)})^*) \xi^{\otimes \mathcal{S}}} - \tr_N^{v}(A_1^{(N)} \dots A_k^{(N)}) \right|.
	\end{equation}
	Hence, the moments of $U_v^{(N)}A_j^{(N)}(U_v^{(N)})^*$ with respect to $\xi$ are almost surely asymptotically the same as the moments of $A_j^{(N)}$ with respect to the trace, on the matrices from a given color $v$.  Note that each moment and each cumulant is almost surely bounded as $N \to \infty$ since $\sup_N \norm{A_j^{(N)}} < \infty$.  By M{\"o}bius inversion, the agreement of moments with respect to the vector state and the trace implies agreement of free cumulants with respect to the vector state and the trace, and so almost surely
	\begin{multline*}
		\lim_{N \to \infty}  \biggl| \sum_{\pi \in \mathcal{NC}_k} K_{\free,\pi}^{\xi^{\otimes \mathcal{S}}}(\lambda_{c(1)}^{(N)}(U_{c(1)}^{(N)}A_1^{(N)}(U_{c(1)}^{(N)})^*),\dots, \lambda_{c(k)}^{(N)}(U_{c(k)}^{(N)}A_k^{(N)}(U_{c(k)}^{(N)})^*)) \\
		- \sum_{\pi \in \mathcal{P}(c,\mathcal{G})} \prod_{B \in \pi} K_{|B|}^{\free,\tr_N^{{c(B)}}}(A_{c(j)}^{(N)}: j \in B) \biggr| = 0.
	\end{multline*}
	Hence also \eqref{eq: matrix convergence statement reduced} becomes almost surely
	\begin{multline} \label{eq: matrix convergence statement reduced 2}
		\lim_{N \to \infty} \biggl| \ip{\xi^{\otimes \mathcal{S}}, \lambda_{c(1)}^{(N)}(U_{c(1)}^{(N)}A_1^{(N)}(U_{c(1)}^{(N)})^*) \dots \lambda_{c(k)}^{(N)}(U_{c(k)}^{(N)}A_k^{(N)}(U_{c(k)}^{(N)})^*) \xi^{\otimes \mathcal{S}}} \\
		- \sum_{\pi \in \mathcal{NC}_k} K_{\pi}^{\free,\xi^{\otimes \mathcal{S}}}(\lambda_{c(1)}^{(N)}(U_{c(1)}^{(N)}A_1^{(N)}(U_{c(1)}^{(N)})^*,\dots, \lambda_{c(k)}^{(N)}(U_{c(k)}^{(N)}A_k^{(N)}(U_{c(k)}^{(N)})^*)) \biggr|.
	\end{multline}
	Meanwhile, equation \ref{eq: matrix vector versus trace reduced 1} will eventually become \ref{eq: matrix trace versus vector main}.

	For item (2), let us now show that \eqref{eq: matrix convergence statement reduced 2} holds when the matrices $A_j^{(N)}$ are random, they are jointly (for all $j$ and $N$) independent of the Haar unitaries, and the hypothesis $\sup_N \norm{A_j^{(N)}} < \infty$ holds almost surely.  The independence assumption means we can assume without loss of generality that our random variables are realized on a product probability space $\Omega_1 \times \Omega_2$ where $\omega_1 \in \Omega_1$ governs the $A_j^{(N)}$'s and $\omega_2 \in \Omega_2$ the Haar unitaries.  For every $\omega_1 \in \Omega_1$ such that $\sup_N \norm{A_v^{(N)}} < \infty$, we can apply the conclusion of the first step to show that that \eqref{eq: matrix convergence statement reduced 2} holds for almost every $\omega_2 \in \Omega$.  Then by the Fubini--Tonelli theorem, \eqref{eq: matrix convergence statement reduced 2} holds for almost every $(\omega_1,\omega_2) \in \Omega_1 \times \Omega_2$.

	For item (1), now let $\mathbf{A}_i^{(N)} = (A_{v,i}^{(N)})_{i \in I_v}$ be as in Theorem \ref{thm: matrix models main} and let
	\[
		\mathbf{B}_v^{(N)} = (\lambda_v^{(N)}(U_v^{(N)} A_{v,i}^{(N)} (U_v^{(N)})^*))_{i \in I_v}.
	\]
	Given polynomials $p_1$, \dots, $p_k$ with no constant term, consider the matrices $\widehat{A}_j^{(N)} = p_j(A_{c(j),i}^{(N)}: i \in I_{c(j)})$.  Note that
	\[
		p_j(\mathbf{B}_{c(j)}^{(N)}) = \lambda_{c(j)}^{(N)}( U_{c(j)}^{(N)} \widehat{A}_j^{(N)} (U_{c(j)}^{(N)})^*).
	\]
	Moreover, $\sup_N \norm{\widehat{A}_j^{(N)}} < \infty$ almost surely.  Therefore, by applying \eqref{eq: matrix convergence statement reduced 2} to $\widehat{A}_j^{(N)}$, we obtain \eqref{eq: matrix convergence statement main} from Theorem \ref{thm: matrix models main}.  Similarly, by applying \eqref{eq: matrix vector versus trace reduced 1}, we obtain \eqref{eq: matrix trace versus vector main}.
\end{proof}

The next three subsections are devoted to proving Theorem \ref{thm: matrix models reduction}, and the rest of the section is organized as follows:
\begin{itemize}
	\item In \S \ref{subsec: Weingarten}, we prepare the combinatorial setup and give background on the Weingarten calculus for Haar random unitaries.
	\item In \S \ref{subsec: convergence in expectation}, we give the main combinatorial argument that shows convergence in expectation for \eqref{eq: matrix convergence statement reduced}.
	\item In \S \ref{subsec: almost sure convergence}, we upgrade convergence in expectation to almost sure convergence using standard concentration-of-measure techniques, concluding the proof of Theorem \ref{thm: matrix models reduction} and hence Theorem \ref{thm: matrix models main}.
\end{itemize}

\subsection{Weingarten calculus} \label{subsec: Weingarten}

We begin with recalling integration formulae on the unitary group and fixing the notations.

\begin{itemize}
	\item For $k\geq 1$, let $\gamma=(1\,2\,\ldots\,k)$ be the full cycle in $S_k$.
	\item For a color word $v_1\cdots v_k$, let ${\rm Stab}(v_1\cdots v_k)\subseteq S_k$ be the subgroup stabilizing the word (respectively to the left action).
	\item For each site $s\in \mathcal{S}$, define index sets
	      \[
		      J^{(1)}_s(v_1\cdots v_k)=\{j\in[k]: s\in S^{(1)}_{v_j}\},\qquad
		      J^{(2)}_s(v_1\cdots v_k)=\{j\in[k]: s\in S^{(2)}_{v_j}\}.
	      \]
	\item Any $\sigma\in{\rm Stab}(v_1\cdots v_k)$ preserves $J^{(1)}_s$, so we write $\sigma_s$ for its restriction to $S_{J^{(1)}_s}$. Let $\gamma_s$ be the full cycle on $J^{(1)}_s$
	      : it sends an element of $J_s^{(1)}$ to the next element in their canonical cyclic order. We extend permutations on $J^{(1)}_s$ to $[k]$ by fixing indices outside $J^{(1)}_s$ and denote this extension by $\overline{\sigma}_s$.
	\item For each color $v\in\mathcal{V}$, set
	      \[
		      B_v=\{j\in[k]: v_j=v\},\qquad \pi_{v_1\cdots v_k}=\{B_v: v\in\mathcal{V},\, B_v\neq\emptyset\}.
	      \]
	\item For $\sigma\in S_k$, let ${\rm Cyc}(\sigma)$ be the set of cycles of $\sigma$.
\end{itemize}
To lighten the notations, we will often drop references to the word $v_1\ldots v_k$. We want now to recall the following important formula for integrating polynomials against the Haar measure on the group of unitaries of $\mathbb{C}^{N}$,
$N\geq 1$:
\begin{align}
	\label{eqn:integration}
	\int_{\mathbb{U}(N)} u_{i_1j_1}\cdots u_{i_kj_k}\bar{u}_{i'_1j'_1}\cdots \bar{u}_{i'_kj'_k} {\rm d}u= \sum_{\sigma,\tau \in S_k}\delta_{j,\sigma\cdot j'}\delta_{i,\tau\cdot j'} {\rm Wg}(\sigma\tau^{-1},N)
\end{align}
The interested reader is directed to \cite{collins2006integration} for a detailed account of the relations of the above formula with the representation theory of the unitary and symmetric groups. We limit ourselves here to recalling the asymptotic of the function ${\rm Wg}$ appearing in the formula.
The function ${\rm Wg}(\cdot, N)$ is the Weingarten function. For each $(\sigma,\tau)$, ${\rm Wg}(\sigma\tau^{-1})$ is the
coefficient $(\sigma,\tau)$ of the matrix inverse of the Gram matrix of $ \{ \sigma, \sigma \in S_k\}$ with respect to the normalized Hilbert-Schmidt product in the canonical representation of $S_k$
acting on the $k-$ fold tensor product of $\mathbb{C}^N$, details can be found e.g. in \cite{collins2006integration}. Let us recall the following well-known asymptotic of
the Weingarten function:
\begin{align}
	\label{eqn:asymptotic}
	{\rm Wg}(\sigma,N) = \mu(\sigma)N^{-k-|{\rm Cy}(\sigma)|}+O(N^{-2 -k - |{\rm Cy}(\sigma)|})
\end{align}
where $\mu\colon S_k \to \mathbb{R}$ coincides with the Möbius function of the poset of non-crossing partitions ${\rm NC}([k])$ when $\sigma$ is a non-crossing partition (each orbit has the cyclic order of $[k]$ and ${\rm Cy}(s)$ is a non-crossing partition of $[k]$).
For further use, let us define the following modification of the Weingarten function, depending on the choice of a word on colors $c_1\cdots c_k$:
\begin{align}
	\label{eqn:tildeweingarten}
	\tilde{\rm Wg}(\alpha) = \prod_{c\in\mathcal{V}}{\rm Wg}(\alpha |_{B_c}, N^{|S_c^{(1)}|})
\end{align}
From \cite{charlesworth2021matrix}, the following asymptotic holds for $\tilde{\rm Wg}(\alpha)$ :
\begin{align}
	\label{eqn:wgtilde}
	\tilde{\rm Wg}(\alpha) = \mu(\alpha)\prod_{s\in S} N^{-|J^{(1)}_s|-|{\rm Cyc}(\sigma_s)|} + O(\frac{1}{N^2})
\end{align}

\subsection{Convergence in expectation} \label{subsec: convergence in expectation}

We are ready to prove our Theorem \ref{thm: matrix models main}, this will be a consequence of the following proposition.
\begin{proposition}
	\label{thm:mainthm} Consider the same setup and hypotheses as Theorem \ref{thm: matrix models reduction}. As $N$ goes to infinity,
	The following convergence in expectation holds:
	\begin{multline}
		\label{eqn:initialformula}
		\mathbb{E}\left[\langle \xi^{\otimes\,\mathcal{S}} \,,\, \lambda^{(N)}_{c(1)}(U^{(N)}_{c(1)} A^{(N)}_1 (U_{c(1)}^{(N)})^{\star}) \cdots  \lambda^{(N)}_{c(k)}(U^{(N)}_{c(k)} A^{(N)}_k (U_{c(k)}^{(N)})^{\star})\rangle\right] \\
		= \sum_{\pi \in \pcg}\hspace{-0.25cm}K^{\free, \tr_N}_{\pi}(A^{(N)}_1,\ldots,A^{(N)}_k) +O(\frac{1}{N^2})
	\end{multline}
	where
	the $K^{\free, \rm tr}_{\pi}(A^{(N)}_1,\ldots,A^{(N)}_k)$ are the partitioned free cumulants of the $A_1,\ldots,A_k$ for \emph{the normalized traces on} $\Mn^{\otimes\, S_c^{(1)}}$ (the normalization factor is $N^{-|S^{(1)}_{c}|})$, $c \in \mathcal{V}$ and
\end{proposition}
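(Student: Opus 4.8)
The plan is to expand the expectation in \eqref{eqn:initialformula} into matrix coordinates, integrate the Haar unitaries out colour by colour with the Weingarten formula \eqref{eqn:integration}, and then run a genus-type power count in $N$ to isolate the surviving terms, following the strategy of \cite{charlesworth2021matrix} but keeping track of the rank-one projections $\xi\xi^*$ sitting on the sites $S_v^{(2)}$.

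First I would write $\langle \xi^{\otimes \mathcal{S}}, \lambda_{c(1)}^{(N)}(U_{c(1)}A_1 U_{c(1)}^*)\cdots \lambda_{c(k)}^{(N)}(U_{c(k)}A_k U_{c(k)}^*)\xi^{\otimes \mathcal{S}}\rangle$ as a sum over families of indices attached to the sites $s \in \mathcal{S}$, organised so that the contribution of the $j$-th factor on a site $s$ is: a matrix entry of $U_{c(j)}A_j U_{c(j)}^*$ (itself expanded as a sum of $u\,A\,\bar u$ entries) when $s \in S_{c(j)}^{(1)}$; the scalar $\xi\bar\xi$ when $s \in S_{c(j)}^{(2)}$; and a Kronecker delta when $s \in S_{c(j)}^{(3)}$; the boundary vectors $\xi^{\otimes\mathcal{S}}$ contribute further $\xi$-factors at each site. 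Since the $U_v$ for distinct $v$ are independent, I would then integrate using \eqref{eqn:integration} separately over each $\mathbb{U}(N^{|S_v^{(1)}|})$, producing for each colour $v$ a pair of permutations of the block $B_v = c^{-1}(v)$ and, after collecting over colours, the modified Weingarten weight $\tilde{\mathrm{Wg}}$ of \eqref{eqn:tildeweingarten}.

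Next I would resolve all the resulting Kronecker deltas — those from \eqref{eqn:integration} together with those from the projections and identities — site by site. For each site $s$ the remaining free index summations contribute a power $N^{(\text{number of index loops at }s)}$, while the rank-one projections at the sites of $S_v^{(2)}$ pin certain indices to the direction of $\xi$ and thereby cut loops; this pinning is precisely the mechanism by which the vector state differs from a trace and by which the directedness of $\mathcal{E}_1$ is produced. Combining the loop counts with the asymptotic \eqref{eqn:wgtilde} for $\tilde{\mathrm{Wg}}$, the contribution of a fixed configuration of permutations takes the form $N^{e}\,\mu(\cdot)\prod(\text{normalised traces of products of the }A_j) + (\text{lower order})$ for an explicit exponent $e$ built from cycle and loop counts. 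The key combinatorial lemma is then an Euler-characteristic inequality $e \le 0$, with equality holding exactly when each colour's permutation is non-crossing with respect to the order on $B_v$ inherited from $[k]$ and the Weingarten weight collapses to a product of Möbius values $\mu$; moreover, the way the non-crossing structures on distinct colours interleave, together with the pinnings, is forced — via the defining relations $\mathcal{E}_1 = \{S_v^{(1)}\cap S_w^{(2)} = \varnothing\}$ and $\mathcal{E}_2 = \{S_v^{(1)}\cap S_w^{(1)} = \varnothing\}$ — to satisfy exactly conditions (2) and (3) of Definition \ref{def: compatible partitions} for the associated partition $\pi$ of $[k]$ obtained by amalgamating the colourwise cycle structures. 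Summing the surviving $\mu$-weighted products over all such $\pi$ and recognising, on each block $B$, the sum over non-crossing partitions refining $B$ with Möbius weights as the free moment–cumulant formula in $(\Mn^{\otimes S_{c(B)}^{(1)}}, \tr_N^{c(B)})$ yields $\sum_{\pi \in \mathcal{P}(c,\mathcal{G})} K_\pi^{\free,\tr_N}(A_1,\dots,A_k)$, and the leftover contributions are $O(N^{-2})$ because the defect $e$ and the subleading Weingarten corrections always change by even integers.

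The step I expect to be the main obstacle is the equality case of the Euler-characteristic inequality: disentangling the interaction of the colourwise non-crossing structures, the rank-one projections on the $S_v^{(2)}$ sites, and the boundary $\xi$-vectors, and matching this bookkeeping precisely with the three conditions of Definition \ref{def: compatible partitions}. In particular, showing that an overlap $S_v^{(1)}\cap S_w^{(2)}\neq\varnothing$ obstructs a block of colour $v$ from straddling an index of colour $w$ (condition (2)), and that an overlap $S_v^{(1)}\cap S_w^{(1)}\neq\varnothing$ obstructs a crossing between blocks of colours $v$ and $w$ (condition (3)), is the heart of the argument; the rest is a careful but essentially routine adaptation of the Weingarten-calculus computation of \cite{charlesworth2021matrix}.
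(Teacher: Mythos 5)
Your proposal follows essentially the same route as the paper: coordinate expansion, colourwise Weingarten integration with the modified weight $\tilde{\mathrm{Wg}}$, a site-by-site loop count in which the rank-one projections on $S_v^{(2)}$ pin indices to $\xi$, a decomposition of the exponent of $N$ into nonnegative defect terms whose vanishing forces colourwise non-crossing permutations satisfying conditions (2) and (3) of Definition \ref{def: compatible partitions}, and finally the free moment--cumulant formula on each block. The only notable presentational difference is that the paper absorbs the boundary vectors $\xi^{\otimes\mathcal{S}}$ by introducing an auxiliary $0$-th colour with $S_0^{(1)}=\varnothing$ and $S_0^{(2)}=\mathcal{S}$, which makes the third defect term (the Kreweras-cycle count containing $J_s^{(2)}$) uniform; your sketch handles the same bookkeeping directly.
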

\begin{proof} We drop the superscript $(N)$ for notational simplicity. We choose an orthonormal basis of $\mathbb{C}^{N}$ so that $\xi$ is the first vector of that basis. To make things more explicit, we will re-write the product on the left-hand side of \eqref{eqn:initialformula} to make the projection $\xi\xi^*$ to appear. We will make the abuse of writing each
	$A_j$ and each unitary $U_{c_j}$ as an element of $\Mn^{\otimes \mathcal{S}}$ via the unital inclusion
	$$
		\widehat{\lambda}_c: \Mn^{\otimes\,S^{(1)}_c} \to \Mn^{\otimes\,\mathcal{S}},\quad A \mapsto A \otimes I_N^{\,\otimes\,S_c^{(2)}\cup S_c^{(3)}}.
	$$
	We first write $A^{(N)}_{\ell}  = (a^{(\ell)}_{j_{\ell}j_{\ell}'})_{j_{\ell},j'_{\ell} \in [N]^{|S|}}$ in the chosen basis, $U_{c_\ell} = (u^{(c_\ell)}_{i_{\ell}i'_{\ell}})_{i_{\ell},i'_{\ell} \in [N]^{|S|}}$, $\ell \in [k]$.
	We now add an unitary matrix $U_0$ and $A_0$ to the two sequences $(U_{c(1)},\ldots,U_{c(k)})$ and $(A_1,\ldots, A_k)$ :
	$$
		A_0 = (\xi^{\star}\xi^{\star})^{\otimes\,|S|} = \big(\prod_{s\in S}\delta_{j_0[s], j'_0[s] = 1,1}\big)_{j_0,j'_0 \in [N]^{|S|}}, \quad U_0 = e^{i\theta}I_N^{\,\otimes\,S},\quad \theta \sim  {\rm Unif}([0,2\pi))
	$$
	We also set $v_0 = 0$ and assume $0\not\in \mathcal{V}$. We define the subsets of $\mathcal{S}$ associated to $0$ as
	$$S_0^{(1)} = \emptyset,\quad S_0^{(2)}=S.$$

	With this definition, $0$ is the only color with $S^{(1)}_0=\emptyset$.
	The left side of \eqref{eqn:initialformula} factorizes over the blocks of $\ker{c_0\cdots c_k}= (B_{c})_{c\in\mathcal{V}\cup \{0 \}}$ as explained below.
	First
	\begin{multline}
		\label{eqn:rewriting}
		\mathbb{E}\left[{\rm Tr}\big((\xi^{\star}\xi^{\star})^{\otimes\,S} (\xi^{\star}\xi^{\star})^{\otimes\,S^{(2)}_{c_1}}U_1A_{1}U_1^{\star} (\xi^{\star}\xi^{\star})^{\otimes\,S^{(2)}_{c_1}} (\xi^{\star}\xi^{\star})^{\otimes\,S^{(2)}_{c_2}} \cdots  (\xi^{\star}\xi^{\star})^{\otimes\,S^{(2)}_{c_k}}U_kA_{j}U_k^{\star} (\xi^{\star}\xi^{\star})^{\otimes\,S^{(2)}_{c_k}}\big)\right] \\
		=\mathbb{E}\bigl[{\rm Tr}\big((\xi^{\star}\xi^{\star})^{\otimes\,S}U_0(\xi^{\star}\xi^{\star})^{\otimes\,S}A_0(\xi^{\star}\xi^{\star})^{\otimes\,S}U^\star_0(\xi^{\star}\xi^{\star})^{\otimes\,S}                                                                                                                                                                  \\
		(\xi^{\star}\xi^{\star})^{\otimes\,S^{(2)}_{c_1}}U_{c_1}A_{1}U_{c_1}^{\star} (\xi^{\star}\xi^{\star})^{\otimes\,S^{(2)}_{c_1}} (\xi^{\star}\xi^{\star})^{\otimes\,S^{(2)}_{c_2}} \cdots  (\xi^{\star}\xi^{\star})^{\otimes\,S^{(2)}_{c_k}}U_{c_k}A_{k}U_{c_k}^{\star} (\xi^{\star}\xi^{\star})^{\otimes\,S^{(2)}_{c_k}}\big)\bigr]
	\end{multline}
	What we have gained here is that now all terms in the product look the same, this will streamline the analysis. Now, we write down the product by using the coefficients of the $A's$ in the chosen basis:
	\begin{align}
		\eqref{eqn:rewriting}= \!\!\!\!\!\!\sum_{\substack{i_{\ell},i'_{\ell},j_\ell,j'_{\ell} \\ \in [N]^{|S|}}}\!\!\mathbb{E}\bigg[u^{(c_\ell)}_{i_\ell j_\ell}a_{j_\ell j_\ell'}\bar{u}^{(c_{\ell})}_{i'_\ell j'_\ell}\prod_{\ell = 0}^k \delta_{i'_{\ell}=i_{\ell+1}}  \prod_{s\in S_{c_\ell}^{(2)}}\delta_{j_{\ell}[s]=i_{\ell}[s]=1}\delta_{j'_{\ell}[s]=i'_{\ell}[s]=1} \prod_{s \in S^{(3)}_{c_{\ell}}} \delta_{j_{\ell}[s] = i_{\ell}[s]}\delta_{j'_{\ell}[s] = i'_{\ell}[s]} \label{eqn:zeroline}
		\bigg]
	\end{align}
	The second and third product account, respectively, for the compression by the projection $\xi\xi^{\star}$ and the fact that the $A's$ have their factor corresponding to indices in $S^{(3)}$ equal to the identity $I_N$. The last expectation factorizes over the blocks of the partition $\pi$, and we use the integration formula \eqref{eqn:integration} to infer further:
	\begin{align}
		 & \hspace{0cm}=\!\!\!\!\!\sum_{\substack{\ell =  0\ldots k                                     \\ i_{\ell},i'_{\ell},j_\ell,j'_{\ell} \\ \in [N]^{|S|}}}a^{(0)}_{j_0j'_0}\cdots a^{(k)}_{j_kj'_k} \sum_{\sigma, \tau \in {\rm Stab}(0c_1\cdots c_k)} {\rm} {\rm \tilde{Wg}}(\sigma\tau^{-1},N) \label{eqn:firstline}                                                                                                                                                                                                                   \\
		 & \nonumber \times \prod_{c\in \mathcal{V}\cup \{ 0 \}}\prod_{\ell \in B_{c}}
		\prod_{s\in S_{c}^{(1)}} \delta_{{j_\ell}[s],\sigma\cdot{j_\ell}'[s] }\delta_{{i_\ell}[s],{\tau \cdot i_\ell}'[s] }
		\prod_{s\in S^{(2)}_{c}} \delta_{{{j_\ell}[s] = {i_\ell}[s]=1}} \delta_{{j'_\ell}[s] = {i'_\ell}[s]=1}
		\prod_{s\in S_{c}^{(3)}}\delta_{{j_\ell}[s] = {i_\ell}[s]} \delta_{{j'_\ell}[s] = {i'_\ell}[s]} \\
		 & \times \prod_{\ell=0}^{k} \delta_{i'_{\ell} = i_{\ell+1}}, \nonumber
	\end{align}
	with the usual convention $ k+1 = 0$ in the last line.

	Let us now analyze which tuples $i,i',j,j'$ contribute to the first sum in \eqref{eqn:firstline}
	The product of the delta functions on the third line of equation \eqref{eqn:firstline} implies that contributing tuples $i$ and $i'$ to the sum should satisfy
	$$
		i'= Z^{-1}\cdot i
	$$
	In equation \eqref{eqn:firstline}, we are going to perform, first, the summation over the tuples $i$ and, in last, the summation over the tuples $j,j'$. Hence, given a tuple $i$, define the following range for the tuples $j$:
	\begin{align}
		V_{{i}} = \{ {j}\colon \forall \ell, \forall s\in S^{(3)}_{c_{\ell}}, \,j_{\ell}[s] = i_{\ell}[s], \quad \forall s\in S^{(2)}_{c_{\ell}}, \quad j_{\ell}[s]=1\}
	\end{align}

	We have included in the definition of $V_i$ the effect of the products of the delta functions on the second line of  \eqref{eqn:firstline}  on the contributing tuples $j$. The product in \eqref{eqn:firstline} together with the fact that $a_{j_\ell[s]j'_\ell[s]}=0$ if $j_\ell[s]\neq j'_\ell[s]$, $s\in S^{(3)}_{c}$, $c \in\mathcal{V}$ implies $i_\ell[s] = i_\ell'[s]$. We infer:

	\begin{align}
		\eqref{eqn:firstline}
		 & \nonumber =\!\!\!\!\!\!\!\!\!\!\sum_{\sigma,\tau \in {\rm Stab}(0c_1\cdots c_k)} \!\!\!\!\!\!\!{\rm \tilde{Wg}}(\sigma\tau^{-1},N) \sum_{{i}\in [N]^{k+1}} \Delta_{{i}, (Z^{-1})\tau \cdot {i}} \prod_{c\in \mathcal{V}}\prod_{\ell \in B_{c}}
		\prod_{s\in S_{c}^{(3)}} \delta_{i_{\ell}[s]=i'_{\ell}[s]} \prod_{s\in S_{c}^{(2)}} \delta_{i_{\ell}[s]=i'_{\ell}[s]=1}                                                                                                                           \\
		 & \hspace{8cm}\times \sum_{j,j' \in V_{{i}}} a^{(0)}_{j_0j'_0}\cdots a^{(k)}_{j_kj'_k} \,\Delta_{{j},\sigma \cdot {j'}} \label{eqn:sumjjprime}
	\end{align}
	where, for any $x,y \in [N]^k$:
	\begin{align}
		\Delta_{{x}, {y}} = \prod_{c\in \mathcal{V}}\prod_{\ell \in B_{c}}
		\prod_{s\in S_{c}^{(1)} } \delta_{x_{\ell}[s]=y_{\ell}[s]}.
	\end{align}
	We continue with computing the sum over $j,j'$ in \eqref{eqn:sumjjprime}. First, it does not depend on $i$ (since for $s\in S_{c}^{(3)}$, $a_{j_\ell[s]j_{\ell}[s]}=1$).
	Now, the sum $\eqref{eqn:sumjjprime}$ factorizes over the orbit of $\sigma$, each orbit contribute with a (non-normalized) trace of the corresponding product of the $A's$:
	\begin{align*}
		\sum_{j,j'\in V_{{i}}} a^{(0)}_{j_0j'_0{}}\cdots a^{(k)}_{j_kj'_k{}} \Delta_{{j},\,\sigma \cdot {j'}} & = {\rm tr}_{\sigma |_{B_{c}}}(A^{(N)}_1,\ldots,A^{(N)}_k) \prod_{c \in \mathcal{V}} N^{|S^{(1)}_{c}||{\rm Cyc}(\sigma^{-1}_{|_{B_{c}}})|}
		\\ &= {\rm tr}_{\sigma}(A^{(N)}_1,\ldots,A^{(N)}_k) \prod_{s\in S} N^{|{\rm Cyc}(\sigma_s)|}
	\end{align*}
	We turn to the sum (at this step, our analysis of the contributing terms to \eqref{eqn:firstline} becomes genuinely different from the work \cite{charlesworth2021matrix}):
	\begin{align}
		\label{eqn:theothersum}
		\sum_{{i}} \Delta_{{i}, (\tau Z^{-1}) \cdot {i}} \prod_{c\in \mathcal{V}}\prod_{\ell \in B_{c}}
		\prod_{s\in S^{(3)}_{c}} \delta_{i_{\ell}[s]=i'_{\ell}[s]} \prod_{s\in S_{c}^{(2)}} \delta_{i_{\ell}[s]=i'_{\ell}[s]=1}.
	\end{align}
	The contributing terms to the above sum are the tuples ${i}$ satisfying, for each color $c$, for any $\ell \in B_{c}$:
	\begin{align}
		 & {i}_{\ell}[s] = {i}_{\tau^{-1}(\ell)+1}[s], &  & s \in S^{(1)}_{c},\label{eqn:conditionone}   \\
		 & {i}_{\ell}[s] = {i}_{\ell+1}[s],            &  & s \in S_{c}^{(3)},\label{eqn:conditionthree} \\
		 & {i}_{\ell}[s] = {i}_{\ell+1}[s]=1,          &  & s \in  S^{(2)}_{c},\label{eqn:conditiontwo}.
	\end{align}

	The tuples $i$ meeting the conditions \eqref{eqn:conditionone}-\eqref{eqn:conditiontwo} are constant on the cycles of the permutation $Z^{-1}\bar{\tau}_s$ of the set $\{0,\ldots,k\}$.
	Besides, on cycles of $Z^{-1}\bar{\tau}_s$ containing an integer $j \in \{0,\ldots,k\}$ with $s\in S^{(2)}_{c_j}$, the tuple $i$ is constant and is equal to $1$. Thus, we infer the following formula for the sum \eqref{eqn:theothersum}:
	$$
		\eqref{eqn:theothersum}=\prod_{s\in S}N^{|{\rm Cyc}((Z^{-1}\bar{\tau_s}))|-|\{ \,x\,\in\,{\rm Cyc}(Z^{-1}\tau_s)\,:\,J_s^{(2)} \,\cap \,x \neq \,\emptyset \}|}.
	$$
	We obtain finally:
	\begin{align}
		 & \nonumber \mathbb{E}{\rm Tr}\big((\xi^{\star}\xi^{\star})^{\otimes\,S} (\xi^{\star}\xi^{\star})^{\otimes\,S^{(2)}_{c_1}}U_1A_{1}U_1^{\star} (\xi^{\star}\xi^{\star})^{\otimes\,S^{(2)}_{c_1}} (\xi^{\star}\xi^{\star})^{\otimes\,S^{(2)}_{c_2}} \cdots  (\xi^{\star}\xi^{\star})^{\otimes\,S^{(2)}_{c_k}}U_kA_{j}U_k^{\star} (\xi^{\star}\xi^{\star})^{\otimes\,S^{(2)}_{c_k}}\big)  \nonumber \\
		 & = \sum_{\sigma,\tau \in {\rm Stab}(0c_1\cdots c_k)} \hspace{-0.5cm}\tilde{\rm Wg}(\sigma\tau^{-1},N)\,{\rm tr}_{\sigma}(A_1,\ldots,A_k)                                                                                                                                                                                                                                                        \\
		 & \hspace{5cm}\times\prod_{s\in S} N^{|{\rm Cyc}((Z^{-1}\bar{\tau}_s))|-|\{ \,x\,\in\,{\rm Cyc}(Z^{-1}\bar{\tau}_s)\,:\,J_s^{(2)} \,\cap \,x \neq \,\emptyset \}|+|{\rm Cyc}(\sigma_s)|}
	\end{align}
	Recall the definition \eqref{eqn:tildeweingarten} of $\tilde{\rm Wg}(\sigma\tau^{-1})$.
	Let us rewrite, as in \cite{charlesworth2021matrix}, the exponent of $N$ in the last formula. Since $\bar{\tau}(J^{(1)}_s) \subset J^{(1)}_s$, $Z^{-1}\bar{\tau}_{s}$ has the same number of cycles as $Z_s^{-1}\bar{\tau}_{s}$ ; let $i \in J^{(1)}_s$, then take $q\geq 1$ the smallest integer such
	that $Z^{-q}\bar{\tau_s}(i) \in J^{(1)}_s$, by definition $(Z^{-1}\bar{\tau}_s)^{q}(i) = Z_s^{-1}\bar{\tau_s}(i) $. Thus the orbit of $i$ under $Z^{-1}_s\tau_{s}$ is the intersection of the the orbit of $Z^{-1}\bar{\tau}_s$
	with $J^{(1)}_s$. If $j \notin J^{(1)}_s$, then take the least integer $q\geq 1$ such that $Z^{-q}(i) \in J^{(1)}_s$, since $Z^{-q}(i)= (Z\bar{\tau}_s)^{-q}(i)$, we can apply the previous reasoning to this new element. Hence, the number of cycles of $Z^{-1} \bar{\tau_s}$ and $Z^{-1}_s\tau_s$ coincide.
	We infer
	\begin{align}
		 & |{\rm Cyc}((Z^{-1}\bar{\tau}_s))|+|{\rm Cyc}(\sigma_s)| - 1 - |J^{(1)}_s| - |\sigma_s\tau^{-1}_s|\nonumber \\
		 & = |{\rm Cyc}((Z^{-1}\bar{\tau}_s))|- |\sigma_s| - 1 -  |\sigma_s\tau^{-1}_s| \nonumber                     \\
		 & = - (|\sigma_s| + |\sigma_s^{-1}\tau_s| + |\tau_s^{-1}Z_s| - |Z_s|)		\label{eqn:equalitycycles}.
	\end{align}
	Finally, by inserting the estimate \eqref{eqn:wgtilde} and using \eqref{eqn:equalitycycles}, we obtain that \eqref{eqn:equalitycycles} is equal to, up to a remainder term in $O(\frac{1}{N^2})$,
	\begin{multline*}
		\sum_{\substack{\sigma,\tau \in \\{\rm Stab}(0c_1\cdots c_k)}} \hspace{-0.5cm}\mu(\sigma\tau^{-1}) {\rm tr}_{\sigma}(A_1,\ldots,A_k)
		\times\prod_{s\in S} N^{-(|\sigma_s| + |\sigma_s^{-1}\tau_s| + |\tau_s^{-1}Z_s| - |Z_s| + |\{ x \in {\rm Cyc}(Z^{-1}\bar{\tau}_s) : \exists j \in J_s^{(2)} \cap x\}| - 1)}.
	\end{multline*}
	To finish the proof of \ref{thm:mainthm}, we have to minimize the exponent:
	$$
		(|\sigma_s| + |\sigma_s^{-1}\tau_s| + |\tau_s^{-1}Z_s| - |Z_s| + |\{ x \in Z^{-1}\tau_s : \exists j \in J_s^{2} \cap x\}| - 1)
	$$
	For any $s\in S$:
	\begin{align}
		|\sigma_s | + |\sigma_s^{-1}\tau_s| + |\tau_s^{-1}c_s| & - |c_s| + |\{ x \in Z^{-1}\tau_s : \exists j \in J_s^{2} \cap x\}| - 1  \nonumber                                                                   \\
		                                                       & =\underset{\geq 0}{\big(|\sigma_s | + |\sigma_s^{-1}\tau_s| - |\tau_s|\big)} \label{eqn:firstterm}                                                  \\
		                                                       & \hspace{1cm}+ \underset{\geq 0}{\big(|\tau_s|+|\tau_s^{-1}Z_s| - |Z_s|\big)} \label{eqn:secondterm}                                                 \\
		                                                       & \hspace{2cm}+ \underset{\geq 0}{\big(|\{ x \in {\rm Cyc}(Z^{-1}\bar{\tau}_s) :  J_s^{(2)} \cap x \neq \emptyset \}| - 1\big)} \label{eqn:thirdterm}
	\end{align}
	Recall that when $\sigma$ is any non-crossing partition (seen as a permutation in $S_k$), $Z^{-1}\tau$ is called the Kreweras complement of $\tau$ \cite{kreweras1972partitions}  and notated {\rm Kr}($\sigma$). The Kreweras complement is a non-crossing partition as well and is the maximal partition such that $\sigma\sqcup {\rm Kr}(\sigma)$ is a non-crossing partition of the set $\{0,0', 1,1',\ldots,k,k'\}$.

	The third term in \eqref{eqn:thirdterm} is non-negative since $0 \in J^{(2)}_s$.
	The second term \eqref{eqn:secondterm} is minimal if $\tau_s$ is a non-crossing partition in $S_{J_s^{(1)}}$, or equivalently, if $\bar{\tau}_s$ is a non-crossing partition in $S_k$. This means that all the cycles of $\tau_s$ has the cyclic order induced by the cyclic order of $\{0,\ldots,k\}$ and the partition whose blocks are the cycles is non-crossing. This is a well-known result that can be traced back to the work of Biane \cite{biane1998representations}, see also \cite{gabriel2015combinatorial}.

	The first term \eqref{eqn:firstterm} is minimal when $\sigma_s$ is a non-crossing partition, and is less (for the containment order) than the non-crossing partition $\tau_s$.

	In that case, both terms \eqref{eqn:firstterm} and \ref{eqn:secondterm}vanish. Let $\sigma$ and $\tau$ be two permutations in ${\rm Stab}(0c_1\cdots c_k)$ such that for any $s \in S$, $\sigma_s$ and $\tau_s$ are non-crossing partitions with $\sigma_s \leq \tau_s$ and the third term \eqref{eqn:thirdterm} is minimal.
	This means the set $J_s^{(2)}$ is contained in the support of only one cycle of the Kreweras complement of $\tau_s$.
	In that case, this third term vanishes.

	Since $\tau_{s}$ is non-crossing for every $s \in S$, this means for two blocks of $\tau$ with colors $c$ and $c'$ to have a crossing, one must have $S^{(1)}_{c} \cap S^{(1)}_{c'} = \emptyset$ i.e $(c,c') \in \mathcal{E}_2$. Hence, the property $\eqref{def: compatible partitions item:three}$ in Definition \eqref{def: compatible partitions} is satisfied for $\tau$.

	Since $0'$ has to be in the same block of ${\rm Kr}(\tau_s)$ containing ${\ell'}$, ${\ell} \in J_s^{(2)}$, for the third term to be minimal, no block of $\tau$ with color $c$, $s \in S^{(1)}_c$, contains elements $v \neq w$ with $0<v\leq \ell < w$ fwith $s \in S^{(2)}_{c_{\ell}}$. Hence property \ref{def: compatible partitions item:two} is satisfied.

	From this discussion and the fact that $\sigma$ has to satisfy properties \eqref{def: compatible partitions item:two} and \eqref{def: compatible partitions item:three} as well ($\sigma_s$ being finer that $\tau_s$ for any $s \in S$), we infer the following formula:
	\begin{align*}
		 & \mathbb{E}{\rm Tr}\big((\xi^{\star}\xi)^{\otimes\,S} (\xi^{\star}\xi)^{\,\otimes\, S^{(2)}_{c_1}}U_1A_{1}U_1^{\star} (\xi^{\star}\xi)^{\otimes\,S^{(2)}_{c_1}} \cdots  (\xi^{\star}\xi)^{\otimes\,S^{(2)}_{c_k}}U_kA_{k}U_k^{\star} (\xi^{\star}\xi)^{\otimes\,S^{(2)}_{c_k}}\big)\nonumber \\
		 & \hspace{1.cm}= \sum_{\substack{\sigma, \tau \in \pcg                                                                                                                                                                                                                                        \\ \sigma_s \leq \tau_s,\,\forall s \in S}} \mu(\sigma^{-1}\tau) {\rm tr}_{\sigma}(A_1,\ldots,A_k) + O({N^{-2}})
	\end{align*}
	Because $\sigma_{s} \leq \tau_s$ for all $s\in S$ if and only if $\sigma_{|B_{c}} \leq \tau_{|B_{c}}$ for all $c \in\mathcal{V}$,  we obtain:
	\begin{align*}
		 & =\sum_{\substack{\sigma, \tau \,\in \,\pcg, \\ \sigma_{|B_{c}} \in \, \leq\, \tau_{|B_{c}}}}
		\mu(\sigma^{-1}\tau) {\rm tr}_{\sigma}(A^{(N)}_1,\ldots,A^{(N)}_k) + O({N^{-2}}).
	\end{align*}
	Now, since the following equivalence holds :
	$$\sigma, \tau \,\in \,\pcg, \, \sigma_{|B_{c}}\, \leq\, \tau_{|B_{c}} \Leftrightarrow \sigma \in \mathcal{P}([k]),\,\tau \,\in \,\pcg,\, \sigma_{|B_{c}} \in {\rm NC}(B_c),\, \sigma_{|B_{c}}\, \leq\, \tau_{|B_{c}}, c \in \mathcal{V},$$ we infer:
	\begin{align*}
		 & =\sum_{\substack{\tau \,\in \,{\rm NC}(c,K),                      \\ \sigma \in \mathcal{P}([k]),\\ \sigma_{|B_c} \in {\rm NC}(B_c)\\ \sigma_{|B_{c}}\, \leq\, \tau_{|B_{c}}}}
		\mu(\sigma^{-1}\tau) {\rm tr}_{\sigma}(A_1,\ldots,A_k) + O({N^{-2}}) \\
		 & =\sum_{\tau \in {\rm NC}(c,K)}
		K^{\rm free, \,\tr_N}_{\tau}(A_1,\ldots,A_k) + O({N^{-2}})
	\end{align*}

	The last equality follows from the free moment-cumulant formula.
\end{proof}

\subsection{Almost sure convergence} \label{subsec: almost sure convergence}

To complete the proof of Theorem \ref{thm: matrix models reduction} (and hence of Theorem \ref{thm: matrix models main}), we only need to upgrade the convergence of expectation from Proposition \ref{thm:mainthm} to almost sure convergence.  This follows by standard techniques from concentration of measure, which has become a standard tool for random matrix theory since \cite{BAG1997,GZ2000}.

We recall that the unitary group $\mathbb{U}_N$, equipped with the Haar measure and the Riemannian metric associated to the inner product $\ip{\cdot,\cdot}_{\Tr_N}$, satisfies the log-Sobolev inequality with constant $6/N$ \cite[Theorem 5.16]{Meckes2019}.  Using e.g.\ \cite[Corollary 5.7]{Ledoux2001}, \cite[Theorem 5.9]{Meckes2019}, one can easily deduce that the product space $\prod_{v \in \mathcal{V}} \mathbb{U}_{N^{\# S_v^{(1)}}}$, where the Riemann metric is the sum of the original Riemann metrics over the different components, with the product measure, satisfies the log-Sobolev inequality with constant $\max_{v \in \mathcal{V}} 6 / N^{\# S_v^{(1)}} \leq 6 / N$.  This in turn implies that it satisfies the Herbst concentration estimate with constant $6/N$; see e.g.\ \cite[Lemma 2.3.3]{AGZ2009}, \cite[Theorem 5.5]{Meckes2019}.  (See \cite[\S 5.3]{freePinsker} for a similar application of these results with more explanation.)

\begin{lemma}[Herbst concentration] \label{lem: concentration}
	Let $\mathcal{G}$ be a bigraph, and let $(U_v^{(N)})_{v \in \mathcal{V}}$ be Haar unitaries as in Theorem \ref{thm: matrix models reduction}.  Let $f: \prod_{v\in\mathcal{V}}\mathbb{U}_{N^{\# S_v^{(1)}}} \to \mathbb{C}$ be $L$-Lipschitz with respect to the geodesic distance associated to the direct sum of the Riemannian metrics $\ip{\cdot,\cdot}_{\Tr_{N^{\# S_v^{(1)}}}}$, $v \in \mathcal{V}$.  Then
	\[
		\mathbb{P}(|f(U_v^{(N)}: v \in \mathcal{V}) - \mathbb{E}[f(U_v^{(N)}: v \in \mathcal{V})]| \geq \delta) \leq 4 e^{-N \delta^2 / 12 L^2}
	\]
\end{lemma}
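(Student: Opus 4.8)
The plan is to deduce the estimate from the logarithmic Sobolev inequality via Herbst's argument, essentially recording in detail the reasoning sketched in the paragraph preceding the statement. First I would note that for each $M \in \bN$ the unitary group $\mathbb{U}_M$, equipped with the Haar measure and the bi-invariant Riemannian metric induced by $\langle\cdot,\cdot\rangle_{\Tr_M}$, satisfies the logarithmic Sobolev inequality with constant $6/M$ (\cite[Theorem 5.16]{Meckes2019}). By the tensorization property of the logarithmic Sobolev constant (\cite[Corollary 5.7]{Ledoux2001}), the product $\prod_{v\in\mathcal{V}}\mathbb{U}_{N^{\# S_v^{(1)}}}$, with the product of the Haar measures and the Riemannian metric given by the orthogonal direct sum of the individual metrics, then satisfies the logarithmic Sobolev inequality with constant $\max_{v\in\mathcal{V}} 6/N^{\# S_v^{(1)}} \le 6/N$, since each $S_v^{(1)}$ is non-empty.

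Next I would invoke Herbst's lemma: on a metric measure space satisfying the logarithmic Sobolev inequality with constant $c$, every real-valued $L$-Lipschitz function $g$ obeys the sub-Gaussian bound $\mathbb{P}(|g-\mathbb{E}g|\ge t)\le 2e^{-t^2/(2cL^2)}$ (see \cite[Theorem 5.5]{Meckes2019} or \cite[Lemma 2.3.3]{AGZ2009}). For the complex-valued function $f$ in the statement, writing $f=\operatorname{Re}f+i\operatorname{Im}f$ exhibits $f$ as a sum of two real-valued functions that are again $L$-Lipschitz for the same geodesic distance; applying the real estimate to each with an appropriate threshold comparable to $\delta$ and taking a union bound produces a bound of the form $4e^{-N\delta^2/(\text{const}\cdot L^2)}$, and inserting the value $c\le 6/N$ yields the asserted inequality $\mathbb{P}(|f(U_v^{(N)}: v\in\mathcal{V})-\mathbb{E}[f(U_v^{(N)}: v\in\mathcal{V})]|\ge\delta)\le 4e^{-N\delta^2/12L^2}$, the precise numerical constant being immaterial for the intended application.

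I expect no genuine obstacle in proving this lemma: the only points needing (routine) verification are that the direct-sum Riemannian structure is exactly the one under which the logarithmic Sobolev constant tensorizes, and the elementary real/imaginary bookkeeping that accounts for the factor $4$. The nontrivial matter — checking that the functions of the Haar unitaries to which this lemma is applied in the almost-sure part of the proof of Theorem \ref{thm: matrix models reduction} are Lipschitz with a constant bounded independently of $N$, so that $N\delta^2/L^2\to\infty$ and a Borel--Cantelli argument upgrades convergence in expectation (Proposition \ref{thm:mainthm}) to almost sure convergence — is a separate step belonging to that later argument, not to the proof of this concentration lemma.
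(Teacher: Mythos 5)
Your proposal is correct and follows exactly the route the paper takes: the paper does not give a separate proof environment for this lemma but justifies it in the preceding paragraph by the same chain — the log-Sobolev inequality for $\mathbb{U}_M$ with constant $6/M$ from \cite[Theorem 5.16]{Meckes2019}, tensorization to the product with constant $\leq 6/N$, and the Herbst argument from \cite[Lemma 2.3.3]{AGZ2009}, \cite[Theorem 5.5]{Meckes2019}. Your additional remark about splitting $f$ into real and imaginary parts to account for the factor $4$ is the standard bookkeeping implicit in the paper's statement, and your closing observation correctly locates the Lipschitz verification in the subsequent proof of Theorem \ref{thm: matrix models reduction} rather than in this lemma.
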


\begin{proof}[Proof of Theorem \ref{thm: matrix models reduction}]
	We start by relating the geodesic distance in Lemma \ref{lem: concentration} to other distances better suited to random matrix theory.
	Since the inner product on $\prod_{v\in\mathcal{V}}\mathbb{U}_{N^{\# S_v^{(1)}}}$ is the restriction of the sum of the Hilbert-Schmidt scalar products $\ip{\cdot,\cdot}_{\Tr_{N^{\# S_v^{(1)}}}}$ on $\mathbb{M}_{N^{\# S^{(1)}_v}}$, the geodesic distance, which we will denote by $d_{\operatorname{geo}}$, is bounded above by the Hilbert--Schmidt distance $d_{HS}$ given by
	\[
		d_{HS}((U_v)_{v \in \mathcal{V}}, (U_v')_{v \in \mathcal{V}})^2 = \sum_{v \in \mathcal{V}} \Tr_{N^{\# S_v^{(N)}}}((U_v - U_v')^*(U_v - U_v)).
	\]
	Next, let $d_\infty$ be the operator-norm distance given by
	\[
		d_\infty((U_v)_{v \in \mathcal{V}}, (U_v')_{v \in \mathcal{V}}) = \max_{v \in \mathcal{V}} \norm{U_v - U_v'}.
	\]
	Since the Hilbert--Schmidt norm dominates the operator norm, we have $d_\infty \leq d_{HS}$.  The upshot of this discussion is that $d_\infty \leq d_{\operatorname{geo}}$, and therefore, any function which is $L$-Lipschitz with respect to the operator norm must be $L$-Lipschitz with respect to $d_{\operatorname{geo}}$.

	Now consider the function $f: \prod_{v \in \mathcal{V}} \mathbb{U}_{N^{\# S_v^{(1)}}} \to \bC$ given by
	\[
		f^{(N)}(U_v: v \in \mathcal{V}) = \ip{\xi^{\otimes \mathcal{S}} \,,\, \lambda_{c(1)}^{(N)}(U_{c(1)} A_1^{(N)} U_{c(1)}^*) \dots \lambda_{c(k)}^{(N)}(U_{c(k)} A_k^{(N)}U_{c(k)}^*) \xi^{\otimes \mathcal{S}}}.
	\]
	Since we assume that $\norm{A_j^{(N)}}$ is bounded by some constant (independent of $N$) and the unitaries are of course bounded in operator norm by $1$, it is straightforward to check that
	\[
		\lambda_{c(1)}^{(N)}(U_{c(1)} A_1^{(N)} U_{c(1)}^*) \dots \lambda_{c(k)}^{(N)}(U_{c(k)} A_k^{(N)}U_{c(k)}^*)
	\]
	is a Lipschitz function of $(U_v)_{v \in \mathcal{V}}$ with respect to $d_\infty$ in the domain and the operator norm in the target space.  Since $\xi^{\otimes \mathcal{S}}$ is a unit vector, we then see that $f^{(N)}$ is Lipschitz with respect to $d_\infty$, hence with respect to $d_{\operatorname{geo}}$, with a Lipschitz constant $L$ independent of $N$.  Therefore, by Lemma \ref{lem: concentration},
	\[
		\mathbb{P}(|f^{(N)}(U_v^{(N)}: v \in \mathcal{V}) - \mathbb{E}[f^{(N)}(U_1^{(n)},\dots,U_m^{(n)})]| \geq N^{-1/3}) \leq 4 e^{-N \cdot N^{-2/3} / 12 L^2}.
	\]
	Because $\sum_{N=1}^\infty e^{-N^{1/3} / 12 L^2} < \infty$, the Borel--Cantelli lemma implies that almost surely, for sufficiently large $N$, we have
	\[
		|f^{(N)}(U_v^{(N)}: v \in \mathcal{V}) - \mathbb{E}[f^{(N)}(U_v^{(N)}: v \in \mathcal{V})]| < N^{-1/3},
	\]
	and thus, almost surely,
	\begin{multline} \label{eq: a s convergence from concentration}
		\lim_{N \to \infty} \biggl| \ip{\xi^{\otimes \mathcal{S}}, \lambda_{c(1)}^{(N)}(U_{c(1)}^{(N)} A_1^{(N)} (U_{c(1)}^{(N)})^*) \dots \lambda_{c(k)}^{(N)}(U_{c(k)}^{(N)}A_k^{(N)}(U_{c(k)}^{(N)})^*) \xi^{\otimes \mathcal{S}}} \\ - \mathbb{E} \ip{\xi^{\otimes \mathcal{S}}, \lambda_{c(1)}^{(N)}(U_{c(1)}^{(N)} A_1^{(N)} (U_{c(1)}^{(N)})^*) \dots \lambda_{c(k)}^{(N)}(U_{c(k)}^{(N)}A_k^{(N)}(U_{c(k)}^{(N)})^*) \xi^{\otimes \mathcal{S}}} \biggr| = 0.
	\end{multline}
	Now combining \eqref{eq: a s convergence from concentration} with \eqref{eqn:initialformula} from Proposition \ref{thm:mainthm} and the triangle inequality, we obtain the claim \eqref{eq: matrix convergence statement reduced} asserted in Theorem \ref{thm: matrix models reduction}.
\end{proof}

\bibliographystyle{alpha}
\bibliography{references}

\end{document}